\newcommand{\al}{\alpha}
\newcommand{\om}{\omega}
\newcommand{\vp}{\varphi}
\newcommand{\sse}{\subseteq}
\newcommand{\contains}{\supseteq}
\DeclareMathOperator{\depth}{depth}
\DeclareMathOperator{\dom}{dom}
\DeclareMathOperator{\E}{E}
\DeclareMathOperator{\R}{R}
\DeclareMathOperator{\Ext}{Ext}
\newcommand{\rgl}{\rangle}
\newcommand{\lgl}{\langle}
\newcommand{\re}{\restriction}
\newcommand{\stem}{\mathrm{stem}}
\newcommand{\bT}{\mathbb{T}}
\newcommand{\bP}{\mathbb{P}}
\newcommand{\bN}{\mathbb{N}}
\newcommand{\bS}{\mathbb{S}}
\newcommand{\ra}{\rightarrow}
\newcommand{\Erdos}{Erd\H{o}s}
\newcommand{\Rodl}{R{\"{o}}dl}
\begin{document}

\newtheorem{thm}{Theorem}  
\newtheorem{prop}[thm]{Proposition} 
\newtheorem{lem}[thm]{Lemma} 
\newtheorem{cor}[thm]{Corollary} 
\newtheorem{fact}[thm]{Fact}
\newtheorem{claim}[thm]{Claim}
\newtheorem*{thmMT}{Main Theorem}
\newtheorem*{thmnonumber}{Theorem}
\newtheorem*{mainclaim}{Main Claim}
\newtheorem*{claim1}{Claim 1}
\newtheorem*{claim2}{Claim 2}
\newtheorem*{claim3}{Claim 3}
\newtheorem*{claim4}{Claim 4}
\newtheorem*{claim5}{Claim 5}
\newtheorem*{claim6}{Claim 6}
\newtheorem*{claim7}{Claim 7}
\newtheorem*{claim8}{Claim 8}
\newtheorem*{claim9}{Claim 9}
\newtheorem*{claim10}{Claim 10}
\newtheorem*{claim11}{Claim 11}
\newtheorem*{claim12}{Claim 12}
\newtheorem*{claim13}{Claim 13}
\newtheorem*{claim14}{Claim 14}
\newtheorem*{claim15}{Claim 15}
\newtheorem*{claimA}{Claim A}
\newtheorem*{claimC}{Claim C}
\newtheorem*{claimD}{Claim D}
\newtheorem*{factD}{Fact D}
\newtheorem{claimn}{Claim}

\theoremstyle{definition}   
\newtheorem{defn}[thm]{Definition} 
\newtheorem{example}[thm]{Example} 
\newtheorem{conj}[thm]{Conjecture} 
\newtheorem{prob}[thm]{Problem} 
\newtheorem{examples}[thm]{Examples}
\newtheorem{question}[thm]{Question}
\newtheorem{problem}[thm]{Problem}
\newtheorem{openproblems}[thm]{Open Problems}
\newtheorem{openproblem}[thm]{Open Problem}
\newtheorem{conjecture}[thm]{Conjecture}
\newtheorem*{problem1}{Problem 1}
\newtheorem*{problem2}{Problem 2}
\newtheorem*{notn}{Notation}

\theoremstyle{remark} 
\newtheorem*{rem}{Remark} 
\newtheorem*{rems}{Remarks} 
\newtheorem*{ack}{Acknowledgments} 
\newtheorem*{note}{Note}
\newtheorem{subclaim}{Subclaim}
\newtheorem*{subclaimn}{Subclaim}
\newtheorem*{subclaim1}{Subclaim (i)}
\newtheorem*{subclaim2}{Subclaim (ii)}
\newtheorem*{subclaim3}{Subclaim (iii)}
\newtheorem*{subclaim4}{Subclaim (iv)}
\newtheorem*{case1}{Case 1}
\newtheorem*{case2}{Case 2}
\newtheorem*{case3}{Case 3}
\newtheorem*{case4}{Case 4}
\newtheorem*{case5}{Case 5}
\newtheorem*{case6}{Case 6}
\newtheorem*{case7}{Case 7}
\newtheorem*{case8}{Case 8}
\newtheorem*{case9}{Case 9}
\newtheorem*{case10}{Case 10}
\newtheorem*{case11}{Case 11}
\newtheorem*{case12}{Case 12}
\newtheorem*{case13}{Case 13}
\newtheorem*{case14}{Case 14}
\newtheorem*{case15}{Case 15}
\newtheorem*{subcasei}{Subcase (i)}
\newtheorem*{subcaseii}{Subcase (ii)}
\newtheorem*{subcaseiii}{Subcase (iii)}
\newtheorem*{subcaseiv}{Subcase (iv)}
\newtheorem*{subcasev}{Subcase (v)}
\newtheorem*{subcasevi}{Subcase (vi)}
\newtheorem*{subsubcasea}{Sub-subcase (a)}
\newtheorem*{subsubcaseb}{Sub-subcase (b)}

\numberwithin{equation}{section}

\title[Ramsey-Classification Theorems]{A new class of Ramsey-Classification Theorems 	and their Applications in the Tukey Theory of  
Ultrafilters}

\author{Natasha Dobrinen} 
\address{Department of Mathematics, University of Denver, 2360 S Gaylord St, Denver, CO 80208, USA} 
\email{natasha.dobrinen@du.edu} 
\thanks{The first author was supported by a National Science Foundation - Association for Women in Mathematics Mentoring Grant and a University of Denver Faculty Research Fund Grant}
  
\author{Stevo Todorcevic}
\address{Department of Mathematics\\
University of Toronto\\
Toronto\\ Canada\\ M5S 2E4}
\email{stevo@math.toronto.edu}
\address{Institut de Mathematiques de Jussieu\\
CNRS - UMR 7056\\
75205 Paris\\
France}
\email{stevo@math.jussieu.fr}
\thanks{The second author was supported by grants from NSERC and CNRS}

\subjclass[2010]{Primary 05D10, 03E02, 06A06, 54D80; Secondary 03E04, 03E05}

\begin{abstract}
Motivated by  Tukey classification problems and building on work in \cite{Dobrinen/Todorcevic11},
we develop a  new hierarchy of 
topological Ramsey spaces $\mathcal{R}_{\al}$, $\al<\om_1$.
These spaces form a natural hierarchy of complexity, 
$\mathcal{R}_0$ being the Ellentuck space \cite{MR0349393},
and for each $\al<\om_1$, $\mathcal{R}_{\al+1}$ coming immediately after $\mathcal{R}_{\al}$ in complexity. 
Associated with each $\mathcal{R}_{\al}$ is an
 ultrafilter $\mathcal{U}_{\al}$, which is Ramsey for $\mathcal{R}_{\al}$, and in particular, 
 is a  rapid p-point satisfying certain partition properties.
We prove Ramsey-classification theorems for equivalence relations on
fronts on $\mathcal{R}_{\al}$, $2\le\al<\om_1$. 
These are analogous to the  Pudlak-\Rodl\
Theorem canonizing  equivalence relations on barriers on the
Ellentuck space. 
We then apply our Ramsey-classification theorems to
completely classify all Rudin-Keisler equivalence classes of
ultrafilters which are Tukey reducible to $\mathcal{U}_{\al}$, for each $2\le\al<\om_1$: Every
ultrafilter which is Tukey reducible to $\mathcal{U}_{\al}$ is
isomorphic to a countable iteration of Fubini products of
ultrafilters from among a fixed countable collection of rapid p-points.
Moreover, we show that the Tukey types of nonprincipal ultrafilters Tukey reducible to $\mathcal{U}_{\al}$ form a descending chain of order type $\al+1$.
\end{abstract}

\maketitle

\section{Overview}\label{sec.intro}

This paper builds on and extends work in \cite{Dobrinen/Todorcevic11} to a large class of new topological Ramsey spaces and their associated ultrafilters.
Motivated by a Tukey classification problem and inspired by work of Laflamme in \cite{Laflamme89} and the second author in \cite{Raghavan/Todorcevic11},
we build  new topological Ramsey spaces $\mathcal{R}_{\al}$, $2\le\al<\om_1$.
The space $\mathcal{R}_0$ denotes the classical Ellentuck space;
the space $\mathcal{R}_1$ was built in \cite{Dobrinen/Todorcevic11}.
The 
topological Ramsey spaces $\mathcal{R}_{\al}$, $\al<\om_1$,  form a natural hierarchy in terms of complexity.
The space $\mathcal{R}_1$ is minimal in complexity above the  Ellentuck space,
the Ellentuck space being obtained as the projection of $\mathcal{R}_1$ via
  a fixed finite-to-one map.
More generally, $\mathcal{R}_{\al+1}$ is minimal in complexity over $\mathcal{R}_{\al}$ via a fixed finite-to-one map.
For limit ordinals $\gamma<\al$, $\mathcal{R}_{\gamma}$ is formed by diagonalizing in a precise manner over the $\mathcal{R}_{\beta}$, $\beta<\gamma$.
$\mathcal{R}_{\gamma}$ is minimal in complexity over the collection of $\mathcal{R}_{\beta}$, $\beta<\gamma$, via fixed finite-to-one maps.

Every topological Ramsey space has  notions of finite approximations, fronts, and barriers.
In \cite{Dobrinen/Todorcevic11}, we proved that
for each $n$,  there is a finite collection of canonical equivalence relations for uniform barriers on $\mathcal{R}_1$ of rank $n$.
In this paper, we prove similar results for all $\al<\om_1$.
In Theorem \ref{thm.FCTR^al_n},
 we  show that for all $2\le\al<\om_1$,
 for any uniform barrier $\mathcal{B}$ on $\mathcal{R}_{\al}$ of finite rank  and 
any equivalence relation $\E$ on $\mathcal{B}$,
there is an $X\in\mathcal{R}_{\al}$ such that $\E$ restricted to the members of  $\mathcal{B}$ coming from within $X$ is exactly one of the canonical equivalence relations.
For finite $\al$, there are finitely many
 canonical equivalence relations on uniform barriers of finite rank; these are represented
by a certain collection of finite trees.
Moreover, the numbers of canonical equivalence relations for finite $\al$ are given by a recursive function.
For infinite $\al$, there are infinitely many canonical equivalence relations on uniform barriers of finite rank, represented by tree-like structures.
These theorems  generalize the \Erdos-Rado Theorem  for uniform barriers of finite rank on the Ellentuck space, namely, those of the form $[\bN]^n$.

In the  main theorem of this paper,
Theorem \ref{thm.PRR_al},
we prove  new Ramsey-classification theorems for all barriers (and moreover all fronts) on the topological Ramsey spaces $\mathcal{R}_{\al}$, $2\le\al<\om_1$.
We prove that for any barrier $\mathcal{B}$ on $\mathcal{R}_{\al}$ and any equivalence relation  on $\mathcal{B}$,
there is an  inner Sperner map which canonizes the equivalence relation.
This generalizes our analogous theorem for $\mathcal{R}_1$ in \cite{Dobrinen/Todorcevic11}, which in turn generalized the Pudlak-\Rodl\ Theorem for  barriers on the Ellentuck space.
These classification theorems were motivated by the following.

Recently the second author (see Theorem 24 in
\cite{Raghavan/Todorcevic11}) has made a connection between the
Ramsey-classification theory (also known as the canonical Ramsey
theory) and the Tukey classification theory of ultrafilters  on
$\omega.$ 
More precisely, he showed that selective ultrafilters
realize  minimal Tukey types in the class of all ultrafilters on
$\omega$ by applying the Pudlak-R\"{o}dl Ramsey classification
result to a given cofinal map from a selective ultrafilter into
any other ultrafilter on $\omega,$ a map which, on the basis of
our previous paper \cite{Dobrinen/Todorcevic10}, he could assume
to be continuous. Recall that the notion of a selective
ultrafilter is closely tied to the Ellentuck space on the family of
all infinite subsets of $\omega,$ or rather the one-dimensional
version of the pigeon-hole principle on which the Ellentuck space is
based, the principle stating that an arbitrary
$f:\omega\rightarrow\omega$ is either constant or is one-to-one on
an infinite subset of $\omega.$ Thus an ultrafilter $\mathcal{U}$
on $\omega$ is \emph{selective} if for every map
$f:\omega\rightarrow\omega$ there is an $X\in \mathcal{U}$ such that
$f$ is either constant or one-to-one on $\mathcal{U}.$ Since
essentially any other topological Ramsey space has it own notion
of a selective ultrafilter living on the set of its
$1$-approximations (see \cite{MR2330595}), the argument for
Theorem 24 in \cite{Raghavan/Todorcevic11} is so general that it
will give analogous Tukey-classification results for all
ultrafilters of this sort provided, of course, that we have the
analogues of the Pudlak-R\"{o}dl Ramsey-classification result for
the corresponding topological Ramsey spaces.

In \cite{Laflamme89},
Laflamme forced ultrafilters, $\mathcal{U}_{\al}$, $1\le \al<\om_1$, which are rapid p-points satisfying certain partition properties, and which have complete
combinatorics over the Solovay model. 
Laflamme showed that, for each $1\le\al<\om_1$, the Rudin-Keisler equivalence classes of all ultrafilters Rudin-Keisler below $\mathcal{U}_{\al}$ form a descending chain of order type $\al+1$.
This result employs a result of Blass in \cite{Blass74} which states that each weakly Ramsey ultrafilter has exactly one Rudin-Keisler type below it, namely the isomorphism class of a selective ultrafilter.
At this
point it is instructive to recall another result of the second
author (see Theorem 4.4 in \cite{MR1644345}) stating that assuming
sufficiently strong large cardinal axioms \emph{every} selective
ultrafilter is generic over $L(\mathbb{R})$ for the partial order
of infinite subsets of $\omega$, and the same argument applies for
any other ultrafilter that is selective relative any other
topological Ramsey space (see \cite{MR2330595}). 
Since, as it is
well-known, assuming large cardinals, the theory of
$L(\mathbb{R})$ cannot be changed by forcing, this gives another
perspective to the notion of `complete combinatorics' of Blass
and Laflamme.

This  paper was motivated by the same two lines of motivation as in \cite{Dobrinen/Todorcevic11}.
One line of motivation was to find  the structure of the Tukey types of nonprincipal ultrafilters Tukey reducible to $\mathcal{U}_{\al}$, for all $1\le\al<\om_1$.
We show in Theorem \ref{cor.1Tpred}
that, in fact, the   Tukey types of nonprincipal  ultrafilters   below that of $\mathcal{U}_{\al}$ forms a descending chain of order type $\al+1$.
Thus, the structure of the Tukey types below $\mathcal{U}_{\al}$ is the same as the structure of the Rudin-Keisler equivalence classes below $\mathcal{U}_{\al}$.

The second and stronger motivation  was to find  new canonization theorems for equivalence relations on  fronts on  $\mathcal{R}_{\al}$, $2\le\al<\om_1$, and to apply them to obtain 
 finer results than  Theorem \ref{cor.1Tpred}.
The canonization Theorems \ref{thm.original}  and \ref{thm.PRR_al} generalize results in \cite{Dobrinen/Todorcevic11}, which in turn had generalized
the \Erdos-Rado Theorem  for barriers of the form $[\bN]^n$ and the Pudlak-\Rodl\ Theorem for general barriers on the Ellentuck space, respectively.

Each of the spaces $\mathcal{R}_{\al}$ is constructed to give rise to an  ultrafilter which is isomorphic to Laflamme's $\mathcal{U}_{\al}$.
Applying Theorem \ref{thm.PRR_al},
we  completely classify all Rudin-Keisler classes of ultrafilters which are 
 Tukey reducible to  $\mathcal{U}_{\al}$ in Theorem \ref{thm.TukeyU_al}.
These extend the authors' Theorem 5.10 in \cite{Dobrinen/Todorcevic11}, which itself
 extended the second author's
Theorem 24 in
\cite{Raghavan/Todorcevic11}, classifying the Rudin-Keisler classes within  the Tukey type of a Ramsey ultrafilter.

The main new contributions in this work, as opposed to straightforward generalizations of the work in \cite{Dobrinen/Todorcevic11}, are the following.
First, the cases when $\al$ is infinite necessitate a new way of constructing the spaces $\mathcal{R}_{\al}$.
The base trees $\bT_{\al}$ for the spaces $\mathcal{R}_{\al}$ must be well-founded in order to generate topological Ramsey spaces.
However,  the true structures are best captured by tree-like objects $\bS_{\al}$ which are neither truly trees nor  well-founded.
These new auxiliary structures $\bS_{\al}$  are also needed to  make the canonical equivalence relations precise.
Second, we provide a general induction scheme by which we prove the Ramsey-classification theorems hold for $\mathcal{R}_{\al}$, for all $\al<\om_1$.
The proof that $\mathcal{R}_{\al}$ is a topological Ramsey space uses the Ramsey-classification theorems for all $\mathcal{R}_{\beta}$, $\beta<\al$.
Third, new sorts of structures appear within the collection of all Rudin-Keisler equivalence classes lying within the Tukey type of $\mathcal{U}_{\al}$, for $\al\ge 2$. 
Taken together, these constitute the first known transfinite collection of topological Ramsey spaces with associated ultrafilters which, though very far from Ramsey, behave quite similarly to Ramsey ultrafilters.
We remark that the fact that each $\mathcal{R}_{\al}$ is a topological Ramsey space is essential to the proof of Theorem \ref{thm.TukeyU_al}, and that forcing alone is not sufficient to obtain our result.

\section{Introduction, Background and Definitions}\label{sec.bd}

We begin with some definitions and background 
for the results in this paper.
Let $\mathcal{U}$ be an ultrafilter on a countable base set.
A subset $\mathcal{B}$ of an ultrafilter $\mathcal{U}$ is called {\em cofinal} if it is a base for the ultrafilter $\mathcal{U}$; that is, if for each $U\in\mathcal{U}$ there is an $X\in\mathcal{B}$ such that $X\sse U$.
Given ultrafilters $\mathcal{U},\mathcal{V}$,
we say that a function $g:\mathcal{U}\ra \mathcal{V}$
 is {\em cofinal} if the image of each cofinal subset of $\mathcal{U}$ is cofinal in $\mathcal{V}$.
We say that $\mathcal{V}$ is {\em Tukey reducible} to $\mathcal{U}$, and write $\mathcal{V}\le_T \mathcal{U}$, if there is a cofinal map from $\mathcal{U}$ into $\mathcal{V}$.
If both $\mathcal{V}\le_T \mathcal{U}$ and $\mathcal{U}\le_T \mathcal{V}$, then we write $\mathcal{U}\equiv_T \mathcal{V}$ and say that $\mathcal{U}$ and $\mathcal{V}$ are Tukey equivalent.
$\equiv_T$ is an equivalence relation, and  $\le_T$ on the equivalence classes forms a partial ordering.
The equivalence classes are called {\em Tukey types}.
A cofinal map $g:\mathcal{U}\ra\mathcal{V}$ is called {\em monotone} if whenever $U\contains U'$ are elements of $\mathcal{U}$, we have $g(U)\contains g(U')$.
By Fact 6 in \cite{Dobrinen/Todorcevic10},
  $\mathcal{U}\ge_T\mathcal{V}$ if and only if there is a monotone cofinal map witnessing this.
It is  useful to note that  $\mathcal{U}\ge_T\mathcal{V}$ if and only if there are cofinal subsets $\mathcal{B}\sse\mathcal{U}$ and $\mathcal{C}\sse\mathcal{V}$ and a  map $g:\mathcal{B}\ra\mathcal{C}$ which is a cofinal map from $\mathcal{B}$ into $\mathcal{C}$.

Rudin-Keisler reducibility is defined as follows.
 $\mathcal{U}\le_{RK}\mathcal{V}$ if and only if there is a function $f:\om\ra\om$ such that $\mathcal{U}=f(\mathcal{V})$, where
\begin{equation}
f(\mathcal{V})=\lgl\{f(U):U\in\mathcal{U}\}\rgl.
\end{equation}
Recall that $\mathcal{U}\equiv_{RK}\mathcal{V}$ if and only if $\mathcal{U}$ and $\mathcal{V}$ are isomorphic.
Tukey reducibility on ultrafilters generalizes  Rudin-Keisler reducibility in that $\mathcal{U}\ge_{RK} \mathcal{V}$ implies that $\mathcal{U}\ge_T\mathcal{V}$.
The converse does not hold.
In particular, there are $2^{\mathfrak{c}}$ many ultrafilters in the top Tukey type, \cite{Juhasz66},  \cite{Isbell65}. 
(See \cite{Dobrinen/Todorcevic10}, \cite{Raghavan/Todorcevic11}, and \cite{Dobrinen/Todorcevic11} for more examples of Tukey types containing more than one Rudin-Keisler equivalence class.)

We remind the reader of the following special kinds of ultrafilters.

\begin{defn}[\cite{Bartoszynski/JudahBK}]
Let $\mathcal{U}$ be an ultrafilter on $\om$.
\begin{enumerate}
\item
$\mathcal{U}$ is {\em Ramsey} if for each coloring $c:[\om]^2\ra 2$, there is a $U\in\mathcal{U}$ such that $U$ is homogeneous, meaning $|c''[U]^2|=1$.
\item
$\mathcal{U}$ is {\em weakly Ramsey} if for each coloring $c:[\om]^2\ra 3$, there is a $U\in\mathcal{U}$ such that $|c''[U]^2|\le 2$.
\item
$\mathcal{U}$ is a {\em p-point} if for each decreasing sequence $U_0\contains U_1\contains\dots$ of elements of $\mathcal{U}$, there is an $X\in\mathcal{U}$ such that $|X\setminus U_n|<\om$, for each $n<\om$.
\item
$\mathcal{U}$ is  {\em rapid} if for each function $f:\om\ra\om$, there is an $X\in\mathcal{U}$ such that $|X\cap f(n)|\le n$ for each $n<\om$.
\end{enumerate}
\end{defn}

Every Ramsey ultrafilter is weakly Ramsey,  which is in turn both a  p-point and rapid.
These sorts of ultrafilters  exist in every model of CH or MA or under some weaker cardinal invariant assumptions (see \cite{Bartoszynski/JudahBK}).
Ramsey ultrafilters are also called {\em selective}, and the property of being Ramsey is equivalent to the following property:
For each decreasing sequence $U_0\contains U_1\contains\dots$ of members of $\mathcal{U}$, there is an $X\in\mathcal{U}$ such that  for each $n<\om$, $X\sse^* U_n$ and moreover $|X\cap(U_{n+1}\setminus U_n)|\le 1$.

Any subset of $\mathcal{P}(\om)$ is a   topological space, with the subspace topology inherited from the Cantor space.
Thus, given any
$\mathcal{B},\mathcal{C}\sse\mathcal{P}(\om)$,
 a function $g:\mathcal{B}\ra\mathcal{C}$ is continuous if for each sequence $(X_n)_{n<\om}\sse\mathcal{B}$ which converges to some $X\in\mathcal{B}$,
the sequence $(g(X_n))_{n<\om}$ converges to $g(X)$, meaning that  for all $k$ there is an $n_k$ such that for all $n\ge n_k$,
$g(X_n)\cap k=g(X)\cap k$.
For any ultrafilter $\mathcal{V}$, cofinal $\mathcal{C}\sse\mathcal{V}$, and  $X\in\mathcal{V}$,  we use $\mathcal{C}\re X$ to denote $\{Y\in\mathcal{C}:Y\sse X\}$.
 $\mathcal{C}\re X$ is a cofinal subset of $\mathcal{V}$ and hence is a filter base for $\mathcal{V}$.
Thus, $(\mathcal{V},\contains)\equiv_T(\mathcal{C}\re X,\contains)$.

The authors  proved in Theorem 20 of \cite{Dobrinen/Todorcevic10} that if $\mathcal{U}$ is a p-point and $\mathcal{W}\le_T\mathcal{U}$,
then there is a continuous monotone cofinal map witnessing this.

\begin{thm}[Dobrinen-Todorcevic \cite{Dobrinen/Todorcevic10}]\label{thm.5}
Suppose $\mathcal{U}$ is a p-point on $\bN$ and that $\mathcal{V}$ is an arbitrary ultrafilter on $\bN$ such that $\mathcal{V}\le_T\mathcal{U}$.
Then there is a continuous monotone map
$g:\mathcal{P}(\bN)\ra\mathcal{P}(\bN)$
 whose restriction to $\mathcal{U}$ is continuous
and has cofinal range in $\mathcal{V}$.
Hence,  $g\re\mathcal{U}$ is a continuous monotone cofinal map from $\mathcal{U}$ into $\mathcal{V}$ witnessing that $\mathcal{V}\le_T\mathcal{U}$.
\end{thm}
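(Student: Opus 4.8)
The plan is to start from an arbitrary monotone cofinal witness and upgrade it to a continuous one, with the p-point property as the engine. Since $\mathcal{V}\le_T\mathcal{U}$, Fact 6 of \cite{Dobrinen/Todorcevic10} supplies a \emph{monotone} cofinal map $f:\mathcal{U}\ra\mathcal{V}$; thus $f(U)\in\mathcal{V}$ for every $U\in\mathcal{U}$, the family $\{f(U):U\in\mathcal{U}\}$ is cofinal in $\mathcal{V}$, and $U\contains U'$ implies $f(U)\contains f(U')$. It then suffices to find a single $A\in\mathcal{U}$ on which $f$ is continuous, in the sense that $f(B)\cap k$ depends only on a fixed finite initial part of $B$; from such an $A$ one reads off a monotone map on finite sets and extends it by unions to a globally continuous monotone $g$ on $\mathcal{P}(\bN)$ which agrees with $f$ on $\{B\in\mathcal{U}:B\sse A\}$, thereby keeping the range cofinal in $\mathcal{V}$.

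The heart of the argument is a \emph{stabilization lemma}. First I would record the basic consequence of monotonicity: for each $i$ the predicate $B\mapsto[i\in f(B)]$ is monotone, so either $i\in f(B)$ for all $B\in\mathcal{U}$, or there is $B_0\in\mathcal{U}$ with $i\notin f(B)$ for every $B\in\mathcal{U}$ with $B\sse B_0$; the former forces $i\in\bigcap\{f(B):B\in\mathcal{U}\}$, which is empty since this family is a base for the nonprincipal ultrafilter $\mathcal{V}$ (the principal case being trivial). Intersecting finitely many such witnesses, for each $k$ there is $A_k\in\mathcal{U}$ on which $f(B)\cap k$ takes a single value for all $B\sse A_k$, $B\in\mathcal{U}$. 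A single set refining every $A_k$ would force $f$ to be constant below it, contradicting cofinality, so the $A_k$ cannot be intersected outright; this is exactly where the p-point hypothesis enters. I would instead carry out a fusion: build decreasing $A_0\contains A_1\contains\cdots$ in $\mathcal{U}$ together with integers $n_0<n_1<\cdots$ so that at stage $k$ I stabilize the value $f(s\cup T)\cap k$ for each of the finitely many $s\sse n_k$ simultaneously over all tails $T$, obtaining a value $d^{(k)}_s$ depending only on $s=B\cap n_k$. The p-point property then yields a pseudo-intersection $A\in\mathcal{U}$ with $A\sse^* A_k$ for every $k$; thinning, I arrange $A\setminus n_k\sse A_k$, which converts the mod-finite control into genuine initial-segment control and gives, for all $B\sse A$ in $\mathcal{U}$, the continuity identity $f(B)\cap k=d^{(k)}_{B\cap n_k}$.

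With $f$ continuous below $A$, I would define $g$ on $\mathcal{P}(\bN)$ from the stabilized values: set $\hat{g}(s)=\bigcup\{\{i<k:i\in d^{(k)}_{s\cap n_k}\}: n_k\le 1+\max s\}$ for finite $s$, check that $\hat{g}$ is monotone, and put $g(X)=\bigcup_{m}\hat{g}(X\cap m)$. A map of this form is automatically monotone and continuous on all of $\mathcal{P}(\bN)$, and by construction $g(B)=f(B)$ for every $B\in\mathcal{U}$ with $B\sse A$. Since $\{B\in\mathcal{U}:B\sse A\}$ is cofinal in $\mathcal{U}$ and $f$ is cofinal, the range of $g\re\mathcal{U}$ is cofinal in $\mathcal{V}$, which is the desired conclusion.

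I expect the main obstacle to be the fusion in the stabilization lemma, namely producing a single $A\in\mathcal{U}$ on which $f$ is simultaneously continuous at every level $k$. The tension is that level-by-level stabilization wants to collapse $f$ to a constant while cofinality forbids this; reconciling the two requires stabilizing over all $2^{n_k}$ initial segments at each stage and then invoking the p-point property to turn $\sse^*$ into exact initial-segment control. The bookkeeping needed to keep the $n_k$ increasing and the stabilization sets coherent across stages, and to verify that the resulting $g$ both lands in $\mathcal{V}$ and stays cofinal, is where the care lies; the reduction to a monotone $f$ and the extension to $\mathcal{P}(\bN)$ are comparatively routine.
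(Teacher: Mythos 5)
A preliminary remark: the paper you are working from does not actually prove this statement; it imports it verbatim as Theorem 20 of \cite{Dobrinen/Todorcevic10}. Your overall strategy (monotone witness via Fact 6, level-by-level stabilization of $f(s\cup T)\cap k$ over tails, one application of the p-point property, extension to $\mathcal{P}(\bN)$ by finite approximations) is the same as the strategy of the proof in that reference. However, there is a genuine gap at what you yourself call the heart of the argument: the sentence ``thinning, I arrange $A\setminus n_k\sse A_k$.'' In your fusion the levels $n_0<n_1<\cdots$ and the sets $A_0\contains A_1\contains\cdots$ are fixed \emph{before} the p-point property is invoked, and a pseudo-intersection $A$ satisfies only $A\setminus m_k\sse A_k$ for some $m_k$ which may grow much faster than $n_k$. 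No further thinning can repair this: any $A^*\sse A$ with $A^*\in\mathcal{U}$ and $A^*\setminus n_k\sse A_k$ for \emph{every} $k$ must satisfy $A^*\sse\bigcap_k(A_k\cup n_k)$, and this intersection need not be in $\mathcal{U}$; it can even be finite. For instance, if $A'\in\mathcal{U}$ and $A_k:=A'\setminus[n_0,n_{k+1})$, then the $A_k$ form a decreasing sequence in $\mathcal{U}$, yet $\bigcap_k(A_k\cup n_k)\sse n_0$. Since your stabilization only guarantees that the $A_k$ are \emph{some} members of $\mathcal{U}$, nothing excludes this configuration, so the claimed identity $f(B)\cap k=d^{(k)}_{B\cap n_k}$ for all $B\sse A$ in $\mathcal{U}$ is unjustified. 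Converting $\sse^*$ into exact containment at levels prescribed in advance is precisely what p-pointness does \emph{not} give.

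The repair requires two changes. First, stabilize $f(s\cup T)\cap k$ for \emph{all} pairs $(s,k)\in[\om]^{<\om}\times\om$, not just $s\sse n_k$ at stage $k$; each single stabilization needs only monotonicity of $f$, downward directedness of $\mathcal{U}$, and finiteness of $\mathcal{P}(k)$, so this costs nothing and lets the levels be chosen \emph{after} the pseudo-intersection. Second, the choice of levels is self-referential (one needs ``$A\setminus n\sse X_{s,k}$ for all $s\sse n$,'' with $n$ on both sides), and here an extra idea is needed: setting $Z_m=\bigcap\{X_{s,k}:s\sse m,\ k\le m\}$ and, for a pseudo-intersection $A'$ of the $Z_m$, setting $G_m=(A'\setminus Z_m)\cap[m,\om)$, each $G_m$ is finite with $\min G_m\ge m$, so one can choose $m_0<m_1<\cdots$ with the $G_{m_i}$ pairwise disjoint; two disjoint sets cannot both lie in the ultrafilter, so deleting from $A'$ the union of every other block yields $A\in\mathcal{U}$ with $A\setminus m_i\sse Z_{m_i}$ for infinitely many $i$. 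Infinitely many good levels suffice: for each $k$, taking a good $m_i\ge k$, the value $f(B)\cap k$ equals $d_{B\cap m_i,k}$ and is determined by $B\cap m_i$, which is all that continuity requires; the rest of your outline (the dichotomy, the monotone coherent system $d_{s,k}$, the definition of $\hat{g}$ and $g$, and preservation of cofinality) then goes through as you describe.
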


Tukey types of p-points has been the subject of work in \cite{Dobrinen/Todorcevic10}, \cite{Raghavan/Todorcevic11}, \cite{Dobrinen10}, and \cite{Dobrinen/Todorcevic11}, and is a sub-theme of this paper.
From Theorem \ref{thm.5},
it follows that every p-point has Tukey type of cardinality continuum. 
However, the Tukey type of a p-point is often quite different from its Rudin-Keisler isomorphism class.
In fact, it is unknown whether there is a p-point whose Tukey type coincides with its Rudin-Keisler class.
By results in \cite{Dobrinen/Todorcevic10},
such a p-point must not be rapid.

To discuss this further, the reader is reminded of the definition of the Fubini product of a collection of ultrafilters.

\begin{defn}\label{defn.Fubprod}
Let $\mathcal{U},\mathcal{V}_n$, $n<\om$, be ultrafilters.
The {\em Fubini product} of $\mathcal{U}$ and $\mathcal{V}_n$, $n<\om$,
is the ultrafilter, denoted $\lim_{n\ra\mathcal{U}}\mathcal{V}_n$,  on base set $\om\times\om$ consisting of the sets $A\sse\om\times\om$ such that
\begin{equation}
\{n\in\om:\{j\in\om:(n,j)\in A\}\in\mathcal{V}_n\}\in\mathcal{U}.
\end{equation}
That is, for $\mathcal{U}$-many $n\in\om$, the section $(A)_n$ is in $\mathcal{V}_n$.
If all $\mathcal{V}_n=\mathcal{U}$, then we let $\mathcal{U}\cdot\mathcal{U}$ denote $\lim_{n\ra\mathcal{U}}\mathcal{U}$.
\end{defn}

It is well-known that the Fubini product of two or more  p-points is not a p-point, hence for any p-point, $\mathcal{U}\cdot\mathcal{U}>_{RK}\mathcal{U}$.
The following facts stand in contrast to this.
Every Ramsey ultrafilter $\mathcal{U}$ has Tukey type equal to the Tukey type of $\mathcal{U}\cdot\mathcal{U}$, and moreover that this is the case for any rapid p-point (see Corollary 37 of \cite{Dobrinen/Todorcevic10}).
Assuming CH, there are p-points $\mathcal{U}\equiv_T\mathcal{V}$ such that $\mathcal{V}<_{RK}\mathcal{U}$ (see Theorem 25 of \cite{Raghavan/Todorcevic11}).
Assuming CH, MA, or using forcing,
the Tukey type of $\mathcal{U}_1$ (the weakly Ramsey ultrafilter constructed from the topological Ramsey space $\mathcal{R}_1$)
contains a Rudin-Keisler strictly increasing chain of order type $\om_1$; contains a Rudin-Keisler strictly increasing chain of rapid p-points of order type $\om$;
and contains ultrafilters which are Rudin-Keisler incomparable (see Example 5.17 of \cite{Dobrinen/Todorcevic10}).
Hence, although the Tukey type of any p-point has size continuum, it can contain many Rudin-Keisler inequivalent  ultrafilters within it.

The question of what precisely are the isomorphism classes within the Tukey type of a given ultrafilter has been answered for Ramsey ultrafilters and for ultrafilters  $\mathcal{U}_1$ which are Ramsey for the topological Ramsey space $\mathcal{R}_1$.
We discuss the previously known results here in order to give the context of the results of this paper.

\begin{thm}[Todorcevic,  Theorem 24, \cite{Raghavan/Todorcevic11}]\label{thm.tod}
If $\mathcal{U}$ is a Ramsey  ultrafilter and $\mathcal{V}\le_T\mathcal{U}$,
then $\mathcal{V}$ is isomorphic to a
countable iterated Fubini product of $\mathcal{U}$.
\end{thm}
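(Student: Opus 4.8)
The plan is to exploit that a Ramsey ultrafilter is in particular a p-point whose natural habitat is the Ellentuck space, so that the abstract Tukey reduction can first be replaced by a concrete continuous map and then canonized by the Pudlak-\Rodl\ Theorem. Since every Ramsey ultrafilter is a p-point, Theorem \ref{thm.5} applies: because $\mathcal{V}\le_T\mathcal{U}$, there is a continuous monotone map $g:\mathcal{P}(\bN)\ra\mathcal{P}(\bN)$ whose restriction to $\mathcal{U}$ is a continuous monotone cofinal map into $\mathcal{V}$. Consequently $\{g(X):X\in\mathcal{U}\}$ is a base for $\mathcal{V}$, and the task reduces to describing how $g(X)$ depends on $X\in\mathcal{U}$.

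Next I would extract a finitary code for $g$ from its continuity. Continuity on the Ellentuck space means that each element of $g(X)$ is already decided by a finite initial segment of $X$; gathering this data produces a front $\mathcal{B}$ on the Ellentuck space together with a map $\vp$ on $\mathcal{B}$ that codes $g$. Define an equivalence relation $\E$ on $\mathcal{B}$ by $s\,\E\,t\iff\vp(s)=\vp(t)$. The Pudlak-\Rodl\ Theorem then yields a single $U\in\mathcal{U}$ such that $\E$ restricted to those $s\in\mathcal{B}$ with $s\sse U$ is one of the canonical equivalence relations, that is, $s\,\E\,t$ holds exactly when a fixed inner (projection) map returns the same value on $s$ and $t$. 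As $U\in\mathcal{U}$ gives $\mathcal{U}\re U\equiv_{RK}\mathcal{U}$, we lose nothing by assuming $g$ is already in this canonical projection form.

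The Fubini structure is then read off by transfinite induction on the rank of the barrier $\mathcal{B}$, which is a countable ordinal; this is the source of the word ``countable'' in the conclusion. The inner projection map selects, at the top level, a distinguished coordinate that ranges over a $\mathcal{U}$-set as $X$ varies; holding that coordinate fixed, the remaining coordinates of $g(X)$ are governed by a barrier of strictly smaller rank and hence, by the induction hypothesis, describe an ultrafilter isomorphic to a countable iterated Fubini product of $\mathcal{U}$. Matching ``choose the outer coordinate in $\mathcal{U}$, then choose the rest in the inner product'' against Definition \ref{defn.Fubprod} identifies $\mathcal{V}$ with $\lim_{n\ra\mathcal{U}}\mathcal{V}_n$ for ultrafilters $\mathcal{V}_n$ supplied by the induction, and therefore with a countable iterated Fubini product of $\mathcal{U}$.

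The delicate point, and the step I expect to be the main obstacle, is precisely this final translation: turning the combinatorial shape of the Pudlak-\Rodl\ canonical relation (a tree of irreducible projections on the barrier) into the recursive Fubini presentation, and verifying that the ultrafilter generated by the images $g(X)$ is genuinely \emph{isomorphic} to the corresponding iterated limit, not merely Tukey-equivalent to it. One must keep $\vp$ a single well-defined code for $g$ across all coordinates at once, rather than coordinate-by-coordinate, and use the monotonicity of $g$ to force the projections at the various levels to cohere, so that the induction on barrier rank actually closes.
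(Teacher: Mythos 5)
Your proposal is correct and follows essentially the same route as the argument this paper attributes to Theorem 24 of \cite{Raghavan/Todorcevic11} (and reproduces in generalized form in Theorem \ref{thm.TukeyU_al}): replace the Tukey reduction by a continuous monotone cofinal map via Theorem \ref{thm.5}, use continuity to code the map by a function on a front, canonize the induced equivalence relation with the Pudlak-\Rodl\ Theorem, and recover the iterated Fubini product structure by induction on the rank of the resulting barrier. The ``delicate point'' you flag is exactly what the paper's machinery of $\vec{\mathcal{W}}$-trees (Definition \ref{def.vecW-trees} and the proof of Theorem \ref{thm.TukeyU_al}) is designed to handle, so your outline matches the intended proof in both strategy and structure.
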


The  proof of Theorem \ref{thm.tod} uses the Pudlak-\Rodl\ Theorem \ref{thm.PR}  which we  review below.
Given Theorem \ref{thm.tod}, one may reasonably ask whether a similar situation holds for ultrafilters which are not Ramsey.
The most natural place to start is low in the Rudin-Keisler hierarchy, with an ultrafilter which is weakly Ramsey but not Ramsey.
Laflamme forced such an ultrafilter, and moreover, constructed a large hierarchy of ultrafilters which are rapid p-points satisfying partition properties which give rise to complete combinatorics.

Recall from \cite{Laflamme89} that an ultrafilter $\mathcal{U}$ is said to satisfy the {\em $(n,k)$ Ramsey partition property} (or RP$^n(k)$) if for all functions $f:[\om]^k\ra n^{k-1}+1$, and all partitions $\lgl A_m:m\in\om\rgl$ of  $\om$ with each $A_m\not\in\mathcal{U}$,
there is a set $X\in\mathcal{U}$ such that $|X\cap A_m|<\om$ for each $m<\om$, and $|f''[A_m\cap X]^2|\le n^{k-1}$ for each $m<\om$.

\begin{thm}[Laflamme \cite{Laflamme89}]\label{thm.Laflammethms}
For each $1\le\al<\om_1$,
there is an ultrafilter $\mathcal{U}_{\al}$, forced by a $\sigma$-complete forcing $\bP_{\al}$, with the following properties.
\begin{enumerate}
\item
For each $1\le \al<\om$,
$\mathcal{U}_{\al}$ is a rapid p-point which has complete combinatorics.
\item
For each $1\le n<\om$,
$\mathcal{U}_n$
satisfies the $(n,k)$ Ramsey partition property for all $k\ge 1$.
For $\om\le\al<\om_1$, $\mathcal{U}_{\al}$ satisfies analogous Ramsey partition properties.
\item
The isomorphism types of all nonprincipal ultrafilters Rudin-Keisler reducible to $\mathcal{U}_{\al}$ forms strictly decreasing chain of order type $\al+1$.
\item
$\mathcal{U}_1$ is weakly Ramsey but not Ramsey.
\end{enumerate}
\end{thm}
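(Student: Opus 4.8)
The plan is to realize each $\mathcal{U}_{\al}$ as the generic object for the $\sigma$-complete tree forcing $\bP_{\al}$ and to read off properties (1)--(4) from fusion and density arguments. First I would define $\bP_{\al}$ recursively: a condition is a labeled well-founded tree whose rank and splitting width are governed by $\al$, ordered so that a stronger condition refines the splitting at deeper levels while freezing it below a fixed finite level. The key structural lemma is that $\bP_{\al}$ is $\sigma$-closed, which I would prove by a fusion argument: given a descending $\om$-sequence of conditions with the usual ``$n$-th commitment is frozen'' bookkeeping, I interleave the frozen parts to produce a lower bound. Since a $\sigma$-closed forcing adds no new reals but may add a new subset of $\mathcal{P}(\om)$, the generic filter determines a new ultrafilter $\mathcal{U}_{\al}$ on the countable set of $1$-approximations of $\bT_{\al}$; a routine density argument (for each ground-model $A\sse\om$ the conditions deciding whether the generic branch is almost inside $A$ or inside its complement are dense) shows $\mathcal{U}_{\al}$ is an ultrafilter.

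Next I would extract the remaining properties in (1) and (2). For the $p$-point property, given a decreasing sequence $U_0\contains U_1\contains\cdots$ in $\mathcal{U}_{\al}$, $\sigma$-closure lets me name the whole sequence and then fuse, strengthening at stage $n$ so as to force the generic past the commitment to $U_n$ and so yielding a pseudo-intersection $X\in\mathcal{U}_{\al}$. Rapidity is the same scheme applied to a single $f:\om\ra\om$: I thin the tree along the fusion so that the $n$-th element selected into the generic exceeds $f(n)$, giving $|X\cap f(n)|\le n$. The delicate part is the partition property $\RP^n(k)$. Here I would combine fusion with a finite pigeonhole/Ramsey step performed at each level: given $f:[\om]^k\ra n^{k-1}+1$ and a partition $\lgl A_m\rgl$ into $\mathcal{U}_{\al}$-small pieces, at each fusion stage I apply a finite Ramsey theorem to the current splitting nodes to reduce the number of colors realized on $k$-tuples drawn from a single block, arranging the bookkeeping so that the final count is exactly $n^{k-1}$ and so that $X$ meets each $A_m$ finitely. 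Matching the bound $n^{k-1}$ to the $\al$-indexed branching width of $\bT_{\al}$ is the most technical step, and for infinite $\al$ the analogous property is stated with correspondingly more complex canonical colorings. Complete combinatorics would then follow by the Blass--Laflamme template: I would verify that the dense sets relevant to $\bP_{\al}$ are precisely captured by the partition properties, so that over the Solovay model any ultrafilter satisfying them is $\bP_{\al}$-generic.

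For property (3) I would first exhibit the chain, using the fixed finite-to-one projections $\pi_{\be}\colon\bT_{\al}\to\bT_{\be}$ for $\be\le\al$: each $\pi_{\be}$ pushes $\mathcal{U}_{\al}$ forward to an isomorph of $\mathcal{U}_{\be}$, so $\mathcal{U}_{\be}\le_{RK}\mathcal{U}_{\al}$, and distinctness of the images gives a strictly descending chain of length $\al+1$. The substantial direction is completeness, that every nonprincipal $\mathcal{V}\le_{RK}\mathcal{U}_{\al}$ is isomorphic to one of the $\mathcal{U}_{\be}$. The plan is to take $f:\om\ra\om$ with $f(\mathcal{U}_{\al})=\mathcal{V}$ and use the partition properties of (2) to find $X\in\mathcal{U}_{\al}$ on which $f$ is \emph{canonical}, i.e.\ agrees up to finite-to-one equivalence with one of the projections $\pi_{\be}$; the index $\be$ read off from this canonical form pins down the isomorphism type of $\mathcal{V}$. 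For $\al=1$ this reduces to Blass's theorem that a weakly Ramsey ultrafilter has a unique selective ultrafilter strictly below it, which also yields (4) once one checks that $\mathcal{U}_1$ satisfies $\RP^1$ but fails the full Ramsey property. The central difficulty throughout is the interaction of the fusion scheme with the exact partition bounds, and I expect the canonization underlying (3) to be the hardest single component, since it must be carried out uniformly for all $\al<\om_1$ rather than case by case.
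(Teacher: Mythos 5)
This statement is not proved in the paper at all: it is quoted, with attribution, as Laflamme's theorem from \cite{Laflamme89}, and serves purely as background. The authors' contribution relative to it is only to build spaces $\mathcal{R}_{\al}$ that are forcing-equivalent to dense subsets of Laflamme's $\bP_{\al}$ (Section \ref{sec.tRsR_al}), so there is no internal proof against which to compare your attempt.

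Judged on its own terms, your outline is consistent with the known shape of Laflamme's development and of the general theory of $\sigma$-complete ultrafilter forcings: $\sigma$-closedness plus fusion to show the generic is an ultrafilter and a rapid p-point; finite pigeonhole steps interleaved into the fusion for the partition properties; the Blass--Laflamme template over the Solovay model for complete combinatorics; and, for (3), finite-to-one projections giving the chain together with a canonization theorem giving its completeness, with Blass's theorem on weakly Ramsey ultrafilters handling the bottom step and (4). Two caveats are worth recording. First, your description of $\bP_{\al}$ as a forcing whose conditions are labeled well-founded trees is anachronistic: Laflamme's conditions are built from finite sets, and the tree presentation via $\bT_{\al}$ and $\bS_{\al}$ is exactly what the present paper introduces, precisely because Laflamme's original conditions do not directly form a topological Ramsey space. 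Second, and more substantively, your sketch defers exactly the steps that carry the mathematical weight: matching the bound $n^{k-1}$ in $\RP^n(k)$ to the branching structure of the conditions; the \emph{strictness} of the Rudin-Keisler chain, since distinctness of the projected images $\mathcal{U}_{\be}$ requires an argument (e.g.\ via partition properties) and not just an observation; and the canonization of arbitrary maps on $\om$ uniformly in $\al$, which you yourself flag as the hardest component. These are the points where Laflamme's paper does its real work, so your proposal should be read as a plausible table of contents for that proof rather than a proof.
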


It follows from a  theorem of Blass in \cite{Blass74} that there is only one isomorphism class of nonprincipal ultrafilters Rudin-Keisler below $\mathcal{U}_1$, which we denote $\mathcal{U}_0$.

In \cite{Dobrinen/Todorcevic11}, the authors
constructed a topological Ramsey space $\mathcal{R}_1$ which is forcing equivalent to Laflamme's forcing $\bP_1$. 
Thus, the ultrafilter associated with $\mathcal{R}_1$ is aptly named $\mathcal{U}_1$.
In  \cite{Dobrinen/Todorcevic11}, the authors
 extended Theorem \ref{thm.tod} to  $\mathcal{U}_1$ (see Theorem \ref{thm.TukeyU_1}  below), 
in the process proving a new Ramsey classification theorem for equivalence relations on fronts on the space $\mathcal{R}_1$ (see Theorem \ref{thm.PRR(1)} below).
To put this work into context, we review Ramsey's Theorem and the  canonization theorems of \Erdos-Rado and Pudlak-\Rodl\ for barriers on the Ellentuck space.
Recall that $[M]^k$ denotes the collection of all subsets of the given set $M$ with cardinality $k$.

\begin{thm}[Ramsey \cite{Ramsey29}]\label{thm.ramsey}
For every positive integer $k$ and every finite coloring of the family $[\bN]^k$,
there is an infinite subset $M$ of $\bN$ such that
the set $[M]^k$ of all $k$-element subsets of $M$ is monochromatic.
\end{thm}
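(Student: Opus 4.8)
The plan is to argue by induction on $k$, using the classical Ramsey induction. For the base case $k=1$, a finite coloring of $[\bN]^1$ is just a partition of $\bN$ into finitely many color classes, so by the pigeonhole principle one class is infinite; that infinite set is the desired monochromatic $M$.

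For the inductive step, suppose the result holds for some $k\ge 1$ and fix a coloring $c:[\bN]^{k+1}\ra r$ into finitely many colors $r$. First I would build, by recursion, a strictly increasing sequence $a_0<a_1<\cdots$ of natural numbers together with a descending sequence $\bN=M_0\contains M_1\contains\cdots$ of infinite sets, as follows. Having chosen $a_0<\cdots<a_{i-1}$ and an infinite $M_i$ with $a_{i-1}<\min M_i$, set $a_i=\min M_i$ and define an auxiliary $k$-coloring $c_i$ on $[M_i\setminus\{a_i\}]^k$ by $c_i(s)=c(\{a_i\}\cup s)$. Applying the induction hypothesis to $c_i$ yields an infinite $M_{i+1}\sse M_i\setminus\{a_i\}$ on which $c_i$ is constant, with some value $\gamma_i<r$.

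The key feature of this construction is that the $c$-color of a $(k+1)$-element set drawn from $\{a_i:i<\om\}$ depends only on its least element: if $i_0<i_1<\cdots<i_k$, then $\{a_{i_1},\dots,a_{i_k}\}\in[M_{i_0+1}]^k$, so $c(\{a_{i_0},a_{i_1},\dots,a_{i_k}\})=\gamma_{i_0}$. This reduces the problem to the one-dimensional case: the assignment $i\mapsto\gamma_i$ is a finite coloring of $\om$, so by the base case there is an infinite set $I\sse\om$ and a fixed color $\gamma<r$ with $\gamma_i=\gamma$ for all $i\in I$. Then $M=\{a_i:i\in I\}$ is the required set: every element of $[M]^{k+1}$ has the form $\{a_{i_0},\dots,a_{i_k}\}$ with $i_0<\cdots<i_k$ all in $I$, and hence receives color $\gamma_{i_0}=\gamma$.

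The substance of the argument, and the only place requiring any care, is the recursive construction together with the verification of the ``color depends only on the minimum'' property; once that is established, the finish is an immediate pigeonhole. I expect no genuine obstacle here, as this is the standard proof. The single point to keep straight is that at each stage the induction hypothesis is invoked on the $k$-dimensional coloring $c_i$ living on the \emph{current} infinite set, and that the nesting $M_0\contains M_1\contains\cdots$ guarantees that the later terms $a_{i_1},\dots,a_{i_k}$ always lie in the domain $[M_{i_0+1}]^k$ on which $c_{i_0}$ was made constant.
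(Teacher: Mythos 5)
Your proof is correct: the base case, the recursive construction of the $a_i$'s and nested infinite sets $M_i$, the verification that the color of a $(k+1)$-set from $\{a_i : i<\om\}$ depends only on its least element, and the final pigeonhole step are all sound, and this is the standard induction-on-$k$ proof of Ramsey's Theorem. Note that the paper itself gives no proof of this statement --- it is quoted as classical background with a citation to Ramsey's original article --- so there is nothing in the paper to compare against; your argument is exactly the classical one that the cited reference establishes.
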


When one is interested in  equivalence relations on $[\bN]^k$,
the canonical equivalence relations are determined by subsets $I\sse\{0,\dots,k-1\}$ as follows:
\begin{equation}
\{x_0,\dots,x_{k-1}\} \E_I \{y_0,\dots,y_{k-1}\}\mathrm{\ iff\ } \forall i\in I,\ x_i=y_i,
\end{equation}
where the $k$-element sets $\{x_0,\dots,x_{k-1}\}$ and $\{y_0,\dots,y_{k-1}\}$ are taken to be in increasing order.

\begin{thm}[\Erdos-Rado \cite{Erdos/Rado50}]\label{thm.ER}
For every $k\ge 1$ and every equivalence relation $\E$ on $[\bN]^k$,
there is an infinite subset $M$ of $\bN$ and an index set $I\sse\{0,\dots,k-1\}$ such that $\E\re [M]^k=\E_I\re[M]^k$.
\end{thm}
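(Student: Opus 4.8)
The plan is to deduce the canonization from Ramsey's Theorem (Theorem \ref{thm.ramsey}) by first collapsing the equivalence relation to one that depends only on the relative order type of a pair of $k$-sets, and then showing that any such order-invariant equivalence relation is one of the $\E_I$. For the first step, I would define a finite coloring of $[\bN]^{2k}$: to a $2k$-set $w=\{a_0<\cdots<a_{2k-1}\}$ assign the function recording, for every pair $(P,Q)$ of $k$-element index subsets of $\{0,\dots,2k-1\}$, whether $w_P\,\E\,w_Q$ holds, where $w_P=\{a_j:j\in P\}$. There are only finitely many such colors, so Ramsey's Theorem yields an infinite $M\sse\bN$ with $[M]^{2k}$ monochromatic. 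Given any $s,t\in[M]^k$ we have $|s\cup t|\le 2k$, so $s\cup t$ extends to some $w\in[M]^{2k}$, and whether $s\,\E\,t$ holds is read off from the (constant) color of $w$ via the positions that $s$ and $t$ occupy inside $w$. Consequently, for $s,t\in[M]^k$ the relation $s\,\E\,t$ depends only on the order type of the pair $(s,t)$, that is, on which coordinates of $s$ and $t$ coincide and on the interleaving of the remaining ones.

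The second step is to pin down the index set. Call a coordinate $i<k$ \emph{rigid} if there are $s,t\in[M]^k$ that agree in every coordinate except the $i$-th but are $\E$-inequivalent, and let $I$ be the set of rigid coordinates; by order-invariance, $i\notin I$ exactly when every pair differing in the single coordinate $i$ is $\E$-equivalent. I would then prove $\E\re[M]^k=\E_I\re[M]^k$ in two directions. For $\E_I\subseteq\E$: if $s\re I=t\re I$, then $s$ and $t$ have the same ``gap structure'' cut out by their common rigid coordinates, and one transforms $s$ into $t$ by changing the non-rigid coordinates one at a time, staying inside $M$ and keeping the sequence strictly increasing; each such single-coordinate change is a move across a non-rigid coordinate and hence preserves the $\E$-class, so $s\,\E\,t$ by transitivity. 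For $\E\subseteq\E_I$: if $s$ and $t$ disagree in some rigid coordinate, I would first use the non-rigid moves to normalize both sets and then invoke the defining witness for rigidity together with order-invariance and transitivity to conclude that $s$ and $t$ are $\E$-inequivalent.

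The main obstacle is the connectivity argument underlying both directions: single-coordinate changes are constrained to keep the $k$-set strictly increasing, so a coordinate may only be slid between its current neighbors and cannot cross another coordinate. One must check that, within each interval determined by the (fixed) rigid coordinates, the available elements of $M$ always suffice to reposition the non-rigid coordinates by such non-crossing slides; this is where the infinitude of $M$ enters. The genuinely delicate case is when $s$ and $t$ disagree on several rigid coordinates at once, since then their gap structures differ and one cannot match their non-rigid coordinates directly; I expect to handle this by an induction that isolates a single rigid disagreement at a time, reducing to the one-coordinate witness furnished by the definition of $I$, after which order-invariance and transitivity finish the argument.
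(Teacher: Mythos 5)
First, a point of orientation: the paper does not prove Theorem \ref{thm.ER} at all --- it is quoted as classical background from \cite{Erdos/Rado50} --- so your attempt can only be measured against the standard argument, which is indeed the one you are reconstructing: Ramsey's theorem on $2k$-sets to reduce to an order-invariant relation, then an analysis showing that order-invariant equivalence relations are canonical. Your first step is essentially right (modulo the routine point that reading the color off a containing $2k$-set is well defined only after comparing both pairs with a ``spread-out'' realization of the same pair type, which the infinitude of $M$ provides), and your inclusion $\E_I\re[M]^k\sse\E\re[M]^k$ is also sound: the sliding connectivity you worry about follows by an easy induction on the number of coordinates in a gap, and in fact needs no infinitude of $M$ at all, since the values of $s$ and $t$ themselves already supply all the room the slides require.

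The genuine gap is in the other inclusion, and it is not where you locate it. Several simultaneous rigid disagreements are not the real obstruction: if $s\E t$, the pair is spread out (arrangeable by order-invariance), and no \emph{cross-coincidence} occurs (no coordinate of $t$ equals a \emph{different} coordinate of $s$), then one step isolates any single disagreement --- replace $t_i$ by a nearby $x\notin s\cup t$ lying in the same slot, getting $u$ with $\mathrm{tp}(s,u)=\mathrm{tp}(s,t)$, hence $s\E u$, hence $u\E t$ by transitivity, while $u$ and $t$ differ exactly at position $i$; if $i$ is rigid, that is already the contradiction, with no induction needed. What your plan cannot handle is a cross-coincidence between rigid coordinates: take $k=2$, both positions rigid, and suppose toward a contradiction that $\{a,b\}\E\{b,c\}$ for some $a<b<c$ in $M$. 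There are no non-rigid coordinates to ``normalize,'' and every pair of this order type exhibits the same coincidence --- the coincidence is part of the type --- so no re-choice of representatives can ``isolate a single rigid disagreement.'' The missing idea is to chain overlapping copies of the type via transitivity: $\{a,b\}\E\{b,c\}\E\{c,d\}\E\{d,e\}$ yields $\{a,b\}\E\{c,d\}$, a coincidence-free (here disjoint) equivalent pair, after which the isolation step above applies (or one notes that a disjoint equivalent pair forces $\E$ to be total via a far-away common partner, contradicting rigidity). In general one must argue that iterating the type dissolves all coincidences at disagreeing positions; this works because the coincidence pattern is an order-respecting injection on positions that admits no cycles, so its orbits leave the domain after at most $k$ compositions. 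Without this chaining/iteration idea, the half of the proof that actually pins down $\E\sse\E_I$ remains unproved.
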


For each $k<\om$, the set $[\bN]^k$ is an example of a uniform barrier of rank $k$ for the Ellentuck space. This leads us to the more general notions of fronts and barriers.
Here, $a\sqsubset b$ denotes that $a$ is a proper initial segment of $b$.

\begin{defn}[\cite{TodorcevicBK10}]\label{defn.barrier}
Let $\mathcal{F}\sse[\bN]^{<\om}$ and $M\in[\bN]^{\om}$.
$\mathcal{F}$ is a {\em front} on $M$ if
\begin{enumerate}
\item
For each $X\in[M]^{\om}$,
there is an $a\in\mathcal{F}$ for which $a\sqsubset X$; and
\item
For all $a,b\in\mathcal{F}$ such that $a\ne b$, we have $a\not\sqsubseteq b$.
\end{enumerate}
$\mathcal{F}$ is a {\em barrier} on $M$ if (1) and (2$'$) hold,
where
\begin{enumerate}
\item[(2$'$)]
For all $a,b\in\mathcal{F}$ such that $a\ne b$, we have $a\not\sse b$.
\end{enumerate}
\end{defn}

Thus, every barrier is a front.  Moreover, by a theorem of  Galvin in \cite{Galvin68}, for every front $\mathcal{F}$, there is an infinite $M\sse\bN$ for which $\mathcal{F}|M$ is a barrier.
The Pudlak-\Rodl\ Theorem  extends the \Erdos-Rado Theorem to general barriers.
If $\mathcal{F}$ is a front, a mapping $\vp:\mathcal{F}\ra\bN$ is called {\em irreducible} if it is (a) {\em inner}, meaning that $\vp(a)\sse a$ for all $a\in\mathcal{F}$, and (b) {\em Nash-Williams},
meaning that for each $a,b\in\mathcal{F}$, $\vp(a)\not\sqsubset \vp(b)$.

\begin{thm}[Pudlak-\Rodl, \cite{Pudlak/Rodl82}]\label{thm.PR} 
For every barrier $\mathcal{F}$ on $\bN$ and every equivalence relation $\E$ on $\mathcal{F}$, there is an infinite $M\sse\bN$ such that the restriction of $\E$ to $\mathcal{F}|M$ is represented by an irreducible mapping defined on $\mathcal{F}|M$.
\end{thm}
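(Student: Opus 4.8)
The plan is to prove the theorem by transfinite induction on the \emph{rank} of the barrier $\mathcal{F}$, a countable ordinal measuring its depth; the combinatorial engine is the Nash-Williams partition theorem for barriers (the Ramsey property for clopen subsets of the Ellentuck space, extending Theorem \ref{thm.ramsey}), together with Galvin's theorem allowing free passage between fronts and barriers. Recall that ``$\E$ is represented by an irreducible $\vp$ on $\mathcal{F}|M$'' should mean $a\,\E\,b \Llra \vp(a)=\vp(b)$ for all $a,b\in\mathcal{F}|M$, with $\vp(a)\sse a$ (inner) and no $\vp(a)\sqsubset\vp(b)$ (Nash-Williams). The base case is the rank-$1$ barrier $[\bN]^1$: coloring each pair $\{m,n\}$ by whether $\{m\}\,\E\,\{n\}$ and applying Ramsey's Theorem yields an infinite $M$ on which $\E$ is either the identity, represented by $\vp(\{n\})=\{n\}$, or trivial, represented by $\vp(\{n\})=\emptyset$. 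This is exactly the $k=1$ instance of \Erdos-Rado.

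For the inductive step, fix a barrier $\mathcal{F}$ of rank $\rho>1$ and assume the theorem for all barriers of rank $<\rho$. For each $n\in\bN$ set $\mathcal{F}_n=\{a\setminus\{n\}: a\in\mathcal{F},\ \min a=n\}$, a barrier on $\{n+1,n+2,\dots\}$ of rank strictly below $\rho$, and equip it with the equivalence $\E_n$ induced by $\E$ on the elements of $\mathcal{F}$ with minimum $n$. By the inductive hypothesis each $\mathcal{F}_n$ admits an irreducible representing map on some infinite witnessing set. I would then run a fusion (diagonal-intersection) argument to produce a single infinite $M_0$ that simultaneously canonizes every $\E_n$ on $\mathcal{F}_n|M_0$, recording for each $a\in\mathcal{F}|M_0$ with $\min a=n$ the inner set $\vp_n(a\setminus\{n\})\sse a\setminus\{n\}$.

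The heart of the argument is then \emph{stitching} the local data into a single global map. Two things must be decided uniformly by further thinning. First, whether the minimum is \emph{essential}: I would color, for pairs of compatible elements, whether $\E$ can hold when their minima differ, or when the recorded tail-data agree but the minima do not, and stabilize a yes/no answer using the Nash-Williams theorem; this dictates whether $\{\min a\}$ is adjoined. Second, whether distinct recorded tail-classes can nonetheless merge under $\E$; again this is a finitely-valued question stabilized by the partition theorem. This yields a candidate map $\vp(a)=\{\min a\}\cup\vp_{\min a}(a\setminus\{\min a\})$, with the leading singleton deleted in the non-essential case; $\vp$ is inner by construction. A final thinning, using the partition theorem and Galvin's theorem to return to a genuine barrier, removes any comparisons with $\vp(a)\sqsubset\vp(b)$, securing the Nash-Williams property.

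The main obstacle is the global coherence of $\vp$: the inductive hypothesis only guarantees $a\,\E\,b\Llra\vp(a)=\vp(b)$ \emph{within} each fixed minimum, and one must upgrade this to hold \emph{across} different minima and tail-classes, despite $\E$ having possibly infinitely many classes. The point that makes this tractable is that each decision in the stitching step reduces to a \emph{finite} coloring (is a given pair $\E$-related; do prescribed coordinates coincide), to which the Nash-Williams Ramsey property for barriers applies, while the fusion keeps all the local canonizations mutually consistent. Verifying that the resulting map is honestly Nash-Williams, rather than merely inner, is the most delicate bookkeeping and is where I expect the real work to lie.
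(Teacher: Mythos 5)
A preliminary point: the paper does not prove Theorem \ref{thm.PR} at all --- it is quoted from \cite{Pudlak/Rodl82} as background. The closest thing to a proof in this paper is its generalization, Theorem \ref{thm.PRR_al} (extending Theorem \ref{thm.PRR(1)} of \cite{Dobrinen/Todorcevic11}), and that proof is \emph{not} an induction on the rank of the front: it fixes a map $f$ inducing $\E$, stabilizes the mixing/separating relation on all pairs of finite approximations (Lemma \ref{claim.A}), canonizes mixing of one-step extensions by the canonical equivalence relations on each level (Claim \ref{claim.E}), and assembles the inner map $\vp$ from the resulting projections using transitivity of mixing (Lemma \ref{lem.mixing}) and fusion (Lemma \ref{lem.1}); fronts of all ranks are treated uniformly. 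Your rank-induction outline is the classical route to Pudlak--\Rodl, so it is a genuinely different strategy from the paper's --- but as written it has a genuine gap.

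The gap is the stitching step. You compress the cross-minimum behavior of $\E$ into two yes/no questions (``is the minimum essential'', ``can distinct tail-classes merge'') and assert each is a finitely-valued coloring settled by one application of the Nash--Williams theorem. It is not: the recorded tail-classes are infinite in number, and pairs $a,b$ with $\min a\ne\min b$ interact in infinitely many interleaving patterns, so what can be stabilized is, for each fixed pattern, whether pairs realizing it are $\E$-related; one must then fuse over all patterns and invoke transitivity of $\E$ to force the dichotomy ``cross-minimum equivalences can be thinned away entirely'' versus ``$\E$ across minima coincides with equality of tail data''. Concretely, on $[\bN]^2$ let $a\E b$ iff $a(0)+a(1)=b(0)+b(1)$: the tail maps $\vp_n$ are the identity, cross-minimum equivalences exist, but they hold only when the tail data are \emph{unequal}; your two questions then give no recipe for $\vp$, and the actual resolution (no pattern can be homogeneously ``equivalent'', hence stabilization thins $\bN$ to a set like $\{2^k:k<\om\}$ on which the minimum becomes essential) is a thinning your outline never performs.

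A second, independent, missing piece is uniformization of the local maps. The inductive hypothesis gives $\vp_n$ whose ``shapes'' may vary with $n$: on $[\bN]^3$, let $a\E b$ iff $a(0),b(0)$ have the same parity, with $a(1)=b(1)$ required when both are even and $a(1)=b(1)$, $a(2)=b(2)$ when both are odd. Then $\vp_n$ selects one coordinate for even $n$ and two for odd $n$, and the glued map $\vp(a)=\vp_{\min a}(a\setminus\{\min a\})$ does represent $\E$ but is \emph{not} Nash--Williams: for $a=\{0,5,9\}$, $b=\{1,5,9\}$ one gets $\vp(a)=\{5\}\sqsubset\{5,9\}=\vp(b)$. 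Your final step --- ``remove any comparisons $\vp(a)\sqsubset\vp(b)$ by a further partition-theorem thinning'' --- fails here: on any set containing both parities, every pair realizing this pattern produces such a comparison, so stabilization returns ``always'', not ``never''; the comparisons can only be eliminated by recognizing the shape mismatch and thinning the set of \emph{minima} (to one parity class), i.e., by an extra stabilization of the shapes of the $\vp_n$ across $n$ that the proposal does not contain. These two coherence problems are exactly where the content of the Pudlak--\Rodl\ theorem lies, and the skeleton you give (base case, decomposition by minimum, fusion) is the routine part surrounding them.
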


In \cite{Dobrinen/Todorcevic11}, the authors
generalized the Pudlak-\Rodl\ Theorem to fronts on the topological Ramsey space $\mathcal{R}_1$.
To avoid unnecessary length in the introduction, we refer the reader to 
 Sections \ref{sec.stevobook}  -  \ref{sec.canonizationsRal}
for the definitions of $\mathcal{R}_1$, fronts on general topological Ramsey spaces,  and canonical equivalence relations.

\begin{thm}[Dobrinen/Todorcevic \cite{Dobrinen/Todorcevic11}]\label{thm.PRR(1)}
Suppose   $\mathcal{F}$ is  a front on $\mathcal{R}_{\al}$ and  $\R$ is an equivalence relation on $\mathcal{F}$.
Then
 there is an $A\in\mathcal{R}_{\al}$ such that $\R$ is canonical when restricted to 
$\mathcal{F}| A$.
\end{thm}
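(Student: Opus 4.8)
The plan is to argue by transfinite induction on the rank of the front $\mathcal{F}$, using throughout that $\mathcal{R}_{\al}$ is a topological Ramsey space, so that the Abstract Ellentuck Theorem and its Nash-Williams corollary are available. First I would make precise the target: a relation $\R$ on $\mathcal{F}|A$ is \emph{canonical} when it is represented by an \emph{inner Sperner map} $\vp$, i.e.\ $\vp(a)$ is a sub-approximation of $a$ respecting the tree structure of the finite approximations of $\mathcal{R}_{\al}$, no $\vp(a)$ is a proper initial segment of any $\vp(b)$, and $a\,\R\,b$ holds iff $\vp(a)=\vp(b)$. Proving the theorem then amounts to producing, after passing to a suitable $A\in\mathcal{R}_{\al}$, such a map representing $\R$ on $\mathcal{F}|A$.

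The first substantive step is a \emph{stabilization}: I would use the Ramsey property of $\mathcal{R}_{\al}$ (repeated application of the pigeonhole axiom, phrased as the Abstract Nash-Williams Theorem) to pass to a member on which $\R$ is \emph{locally determined}, meaning that whether $a\,\R\,b$ holds is decided by the way the approximations $a$ and $b$ overlap rather than by their ambient position. Concretely, one colours the relevant finite configurations by the $\R$-behaviour they force and diagonalizes so that the colour stabilizes. For the base case, rank $1$, the front consists of $1$-approximations of the space, and the one-dimensional pigeonhole principle underlying $\mathcal{R}_{\al}$ yields, after restricting to a further member, exactly the two canonical possibilities — the constant map (all elements $\R$-equivalent) or the identity on the relevant coordinate — each visibly given by an inner Sperner map.

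For the inductive step, let $\mathcal{F}$ have rank $>1$. For each $1$-approximation $s$ that can begin a member of $\mathcal{F}$, set $\mathcal{F}_s=\{a\setminus s : a\in\mathcal{F},\ s\sqsubset a\}$; each $\mathcal{F}_s$ is a front of strictly smaller rank on the restriction of $\mathcal{R}_{\al}$ above $s$, carrying the induced equivalence relation. Applying the induction hypothesis gives, for each $s$, a member $A_s$ and a canonical inner Sperner map $\vp_s$ on $\mathcal{F}_s|A_s$. I would then invoke the fusion/diagonalization available in any topological Ramsey space to amalgamate the $A_s$ into a single $A$ on which all the $\vp_s$ remain canonical simultaneously, and apply the Ramsey property once more to decide, for $a,b\in\mathcal{F}|A$ with distinct first parts, whether $a\,\R\,b$ can occur and to glue the $\vp_s$ into one inner Sperner map $\vp$ on $\mathcal{F}|A$ representing $\R$.

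The hard part is precisely this amalgamation together with the analysis of cross-class relations between members with different stems: it is here that the full strength of the topological Ramsey space structure of $\mathcal{R}_{\al}$ is needed, and where the bookkeeping must guarantee that the glued map is genuinely inner and Sperner rather than merely representing $\R$. This is further complicated by the fact that the finite approximations of $\mathcal{R}_{\al}$ carry a tree-like structure far richer than the flat $[\bN]^{<\om}$ of the Ellentuck space, so that \emph{inner} and \emph{Sperner} must be read relative to that tree structure; controlling the finitely many canonical forms at each finite stage requires the \Erdos-Rado analogue for uniform barriers of finite rank (Theorem \ref{thm.FCTR^al_n}) as a sub-ingredient feeding the induction.
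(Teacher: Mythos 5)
Your induction skeleton founders already at the base case, and the error there is not incidental: you assert that for a rank-$1$ front the pigeonhole principle of $\mathcal{R}_{\al}$ yields ``exactly the two canonical possibilities,'' constant or identity. That is the Ellentuck-space dichotomy, and it is precisely what fails for every $\mathcal{R}_{\al}$: a $1$-approximation is a finite tree, and an arbitrary equivalence relation on $\mathcal{R}_{\al}(0)$ canonizes to one of the projection relations $\E_S$, $S\in\mathfrak{S}_{\al}(0)$ (Theorem \ref{thm.canon.eq.rel.R(n)}) --- three of them already for $\al=1$, four for $\al=2$, and countably infinitely many once $\al\ge\om$. (Axiom \textbf{A.4} concerns $2$-colorings; an equivalence relation can have infinitely many classes, and reducing it to a projection map is the actual content of the theorem, not a corollary of monochromaticity.) The same misconception reappears in your closing sentence: ``the finitely many canonical forms at each finite stage'' is false for $\al\ge\om$, since $\mathfrak{S}_{\al}(n,m)$ is finite for each $m$ but $\mathfrak{S}_{\al}(n)$ is countably infinite, so no single application of the Abstract Ellentuck Theorem to finitely many open sets can isolate the canonical $S$; the paper needs the coherence argument of Lemma \ref{lem.cohere}, in which a decreasing sequence $S_m\in\mathfrak{S}_{\al}(n,m)$ is stabilized and the canonical set is the intersection $\bigcap_{m}S_m$. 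This is one of the genuinely new difficulties relative to \cite{Dobrinen/Todorcevic11}, and your outline has no mechanism for it. You also omit clause (2) of Definition \ref{defn.canonical}: canonicity requires maximality of the inner Nash-Williams map among all inner maps representing $\R$, which the paper proves via Lemma \ref{lem.irredssephi}.

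Second, the paper's proof is not a rank induction at all, and the step you yourself flag as ``the hard part'' --- amalgamating the $\vp_s$ and analyzing cross-stem relations --- is exactly where the whole theorem lives; deferring it to ``apply the Ramsey property once more'' is not a proof. The paper's substitute for this step is the mixing/separating calculus: one fixes $f$ inducing $\R$, thins so that every pair $a,b\in\hat{\mathcal{F}}$ is decided (Lemma \ref{claim.A}, obtained through the fusion Lemma \ref{lem.1}), assigns to each $a\in\hat{\mathcal{F}}\setminus\mathcal{F}$ a canonical $S_a$ governing mixing of its one-step extensions (Claim \ref{claim.E}, which is where Theorem \ref{thm.canon.eq.rel.R(n)} enters), defines $\vp(a)=\bigcup_{i<|a|}\pi_{S_{r^{\al}_i(a)}}(a(i))$, and then proves that mixing \emph{across different stems} is governed by equality of these projections and that the $S$'s of mixed nodes are isomorphic (Claim \ref{claim.Tsametype}, using transitivity of mixing, Lemma \ref{lem.mixing}). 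Only then do Claims \ref{claim.phisamefsame}, \ref{claim.front} and \ref{claim.L} yield that $\vp$ represents $\R$ and is Nash-Williams. Nothing in your outline produces these facts: without an analogue of Claim \ref{claim.Tsametype} you cannot show that the glued map is well defined on $\R$-classes meeting several $\mathcal{F}_s$'s, cannot rule out that it violates the Nash-Williams/Sperner condition, and cannot prove the converse implication that $f(a)=f(b)$ forces $\vp(a)=\vp(b)$.
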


We applied Theorem \ref{thm.PRR(1)} to obtain the next result, completely classifying all isomorphism types of ultrafilters Tukey reducible to $\mathcal{U}_1$.

\begin{thm}[Dobrinen/Todorcevic \cite{Dobrinen/Todorcevic11}]\label{thm.TukeyU_1}
Suppose   $\mathcal{V}\le_T\mathcal{U}_1$.
Then $\mathcal{V}$ is isomorphic to an iterated Fubini product of ultrafilters from among a countable collection of ultrafilters.
Moreover, this countable collection 
 forms a Rudin-Keisler strictly increasing chain of order-type $\om$.
In particular, $\mathcal{U}_0$ is the Rudin-Keisler minimal  nonprincipal ultrafilter among them, and the other nonprincipal ultrafilters in this collection are all Tukey equivalent to $\mathcal{U}_1$.
\end{thm}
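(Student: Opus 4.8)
\section*{Proof proposal}

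The plan is to combine the continuous-map reduction of Theorem \ref{thm.5} with the canonization of Theorem \ref{thm.PRR(1)}, mirroring the argument for Ramsey ultrafilters in Theorem \ref{thm.tod}. Since $\mathcal{U}_1$ is a rapid p-point and $\mathcal{V}\le_T\mathcal{U}_1$, Theorem \ref{thm.5} furnishes a continuous monotone map $g\colon\mathcal{P}(\bN)\to\mathcal{P}(\bN)$ whose restriction to $\mathcal{U}_1$ is cofinal in $\mathcal{V}$. Because $\mathcal{U}_1$ is the ultrafilter associated to $\mathcal{R}_1$, generated by the projections of members of $\mathcal{R}_1$, it suffices to understand $g$ on a cofinal copy of $\mathcal{R}_1$; I would replace the generating family of $\mathcal{V}$ by the cofinal image $g''\mathcal{U}_1$ throughout, so that the isomorphism type of $\mathcal{V}$ is exactly that of the ultrafilter generated by $g''\mathcal{U}_1$.

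Next I would use continuity together with the Ramsey property of $\mathcal{R}_1$ to show that $g$ is \emph{generated by a front}. For each point $k$ of the base set of $\mathcal{V}$ and each $X\in\mathcal{R}_1$, whether $k\in g(X)$ is decided by a finite approximation of $X$, so a fusion argument (absorbing, simultaneously for all $k$, the finite stages at which membership stabilizes) produces an $A\in\mathcal{R}_1$, a front $\mathcal{F}$ on $\mathcal{R}_1\re A$, and a map $\vp\colon\mathcal{F}\to\bN$ with $g(X)=\{\vp(a):a\in\mathcal{F},\ a\sqsubset X\}$ for every $X\le A$ in $\mathcal{R}_1$. Monotonicity of $g$ ensures the pieces $\vp(a)$ cohere, and cofinality guarantees that this description still captures a generating family for $\mathcal{V}$.

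With $g$ represented on the front $\mathcal{F}$, define the equivalence relation $\R$ on $\mathcal{F}$ by $a\mathrel{\R}b$ iff $\vp(a)=\vp(b)$, and apply Theorem \ref{thm.PRR(1)} to obtain $B\le A$ in $\mathcal{R}_1$ on which $\R\re(\mathcal{F}\re B)$ is one of the canonical equivalence relations, represented by an inner Sperner (irreducible) map. The final step is to read off the isomorphism type of $\mathcal{V}$ from the shape of this canonical projection: each canonical inner map collapses $a\in\mathcal{F}$ onto a fixed substructure of the base tree $\bT_1$, and the induced ultrafilter $g''\mathcal{U}_1$ is then isomorphic to the iterated Fubini product obtained by integrating, level by level along $\bT_1$, the one-step projections of $\mathcal{U}_1$. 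The minimal nonprincipal projection is the selective ultrafilter $\mathcal{U}_0$ (selectivity coming from the one-dimensional pigeonhole principle underlying $\mathcal{R}_1$, via Blass's theorem), while each nonprincipal building block above it is a rapid p-point Tukey-equivalent to $\mathcal{U}_1$; since any rapid p-point is Tukey-equivalent to its Fubini square, the iterated products of these blocks remain Tukey-equivalent to $\mathcal{U}_1$, and a direct check that products destroy the p-point property shows the blocks form a Rudin-Keisler strictly increasing $\om$-chain.

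I expect the main obstacle to be the two combinatorial bridges. The first is the fusion establishing that $g$ factors through a single front map $\vp$ in a monotone, cofinality-preserving way, which requires the Ramsey property of $\mathcal{R}_1$ to stabilize membership in $g(X)$ uniformly over all base points $k$. The second, and harder, is the dictionary translating each canonical inner Sperner map into a concrete iterated Fubini product and then verifying the exact Rudin-Keisler and Tukey relations among the resulting building blocks; pinning down that the chain has order type exactly $\om$, and that all nonprincipal blocks past $\mathcal{U}_0$ collapse to the single Tukey type of $\mathcal{U}_1$, is precisely where the specific structure of $\mathcal{R}_1$ (rather than a general topological Ramsey space) is used.
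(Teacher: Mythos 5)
Your pipeline is the same as the paper's: Theorem \ref{thm.5} gives a continuous monotone cofinal map; continuity plus fusion factors it through a front map $f:\mathcal{F}\ra\bN$ (this is exactly Proposition \ref{prop.W=frontuf}); the Ramsey-classification theorem canonizes the induced equivalence relation by an inner map $\vp$; and the tree of initial segments of $\vp$-values yields the iterated Fubini product (the ultrafilter of $\vec{\mathcal{W}}$-trees, in the paper's terminology) whose one-step blocks are projection ultrafilters $\mathcal{Y}^1_S$. Up to the product representation your argument is sound, modulo one imprecision: the iteration runs along the tree $\hat{\mathcal{T}}$ of truncations $\vp(a)\cap r_i(a)$ determined by the front and the canonical map, not ``level by level along $\bT_1$.''

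The genuine gap is in your final structural step. You justify the strict Rudin-Keisler increase of the chain of blocks by ``a direct check that products destroy the p-point property,'' but the blocks are not Fubini products of one another: they are the projection ultrafilters $\mathcal{U}_0$ and $\mathcal{U}_1\re\mathcal{R}_1(n)$, $n<\om$, every one of which \emph{is} a rapid p-point. That a p-point's Fubini square lies strictly RK-above it says nothing about comparing two distinct projections; what is needed is the RK-structure theorem for projections (the $\al=1$ analogue of Theorem \ref{prop.RKstructure}): $\mathcal{Y}^1_S\le_{RK}\mathcal{Y}^1_T$ if and only if $S$ embeds into $T$, whose nontrivial direction is itself another application of the canonization theorem (compose a witnessing RK map with $\pi_T$ and canonize it to a projection $\pi_{T'}$ with $T'\sse T$); strictness then follows because $\bS_1(n)$ embeds into $\bS_1(n+1)$ but not conversely. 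Two further claims you assert also require proof: (i) that the blocks fall, up to isomorphism, into a \emph{single} $\om$-chain at all --- this uses the $\mathcal{R}_1$-specific fact that every nonempty downward-closed substructure of $\bS_1(n)$ is isomorphic either to a chain (giving $\mathcal{U}_0$) or to $\bS_1(k)$ for some $k\le n$ (giving $\mathcal{U}_1\re\mathcal{R}_1(k)$); and (ii) that each block above $\mathcal{U}_0$ is Tukey equivalent to $\mathcal{U}_1$, which follows not from rapidity but from the reconstruction argument of Theorem \ref{thm.Tukeystructure}: the map $\pi_{S_0}(\mathcal{R}_1(0)|X)\mapsto\pi_S(\mathcal{R}_1(n)|X)$ is monotone cofinal, giving $\mathcal{Y}^1_S\le_T\mathcal{Y}^1_0\cong\mathcal{U}_1$, while the converse reduction is the RK projection.
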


The next theorem follows from Theorem \ref{thm.TukeyU_1}.
This shows that  the structure of the Tukey types below $\mathcal{U}_1$ is analogous to the structure of the Rudin-Keisler types below $\mathcal{U}_1$ as proved by Laflamme  (see Theorem \ref{thm.Laflammethms} (3)).

\begin{thm}[Dobrinen/Todorcevic \cite{Dobrinen/Todorcevic11}]\label{cor.1Tpred}
If $\mathcal{V}$ is nonprincipal and $\mathcal{V}\le_T\mathcal{U}_1$,
then either 
$\mathcal{V}\equiv_T\mathcal{U}_1$,
or
$\mathcal{V}\equiv_T\mathcal{U}_0$.
\end{thm}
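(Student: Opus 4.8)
The plan is to read the dichotomy straight off the structural classification in Theorem \ref{thm.TukeyU_1}. Since $\mathcal{V}\le_T\mathcal{U}_1$, that theorem tells me $\mathcal{V}$ is isomorphic to an iterated Fubini product of ultrafilters from a fixed countable collection $\mathcal{C}$, whose nonprincipal members are $\mathcal{U}_0$ (the Rudin-Keisler least nonprincipal one) together with ultrafilters each Tukey equivalent to $\mathcal{U}_1$. Because isomorphic ultrafilters are Tukey equivalent, I may replace $\mathcal{V}$ by this iterated Fubini product. As $\mathcal{V}$ is nonprincipal, at least one nonprincipal factor occurs, and I would split the argument according to whether any factor Tukey equivalent to $\mathcal{U}_1$ appears in the product.

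First suppose some factor $\mathcal{W}$ with $\mathcal{W}\equiv_T\mathcal{U}_1$ occurs. I would use the fact that each constituent ultrafilter of an iterated Fubini product is Rudin-Keisler reducible to the product (the ultrafilter at each node of the defining tree is recovered by the appropriate coordinate projection), so $\mathcal{W}\le_{RK}\mathcal{V}$, and hence $\mathcal{W}\le_T\mathcal{V}$ since $\le_{RK}$ implies $\le_T$. Thus $\mathcal{U}_1\equiv_T\mathcal{W}\le_T\mathcal{V}$, giving $\mathcal{U}_1\le_T\mathcal{V}$; combined with the hypothesis $\mathcal{V}\le_T\mathcal{U}_1$ this yields $\mathcal{V}\equiv_T\mathcal{U}_1$.

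In the complementary case every nonprincipal factor equals $\mathcal{U}_0$, so $\mathcal{V}$ is a countable iterated Fubini product of copies of $\mathcal{U}_0$, the principal factors being irrelevant to the Tukey type. As in the previous paragraph $\mathcal{U}_0\le_{RK}\mathcal{V}$ and so $\mathcal{U}_0\le_T\mathcal{V}$. For the reverse inequality I would invoke that $\mathcal{U}_0$ is Ramsey, hence a rapid p-point, so by Corollary 37 of \cite{Dobrinen/Todorcevic10} we have $\mathcal{U}_0\cdot\mathcal{U}_0\equiv_T\mathcal{U}_0$; propagating this equivalence up the well-founded countable tree of the Fubini product should give $\mathcal{V}\le_T\mathcal{U}_0$, whence $\mathcal{V}\equiv_T\mathcal{U}_0$.

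The steps that are routine are that the constituent ultrafilters of a Fubini product project down in the Rudin-Keisler order and that $\le_{RK}$ implies $\le_T$. The step I expect to be the main obstacle is the last one: promoting the single collapse $\mathcal{U}_0\cdot\mathcal{U}_0\equiv_T\mathcal{U}_0$ to the statement that an \emph{arbitrary countable} iterated Fubini product of the rapid p-point $\mathcal{U}_0$ remains in the Tukey type of $\mathcal{U}_0$. The finite-height case follows by induction from Corollary 37, but the genuinely countable iteration needs more: either a direct construction of a monotone cofinal map that uses the rapidity of $\mathcal{U}_0$ to diagonalize the countably many levels of the tree into a single member of $\mathcal{U}_0$, or an appeal to the cofinal-type computation for rapid p-points in \cite{Dobrinen/Todorcevic10}.
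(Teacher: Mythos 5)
Your Case~1 rests on a false lemma. It is not true that every constituent ultrafilter of an iterated Fubini product is recovered by a coordinate projection and is therefore Rudin--Keisler (hence Tukey) below the product: this holds only for the ultrafilter at the \emph{root} of the defining tree. Projecting the base set onto a deeper level yields the Fubini limit of the ultrafilters at all sibling nodes of that level, not any individual one of them, and a factor sitting at a single non-root node need not be Tukey below the product at all. Concretely, let $\mathcal{V}=\lim_{n\ra\mathcal{U}_0}\mathcal{V}_n$, where $\mathcal{V}_{17}$ is an ultrafilter Tukey equivalent to $\mathcal{U}_1$ and $\mathcal{V}_n=\mathcal{U}_0$ for $n\ne 17$. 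Since $\{17\}\notin\mathcal{U}_0$, the set $(\om\setminus\{17\})\times\om$ belongs to $\mathcal{V}$, and $\mathcal{V}$ restricted to this set is a copy of $\mathcal{U}_0\cdot\mathcal{U}_0$; hence $\mathcal{V}\equiv_T\mathcal{U}_0\cdot\mathcal{U}_0\equiv_T\mathcal{U}_0$ by Corollary 37 of \cite{Dobrinen/Todorcevic10}, and $\mathcal{U}_0<_T\mathcal{U}_1$ strictly (if $\mathcal{U}_1\le_T\mathcal{U}_0$, then by Theorem~\ref{thm.tod} the p-point $\mathcal{U}_1$ would be isomorphic to an iterated Fubini product of $\mathcal{U}_0$, hence to $\mathcal{U}_0$ itself, contradicting Theorem~\ref{thm.Laflammethms}(4)). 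So a factor Tukey equivalent to $\mathcal{U}_1$ can ``appear in the product'' while $\mathcal{V}\equiv_T\mathcal{U}_0$: the hypothesis of your Case~1 does not imply its conclusion. The Tukey type of an iterated Fubini product cannot be read off from \emph{which} factors occur; it depends on whether they occur on a set of nodes that is large with respect to the ultrafilters at the preceding levels.

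The correct replacement---and this is how the paper proves the general form of this statement in Section~\ref{sec.R_alTukey}---is an induction on the rank of the well-founded tree indexing the iteration, after the classification step (your first step, which matches the paper's). At each node $t$ the iterate is the Fubini limit $\lim_{s\ra\mathcal{W}_t}\mathcal{V}_s$ of the iterates above the immediate successors $s$; by the induction hypothesis each $\mathcal{V}_s$ is Tukey equivalent to $\mathcal{U}_0$ or to $\mathcal{U}_1$, so one of the two corresponding sets of successors lies in $\mathcal{W}_t$, and one restricts to that large homogeneous set. There both bounds are available: the upper bound from $\lim_{s\ra\mathcal{W}_t}\mathcal{V}_s\le_T\mathcal{W}_t\times\prod_s\mathcal{V}_s$ together with the collapse of countable Tukey powers of a rapid p-point $\mathcal{Y}$ (equivalent to Corollary 37 of \cite{Dobrinen/Todorcevic10}), and the lower bound $\mathcal{Y}\le_T\lim_{s\ra\mathcal{W}_t}\mathcal{V}_s$, obtained by gluing monotone cofinal maps $\mathcal{V}_s\ra\mathcal{Y}$ along sections, valid exactly when $\mathcal{Y}\le_T\mathcal{V}_s$ for $\mathcal{W}_t$-many $s$. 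This level-by-level largeness argument is what your global case division is missing, and the same machinery (with the transfinite rank induction) is what closes the gap you honestly flag at the end of Case~2: a finite induction from $\mathcal{U}_0\cdot\mathcal{U}_0\equiv_T\mathcal{U}_0$ indeed does not reach trees of infinite rank. Your lower bound $\mathcal{U}_0\le_T\mathcal{V}$ in Case~2 does survive, but only because there the relevant factor can be taken at the root after absorbing principal nodes (the tree is well-founded, so this terminates), which is the single valid instance of your projection claim.
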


This paper builds on Theorem \ref{thm.PRR(1)}  and extends the aforementioned results of \cite{Dobrinen/Todorcevic11} for $\mathcal{R}_1$
to a new class of topological Ramsey spaces, denoted $\mathcal{R}_{\al}$, $2\le\al<\om_1$.
These spaces
are constructed in Section \ref{sec.tRsR_al}, based on infinitely wide, well-founded  trees $\bT_{\al}$.
The fact that $\al$ may now be infinite necessitates a new construction of the base trees $\bT_{\al}$ for the spaces $\mathcal{R}_{\al}$, using auxiliary structures $\bS_{\al}$ to preserve information about how the trees were built.
A new
analysis of the canonical equivalence relations is also necessary in this context.   
See Section \ref{sec.tRsR_al} for more discussion of these issues.
By an induction on $2\le\al<\om_1$ cycling through Sections \ref{sec.tRs} and \ref{sec.canonizationsRal},
each $\mathcal{R}_{\al}$ is proved to be a topological Ramsey space (Theorem \ref{thm.R_altRs}) and the main theorem of this paper, the Ramsey-classification Theorem \ref{thm.PRR_al} for $\mathcal{R}_{\al}$ is proved for each $2\le\al<\om_1$.

Associated to each of these spaces $\mathcal{R}_{\al}$ is a notion of an ultrafilter Ramsey for $\mathcal{R}_{\al}$, which we denote $\mathcal{U}_{\al}$.
As each space $\mathcal{R}_{\al}$ is forcing-equivalent to Laflamme's $\bP_{\al}$,
 the ultrafilters $\mathcal{U}_{\al}$  are the same as the ultrafilters  forced by Laflamme.
In Theorem \ref{thm.TukeyU_al}  in  Section \ref{sec.R_alTukey}, we extend Theorem \ref{thm.TukeyU_1}
to classify all the isomorphism classes of ultrafilters which are Tukey reducible to $\mathcal{U}_{\al}$, for all $2\le\al<\om_1$.
These turn out to be exactly the countable iterations of Fubini products of ultrafilters obtained   as projections of $\mathcal{U}_{\al}$ via canonical equivalence relations.
Finally, in Theorem \ref{cor.1Tpred}, we show that the Tukey types of all ultrafilters Tukey reducible to $\mathcal{U}_{\al}$
forms a strictly decreasing chain of order type $\al+1$. 
This shows that the structure of the Tukey types of ultrafilters Tukey-reducible to $\mathcal{U}_{\al}$ is analogous to the structure of the  isomorphism types of ultrafilters Rudin-Keisler reducible to $\mathcal{U}_{\al}$ found by Laflamme. 
For ease of reading, we include basic definitions and theorems for topological Ramsey spaces  in Section \ref{sec.stevobook}.


\section{Definitions and Theorems for Topological Ramsey Spaces}\label{sec.stevobook}

The  background in this section  can be found in detail in  Chapter 5, Section 1 of \cite{TodorcevicBK10}, which we include for 
 the convenience of the reader.
The  axioms \bf A.1 \rm - \bf A.4 \rm
are defined for triples
$(\mathcal{R},\le,r)$
of objects with the following properties.
$\mathcal{R}$ is a nonempty set,
$\le$ is a quasi-ordering on $\mathcal{R}$,
 and $r:\mathcal{R}\times\om\ra\mathcal{AR}$ is a mapping giving us the sequence $(r_n(\cdot)=r(\cdot,n))$ of approximation mappings, where
$\mathcal{AR}$ is  the collection of all finite approximations to members of $\mathcal{R}$.
For $a\in\mathcal{AR}$ and $A,B\in\mathcal{R}$,
\begin{equation}
[a,B]=\{A\in\mathcal{R}:A\le B\mathrm{\ and\ }(\exists n)\ r_n(A)=a\}.
\end{equation}

For $a\in\mathcal{AR}$, let $|a|$ denote the length of the sequence $a$.  Thus, $|a|$ equals the integer $k$ for which $a=r_k(a)$.
For $a,b\in\mathcal{AR}$, $a\sqsubseteq b$ if and only if $a=r_m(b)$ for some $m\le |b|$.
$a\sqsubset b$ if and only if $a=r_m(b)$ for some $m<|b|$.
For each $n<\om$, $\mathcal{AR}_n=\{r_n(A):A\in\mathcal{R}\}$.
If $n>|a|$, then  $r_n[a,A]$ is the collection of all $b\in\mathcal{AR}_n$ such that $a\sqsubset b$ and $b\le_{\mathrm{fin}} A$.
\vskip.1in

\begin{enumerate}
\item[\bf A.1]\rm
\begin{enumerate}
\item
$r_0(A)=\emptyset$ for all $A\in\mathcal{R}$.\vskip.05in
\item
$A\ne B$ implies $r_n(A)\ne r_n(B)$ for some $n$.\vskip.05in
\item
$r_n(A)=r_m(B)$ implies $n=m$ and $r_k(A)=r_k(B)$ for all $k<n$.\vskip.1in
\end{enumerate}
\item[\bf A.2]\rm
There is a quasi-ordering $\le_{\mathrm{fin}}$ on $\mathcal{AR}$ such that\vskip.05in
\begin{enumerate}
\item
$\{a\in\mathcal{AR}:a\le_{\mathrm{fin}} b\}$ is finite for all $b\in\mathcal{AR}$,\vskip.05in
\item
$A\le B$ iff $(\forall n)(\exists m)\ r_n(A)\le_{\mathrm{fin}} r_m(B)$,\vskip.05in
\item
$\forall a,b,c\in\mathcal{AR}[a\sqsubset b\wedge b\le_{\mathrm{fin}} c\ra\exists d\sqsubset c\ a\le_{\mathrm{fin}} d]$.\vskip.1in
\end{enumerate}
\end{enumerate}

$\depth_B(a)$ is the least $n$, if it exists, such that $a\le_{\mathrm{fin}}r_n(B)$.
If such an $n$ does not exist, then we write $\depth_B(a)=\infty$.
If $\depth_B(a)=n<\infty$, then $[\depth_B(a),B]$ denotes $[r_n(B),B]$.

\begin{enumerate}
\item[\bf A.3] \rm
\begin{enumerate}
\item
If $\depth_B(a)<\infty$ then $[a,A]\ne\emptyset$ for all $A\in[\depth_B(a),B]$.\vskip.05in
\item
$A\le B$ and $[a,A]\ne\emptyset$ imply that there is $A'\in[\depth_B(a),B]$ such that $\emptyset\ne[a,A']\sse[a,A]$.\vskip.1in
\end{enumerate}
\end{enumerate}

\begin{enumerate}
\item[\bf A.4]\rm
If $\depth_B(a)<\infty$ and if $\mathcal{O}\sse\mathcal{AR}_{|a|+1}$,
then there is $A\in[\depth_B(a),B]$ such that
$r_{|a|+1}[a,A]\sse\mathcal{O}$ or $r_{|a|+1}[a,A]\sse\mathcal{O}^c$.\vskip.1in
\end{enumerate}

The topology on $\mathcal{R}$ is given by the basic open sets
$[a,B]$.
This topology is called the {\em natural} or {\em Ellentuck} topology on $\mathcal{R}$;
it extends the usual metrizable topology on $\mathcal{R}$ when we consider $\mathcal{R}$ as a subspace of the Tychonoff cube $\mathcal{AR}^{\bN}$.
Given the Ellentuck topology on $\mathcal{R}$,
the notions of nowhere dense, and hence of meager are defined in the natural way.
Thus, we may say that a subset $\mathcal{X}$ of $\mathcal{R}$ has the {\em property of Baire} iff $\mathcal{X}=\mathcal{O}\cap\mathcal{M}$ for some Ellentuck open set $\mathcal{O}\sse\mathcal{R}$ and Ellentuck meager set $\mathcal{M}\sse\mathcal{R}$.

\begin{defn}[\cite{TodorcevicBK10}]\label{defn.5.2}
A subset $\mathcal{X}$ of $\mathcal{R}$ is {\em Ramsey} if for every $\emptyset\ne[a,A]$,
there is a $B\in[a,A]$ such that $[a,B]\sse\mathcal{X}$ or $[a,B]\cap\mathcal{X}=\emptyset$.
$\mathcal{X}\sse\mathcal{R}$ is {\em Ramsey null} if for every $\emptyset\ne [a,A]$, there is a $B\in[a,A]$ such that $[a,B]\cap\mathcal{X}=\emptyset$.

A triple $(\mathcal{R},\le,r)$ is a {\em topological Ramsey space} if every property of Baire subset of $\mathcal{R}$ is Ramsey and if every meager subset of $\mathcal{R}$ is Ramsey null.
\end{defn}

The following result is   Theorem
5.4 in \cite{TodorcevicBK10}.

\begin{thm}[Abstract Ellentuck Theorem]\label{thm.AET}\rm \it
If $(\mathcal{R},\le,r)$ is closed (as a subspace of $\mathcal{AR}^{\bN}$) and satisfies axioms {\bf A.1}, {\bf A.2}, {\bf A.3}, and {\bf A.4},
then every property of Baire subset of $\mathcal{R}$ is Ramsey,
and every meager subset is Ramsey null;
in other words,
the triple $(\mathcal{R},\le,r)$ forms a topological Ramsey space.
\end{thm}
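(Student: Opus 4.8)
The plan is to run the classical Galvin--Prikry--Ellentuck combinatorial-forcing argument in the abstract setting, invoking the axioms \textbf{A.1}--\textbf{A.4} exactly where the concrete combinatorics of $[\bN]^{\om}$ would be used, and invoking closedness of $\mathcal{R}$ in $\mathcal{AR}^{\bN}$ to guarantee that objects produced by infinite fusions lie in $\mathcal{R}$. Fix $\mathcal{X}\subseteq\mathcal{R}$. For $a\in\mathcal{AR}$ and $A\in\mathcal{R}$ with $[a,A]\ne\emptyset$, say $A$ \emph{accepts} $a$ if $[a,A]\subseteq\mathcal{X}$, say $A$ \emph{rejects} $a$ if no $B\le A$ with $[a,B]\ne\emptyset$ accepts $a$, and say $A$ \emph{decides} $a$ if it does one of these. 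I would first obtain the Ramsey dichotomy for metrically open sets by combinatorial forcing, and then lift it to all sets with the property of Baire in the Ellentuck topology.

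The first block consists of three forcing lemmas. (i) \emph{Any condition refines to decide a given $a$}: if $A$ does not reject $a$ then by definition some $B\le A$ accepts $a$, and \textbf{A.3}(b) pulls a witness back into the neighborhood $[\depth_A(a),A]$; otherwise $A$ already rejects. (ii) \emph{Heredity}: acceptance and rejection pass to any $B\le A$ with $[a,B]\ne\emptyset$, directly from the definitions and \textbf{A.2}. (iii) The crucial lemma: \emph{if $A$ rejects $a$ then some $B\in[\depth_A(a),A]$ rejects every one-step extension $b\in r_{|a|+1}[a,B]$}. This is the step I expect to be the main obstacle, and it is where \textbf{A.4} is essential. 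Letting $\mathcal{O}\subseteq\mathcal{AR}_{|a|+1}$ be the set of immediate successors of $a$ that are accepted by \emph{some} condition, \textbf{A.4} yields $B\in[\depth_A(a),A]$ with $r_{|a|+1}[a,B]$ lying entirely in $\mathcal{O}$ or entirely in $\mathcal{O}^c$. The first alternative is ruled out by amalgamation: if every immediate successor of $a$ within $B$ were accepted, a level-by-level fusion of the individual witnesses --- kept nonempty by \textbf{A.3} and finite at each level by \textbf{A.2}(a) --- would produce a single $B'\le A$ accepting all of them, hence accepting $a$ (since $[a,B']=\bigcup_b[b,B']$), contradicting that $A$ rejects $a$. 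So the second alternative holds, which is the claim.

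Next I would iterate (i) and (iii) along a $\le_{\mathrm{fin}}$-increasing enumeration of approximations, with \textbf{A.2}(a) bounding the work at each level, to build a $\le$-decreasing sequence whose diagonal limit $B$ lies in $\mathcal{R}$ precisely because $\mathcal{R}$ is closed in $\mathcal{AR}^{\bN}$. This $B$ decides every $b$ with $a\sqsubseteq b\le_{\mathrm{fin}}B$ \emph{coherently}: whenever it rejects such a $b$ it rejects all one-step extensions of $b$ as well. If $\mathcal{X}$ is metrically open I then get $[a,B]\subseteq\mathcal{X}$ or $[a,B]\cap\mathcal{X}=\emptyset$: if $B$ accepts $a$ the first holds; if $B$ rejects $a$, coherence propagates rejection to all $b\le_{\mathrm{fin}}B$ above $a$, and any $X\in[a,B]\cap\mathcal{X}$ would, by metric openness, place a basic neighborhood cut out by some $b=r_n(X)$ inside $\mathcal{X}$, making $B$ accept $b$ --- a contradiction. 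Thus every metrically open, and by symmetry of the definition every metrically closed, subset of $\mathcal{R}$ is Ramsey.

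Finally I would lift to the Ellentuck topology. An Ellentuck-nowhere-dense set is Ramsey null: passing to its closure, the complement is Ellentuck-open and dense, and a parallel combinatorial-forcing/fusion argument --- using density and openness to capture, at each finite stage, a whole basic neighborhood inside the complement, homogenizing the one-step extensions by \textbf{A.4} and amalgamating by \textbf{A.3}, with closedness of $\mathcal{R}$ again giving the limit --- yields $B\in[a,A]$ with $[a,B]$ disjoint from the set. A meager set is a countable union of such sets, handled by diagonalizing the fusion across the union, so every meager set is Ramsey null. For a general property-of-Baire set $\mathcal{X}=\mathcal{O}\triangle\mathcal{M}$ with $\mathcal{O}$ Ellentuck-open and $\mathcal{M}$ meager, the boundary of $\mathcal{O}$ is nowhere dense hence Ramsey null, so after removing it one may assume $[a,B]$ lies in $\mathcal{O}$ or in the interior of its complement; absorbing the meager error $\mathcal{M}$ by Ramsey-nullity then gives $[a,B]\subseteq\mathcal{X}$ or $[a,B]\cap\mathcal{X}=\emptyset$. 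Assembling these, every property-of-Baire subset of $\mathcal{R}$ is Ramsey and every meager subset is Ramsey null, which is exactly the assertion that $(\mathcal{R},\le,r)$ is a topological Ramsey space.
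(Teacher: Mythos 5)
The paper itself gives no proof of Theorem \ref{thm.AET}: it is quoted from Chapter 5 of \cite{TodorcevicBK10} (Theorem 5.4 there), so your proposal can only be measured against the standard argument in that reference. Your overall architecture is exactly that argument --- combinatorial forcing with accept/reject/decide, fusion made legitimate by closedness of $\mathcal{R}$ in $\mathcal{AR}^{\bN}$, the dichotomy for open sets, then the nowhere dense/meager/property-of-Baire lift --- but your proof of the step you yourself single out as the crux, lemma (iii), has a genuine gap. You define $\mathcal{O}$ as the set of one-step extensions of $a$ accepted by \emph{some} condition, and in the first alternative of \textbf{A.4} you claim a contradiction by fusing the individual accepting witnesses into a single $B'\le A$. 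That fusion is impossible in general: the witnesses need not lie below $A$, nor be compatible with one another, and the axioms provide no amalgamation of arbitrary conditions (\textbf{A.3} only shrinks a single condition above a given depth, and heredity passes acceptance only \emph{downward}). Concretely, already in the Ellentuck space take $\mathcal{X}$ to be the metrically open set of all $X$ whose second element is odd, $a=\emptyset$, and $A$ the set of even numbers. Then $A$ rejects $\emptyset$, yet each $b=\{n\}$ with $n\in A$ is accepted by the condition $\{n\}\cup\{m>n: m \text{ odd}\}$; hence with your $\mathcal{O}$ \emph{every} $B\le A$ satisfies the first alternative $r_1[\emptyset,B]\sse\mathcal{O}$ --- precisely the case you claim cannot occur --- and no contradiction is available, because each witness meets $A$ in a single point and cannot be fused below it. (Your argument does give the lemma when \textbf{A.4} returns the second alternative, since then nothing at all accepts those $b$'s; but nothing forces that alternative.)

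The repair is to reverse the order of fusion and \textbf{A.4}, which is what the proof in \cite{TodorcevicBK10} does. First use your lemma (i) together with a fusion (finiteness at each level from \textbf{A.2}(a), nonemptiness from \textbf{A.3}, the diagonal limit in $\mathcal{R}$ by closedness) to produce a single condition $B_0\in[\depth_A(a),A]$ that \emph{decides} every $b\in r_{|a|+1}[a,B_0]$, and only then apply \textbf{A.4} to $\mathcal{O}=\{b\in r_{|a|+1}[a,B_0]: B_0 \text{ accepts } b\}$. Now the first alternative is genuinely contradictory: if every $b\in r_{|a|+1}[a,B]$ is accepted by $B_0$, then $[a,B]=\bigcup_b [b,B]\sse\mathcal{X}$, so $B\le A$ accepts $a$, contradicting rejection. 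In the second alternative every such $b$ is decided but not accepted by $B_0$, hence rejected by $B_0$, hence rejected by $B$ by heredity --- which is the lemma. With lemma (iii) repaired this way, the rest of your outline (the coherent-deciding fusion, the open-set dichotomy, and the meager and Baire-property steps) goes through essentially as you describe.
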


\begin{defn}[\cite{TodorcevicBK10}]\label{defn.5.16}
A family $\mathcal{F}\sse\mathcal{AR}$ of finite approximations is
\begin{enumerate}
\item
{\em Nash-Williams} if $a\not\sqsubseteq b$ for all $a\ne b\in\mathcal{F}$;
\item
{\em Sperner} if $a\not\le_{\mathrm{fin}} b$ for all $a\ne b\in\mathcal{F}$;
\item
{\em Ramsey} if for every partition $\mathcal{F}=\mathcal{F}_0\cup\mathcal{F}_1$ and every $X\in\mathcal{R}$,
there are $Y\le X$ and $i\in\{0,1\}$ such that $\mathcal{F}_i|Y=\emptyset$.
\end{enumerate}
\end{defn}

The next theorem appears as  Theorem 5.17  in \cite{TodorcevicBK10}.

\begin{thm}[Abstract Nash-Williams Theorem]\label{thm.ANW}
Suppose $(\mathcal{R},\le,r)$ is a closed triple that satisfies {\bf A.1} - {\bf A.4}. Then
every Nash-Williams family of finite approximations is Ramsey.
\end{thm}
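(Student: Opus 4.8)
The plan is to derive this as an immediate corollary of the Abstract Ellentuck Theorem (Theorem \ref{thm.AET}). Fix a Nash-Williams family $\mathcal{F}$, a partition $\mathcal{F}=\mathcal{F}_0\cup\mathcal{F}_1$ into disjoint pieces, and $X\in\mathcal{R}$. For each $i\in\{0,1\}$ I would set
$$
\mathcal{X}_i=\{A\in\mathcal{R}:(\exists n)\ r_n(A)\in\mathcal{F}_i\}.
$$
First I would check that each $\mathcal{X}_i$ is open in the Ellentuck topology: if $A\in\mathcal{X}_i$ and $a=r_n(A)\in\mathcal{F}_i$, then by \textbf{A.1} every $B\in[a,A]$ satisfies $r_n(B)=a\in\mathcal{F}_i$, so $[a,A]\subseteq\mathcal{X}_i$. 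Being open, $\mathcal{X}_i$ has the property of Baire, and hence by Theorem \ref{thm.AET} it is Ramsey. I would also record the dictionary that, since $r_0(A)=\emptyset$ for all $A$ by \textbf{A.1}(a), we have $[\emptyset,Y]=\{A\in\mathcal{R}:A\le Y\}$, so that $[\emptyset,Y]\cap\mathcal{X}_i=\emptyset$ says precisely that no approximation coming from $Y$ lies in $\mathcal{F}_i$, that is, $\mathcal{F}_i|Y=\emptyset$.

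Next I would apply the Ramsey property of $\mathcal{X}_0$ to the basic set $[\emptyset,X]$, obtaining $Y_0\le X$ with either $[\emptyset,Y_0]\cap\mathcal{X}_0=\emptyset$ or $[\emptyset,Y_0]\subseteq\mathcal{X}_0$. In the first case the dictionary above gives $\mathcal{F}_0|Y_0=\emptyset$ and we finish with $i=0$. In the second case I would apply the Ramsey property of $\mathcal{X}_1$ to $[\emptyset,Y_0]$, producing $Y_1\le Y_0$ with either $[\emptyset,Y_1]\cap\mathcal{X}_1=\emptyset$ or $[\emptyset,Y_1]\subseteq\mathcal{X}_1$. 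The first alternative finishes the proof with $i=1$, so everything reduces to excluding the remaining alternative, in which $[\emptyset,Y_1]\subseteq\mathcal{X}_0\cap\mathcal{X}_1$.

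This exclusion is where the Nash-Williams hypothesis does its work, and it is the only genuinely combinatorial step. Picking any $A\le Y_1$, we have $A\in\mathcal{X}_0\cap\mathcal{X}_1$, so there are $m,n$ with $r_m(A)\in\mathcal{F}_0$ and $r_n(A)\in\mathcal{F}_1$. Since $\mathcal{F}_0$ and $\mathcal{F}_1$ are disjoint, $r_m(A)\ne r_n(A)$, whence $m\ne n$ by \textbf{A.1}(c); but then the shorter of the two is an initial segment of the longer, say $r_m(A)\sqsubseteq r_n(A)$, with both lying in $\mathcal{F}$, contradicting that $\mathcal{F}$ is Nash-Williams. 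Hence the bad alternative never occurs, and in every case we obtain $Y\le X$ and $i\in\{0,1\}$ with $\mathcal{F}_i|Y=\emptyset$, as required. I expect no serious obstacle beyond pinning down exactly the translation between the condition $\mathcal{F}_i|Y=\emptyset$ and the clopen behavior of $\mathcal{X}_i$; the real analytic content has already been absorbed into the Abstract Ellentuck Theorem, and what remains is bookkeeping together with the one-line Nash-Williams contradiction.
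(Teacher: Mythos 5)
Your proof is correct, and it is essentially the canonical one: the paper itself does not prove Theorem \ref{thm.ANW} but imports it as Theorem 5.17 of \cite{TodorcevicBK10}, where the argument is precisely yours --- the sets $\mathcal{X}_i$ are open in the Ellentuck topology (hence have the property of Baire), the Abstract Ellentuck Theorem makes them Ramsey, and the Nash--Williams property together with disjointness of the partition and {\bf A.1}(c) excludes the alternative $[\emptyset,Y]\subseteq\mathcal{X}_0\cap\mathcal{X}_1$. The only remark worth making is that your second application of the Ramsey property is dispensable: once $[\emptyset,Y_0]\subseteq\mathcal{X}_0$, any $A\le Y_0$ having some $r_n(A)\in\mathcal{F}_1$ already produces the Nash--Williams contradiction, so $\mathcal{F}_1|Y_0=\emptyset$ holds directly.
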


\begin{defn}\label{def.frontR1}
Suppose $(\mathcal{R},\le,r)$ is a closed triple that satisfies {\bf A.1} - {\bf A.4}.
Let $X\in\mathcal{R}$.
A family $\mathcal{F}\sse\mathcal{AR}$ is a {\em front} on $[0,X]$ if
\begin{enumerate}
\item
For each $Y\in[0,X]$, there is an $a\in \mathcal{F}$ such that $a\sqsubset Y$; and
\item
$\mathcal{F}$ is Nash-Williams.
\end{enumerate}
$\mathcal{F}$ is a {\em barrier}  if (1) and ($2'$) hold,
where
\begin{enumerate}
\item[(2$'$)]
$\mathcal{F}$ is Sperner.
\end{enumerate}
\end{defn}

\begin{rem}
Any front on a topological Ramsey space is Nash-Williams; hence is Ramsey, by Theorem \ref{thm.ANW}.
\end{rem}


\section{Construction of trees $\bT_{\al}$ and the spaces $(\mathcal{R}_{\al},\le_{\al},r^{\al})$, for $\al<\om_1$}\label{sec.tRsR_al}

By recursion on $\al<\om_1$, we  construct trees $\bT_{\al}$, related auxilliary structures $\bS_{\al}$, and maps $\tau_{\beta,\al}$, $\sigma_{\beta,\al}$, $\psi_{\al}$, for $\beta<\al$.
After completing this recursive definition, we then define the spaces $\mathcal{R}_{\al}$.
These spaces 
are modified versions of dense subsets of 
 the forcings $\bP_{\al}$ of Laflamme in \cite{Laflamme89}.
The main difference is that we pair down his forcings and 
use trees and related structures instead of finite sets 
in such a way as will produce  topological Ramsey spaces.
This allows us to apply   the theorems mentioned in Section \ref{sec.stevobook}.

The purpose of the $\bS_{\al}$ is several-fold. 
They aid in the precision of the definitions of members of $\mathcal{R}_{\al}$ while having the members of $\mathcal{R}_{\al}$   be well-founded trees (hence countable objects).
They also  provide a simple way of understanding the canonical equivalence relations in terms of downward closed subsets of the $\bS_{\al}(n)$.
This in turn makes clear the structures of the Rudin-Keisler types  and Tukey types  of all ultrafilters Rudin-Keisler or  Tukey reducible to $\mathcal{U}_{\al}$.
For $\al\ge \om$, $\bS_{\al}$ will not truly be a tree, but will have a tree-like structure under the ordering of $\subset$.
Downward closed subsets of $\bS_{\al}$ will be chains which are well-ordered by the reverse ordering $\supset$ on $\bS_{\al}$.
This may seem a bit strange at first, but the $\bS_{\al}$'s are in fact the correct structures, completely and precisely capturing the structure of the spaces $\mathcal{R}_{\al}$.

The maps $\psi_{\al}:\bS_{\al}\ra\bT_{\al}$ are to be thought of as projection maps, projecting the structure of $\bS_{\al}$ onto the tree $\bT_{\al}$.
For $\al<\om\cdot\om$,
$\tau_{\beta,\al}$ will be the projection map from  $\bT_{\al}$ to $\bT_{\beta}$ and $\sigma_{\beta,\al}$ will be the projection map from $\bS_{\al}$ to $\bS_{\beta}$.
For $\al\ge\om\cdot\om$, this will almost be the case: Properties $(\dagger)$ and $(\ddagger)$ below will be preserved.

Let $\mathcal{R}_0$ denote the Ellentuck space.
For the recursive construction of $\mathcal{R}_1$ from $\mathcal{R}_0$, 
it is useful to represent the Ellentuck space as a space of trees as follows.
Let  $\bT_0$ denote the tree ${}^{\le 1}\bN$ of height $1$ and infinite width.
The members of $\mathcal{R}_0$ are all infinite subtrees of $\bT_0$.
For $X,Y\in\mathcal{R}_0$, $Y\le_0 X$ iff $Y\sse X$.
Let $\bS_0=\bT_0$ and $\psi_0$ be the identity map from $\bS_0$ to $\bT_0$.

In order to accommodate the recursive definitions of the trees $\bT_{\al}$, $1\le\al<\om_1$,
we very slightly modify the definition of  $\bT_1$ from \cite{Dobrinen/Todorcevic11}
by changing $\bT_1(0)$ from $\{\lgl\rgl,\lgl 0\rgl,\lgl 0,0\rgl\}$ to $\{\lgl\rgl,\lgl 0\rgl\}$.
The structure $\bS_1$ here is exactly the structure $\bT_1$ from \cite{Dobrinen/Todorcevic11}.
The reader familiar with that paper will immediately see  that this re-definition does not  change any of the results in there.
In fact, we could use  the same definition of $\bT_1$ here as in  \cite{Dobrinen/Todorcevic11} and just define all trees $\bT_n$ below to be exactly $\bS_n$, for all $n<\om$.
The shortcoming of that approach is that it will not lead to a recursive definition for $\bT_{\al}$, $\om\le \al<\om_1$.

\begin{defn}[$\bT_1$,  $\tau_{0,1}$, $\bS_1$, $\sigma_{0,1}$, $\psi_1$]\label{defn.T_1}
Let $l^0_0=0$, $l^0_1=1$, $l^0_2=3$,
and generally,
$l^0_{n+1}=l^0_n+n+1$, for $n\ge 2$.
Define 
\begin{equation}
\bT_1(0)=\{\lgl\rgl,\lgl 0\rgl\}.
\end{equation}
For $0<n<\om$, let 
\begin{equation}
\bT_1(n)=\{\lgl\rgl,\lgl n\rgl,\lgl n,i\rgl:l^0_n\le i<l^0_{n+1}\}.
\end{equation}
Let
\begin{equation}
\bT_1=\bigcup_{n<\om}\bT_1(n).
\end{equation}
Note that $\bT_1$ is a tree, ordered by end-extension, which is a substructure of ${}^{\le 2}\bN$.


Define $\tau_{0,1}:\bT_1\ra\bT_0$, the {\em projection of $\bT_1$ to $\bT_0$},
 by 
\begin{align}
\tau_{0,1}(\lgl 0\rgl)
&=\lgl 0\rgl;\cr
\tau_{0,1}(t)
&=\lgl t(1)\rgl, \mathrm{\ if\ } |t|=2;\cr  
\tau_{0,1}(t)
&=\lgl\rgl, \mathrm{\  if \ }t=\lgl n\rgl\mathrm{\ and\ } n\ne 0\mathrm{\ or\ } t=\lgl\rgl.\cr
\end{align}

Define the auxiliary structure $\bS_1$
as follows.
For each $n<\om$,
let $\bS_1(n)$ be the collection of functions with domain $\{0,1\}$, $\{1\}$, or $\emptyset$  such that 
if $0\in\dom(f)$, then $l^0_n\le f(0)<l^0_{n+1}$, and if $1\in\dom(f)$, then $f(1)=n$.
Let $\bS_1=\bigcup_{n<\om}\bS_1(n)$.
Note that $\bS_1$  forms a tree structure under extension.
For example, $\{(1,1)\}\subset\{(0,1),(1,1)\}$ in the extension ordering on $\bS_1$.
Define $\sigma_{0,1}:\bS_1\ra\bS_0$, the {\em projection of $\bS_1$ to $\bS_0$} by
$\sigma_{0,1}(s)= s\re\{0\}$, for each $s\in\bS_1$.


For each $n<\om$, there is a natural projection map $\psi_1:\bS_1\ra\bT_1$ such that for each $n<\om$, $\psi_1''\bS_1(n)=\bT_1(n)$.
This map 
is defined by 
\begin{align}
\psi_1(\{(0,0),(1,0)\})
&=\psi_1(\{(1,0)\})=\lgl 0\rgl;\cr
\psi_1(\{(0,i),(1,n)\})
&=\lgl n,i\rgl, \mathrm{\ for\ }  n\ge 1;\cr
\psi_1(\{(1,n)\})
&=\lgl n\rgl, \mathrm{\ for \ }n\ge 1; \cr
\psi_1(\{\emptyset\})
&=\lgl\rgl.\cr
\end{align}
\end{defn}

\begin{rem}
$\bS_1$ has a tree structure under the ordering $\subset$, but with the domain of the sequences reversed in order.
This is done so that it will be clear exactly how $\bS_{\al}$ is built from  $\bS_{\beta}$, for $\beta<\al$, and also to aid in understanding the Rudin-Keisler ordering on the ultrafilters $\mathcal{U}_{\al}$ Ramsey for  the spaces $\mathcal{R}_{\al}$.
\end{rem}

In preparation for the recursive construction,
assume  that we have fixed, for each limit ordinal $\al<\om_1$, a strictly increasing cofinal function $c_{\al}:\om\ra\al$.
For $\al=\om\cdot (n+1)$ for $ n<\om$, we may take  $c_{\al}:\om\ra\al$ to be given by
$c_{\al}(m)=\om\cdot n + m$.
Though not necessary, this does make the spaces $\bT_{\al}$, $\al<\om\cdot\om$ very clear.

Given that $\bT_{\beta}$ and $\bS_{\beta}$ have been defined,
we define the maps $\sigma_{\gamma,\beta}$ and $\tau_{\gamma,\beta}$ for all $\gamma<\beta$ as follows.
Define $\sigma_{\gamma,\beta}$ on $\bS_{\beta}$ by 
$\sigma_{\gamma,\beta}(s)=s\re(\gamma+1)$, for each $s\in\bS_{\beta}$. 
Hence, if $\dom(s)=[\zeta,\beta]$ with $\zeta\le \gamma$,
then $\sigma_{\gamma,\beta}(s)=s\re[\zeta,\gamma]$;
and if $\gamma<\zeta\le\beta$, then $\sigma_{\gamma,\beta}(s)=\lgl\rgl$.
Note that 
for each $t\in\bT_{\beta}$,
$\psi_{\gamma}\circ\sigma_{\gamma,\beta}\circ\psi_{\beta}^{-1}(t)$ is  a singleton.
(The singleton can be the set containing the empty sequence.)
Define 
 $\tau_{\gamma,\beta}(t)$ to be {\em the} member of $\psi_{\gamma}\circ\sigma_{\gamma,\beta}\circ\psi_{\beta}^{-1}(t)$.

By our choices of the functions $c_{\al}$ for $\al<\om\cdot\om$,
it follows that for all $\gamma<\beta<\om\cdot\om$,
$\sigma_{\gamma,\beta}:\bS_{\beta}\ra\bS_{\gamma}$ and 
$\tau_{\gamma,\beta}:\bT_{\beta}\ra\bT_{\gamma}$.
For $\gamma\ge\om\cdot\om$,
this will not necessarily be the case.
However, the following properties $(\dagger)$ and $(\ddagger)$ hold for $\beta=1$, and we will preserve them for all $\beta<\om_1$.
For $m<\om$, we shall let $\bS_{\beta}([m,\om))$ denote $\bigcup\{\bS_{\beta}(n):m\le n<\om\}$.

\begin{enumerate}
\item[$(\dagger)$]
For each $\gamma\le \beta$, there is a $k<\om$ such that for each $l\ge k$, there is an $m<\om$ such that 
$\bS_{\gamma}(l)\sse\sigma_{\gamma,\beta}(\bS_{\beta}(m))$. 
\end{enumerate}
In particular, there are $k,m<\om$ such that $\sigma_{\gamma,\beta}(\bS_{\beta}( [k,\om)))=\bS_{\gamma}([m,\om))$.

\begin{enumerate}
\item[$(\ddagger)$]
For each $\gamma\le \beta$, there is a $k<\om$ such that for each $l\ge k$, there is an $m<\om$ such that 
$\bT_{\gamma}(l)\sse\tau_{\gamma,\beta}(\bT_{\beta}(m))$. 
\end{enumerate}

\noindent\underbar{Induction Assumption for $1<\al<\om_1$}.
Let  $1<\al<\om_1$ and suppose that 
 for all $\beta<\al$ we have 
defined $\bT_{\beta},\bS_{\beta},\psi_{\beta}$,
and for all $\gamma<\beta<\al$, we have defined $\sigma_{\gamma,\beta},\tau_{\gamma,\beta}$
so that  $(\dagger)$ and $(\ddagger)$ hold.
\vskip.1in

There are two cases for the induction step: either $\al$ is a successor ordinal or else $\al$ is a limit ordinal.

\begin{defn}[$\bT_{\al}$,  $\bS_{\al}$,  $\psi_{\al}$, $\al$ a successor ordinal]\label{defn.T_{al}succ}
Suppose that $\al=\delta+k+1$, where $\delta$ is either $0$ or a countable limit ordinal and $k<\om$.
For $n\le k+1$, define $l^{\delta+k}_n=n$, and for 
$n\ge k+1$, define
$l^k_{n+1}=l^k_n +(n+1) -k$.
For each $n\le k$, let
\begin{equation}
\bT_{\al}(n)= \bT_{\delta+k}(n).
\end{equation} 
For each $n> k$, let
\begin{equation}
\bT_{\al}(n)=\{\lgl\rgl\}\cup\{\lgl n\rgl^{\frown}t:t\in\bigcup\{\bT_{\delta+k}(i):l^{\delta+k}_n\le i<l^{\delta+k}_{n+1}\}\}.
\end{equation} 
Let 
\begin{equation}
\bT_{\al}=\bigcup\{\bT_{\al}(n):n<\om\}.
\end{equation}

For each $n<\om$, 
define $\bS_{\al}(n)$ to consist of the empty set along with  all functions $f\re [\beta,\al]$, 
$\beta\le \al$, 
where $f=g\cup\{(\al,n)\}$ for some 
$g\in \bigcup\{\bS_{\delta+k}(l):l^{\delta+k}_n\le l< l^{\delta+k}_{n+1}\}$ with $\dom(g)=[0,\delta+k]$.
Let $\bS_{\al}=\bigcup_{n<\om}\bS_{\al}(n)$.

There is a natural projection map $\psi_{\al}:\bS_{\al}\ra\bT_{\al}$ such that for each $n<\om$, $\psi_{\al}''\bS_{\al}(n)=\bT_{\al}(n)$,
defined as follows.
Let $s\in\bS_{\al}(n)$.
If $\dom(s)=\emptyset$, then let $\psi_{\al}(s)=\lgl\rgl$.
If $\dom(s)=[\al,\al]$, then let $\psi_{\al}(s)=\lgl n\rgl$.
Now suppose $\dom(s)=[\zeta,\al]$ where $\zeta<\al$.
If $n\le k$, then
let
$\psi_{\al}(s) =\psi_{\delta+k}(s\re [\zeta,\delta+k])$.
If  $n> k$, then let
$\psi_{\al}(s)=\lgl n\rgl^{\frown}\psi_{\delta+k}(s\re [\zeta,\delta+k])$.
\end{defn}

\begin{defn}[$\bT_{\al}$,  $\bS_{\al}$,  $\psi_{\al}$, $\al$ a limit ordinal]\label{defn.T_al_lim}
For $n=0$, letting $\gamma=c_{\al}(0)<\beta=c_{\al}(1)<\al$,
by $(\ddagger)$ there is a $k_0$ such that for each $k\ge k_0$, there is an $m$ such that $\bT_{c_{\al}(0)}(k)\sse \tau_{c_{\al}(0),c_{\al}(1)}(\bT_{c_{\al}(1)}(m))$.
Choose the least such $k_0$ and fix $m_0$ such that $\bT_{c_{\al}(0)}(k_0)\sse 
\tau_{c_{\al}(0),c_{\al}(1)}(\bT_{c_{\al}(1)}(m_0))$
and let $l_0$ be the largest integer such that $\bT_{c_{\al}(0)}(l_0)\sse\bT_{c_{\al}(1)}(m_0)$.
For each $i\le l_0$, let 
\begin{equation}
\bT_{\al}(i)=\bT_{c_{\al}(0)}(i)
\end{equation}
Define $p_{-1}=0$ and $p_0=l_0$.

Assume we have defined $\bT_{\al}(i)$ for all $i\le p_n$ such that
\begin{enumerate}
\item
For each $p_{n-1}<i\le p_n$, $\bT_{\al}(i)=\bT_{c_{\al}(n)}(m)$ for some $m$;
and for some $l_n,m_n$:
\item
$\bT_{\al}(p_n)=\bT_{c_{\al}(n)}(l_n)$;
\item
$\bT_{c_{\al}(n)}(l_n)\sse\bT_{c_{\al}(n+1)}(m_n)$, and $l_n$ is the largest such;
\item
For all $q\ge l_n$, there is an $m$ such that $\bT_{c_{\al}(n)}(q)\sse\bT_{c_{\al}(n+1)}(m)$.
\end{enumerate}

Use $(\ddagger)$ to find a $k_{n+1}$ such that for each $q\ge k_{n+1}$, there is an $m$ such that $\bT_{c_{\al}(n+1)}(q)\sse\bT_{c_{\al}(n+2)}(m)$.
Choose the least such $k_{n+1}\ge m_n$ and fix $m_{n+1}$ such that 
$\bT_{c_{\al}}(n+1)\sse\bT_{c_{\al}}(n+2)$ and let $l_{n+1}$ be the largest integer such that 
$\bT_{c_{\al}(n+1)}(l_{n+1})\sse\bT_{c_{\al}(n+2)}(m_{n+1})$.
Put 
\begin{equation}
\bT_{\al}(i)=\bT_{c_{\al}(n+1)}(m_n+i-p_n),
\end{equation}
for
$i=p_n+1,\dots, p_n+l_{n+1}-m_n:=p_{n+1}$.
Let 
\begin{equation}
\bT_{\al}=\bigcup\{\bT_{\al}(j):j<\om\}.
\end{equation}
Note that $(\ddagger)$ is preserved up to and including $\al$ by this construction.

Define $\bS_{\al}$ to be the collection functions with domain $\al+1$ (ordered downwards) as follows.
For each $n<\om$ and $p_{n-1}<i\le p_n$,
let $\bS_{\al}(i)$ consist of the emptyset along with the collection of all functions 
$f$, satisfying
\begin{enumerate}
\item
$\dom(f)=[\beta,\al]$ for some $\beta\le\al$;
\item
 $f\re [\beta,c_{\al}(n)]\in\bS_{c_{\al}(n)}(m)$,
 where $m$ is such that  
 $\bT_{\al}(i)=\bT_{c_{\al}(n)}(m)$; 
  and
\item
$f\re[c_{\al}(n) +1,\al]$ is the constant function with value $i$.
\end{enumerate}
Then we set
\begin{equation}
\bS_{\al}=\bigcup_{i<\om}\bS_{\al}(i).
\end{equation}

There is a natural projection map $\psi_{\al}:\bS_{\al}\ra\bT_{\al}$ such that for each $n<\om$, $\psi_{\al}''\bS_{\al}(n)=\bT_{\al}(n)$.
For $i<\om$, $s\in\bS_{\al}(i)$ and $n$ such that $p_{n-1}<i\le p_n$,
define
$\psi_{\al}(s) = \psi_{c_{\al}(n)}\circ \sigma_{c_{\al}(n),\al}(s)$.
\end{defn}

If $s,s'\in\bS_{\al}$, $\dom(s)=[\beta,\al]$, $\dom(s')=[\beta',\al]$,
we say that $s'$ is an {\em immediate successor} of $s$ iff $\beta=\beta' +1$ and $s'\supset s$; we also say that $s$ is the {\em immediate predecessor} of $s'$.
We shall say that $s$ is a {\em splitting node} iff $\beta$ is a successor ordinal, say $\beta=\gamma+1$, and  there are 
$s_0,s_1\in\bS_{\al}$ with $\dom(s_0)=\dom(s_1)=[\gamma,\al]$,
$s_0\re[\beta,\al]=s_1\re[\beta,\al]=s$,
and $s_0\ne s_1$ (that is, $s_0(\gamma)\ne s_1(\gamma)$).

Note that for each $t\in \bT_{\al}$, $\psi^{-1}_{\al}(t)$ is a closed  interval  of $\bS_{\al}(n)$ and the maximal node in $\psi^{-1}_{\al}(t)$ is either maximal in $\bS_{\al}$ or else a splitting node in $\bS_{\al}$. 
Whenever $s$ is  a splitting node in $\bS_{\al}$,
$\min(\dom(s))$ must be a successor ordinal.
This allows us to define 
the lexicographic ordering $<_{\mathrm{lex}}$ on $\bS_{\al}$.

\begin{defn}\label{def.lex}
For $s,s'\in\bS_{\al}$, define $s<_{\mathrm{lex}} s'$ iff
either $s\subsetneq s'$ (i.e.\ $s'$ properly extends $s$),
or else $s(\beta-1)<s'(\beta-1)$,
where $\beta\le\al$ is the maximal ordinal such that 
$s\re[\beta,\al]=s'\re[\beta,\al]$ and $s(\beta-1)\ne s'(\beta-1)$.
By {\em isomorphism} between substructures of $\bS_{\al}$, we mean a bijection which preserves  the lexicographical order.
\end{defn}

\begin{rem}
Each  $\bS_{\al}$ forms a tree-like structure.
For $n<\om$,  $\bS_n$ truly  is  a tree.
For each $s\in\bS_{\al}$, $\{s'\in\bS_{\al}:s'\subset s\}$
forms a linearly ordered set which is well-ordered by $\supset$.
Moreover, for each $n<\om$, there are only finitely many splitting nodes  in $\bS_{\al}(n)$.
The $\bS_{\al}$ may be viewed as  the true structures,  the trees $\bT_{\al}$ being obtained by the simple projection mappings $\psi_{\al}:\bS_{\al}\ra\bT_{\al}$.
The map $\psi_{\al}$ essentially glues   all non-splitting nodes  between two consecutive splitting nodes of $\bS_{\al}$ to the upper splitting node.
\end{rem}

We are now equipped to define $\mathcal{R}_{\al}$.

\begin{defn}[$(\mathcal{R}_{\al},\le_{\al},r^{\al})$, $1\le\al<\om_1$]\label{defn.R_al}
A subset $X\sse\bT_{\al}$ is a member of $\mathcal{R}_{\al}$ iff
$\psi^{-1}_{\al}(X)\cong \bS_{\al}$.
Equivalently, 
$X\in\mathcal{R}_{\al}$ iff
there is a strictly increasing sequence $(k_n)_{n<\om}$ such that 
\begin{enumerate}
\item
$X\cap\bT_{\al}(m)\ne\emptyset$ iff $m=k_n$ for some $n<\om$;
\item
For each $n<\om$,
$\psi_{\al}^{-1}(X\cap\bT_{\al}(k_n))\cong \bS_{\al}(n)$.
\end{enumerate}
For the sequence $(k_n)_{n<\om}$ above,  
we let $X(n)$ denote $X\cap \mathbb{T}_{\al}(k_n)$.
We shall call $X(n)$ the {\em $n$-th tree} of $X$.
For each $n<\om$, 
\begin{equation}
\mathcal{R}_{\al}(n)=\{X(n):X\in\mathcal{R}_{\al}\}.
\end{equation} 
For $n<\om$, $r^{\al}_n(X)$ denotes $\bigcup_{i<n}X(i)$.
The set of $n$-th approximations to members in $\mathcal{R}_{\al}$ is
\begin{equation}
\mathcal{AR}^{\al}_n=\{r^{\al}_n(X):X\in\mathcal{R}_{\al}\},
\end{equation}
 and the set of all finite approximations to members in $\mathcal{R}_{\al}$ is
\begin{equation}
\mathcal{AR}^{\al}=\bigcup_{n<\om}\mathcal{AR}^{\al}_n.
\end{equation}
For $X,Y\in\mathcal{R}_{\al}$, define $Y\le_{\al} X$ iff 
there is a strictly increasing sequence $(k_n)_{n<\om}$ such that for each $n<\om$, $Y(n)\sse X(k_n)$.

Let $a,b\in\mathcal{AR}^{\al}$ and $A,B\in\mathcal{R}_{\al}$.
The quasi-ordering $\le^{\al}_{\mathrm{fin}}$ on $\mathcal{AR}^{\al}$ is defined as follows:
$b\le^{\al}_{\mathrm{fin}} a$ if and only if there are $n\le m$ 
such that
$a\in\mathcal{AR}^{\al}_m$,
 $b\in\mathcal{AR}^{\al}_n$, and  
there is  a strictly increasing sequence $(k_i)_{i<n}$  with $k_{n-1}<m$ such that  
for each $i< n$,  $b(i)$ is a subtree of $a(k_i)$ (equivalently, $b(i)\sse a(k_i)$).
In fact, $\le^{\al}_{\mathrm{fin}}$ is a partial ordering.
We write $a\le^{\al}_{\mathrm{fin}} B$ if and only if there is an $A\in\mathcal{R}_{\al}$ such that $a\sqsubset A$ and  $A\le_{\al} B$.
$B/a$ is defined to be $\bigcup\{B(n):n\ge\depth_B(a)\}$.
The basic open sets are given by 
\begin{equation}
[a,B]=\{X\in\mathcal{R}_{\al}:a\sqsubseteq X\mathrm{\  and \ }X\le_{\al} B\}.
\end{equation}
\end{defn}

\begin{rem}\label{rem.barrier}
Since the quasi-ordering $\le_{\mathrm{fin}}^{\al}$ is actually a partial ordering, it follows from Corollary 5.19 in \cite{TodorcevicBK10} that for any front $\mathcal{F}$ on $[0,X]$,
$X\in\mathcal{R}_{\al}$, there is a $Y\le_{\al} X$ for which $\mathcal{F}|Y$ is a barrier.
\end{rem}

We point out the following trivial but useful facts.

\begin{fact}
\begin{enumerate}
\item
For  $u\sse\bT_{\al}$,
$u\in\mathcal{R}_{\al}(n)$ iff $\psi_{\al}^{-1}(u)\sse\bS_{\al}(m)$  for some  $m\ge n$ and $\psi_{\al}^{-1}(u)\cong \bS_{\al}(n)$.
\item
 $u\in\mathcal{R}_{\al}(n)$ iff the structure  obtained by identifying each node $t$  in $u$ which is both not  a leaf and not a  splitting node in $u$ with the minimal splitting node in $u$ above  $t$,
is isomorphic to $\bT_{\al}(n)$.
\item
Because of the structure inherent in being a member of $\mathcal{R}_{\al}$, the following are equivalent for all $X,Y\in\mathcal{R}_{\al}$:
\begin{enumerate}
\item[(a)]
$Y\le_{\al} X$. 
\item[(b)]
There is a strictly increasing sequence $(k_n)_{n<\om}$ such that for each $n<\om$, $Y(n)$ is a subtree of $X(k_n)$, 
$\psi_{\al}^{-1}(Y(n))$ is isomorphic to $\bS_{\al}(n)$,
and $\psi_{\al}^{-1}(Y(n))$ is a substructure of $\psi_{\al}^{-1}(X(k_n))$.
\item[(c)]
$Y\sse X$.
\end{enumerate}
\end{enumerate}
\end{fact}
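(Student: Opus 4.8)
The plan is to read all three items off the definitions of $\bT_{\al}$, $\bS_{\al}$, $\psi_{\al}$ and $(\mathcal{R}_{\al},\le_{\al},r^{\al})$, using two structural observations. The first is that $\psi_{\al}$ is \emph{level-preserving}: since $\psi_{\al}''\bS_{\al}(m)=\bT_{\al}(m)$ and, apart from the empty node shared by all levels, no node outside $\bS_{\al}(m)$ maps into $\bT_{\al}(m)$, one has $\psi_{\al}^{-1}(\bT_{\al}(m))=\bS_{\al}(m)$; hence for $u\sse\bT_{\al}$, $u\sse\bT_{\al}(m)$ if and only if $\psi_{\al}^{-1}(u)\sse\bS_{\al}(m)$. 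The second, recorded in the Remark following Definition \ref{def.lex}, is that $\psi_{\al}$ collapses each maximal run of non-splitting nodes onto the splitting node immediately above it.

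For Part (1) I would argue from Definition \ref{defn.R_al}. If $u\in\mathcal{R}_{\al}(n)$, then $u=X(n)=X\cap\bT_{\al}(k_n)$ for some $X\in\mathcal{R}_{\al}$, so $\psi_{\al}^{-1}(u)\cong\bS_{\al}(n)$ immediately, and $m:=k_n\ge n$ since $(k_n)_{n<\om}$ is strictly increasing; by the first observation $\psi_{\al}^{-1}(u)\sse\bS_{\al}(m)$. Conversely, given $\psi_{\al}^{-1}(u)\cong\bS_{\al}(n)$ and $\psi_{\al}^{-1}(u)\sse\bS_{\al}(m)$ with $m\ge n$, I would extend $u$ to a member $X\in\mathcal{R}_{\al}$ with $X(n)=u$ by placing, for $i<n$, the full tree $\bT_{\al}(i)$ at level $i$, putting $u$ at level $m$, and, for $i>n$, placing a copy of $\bS_{\al}(i)$ at a level $k_i>m$ chosen so that $(k_i)$ is strictly increasing. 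The only point beyond definition-chasing here is that such copies exist at cofinally many levels, which is exactly the homogeneity built into the recursive construction (cf.\ $(\dagger)$ and conditions (3)--(4) of Definition \ref{defn.T_al_lim}).

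Part (2) is the intrinsic restatement of Part (1). By the gluing description above, contracting in $u$ each node that is neither a leaf nor a splitting node to the least splitting node above it is precisely the operation inverting $\psi_{\al}$ at the level occupied by $u$; thus the contracted structure is isomorphic to $\bT_{\al}(n)$ exactly when $\psi_{\al}^{-1}(u)\cong\bS_{\al}(n)$, which by Part (1) is equivalent to $u\in\mathcal{R}_{\al}(n)$. I would only need to confirm that the splitting nodes of $u$ match the splitting nodes of $\psi_{\al}^{-1}(u)$ and that the contraction preserves the level, both immediate from the construction.

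For Part (3), the equivalence (a)$\Leftrightarrow$(b) merely adds automatic clauses to the definition of $\le_{\al}$: $Y\le_{\al}X$ yields $(k_n)$ with $Y(n)\sse X(k_n)$, and since $Y\in\mathcal{R}_{\al}$ forces $\psi_{\al}^{-1}(Y(n))\cong\bS_{\al}(n)$ while monotonicity of $\psi_{\al}^{-1}$ turns $Y(n)\sse X(k_n)$ into the substructure relation, (b) follows; the reverse implication discards these clauses. Here (a)$\Rightarrow$(c) is immediate, as $Y=\bcup_n Y(n)\sse\bcup_n X(k_n)\sse X$. The substantive direction is (c)$\Rightarrow$(a): assuming $Y\sse X$, I fix $n$ and note that $Y(n)$ lies in the single level $\bT_{\al}(j_n)$ and contains a non-root node (as $\psi_{\al}^{-1}(Y(n))\cong\bS_{\al}(n)$ is nontrivial), so $Y(n)\sse X\cap\bT_{\al}(j_n)$ forces $X$ to have a tree $X(k_n)=X\cap\bT_{\al}(j_n)\contains Y(n)$ at that level; strict monotonicity of $n\mapsto k_n$ follows from that of $(j_n)$ and of the levels of $X$, giving $Y\le_{\al}X$. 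The main obstacle I anticipate is bookkeeping rather than conceptual: one must consistently quarantine the empty node, which is common to all $\bT_{\al}(m)$, so that the phrases ``$u$ occupies a single level'' and the level-preservation of $\psi_{\al}$ are correctly stated, and one must invoke the construction's homogeneity at the one non-formal step in the converse of Part (1).
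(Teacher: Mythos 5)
Your proposal is correct and coincides with the paper's own treatment: the paper gives no proof of this Fact whatsoever, introducing it as one of the ``trivial but useful facts'' that follow by unwinding Definition~\ref{defn.R_al} together with the constructions of $\bT_{\al}$, $\bS_{\al}$ and $\psi_{\al}$. Your definition-chasing argument---including the appeal to the homogeneity of the recursive construction for the converse direction of (1), the reading of (2) as the intrinsic restatement of (1) via the gluing description of $\psi_{\al}$, and the explicit bookkeeping about the root node shared by all levels in (3)---is precisely the justification the paper leaves implicit.
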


Throughout this paper, we use the following fact without further mention.

\begin{fact}\label{fact.B(n)}
Suppose $1\le\al<\om_1$, $n<\om$, $a\in\mathcal{AR}^{\al}_n$,  $B\in\mathcal{R}_{\al}$, and there are $k<k'$ such that
$B(n)\sse\mathbb{T}_{\al}(k')$ and $a(n-1)\sse\mathbb{T}_{\al}(k)$.
Then $a\cup (B/r_n^{\al}(B))$ is a member of $\mathcal{R}_{\al}$.
\end{fact}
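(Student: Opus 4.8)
The plan is to verify directly that $Z := a\cup(B/r^{\al}_n(B))$ satisfies the equivalent characterization of membership in $\mathcal{R}_{\al}$ given in Definition \ref{defn.R_al}: namely, that there is a strictly increasing sequence of levels enumerating the nonempty slices of $Z$, each of whose $\psi_{\al}$-preimage is isomorphic to the corresponding $\bS_{\al}(m)$.

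First I would unwind the notation $B/r^{\al}_n(B)$. Since $r^{\al}_n(B)=\bigcup_{i<n}B(i)$ consists of exactly the first $n$ trees of $B$, and $\le^{\al}_{\mathrm{fin}}$ compares approximations only when the first has no more trees than the second, one checks that $\depth_B(r^{\al}_n(B))=n$. Hence $B/r^{\al}_n(B)=\bigcup\{B(m):m\ge n\}$, so that $Z$ is the union of the $n$ trees $a(0),\dots,a(n-1)$ of $a$ together with the trees $B(n),B(n+1),\dots$ of $B$. (The case $n=0$ is trivial, as then $a=\emptyset$ and $Z=B$.)

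Next I would check the level structure. Each tree $a(i)$, $i<n$, lives on a single slice $\bT_{\al}(p_i)$ with $p_0<\dots<p_{n-1}=k$, and each $B(m)$, $m\ge n$, lives on a single slice, with that of $B(n)$ equal to $\bT_{\al}(k')$ and those of $B(n+1),B(n+2),\dots$ strictly above it. The hypothesis $k<k'$ is exactly what guarantees that the combined sequence of occupied levels $p_0<\dots<p_{n-1}=k<k'<\dots$ is strictly increasing across the junction between $a$ and the tail of $B$; in particular the slices of $a$ and of $B/r^{\al}_n(B)$ are disjoint, so the union is clean and the enumeration $Z(0),\dots,Z(n-1),Z(n),\dots$ of the trees of $Z$ is well defined with $Z(i)=a(i)$ for $i<n$ and $Z(m)=B(m)$ for $m\ge n$.

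Finally I would verify the isomorphism conditions, which is where the indexing is crucial. Because $a\in\mathcal{AR}^{\al}_n$, we have $a=r^{\al}_n(X)$ for some $X\in\mathcal{R}_{\al}$, whence $\psi^{-1}_{\al}(Z(i))=\psi^{-1}_{\al}(X(i))\cong\bS_{\al}(i)$ for each $i<n$; and because $B\in\mathcal{R}_{\al}$, $\psi^{-1}_{\al}(Z(m))=\psi^{-1}_{\al}(B(m))\cong\bS_{\al}(m)$ for each $m\ge n$. The point is that cutting $B$ at precisely index $n$ — the number of trees of $a$ — makes $B(n)$ land as the $n$-th tree of $Z$, so the global index $m$ of each tree of $Z$ matches the index for which it is isomorphic to $\bS_{\al}(m)$. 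Thus $Z(m)\cong\bS_{\al}(m)$ for every $m<\om$ along a strictly increasing sequence of levels, and Definition \ref{defn.R_al} yields $Z\in\mathcal{R}_{\al}$. The only real subtlety — and the single place the hypothesis is used — is the strict increase of levels at the $a$-to-$B$ junction supplied by $k<k'$; the index alignment, though essential, is forced automatically by the definition of $B/r^{\al}_n(B)$.
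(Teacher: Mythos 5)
The paper states this fact without proof (it is introduced as ``trivial but useful''), so there is no official argument to diverge from; your direct verification is correct and is exactly the intended routine check. In particular, you supply the two points that make it work: the computation $\depth_B(r^{\al}_n(B))=n$, so that $B/r^{\al}_n(B)=\bigcup\{B(m):m\ge n\}$, and the observation that the hypothesis $k<k'$ forces the occupied slices of $a\cup(B/r^{\al}_n(B))$ to increase strictly across the junction while the cut at index $n$ keeps each $m$-th tree isomorphic (via $\psi_{\al}^{-1}$) to $\bS_{\al}(m)$.
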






\section{$\mathcal{R}_{\al}$ is a topological Ramsey space, for each $\al<\om_1$}\label{sec.tRs}

In this section, we prove by induction  that each $\mathcal{R}_{\al}$, $2\le\al<\om_1$, is a topological Ramsey space.
In the process, we define the canonical equivalence relations on $\mathcal{R}_{\al}(n)$ and on $\mathcal{AR}^{\al}_n$.
Recall that $\mathcal{R}_0$ denotes the Ellentuck space, which is the fundamental example of a topological Ramsey space.
In Theorem 3.9 of \cite{Dobrinen/Todorcevic11}, $\mathcal{R}_1$ was shown to be a topological Ramsey space.
This forms the basis of the induction scheme which cycles through this and the next section.
We begin this section by setting the stage for the introduction of the canonical equivalence relations.

A subset $S\sse\bS_{\al}$ is called {\em downward closed} iff $\emptyset \in S$ and, for all  $s\in S$, if $\dom(s)=[\beta,\al]$,
then also $s\re [\gamma,\al]\in S$ for all $\gamma\in[\beta,\al]$.
Two downward closed sets $S,S'\sse\bS_{\al}$ are  {\em isomorphic} iff  there is a bijection between $S$ and $S'$ which preserves the lexicographic ordering.

\begin{defn}\label{defn.frakS(n,m)}
For each $n<\om$,
define $\mathfrak{S}_{\al}(n)$ to be the collection of all non-empty downward closed subsets of $\bS_{\al}(n)$.
For each $n\le m$, 
$\mathcal{R}_{\al}(n)|\bT_{\al}(m)$ denotes the collection of all $u\in\mathcal{R}_{\al}(n)$ such that $u\sse\bT_{\al}(m)$.
Define $S\in\mathfrak{S}_{\al}(n,m)$ iff $S\in\mathfrak{S}_{\al}(n)$ and 
there is a $u\in\mathcal{R}_{\al}(n)|\bT_{\al}(m)$ and a nonempty subtree $v\sse u$ such that $S\cong\psi^{-1}_{\al}(v)$.
\end{defn}

We point out the following.
The set $\{\emptyset\}$ is the $\sse$-minimal member of each $\mathfrak{S}_{\al}(n)$; 
$\{\lgl\rgl\}$ is the smallest nonempty subtree of any member in  $\mathcal{R}_{\al}(n)$.
 $\psi^{-1}_{\al}(v)=\{\emptyset\}$ iff  $v=\{\lgl\rgl\}$.
Note that  $\mathfrak{S}_{\al}(n,m)$ is finite, for all $n\le m$.  However, if $\al$ is infinite, then $\mathfrak{S}_{\al}(n)$ is countably infinite.

Given $\beta\le\al$, we shall let $S^{\al}_{\beta}$, or just $S_{\beta}$, denote the member of $\mathfrak{S}_{\al}(0)$ which is a downward closed chain of order type $[\beta,\al+1]$.
Thus,  $S_{\beta}=\bS_{\al}(0)\re[\beta,\al+1]$, which is the collection of all constantly zero functions on domains $[\gamma,\al]$, for $\beta\le\gamma\le\al$, along with the empty function.
The next fact follows immediately from Definition \ref{defn.frakS(n,m)}.

\begin{fact}\label{fact.frakSm<m'}
Let $n\le m<m'$. 
\begin{enumerate}
\item
$\mathfrak{S}_{\al}(n,m)\sse\mathfrak{S}_{\al}(n,m')\sse\mathfrak{S}_{\al}(n)$.
\item
$\mathfrak{S}_{\al}(n)=\bigcup\{\mathfrak{S}_{\al}(n,m):m\ge n\}\cup\{\{(\al,n),\emptyset\}\}$.
\end{enumerate}
\end{fact}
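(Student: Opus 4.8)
The plan is to recast membership in $\mathfrak{S}_{\al}(n,m)$ in terms of which downward closed subsets of $\bS_{\al}(n)$ arise, up to lexicographic isomorphism, as $\psi^{-1}_{\al}(v)$ for a subtree $v$ of a copy of $\bT_{\al}(n)$ living inside $\bT_{\al}(m)$. Since $\psi^{-1}_{\al}(v)=\bigcup_{t\in v}\psi^{-1}_{\al}(t)$ is a union of full fibers, and each fiber $\psi^{-1}_{\al}(t)$ is a $\subset$-interval whose top is, by the discussion preceding Definition \ref{def.lex}, either maximal in $\bS_{\al}$ or a splitting node, the governing observation is: a nonempty downward closed $S$ lies in some $\mathfrak{S}_{\al}(n,m)$ exactly when $S$ is lex-isomorphic to such a fiber-union inside a copy of $\bS_{\al}(n)$, which forces each $\subset$-maximal node of $S$ to be matched to a splitting node or a maximal node, with the required fiber lengths simultaneously available at a common level $m$. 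I would establish this reformulation first, reading off the fiber structure of $\psi_{\al}$ directly from the recursive construction of $\bS_{\al}$.

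For part (1), the inclusion $\mathfrak{S}_{\al}(n,m')\sse\mathfrak{S}_{\al}(n)$ is immediate from Definition \ref{defn.frakS(n,m)}. For $\mathfrak{S}_{\al}(n,m)\sse\mathfrak{S}_{\al}(n,m')$ with $m<m'$, I would prove a monotonicity lemma for the construction: the lengths and branching shapes of fibers available in $\bT_{\al}(m)$ are contained in those available in $\bT_{\al}(m')$, so any $u\in\mathcal{R}_{\al}(n)|\bT_{\al}(m)$ together with a subtree $v$ can be matched by some $u'\in\mathcal{R}_{\al}(n)|\bT_{\al}(m')$ and $v'$ with $\psi^{-1}_{\al}(v')\cong\psi^{-1}_{\al}(v)$. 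This monotonicity is precisely the content preserved by $(\dagger)$ and $(\ddagger)$, namely that higher levels are built to contain isomorphic copies of the lower-level fiber data; hence the same isomorphism type $S$ is realized at level $m'$.

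For part (2), the containment $\bigcup_m\mathfrak{S}_{\al}(n,m)\cup\{\{(\al,n),\emptyset\}\}\sse\mathfrak{S}_{\al}(n)$ follows from part (1) together with the fact that $\{(\al,n),\emptyset\}$ is a nonempty downward closed subset of $\bS_{\al}(n)$, being the $\subset$-downward closure of the unique domain-$[\al,\al]$ node $\{(\al,n)\}$. The reverse containment is the heart of the matter. Given a nonempty downward closed $S\sse\bS_{\al}(n)$ with $S\neq\{(\al,n),\emptyset\}$, I would show $S$ is realizable at some finite level by an induction on $\al$ mirroring the recursive construction of $\bS_{\al}$: the part of $S$ lying above the bottom node $\{(\al,n)\}$ decomposes into downward closed subsets of copies of the $\bS_{\beta}(\cdot)$, $\beta<\al$, to which the inductive hypothesis applies, while the bottom chain through $\{(\al,n)\}$ is realized by choosing $m$ large enough that the corresponding initial fiber attains the needed length. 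The single obstruction is the two-element chain $\{(\al,n),\emptyset\}$: realizing it as $\psi^{-1}_{\al}(v)$ forces $v=\{\lgl\rgl, t\}$ with $t$ a child of the root, hence forces the initial fiber to have length $1$, i.e.\ forces $\{(\al,n)\}$ to be a splitting or maximal node. When $\al$ is a limit ordinal (more generally when $\{(\al,n)\}$ is non-splitting and non-maximal) this fails at every level, so $\{(\al,n),\emptyset\}$ must be adjoined separately; when $\{(\al,n)\}$ is already a splitting node it lies in the union and the adjunction is harmless, so the stated equality holds uniformly.

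The main obstacle is this reverse containment in (2): proving at once that every downward closed set other than $\{(\al,n),\emptyset\}$ is realizable and that this one is the unique exception. The delicate points are (i) matching arbitrary chain order types and branching patterns occurring in $\bS_{\al}(n)$ to fiber lengths and splitting positions that genuinely occur at a common finite level $m$, which is where the recursive construction and the cofinal functions $c_{\al}$ enter; and (ii) verifying, via the characterization of splitting nodes as those with minimal domain a successor ordinal, that the only root-adjacent node that can fail to be a fiber top is $\{(\al,n)\}$, so that no other small downward closed set is lost. I expect the fiber-spectrum monotonicity lemma needed for (1) to be the reusable technical core, with the induction on $\al$ in (2) resting upon it.
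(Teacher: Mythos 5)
The paper offers no proof of this Fact at all: it is recorded as following ``immediately'' from Definition \ref{defn.frakS(n,m)}. So the only issue is whether your argument is sound, and it has a genuine gap: the ``monotonicity lemma'' on which your part (1) rests is false as stated. The fibers $\psi^{-1}_{\al}(t)$ do not persist from $\bT_{\al}(m)$ to $\bT_{\al}(m')$; they get strictly \emph{finer} as the level increases. Already for $\al=1$: the fiber over $\lgl 0\rgl$ in $\bT_1(0)$ is the two-element chain $\{\{(1,0)\},\{(0,0),(1,0)\}\}$, while every fiber over a node of $\bT_1(m)$, $m\ge 1$, is a singleton; likewise for $\al=2$ the fiber over $\lgl 0\rgl$ at level $0$ is a three-element chain, while at level $2$ all fibers are singletons. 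So ``the lengths and branching shapes of fibers available in $\bT_{\al}(m)$ are contained in those available in $\bT_{\al}(m')$'' is exactly backwards, and $(\dagger)$, $(\ddagger)$ cannot rescue it: those properties compare $\bS_{\gamma}$ with the $\sigma_{\gamma,\beta}$-, $\tau_{\gamma,\beta}$-projections of $\bS_{\beta}$ for \emph{different ordinals} $\gamma\le\beta$, not levels within one space. What actually makes $\mathfrak{S}_{\al}(n,m)\sse\mathfrak{S}_{\al}(n,m')$ true is the opposite phenomenon: finer fibers can be \emph{reassembled} into unions isomorphic to coarser ones. A correct proof needs a refinement lemma --- for a suitable $w\sse\bT_{\al}(m')$ with $\psi^{-1}_{\al}(w)\cong\bS_{\al}(m)$, every level-$m'$ fiber over a node of $w$ is carried by the isomorphism into a single level-$m$ fiber (because splitting nodes of $\psi^{-1}_{\al}(w)$ are splitting nodes of $\bS_{\al}(m')$, and no fiber contains an ambient splitting node in its interior) --- after which one transfers $u,v$ through $w$.

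The same level-versus-shape confusion infects your analysis of the exceptional two-element chain in part (2). Whether $\{\emptyset,\{(\al,n)\}\}$ is realized as some $\psi^{-1}_{\al}(v)$ is not governed by whether $\{(\al,n)\}$ is splitting or maximal in $\bS_{\al}(n)$: realization takes place at a level $m\ge n$ of one's choosing, and what is needed is that $\{(\al,m)\}$ be a fiber top in $\bS_{\al}(m)$ for \emph{some} such $m$. Concretely, for $\al=2$, $n=0$, the node $\{(2,0)\}$ has a unique extension in $\bS_2(0)$ (it is neither splitting nor maximal), yet the two-element chain lies in $\mathfrak{S}_2(0,m)$ for every $m\ge 2$, where all fibers are singletons; your stated criterion would wrongly place it outside the union. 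The correct dichotomy is by the type of $\al$: for successor $\al$ the node $\{(\al,m)\}$ is a splitting node of $\bS_{\al}(m)$ for all sufficiently large $m$, so the chain belongs (redundantly) to the union, whereas for limit $\al$ the fiber directly above the root at every level contains the entire infinite ``constant'' part of the stem, so the chain is never realized and must be adjoined. Your final equality happens to survive because the chain is added explicitly in either case, but the mechanism you give is wrong, and since it is the same mechanism your part (1) depends on, the argument as a whole does not go through without the refinement lemma above.
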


Next we define projection maps $\pi_S$.  The map $\pi_S$ takes a structure $u$ in its domain and projects it to the substructure of $u$ whose $\psi_{\al}$-preimage is isomorphic $S$.

\begin{defn}\label{defn.pi_S}
Let $1\le\al<\om_1$ and $m<\om$ be given.
Let $S\in\mathfrak{S}_{\al}(m)$.
Define $\pi_S$ on $\mathcal{R}_{\al}(m)$ as follows:
Given $u\in\mathcal{R}_{\al}(m)$,
let $\iota_u:\bS_{\al}(m)\ra\psi_{\al}^{-1}(u)$ be {\em the} isomorphism from   $\bS_{\al}(m)$ to $\psi^{-1}_{\al}(u)$.
Define
\begin{equation}
\pi_S(u)=
\psi_{\al}\circ\iota_u (S).
\end{equation}
Given $n<m$, letting $S$ be the subset of $\bS_{\beta}(m)$ which  consists of the lexicographically least (i.e.\ leftmost) members of $\bS_{\al}(m)$ which together comprise a set isomorphic to $\bS_{\al}(n)$,
let $\pi_{m,n}^{\al}$ denote $\pi_S$ for this particular $S$.
\end{defn}

Note that if $n<m$ and $S$ is any downward closed subset of $\bS_{\al}(m)$ such that $S$ is isomorphic to $\bS_{\al}(n)$,
then $\pi_S$ is in fact a map from $\mathcal{R}_{\al}(m)$ to $\mathcal{R}_{\al}(n)$.

We now introduce the various canonical equivalence relations.

\begin{defn}[Canonical Equivalence Relations on $\mathcal{R}_{\al}(n)$, for $\al<\om_1$]\label{defn.canonicaleqrels}
For each $n<\om$, 
each  $S\in \mathfrak{S}_{\al}(n)$ induces the equivalence relation $\E_S$ on $\mathcal{R}_{\al}(n)$ defined as follows:
For $u,v\in\mathcal{R}_{\al}(n)$,
\begin{equation}
u \E_S v \Leftrightarrow
\pi_S(u)=\pi_S(v).
\end{equation}
Let $\mathcal{E}_{\al}(n)$ denote the collection of all equivalence relations of the form $\E_S$, where $S\in\mathfrak{S}_{\al}(n)$.
$\mathcal{E}_{\al}(n)$ is the set of {\em canonical equivalence relations on $\mathcal{R}_{\al}(n)$}.
\end{defn}

\begin{defn}[Canonical Equivalence Relations on $\mathcal{R}_{\al}(n)|X(m)$, for $\al<\om_1$, $X\in\mathcal{R}_{\al}$, and $n\le m<\om$]\label{defn.canonicaleqrelsnm}
Given any $\al<\om_1$, $X\in\mathcal{R}_{\al}$, and $n\le m$, 
the
 {\em canonical equivalence relations on $\mathcal{R}_{\al}(n)|X(m)$} are given by $\E_S$, 
where $S\in\mathfrak{S}_{\al}(n,m)$.  
\end{defn}

\begin{rem}
For any $n\le m$ and any $S\in\mathfrak{S}_{\al}(n)$,
there is an $S'\in\mathfrak{S}_{\al}(n,m)$ such that $\E_S$ is the same as $\E_{S'}$ when restricted to 
 $\mathcal{R}_{\al}(n)|X(m)$.
Moreover, this $S'$ is unique, and  it must be the case that $S\sse S'$.
\end{rem}

\begin{defn}[Canonical Equivalence Relations on $\mathcal{AR}^{\al}_n$]\label{defn.canon-chunk} 
For any given $n_0<n_1<\om$ and $X\in\mathcal{R}_{\al}$,  let
$X[n_0,n_1)=\bigcup\{X(n):n_0\le n<n_1\}$.
Let
\begin{equation} 
\mathcal{R}_{\al}[n_0,n_1)
=\bigcup\{ X[n_0,n_1): X\in\mathcal{R}_{\al}\}.
\end{equation}
We shall say that an equivalence relation $\E$ on  $\mathcal{R}_{\al}([n_0,n_1))$  is {\em canonical} iff there   are    $S(i)\in\mathfrak{S}_{\al}(i)$, 
 $n_0\le i<n_1$, such that for all $x,y\in\mathcal{R}_{\al}([n_0,n_1))$,
\begin{equation} 
x\E y\Leftrightarrow \forall n_0\le i<n_1,\ x(i) \E_{S(i)} y(i).
\end{equation}

Taking $n_0=0$ and $n_1=n$, this
 defines the canonical equivalence relations on $\mathcal{AR}^{\al}_n$, for all $\al<\om_1$.
\end{defn}

\bf Numbers of Canonical Equivalence Relations. \rm
For each $k,n<\om$, the number of canonical equivalence relations on $\mathcal{R}_k(n)$  and $\mathcal{AR}^k_n$ are given by a recursive formula.
Let $N_k(n)$ denote the number of canonical equivalence relations on $\mathcal{R}_k(n)$.
Recall from \cite{Dobrinen/Todorcevic11} that for each $n$,
 $N_1(n)=2^{n+1}+1$, and for $n\ge 1$, there are $\Pi_{i<n}(2^{i+1}+1)$ canonical equivalence relations on $\mathcal{AR}^1_n$.
 It will be proved in Section \ref{sec.canonizationsRal}  that the canonical equivalence relations on $\mathcal{R}_{\al}(n)$ and $\mathcal{AR}^{\al}_n$ are precisely the ones defined above.
Hence, for $k\ge 1$, 
\[N_{k+1}(n)=\left\{
\begin{array}{l l}
N_k(n)+1 & \quad \text{if $n\le k$}\\
(\Pi_{l^k_n\le j<l^k_{n+1}}N_k(j))+1 &\quad\text{if $n>k$}\\
\end{array} \right.
\]
For $n\ge 1$, there are $\Pi_{i<n}N_k(i)$ many canonical equivalence relations on $\mathcal{AR}_{k}^n$.

Thus, for $k=2$,
there are $4$ canonical equivalence relations on $\mathcal{R}_2(0)$; $6$ canonical equivalence relations on $\mathcal{R}_2(1)$; $154$ canonical equivalence relations on $\mathcal{R}_2(2)$; etc.
There are $4$ canonical equivalence relations on $\mathcal{AR}^{\al}_1$; $24$ canonical equivalence relations on $\mathcal{AR}^{\al}_2$; $3696$ canonical equivalence relations on $\mathcal{AR}^2_3$; etc.

For $\om\le\al<\om_1$ and $n\le m$, $\mathfrak{S}_{\al}(n,m)$ is finite; however, $\mathfrak{S}_{\al}(n)$ is countably infinite.
\vskip.1in

The following theorem for  $\mathcal{R}_1$  was proved in \cite{Dobrinen/Todorcevic11}.
Recall that $\mathcal{AR}^1_n|D$ denotes the collection of all $a\in\mathcal{AR}^1_n$ such that $a\le^1_{\mathrm{fin}} D$.

\begin{thm}[Canonization Theorem for $\mathcal{AR}^1_n$ \cite{Dobrinen/Todorcevic11}]\label{thm.original}
Let  $1\le n<\om$.
Given any $A\in\mathcal{R}_1$ and any equivalence relation $\R$ on  $\mathcal{AR}^1_n|A$,
there is a $D\le_1 A$ such that $\R$ is canonical on $\mathcal{AR}^1_n|D$.
\end{thm}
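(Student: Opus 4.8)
The plan is to transport the \Erdos-Rado canonical Ramsey theorem (Theorem \ref{thm.ER}) to the space $\mathcal{R}_1$, using that $\mathcal{R}_1$ is a topological Ramsey space, so that the Abstract Nash-Williams Theorem \ref{thm.ANW} plays the role of the classical Ramsey theorem. First I would isolate the finite data attached to a pair $a,b\in\mathcal{AR}^1_n|A$. An element $a$ is a sequence $a(0),\dots,a(n-1)$ of blocks with $a(i)\in\mathcal{R}_1(i)$, each block being a tree with a stem and finitely many leaves sitting inside some $A(k)$. I would define the \emph{type} of $(a,b)$ to record, for the merged approximation $a\cup b$: the interleaving of the levels occupied by the blocks of $a$ and of $b$, and, block by block, exactly which stems and leaves $a$ and $b$ share. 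Since the rank $n$ is fixed and finite, only finitely many types are realized by pairs drawn from $\mathcal{AR}^1_n|A$.

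Next I would \textbf{stabilize}. For each type $t$, a given pair of that type is decodable from the single merged approximation $a\cup b$ (of rank at most $2n$) together with $t$; so ``$a\,\R\,b$ or not'' becomes a $2$-coloring of a Nash-Williams family of bounded-rank approximations. Applying Theorem \ref{thm.ANW} finitely many times, once per type and once per possible merged rank $n\le r\le 2n$, and refining successively, I obtain $D\le_1 A$ so that for all $a,b\in\mathcal{AR}^1_n|D$, whether $a\,\R\,b$ holds depends only on the type of $(a,b)$. This replaces the single application of Ramsey's theorem in the classical \Erdos-Rado proof.

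Then comes the \textbf{rigidity analysis}, which I expect to be the heart of the matter. Here I would use that $\R$ is an equivalence relation together with the freedom, inside the thin $D$, to slide leaves and whole blocks to new positions without changing types. Two observations drive the conclusion. \emph{Factoring across coordinates:} inside $D$ the blocks of distinct coordinates lie at strictly increasing, widely separated levels, so a leaf of $a(i)$ can never coincide with a leaf of $b(j)$ for $i\ne j$; hence no genuine cross-coordinate constraint can survive, and the stabilized relation must be a conjunction of per-coordinate constraints. \emph{Per-coordinate canonization:} fixing a coordinate $i$ and freezing the remaining blocks, the induced relation on the $i$-th block is an \Erdos-Rado situation on the leaves of $\mathcal{R}_1(i)$-blocks; running the classical collapsing argument (using transitivity to identify types that must be equated, and producing explicit witnesses to separate the rest) shows the induced relation is exactly $\E_{S(i)}$ for a unique downward closed $S(i)\in\mathfrak{S}_1(i)$. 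The downward closure of $S(i)$ is automatic, since remembering a leaf forces remembering its stem.

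Finally I would \textbf{assemble}: combining the two observations yields downward closed sets $S(i)\in\mathfrak{S}_1(i)$, $i<n$, with $a\,\R\,b$ iff $a(i)\,\E_{S(i)}\,b(i)$ for every $i<n$, which is precisely the assertion that $\R$ is canonical on $\mathcal{AR}^1_n|D$ in the sense of Definition \ref{defn.canon-chunk}. The main obstacle is the rigidity step: care is needed both to verify that the per-coordinate relations really are equivalence relations, so that the product decomposition is legitimate, and to carry out the \Erdos-Rado collapsing argument in the tree setting, where leaves are not bare integers but carry stem information, so that the family one extracts is genuinely downward closed rather than an arbitrary set of leaves.
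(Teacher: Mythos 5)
Your overall architecture (finitely many pair-types, stabilization via the Abstract Nash--Williams Theorem, then a rigidity/collapsing analysis) is the right shape for a direct proof, but the step you call \emph{factoring across coordinates} rests on a false claim, and that claim is doing exactly the work that is the hard part of the theorem. You assert that for $a,b\in\mathcal{AR}^1_n|D$ and $i\ne j$, a leaf of $a(i)$ can never coincide with a leaf of $b(j)$ because blocks of distinct coordinates lie at widely separated levels. That separation holds only \emph{within a single} approximation: the blocks of one $a$ occupy strictly increasing levels of $D$, but the level sequences used by $a$ and by $b$ are chosen independently. Nothing prevents $a(0)$ and $b(1)$ from being subtrees of the very same block $D(k)$, overlapping as much as the shapes $\bS_1(0)$ and $\bS_1(1)$ allow; for instance, take $a$ with blocks in $D(2),D(5)$ and $b$ with blocks in $D(0),D(2)$, so that the single leaf of $a(0)$ is one of the leaves of $b(1)$. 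Such cross-coordinate coincidences occur in every $D\le_1 A$, so the geometry of levels cannot by itself force the stabilized relation to factor as a conjunction of per-coordinate constraints. Ruling out cross-coordinate dependence is precisely what, both in the classical \Erdos-Rado and Pudlak-\Rodl\ arguments and in the paper's machinery, is carried by transitivity arguments: if relatedness depended on a coincidence between coordinate $i$ of $a$ and coordinate $j\ne i$ of $b$, one chains through a third element in general position (sharing no levels with either) and contradicts transitivity of $\R$. Your proposal invokes transitivity only inside the per-coordinate collapsing, so as written the decomposition step has a genuine gap.

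Two further points. First, a technical one: $a\cup b$ is generally not a member of $\mathcal{AR}^1$ (blocks of $a$ and $b$ sharing a level of $D$ union to a subtree of $D(k)$ whose $\psi_1$-preimage need not be isomorphic to any $\bS_1(m)$), so your $2$-colorings must instead be placed on genuine approximations $c\in\mathcal{AR}^1_r|D$ together with extraction patterns for reading off a pair from $c$; this is routine but must be set up. Second, per-coordinate collapsing with the other blocks frozen a priori yields a set $S(i)$ depending on the frozen context, whereas Definition \ref{defn.canon-chunk} requires a single $S(i)$ uniformly for all of $\mathcal{AR}^1_n|D$; an additional diagonalization is needed, as in the proof of Theorem \ref{thm.FCTR^al_n} with the sets $\mathcal{X}_{S(j)}$. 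Note finally that the present paper does not reprove this statement at all: it quotes it from \cite{Dobrinen/Todorcevic11} as the base case of its induction, and for $2\le\al<\om_1$ it obtains the analogous Theorem \ref{thm.FCTR^al_n} not by a direct \Erdos-Rado-style argument but as a corollary of the general front-classification Theorem \ref{thm.PRR_al}, where exactly the cross-coordinate issues you gloss over are handled by the mixing/separating apparatus (Definition \ref{def.sepmix}, Lemma \ref{lem.mixing}, Claim \ref{claim.Tsametype}).
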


Theorem \ref{thm.original} serves as the basis for the following {\bf Inductive Scheme:}
Given Theorem \ref{thm.original}, we prove Theorem \ref{thm.FCTR^beta_n}  and
Lemma \ref{lem.n_0n_1} for $\beta=1$.
These are then used to prove Theorems \ref{thm.FCT_R_al(n)}, \ref{thm.PigeonR_al(n)}, and \ref{thm.R_altRs} for
for $\al=2$.
Given these theorems, we then 
prove 
Theorems \ref{thm.PRR_al} and \ref{thm.FCTR^al_n} in Section \ref{sec.canonizationsRal} for $\al=2$.
The induction scheme continues for  $3\le\al<\om_1$ as follows.
Assume Theorems  \ref{thm.FCTR^al_n} and  \ref{thm.FCT_R_al(n)} hold for all $1\le \beta<\al$.
If $\al$ is a successor ordinal, say $\al=\beta+1$, then
we also assume Theorem \ref{thm.FCTR^beta_n} and
Lemma \ref{lem.n_0n_1} hold
for all $1\le\gamma<\beta$, and
we prove Theorem \ref{thm.FCTR^beta_n} and
Lemma \ref{lem.n_0n_1} hold
for  $\beta$.
If $\al$ is a limit ordinal, then 
by the time we have proved 
Theorems  \ref{thm.FCTR^al_n} and  \ref{thm.FCT_R_al(n)}
 for all $1\le\beta<\al$, 
we will also have proved 
Theorem \ref{thm.FCTR^beta_n} and
Lemma \ref{lem.n_0n_1}
for all $1\le\beta<\al$. 
These are then used to prove Theorems \ref{thm.FCT_R_al(n)}, \ref{thm.PigeonR_al(n)}, and \ref{thm.R_altRs} for
 $\al$, so that in particular, $\mathcal{R}_{\al}$ is a topological Ramsey space.
 Then 
 we prove 
  Theorems 
\ref{thm.PRR_al} and \ref{thm.FCTR^al_n} for $\al$ in Section \ref{sec.canonizationsRal}.

Thus, let $1<\al<\om_1$.
 In order to prove that $\mathcal{R}_{\al}$ is a topological Ramsey space, 
we will need to show that the Pigeonhole Principal \bf A.4 \rm holds for $\mathcal{R}_{\al}(n)$, for each $n<\om$. 
Toward this end, we first prove some finite canonization theorems.
The next theorem follows from Theorem \ref{thm.original} for $\beta=1$; for $\beta\ge 2$, it follows from Theorem \ref{thm.FCTR^al_n} for $\beta$.
We omit the  proof, as it is completely analogous to the standard proof of the Finite Ramsey Theorem from the Infinite Ramsey Theorem.

\begin{thm}[Finite Canonization Theorem for $\mathcal{AR}^{\beta}_n$]\label{thm.FCTR^beta_n}
For each $n\le k<\om$ and each $X\in\mathcal{R}_{\beta}$, there is an $m<\om$ such that 
for each equivalence relation $\E$ on $\mathcal{AR}^{\beta}_n|r^{\beta}_m(X)$,
there is an $a\in\mathcal{AR}^{\beta}_k|r^{\beta}_m(X)$ such that 
$\E$ is canonical on $\mathcal{AR}^{\beta}_n|a$.
\end{thm}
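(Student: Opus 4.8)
The plan is to obtain this finite statement from the corresponding \emph{infinite} canonization theorem by the usual compactness (K\"onig's Lemma) argument, exactly as one derives the Finite Ramsey Theorem from the Infinite Ramsey Theorem. Fix $n\le k<\om$ and $X\in\mathcal{R}_{\beta}$, and suppose toward a contradiction that no such $m$ works. Then for every $m<\om$ there is at least one equivalence relation on $\mathcal{AR}^{\beta}_n|r^{\beta}_m(X)$ that is \emph{bad}, in the sense that it is not canonical on $\mathcal{AR}^{\beta}_n|a$ for any $a\in\mathcal{AR}^{\beta}_k|r^{\beta}_m(X)$; write $\mathcal{B}_m$ for the nonempty set of all such bad relations. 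Since $r^{\beta}_m(X)$ is a finite tree, $\mathcal{AR}^{\beta}_n|r^{\beta}_m(X)$ is a finite set and hence carries only finitely many equivalence relations, so each $\mathcal{B}_m$ is finite.

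The key point is that \emph{restriction carries bad relations to bad relations}. For $m\le m'$ one has $\mathcal{AR}^{\beta}_n|r^{\beta}_m(X)\sse\mathcal{AR}^{\beta}_n|r^{\beta}_{m'}(X)$, since $r^{\beta}_m(X)\sqsubset r^{\beta}_{m'}(X)$ and $\le^{\beta}_{\mathrm{fin}}$ is transitive. If $\E'\in\mathcal{B}_{m'}$, I claim its restriction $\E=\E'\re\mathcal{AR}^{\beta}_n|r^{\beta}_m(X)$ lies in $\mathcal{B}_m$: were $\E$ canonical on $\mathcal{AR}^{\beta}_n|a$ for some $a\in\mathcal{AR}^{\beta}_k|r^{\beta}_m(X)$, then $a\in\mathcal{AR}^{\beta}_k|r^{\beta}_{m'}(X)$ and $\mathcal{AR}^{\beta}_n|a\sse\mathcal{AR}^{\beta}_n|r^{\beta}_m(X)$, so $\E$ and $\E'$ agree on $\mathcal{AR}^{\beta}_n|a$ and $\E'$ would be canonical there, contradicting $\E'\in\mathcal{B}_{m'}$. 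Thus the consecutive restriction maps form an inverse system $\cdots\to\mathcal{B}_{m+1}\to\mathcal{B}_m\to\cdots$ of finite nonempty sets, and K\"onig's Lemma (equivalently, nonemptiness of the inverse limit of an $\om$-indexed system of finite nonempty sets) yields a coherent sequence $(\E_m)_{m<\om}$ with $\E_m\in\mathcal{B}_m$ and $\E_{m+1}$ restricting to $\E_m$. As $\mathcal{AR}^{\beta}_n|X=\bigcup_m\mathcal{AR}^{\beta}_n|r^{\beta}_m(X)$, these patch into a single equivalence relation $\E_\infty$ on $\mathcal{AR}^{\beta}_n|X$.

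Next I would apply the infinite canonization theorem to $\E_\infty$: Theorem \ref{thm.original} when $\beta=1$, and Theorem \ref{thm.FCTR^al_n} (available for $\beta$ by the induction scheme) when $\beta\ge 2$, noting that $\mathcal{AR}^{\beta}_n|X$ is a uniform barrier of rank $n$ on $[0,X]$. This produces $D\le_{\beta}X$ on which $\E_\infty$ is one of the canonical equivalence relations of Definition \ref{defn.canon-chunk}. Setting $a=r^{\beta}_k(D)$ and $m^*=\depth_X(a)$, we have $a\in\mathcal{AR}^{\beta}_k|r^{\beta}_{m^*}(X)$ and $\mathcal{AR}^{\beta}_n|a\sse\mathcal{AR}^{\beta}_n|D$, so $\E_\infty$ is canonical on $\mathcal{AR}^{\beta}_n|a$. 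But on $\mathcal{AR}^{\beta}_n|a\sse\mathcal{AR}^{\beta}_n|r^{\beta}_{m^*}(X)$ the relation $\E_\infty$ coincides with $\E_{m^*}$, whence $\E_{m^*}$ is canonical on $\mathcal{AR}^{\beta}_n|a$, contradicting $\E_{m^*}\in\mathcal{B}_{m^*}$. This contradiction proves the theorem.

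I expect the only delicate point to be the bookkeeping that makes the compactness argument go through: namely (i) the verification that restriction maps $\mathcal{B}_{m'}$ into $\mathcal{B}_m$, which rests on the monotonicity $\mathcal{AR}^{\beta}_n|r^{\beta}_m(X)\sse\mathcal{AR}^{\beta}_n|r^{\beta}_{m'}(X)$ and on canonicity being inherited by subsets; and (ii) matching the witness $a=r^{\beta}_k(D)$ returned by the infinite theorem back to the finite level $m^*=\depth_X(a)$ so that it contradicts the badness of $\E_{m^*}$. Everything else is the standard transfer of an infinite Ramsey/canonization statement to its finite counterpart, which is why the authors omit the details.
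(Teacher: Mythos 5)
Your proposal is correct and is exactly the argument the paper has in mind: the paper omits this proof, stating that it is ``completely analogous to the standard proof of the Finite Ramsey Theorem from the Infinite Ramsey Theorem,'' i.e.\ the compactness derivation from Theorem \ref{thm.original} (for $\beta=1$) or Theorem \ref{thm.FCTR^al_n} for $\beta$ (for $\beta\ge 2$), which is precisely what you carry out. Your bookkeeping (restriction preserving badness, the inverse limit of finite nonempty sets, and matching $a=r^{\beta}_k(D)$ to the level $m^*=\depth_X(a)$) correctly fills in the details the authors leave to the reader.
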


\begin{lem}\label{lem.n_0n_1}
Let $n_0< n_1$ and $k_0< k_1$ be such that $k_0\ge n_0$ and $k_1-k_0\ge n_1-n_0$,
 and let $X\in\mathcal{R}_{\beta}$.
There is an $m$ such that for each equivalence relation $\E$ on 
$\mathcal{R}_{\beta}[n_0,n_1)|r^{\beta}_m(X)$,
there is a $y\in\mathcal{R}^{\beta}[k_0,k_1)|r^{\beta}_m(X)$ such that $\E$ is canonical on 
$\mathcal{R}_{\beta}[n_0,n_1)|y$.
\end{lem}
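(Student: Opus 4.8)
The plan is to deduce Lemma~\ref{lem.n_0n_1} from the Finite Canonization Theorem~\ref{thm.FCTR^beta_n} by viewing an equivalence relation on the block $\mathcal{R}_{\beta}[n_0,n_1)$ as a product of equivalence relations on the individual coordinates, and then canonizing each coordinate simultaneously. Recall from Definition~\ref{defn.canon-chunk} that an equivalence relation on $\mathcal{R}_{\beta}([n_0,n_1))$ is canonical precisely when it is determined coordinatewise by canonical relations $\E_{S(i)}$, $n_0\le i<n_1$, on the single-level pieces $\mathcal{R}_{\beta}(i)$. So the target is to find a single $y\in\mathcal{R}_{\beta}[k_0,k_1)|r^{\beta}_m(X)$ such that, restricted to structures drawn from $y$, an arbitrary $\E$ becomes such a product. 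The hypotheses $k_0\ge n_0$ and $k_1-k_0\ge n_1-n_0$ guarantee that a block of levels $[k_0,k_1)$ inside $X$ has at least $n_1-n_0$ trees available, each of level-index at least $n_0$, so that each coordinate of the target block can be realized as a subtree of a sufficiently high tree of $X$.

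First I would reduce to canonizing a single coordinate with a parameter. Fix $i$ with $n_0\le i<n_1$ and consider the equivalence relation $\E$ as inducing, for each fixed choice of the other coordinates, an equivalence relation on the $i$-th coordinate $x(i)\in\mathcal{R}_{\beta}(i)$. By applying Theorem~\ref{thm.FCTR^beta_n} with the level pair playing the role of $(n,k)$, I obtain, for a suitably large $m$, an approximation $a\in\mathcal{AR}^{\beta}_{k}|r^{\beta}_m(X)$ on which the corresponding single-level relation is canonical, i.e.\ equal to some $\E_{S(i)}$ for an $S(i)\in\mathfrak{S}_{\beta}(i)$. The content of Theorem~\ref{thm.FCTR^beta_n} is exactly that, given target rank $k$, one can find a finite initial segment $r^{\beta}_m(X)$ large enough that within it a copy of $\mathcal{AR}^{\beta}_k$ sits on which an arbitrary equivalence relation on the relevant lower-rank approximations is already canonical.

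Next I would iterate this coordinatewise, feeding in the standard product-canonization trick: process the coordinates $i=n_0,n_0+1,\dots,n_1-1$ one at a time, at each stage thinning the available trees of $X$ (equivalently, passing to a longer initial segment and selecting an appropriate $a\le^{\beta}_{\mathrm{fin}}$-below the current one) so that the $i$-th coordinate relation stabilizes to some $\E_{S(i)}$ while not disturbing the canonicity already achieved on the earlier coordinates. Because there are only finitely many coordinates and at each step Theorem~\ref{thm.FCTR^beta_n} only requires enlarging $m$, after finitely many applications I arrive at a single $m$ and a single $y\in\mathcal{R}_{\beta}[k_0,k_1)|r^{\beta}_m(X)$ on which $\E$ agrees with the product relation $\bigwedge_{n_0\le i<n_1}\E_{S(i)}$; this product is canonical by Definition~\ref{defn.canon-chunk}. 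The role of the index arithmetic ($k_0\ge n_0$, $k_1-k_0\ge n_1-n_0$) is to ensure that the selected block $y$ actually lies in $\mathcal{R}_{\beta}[k_0,k_1)$, with each $y(j)$ a tree of the required level and with enough room in $[k_0,k_1)$ to accommodate all $n_1-n_0$ coordinates.

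The main obstacle I expect is the bookkeeping needed to make the coordinatewise canonizations interact correctly: when I canonize the $i$-th coordinate I must do so \emph{uniformly} over all possible values of the remaining coordinates drawn from $y$, and I must verify that passing to the thinning required for coordinate $i$ does not destroy the canonical form $\E_{S(j)}$ already fixed for $j<i$. This is the usual ``freeze earlier coordinates, canonize the next, rethin'' argument, but here it must respect the tree structure of members of $\mathcal{R}_{\beta}$ and the compatibility between $\mathfrak{S}_{\beta}(i,m)$ and $\mathfrak{S}_{\beta}(i)$ recorded in Fact~\ref{fact.frakSm<m'} and the Remark after Definition~\ref{defn.canonicaleqrelsnm}. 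Once the uniformity is set up correctly, the finiteness of the number of coordinates makes the iteration terminate, and the resulting $y$ witnesses the lemma.
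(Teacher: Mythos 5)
Your plan has a genuine gap at its central step. Freezing all coordinates except the $i$-th and canonizing the induced section relation---even uniformly in the frozen parameters---only tells you what $\E$ looks like on pairs of blocks that differ in a single coordinate; it does not yield that $\E$ itself is the product $\bigwedge_{n_0\le i<n_1}\E_{S(i)}$. From uniform sectionwise canonicity you do get one inclusion: if $x(i)\,\E_{S(i)}\,y(i)$ for all $i$, then changing one coordinate at a time and using transitivity gives $x\,\E\,y$. But the converse direction fails in general: $\E$ may identify blocks that differ in several coordinates in a correlated way, and no iteration of one-coordinate-at-a-time canonizations detects this. A toy example is the relation on pairs of integers defined by $x_1+y_1=x_2+y_2$: every section (either coordinate frozen) is equality, uniformly, yet the relation is not the product of equalities. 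Eliminating such cross-coordinate identifications is the actual mathematical content of canonization on a product, and it requires either the mixing/separating machinery or a thinning argument applied to pairs of blocks in general position; your outline files this under ``bookkeeping'' and uniformity, which is necessary but not sufficient. (A secondary slip: Theorem \ref{thm.FCTR^beta_n} canonizes relations on $\mathcal{AR}^{\beta}_{n}$, not on a single level $\mathcal{R}_{\beta}(i)$; for single levels you would want Theorem \ref{thm.FCT_R_al(n)} for $\beta$, which the induction scheme does make available at this stage.)

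The paper sidesteps all of this with a one-step reduction worth comparing against. Take $m$ from Theorem \ref{thm.FCTR^beta_n} applied to the pair $(n_1,k_1)$, and given $\E$ on $\mathcal{R}_{\beta}[n_0,n_1)|r^{\beta}_m(X)$, define $\E'$ on the full approximations $\mathcal{AR}^{\beta}_{n_1}|r^{\beta}_m(X)$ by $a\,\E'\,b$ iff $a[n_0,n_1)\,\E\,b[n_0,n_1)$. Theorem \ref{thm.FCTR^beta_n} then produces $c\in\mathcal{AR}^{\beta}_{k_1}|r^{\beta}_m(X)$ on which $\E'$ is canonical, i.e.\ a genuine product $\bigwedge_{i<n_1}\E_{S(i)}$ across \emph{all} coordinates; this is exactly where the cross-coordinate dependencies are killed, inside the already-proved theorem rather than by hand. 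Since $\E'$ ignores the coordinates below $n_0$, the canonical form forces $S(i)=\{\emptyset\}$ for $i<n_0$, and restricting to $y=c[k_0,k_1)$ gives that $\E$ is canonical on $\mathcal{R}_{\beta}[n_0,n_1)|y$ in the sense of Definition \ref{defn.canon-chunk}. The index arithmetic $k_0\ge n_0$ and $k_1-k_0\ge n_1-n_0$ enters only to ensure that blocks of shape $[n_0,n_1)$ embed into $c[k_0,k_1)$; your reading of that hypothesis is the one point where your proposal and the paper agree completely.
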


\begin{proof}
Let $n_0,n_1,k_0,k_1$  be  as in the hypotheses.
Take $m$ from Theorem \ref{thm.FCTR^beta_n}
for $n_1$ and $k_1$.
Let $\E$ be an equivalence relation on 
$\mathcal{R}^{\beta}[n_0,n_1)|r^{\beta}_m(X)$.
Define an equivalence relation $\E'$ on $\mathcal{AR}^{\beta}_{n_1}|r^{\beta}_m(X)$ by defining $a \E' b $ if and only if 
$a[n_0,n_1) \E
 b[n_0,n_1)$, for  $a,b\in \mathcal{AR}^{\beta}_{n_1}|  r^{\beta}_m(X)$.
Then there is a $c\in \mathcal{AR}^{\beta}_{k_1}|r^{\beta}_m(X)$ such that $\E'$ is canonical on $\mathcal{AR}^{\beta}_{n_1}|c$.
Hence, $\E$ is canonical on 
$\mathcal{R}^{\beta}[n_0,n_1)| c[k_0,k_1)$.
\end{proof}

\begin{thm}[Finite Canonization Theorem for  $\mathcal{R}_{\al}(n)$]\label{thm.FCT_R_al(n)}
Let $n\le k<\om$ and $X\in\mathcal{R}_{\al}$ be given.
Then there is an $m$ such that for each equivalence relation $\E$ on  $\mathcal{R}_{\al}(n)|X(m)$,
there is a $y\in\mathcal{R}_{\al}(k)|X(m)$ such that $\E$ is canonical  on $\mathcal{R}_{\al}(n)|y$.
\end{thm}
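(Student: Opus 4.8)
The plan is to induct on $\al$, following the paper's inductive scheme: I may assume Theorem~\ref{thm.FCT_R_al(n)} for every $\beta<\al$, together with Theorem~\ref{thm.FCTR^beta_n} and Lemma~\ref{lem.n_0n_1} for every $\beta<\al$. The unifying idea is that, through $\psi_\al$, every member of $\mathcal{R}_\al(n)$ is assembled from lower-order data, so an equivalence relation $\E$ on $\mathcal{R}_\al(n)|X(m)$ pulls back to an equivalence relation on that data, to which the inductive canonization tools apply; the resulting canonization is then pushed forward to produce the required $y\in\mathcal{R}_\al(k)|X(m)$.

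\textbf{Successor case.} Write $\al=\delta+j+1$ and $\beta=\delta+j$. By construction $\bS_\al(n)$ is a single root beneath which hangs one copy of $\bS_\beta(l)$ for each $l\in[l^\beta_n,l^\beta_{n+1})$ (a single copy when $n\le j$, since then the interval is $\{n\}$). This yields a canonical bijection, obtained by forgetting the shared root, between $\mathcal{R}_\al(n)$ and the chunk space $\mathcal{R}_\beta[l^\beta_n,l^\beta_{n+1})$; it carries $X(m)$ to the $\beta$-chunk $\widehat X$ formed by the subtrees of $X(m)$ hanging below its root, and carries $\E$ to an equivalence relation $\widehat\E$ on the members of $\mathcal{R}_\beta[l^\beta_n,l^\beta_{n+1})$ lying inside $\widehat X$. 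I would then invoke Lemma~\ref{lem.n_0n_1} for $\beta$ with $(n_0,n_1,k_0,k_1)=(l^\beta_n,l^\beta_{n+1},l^\beta_k,l^\beta_{k+1})$. Its hypotheses $k_0\ge n_0$ and $k_1-k_0\ge n_1-n_0$ hold because $(l^\beta_i)_i$ is strictly increasing with non-decreasing gaps (the gap at $i$ is $1$ for $i\le j$ and $(i+1)-j$ for $i\ge j+1$). As these gaps tend to infinity, taking $m$ large makes $\widehat X$ contain a copy of the initial segment $r^\beta_M(Z)$ demanded by the Lemma; the Lemma then supplies a chunk-witness $\widehat y$ of type $[l^\beta_k,l^\beta_{k+1})$ inside $\widehat X$ on which $\widehat\E$ is chunk-canonical, and the preimage $y$ of $\widehat y$ under the bijection is the desired member of $\mathcal{R}_\al(k)|X(m)$.

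That chunk-canonicity transfers to canonicity in the sense of Definition~\ref{defn.canonicaleqrelsnm} rests on the observation that restricting a downward closed $S\subseteq\bS_\al(n)$ to each constituent copy yields a tuple $(S(l))_l$ of downward closed $S(l)\subseteq\bS_\beta(l)$, and that this correspondence intertwines $\E_S$ with the chunk-canonical relation $x\,\widehat\E\,y\Leftrightarrow x(l)\,\E_{S(l)}\,y(l)$ for all $l$; a constituent missed by $S$ is harmlessly identified with $S(l)=\{\emptyset\}$, both meaning ``ignore that coordinate.'' Thus the chunk-canonical form produced by Lemma~\ref{lem.n_0n_1} is exactly some $\E_S$ with $S\in\mathfrak{S}_\al(n,m)$. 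A convenient consequence is that this one argument also settles $n\le j$: there the source chunk is a single coordinate, and the cross-tree canonization already present in Lemma~\ref{lem.n_0n_1} automatically dissolves any merging of type-$n$ structures lying in different constituents of $y$, which is precisely what canonicity for $\al$ demands.

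\textbf{Limit case and the main difficulty.} When $\al$ is a limit ordinal, each $\bT_\al(i)$ is literally a tree $\bT_{c_\al(p)}(m_i)$, so $\mathcal{R}_\al(n)$, $\mathcal{R}_\al(k)$, and $X(m)$ are (up to the non-branching stems) members of $\mathcal{R}_{c_\al(p)}$, $\mathcal{R}_{c_\al(q)}$, and $\mathcal{R}_{c_\al(r)}$ respectively, with $c_\al(p)\le c_\al(q)\le c_\al(r)<\al$ for $n\le k$ and $m$ large. If $n$, $k$ and a suitable $m$ happen to fall in one block (so the relevant part of $\mathcal{R}_\al$ is literally one space $\mathcal{R}_{c_\al(p)}$), the conclusion is immediate from Theorem~\ref{thm.FCT_R_al(n)} for $c_\al(p)$. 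I expect the main obstacle to be that, once canonization forces $m$ into a higher block $r>p$, the type-$n$ sources (members of the smaller space $\mathcal{R}_{c_\al(p)}$) sit scattered throughout $X(m)$ as $c_\al(p)$-subtrees of a $c_\al(r)$-structure, so that no single lower canonization sees them all at once. The plan is to transport $\E$ down to the common space $\mathcal{R}_{c_\al(p)}$ through the tower of projections $\tau_{c_\al(p),c_\al(r)}$ and $\sigma_{c_\al(p),c_\al(r)}$, canonize there by the inductive Theorem~\ref{thm.FCT_R_al(n)} and Lemma~\ref{lem.n_0n_1}, and lift the result back to an $\E_S$ with $S\in\mathfrak{S}_\al(n,m)$. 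Verifying that these projections respect the downward-closed-set bookkeeping of the canonical relations, and that enough room survives inside $X(m)$ at each stage (via $(\dagger)$ and $(\ddagger)$), is the technical heart of the argument.
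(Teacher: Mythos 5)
Your successor case is essentially the paper's own argument: the same root-forgetting correspondence between $\mathcal{R}_{\al}(n)$ and the chunk space $\mathcal{R}_{\beta}[l^{\beta}_n,l^{\beta}_{n+1})$, the same application of Lemma \ref{lem.n_0n_1} with $(n_0,n_1,k_0,k_1)=(l^{\beta}_n,l^{\beta}_{n+1},l^{\beta}_k,l^{\beta}_{k+1})$, and the same reassembly of the tuple $(S(i))_{n_0\le i<n_1}$ into a single $S\in\mathfrak{S}_{\al}(n)$ by appending the coordinate $(\al,n)$; that part is correct.

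The limit case, however, has a genuine gap, and it lies in the direction of transport. You propose to push $\E$ \emph{down} to the block-of-$n$ space $\mathcal{R}_{c_{\al}(p)}$ along $\tau_{c_{\al}(p),c_{\al}(r)}$ and canonize there. But these projections are many-to-one on type-$n$ structures: distinct $u_1\ne u_2$ in $\mathcal{R}_{\al}(n)|X(m)$ can have the same image, and $\E$ need not be constant on such fibers, so the ``transported'' relation is not well defined; if instead you canonize a pullback along a chosen section, you get no control of $\E$ on fiber-mates, which the theorem obliges you to decide. Moreover, the witness you must produce is $y\in\mathcal{R}_{\al}(k)|X(m)$, a block-of-$k$ (i.e.\ $c_{\al}(q)$-block) object; a canonization carried out inside $\mathcal{R}_{c_{\al}(p)}$ yields only $c_{\al}(p)$-block witnesses, and nothing in your sketch converts these into a type-$k$ structure. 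The paper's proof goes in the opposite direction: writing $\gamma$ for the block of $n$ and $\delta$ for the block of $k$, it uses $(\ddagger)$ to choose $n_{\delta}\le k_{\delta}$ and $S\in\mathfrak{S}_{\delta}(n_{\delta})$ with $\tau_{\gamma,\delta}\circ\pi_S(\bT_{\delta}(n_{\delta}))=\bT_{\gamma}(n_{\gamma})$, pulls $\E$ \emph{up} to a relation $\E'$ on $\mathcal{R}_{\delta}(n_{\delta})|x$ (a pullback along the correspondence $y\mapsto\bar{y}$ is automatically well defined, and its canonical form automatically factors through the fibers), applies the inductive instance of this very theorem for $\mathcal{R}_{\delta}$ with a target $k'$ chosen so that any $w\in\mathcal{R}_{\delta}(k')$ contains a $v\in\mathcal{R}_{\delta}(k_{\delta})|w$ whose type-$n_{\gamma}$ substructures are all reached by the projections $\tau_{\gamma,\delta}\circ\pi_S$ of members of $\mathcal{R}_{\delta}(n_{\delta})|w$, and only then pushes the canonical form back down, taking $\bar{v}=v\cup\stem(x)$ as the required member of $\mathcal{R}_{\al}(k)$. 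This choice of $n_{\delta}$, $k'$, and the pull-back/push-forward order is precisely the content you deferred as ``the technical heart,'' and without it the limit case does not go through.
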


\begin{proof}
Let $n,k,X$ be as in the hypotheses.
There are two cases.

\begin{case1} 
$\al$ is a successor ordinal.
\end{case1}

Let $\beta$ be such that $\al=\beta+1$.
Let $n_0=l^{\beta}_n$, $n_1=l^{\beta}_{n+1}$, $k_0=l^{\beta}_{k}$, and $k_1=l^{\beta}_{k+1}$.
Take $m_0$ from Lemma \ref{lem.n_0n_1}.
Let $m$ be large enough that $l^{\beta}_{m+1}-l^{\beta}_m>m_0$.
Let $\E$ be an equivalence relation on 
 $\mathcal{R}_{\al}(n)|X(m)$. 
Take $a\in\mathcal{AR}^{\beta}_{m_0}$ such that $a\sse\tau_{\beta,\al}''X(m)$. 
Let $\E'$ be the equivalence relation on 
 $\mathcal{R}_{\beta}[n_0,n_1)|a$ induced by $\E$ in the following manner: 
For all $u',v'\in  \mathcal{R}_{\beta}[n_0,n_1)|a$,
 $u' \E' v'$ iff
$u\E v$,
where $u=\{\lgl\rgl\}\cup\{\lgl m\rgl^{\frown}t:t\in u'\}$ and $v=\{\lgl\rgl\}\cup\{\lgl m\rgl^{\frown}t:t\in v'\}$.
By Lemma \ref{lem.n_0n_1}, there is a 
$y'\in\mathcal{AR}^{\beta}_{k_1}|a$ such that $\E'$ is canonical on 
$ \mathcal{R}_{\beta}[n_0,n_1)|y'[k_0,k_1)$,
given by some $S(i)\in\mathfrak{S}_{\beta}(i)$, $n_0\le i<n_1$.
Letting $y=\{\lgl\rgl\}\cup\{\lgl m\rgl^{\frown}t:t\in y'[k_0,k_1)\}$, we have that $y\in\mathcal{R}_{\al}(k)$.
Moreover, $\E$ is canonical on 
$\mathcal{R}_{\al}(n)|y$, given by $\E_S$,
where if at least one $S(i)\ne\{\emptyset\}$,
then we let $S=\{\emptyset\}\bigcup\{ s\cup\{(\al,n)\} :n_0\le i<n_1,\ s\in S(i)\}$,
and if all $S(i)=\{\emptyset\}$, then $S=\{\emptyset\}$.

\begin{case2}
$\al$ is a limit ordinal.
\end{case2}

Let $\gamma\le\delta$ and $n_{\gamma},k_{\delta}$ be the ordinals such that $\bT_{\al}(n)=\bT_{\gamma}(n_{\gamma})$ and $\bT_{\al}(k)=\bT_{\delta}(k_{\delta})$, 
by construction of $\bT_{\al}$.
$\bT_{\gamma}(n_{\gamma})$ is contained in $\tau_{\gamma,\delta}''\bT_{\delta}(n_{\delta})$, for some $n_{\delta}\le k_{\delta}$.
Note that necessarily $n_{\delta}\le k_{\delta}$.
Take $S\in\mathfrak{S}_{\delta}(n_{\delta})$ which satifies $\tau_{\gamma,\delta}\circ\pi_S(\bT_{\delta}(n_{\delta}))=\bT_{\gamma}(n_{\gamma})$.
Take $k'$ large enough that for any $w\in\mathcal{R}_{\delta}(k')$,
there is some member $v\in\mathcal{R}_{\delta}(k_{\delta})|w$ such that
as $u$ ranges over $\mathcal{R}_{\delta}(n_{\delta})|w$, their projections   $\tau_{\gamma,\delta}\circ\pi_S(u)$ range over (and possibly beyond) $\mathcal{R}_{\gamma}(n_{\gamma})|v$.
By Theorem \ref{thm.FCT_R_al(n)} for $\mathcal{R}_{\delta}$, 
there is an
$m$ such that for each 
$x\in\mathcal{R}_{\delta}(m)$ and
equivalence relation $\E'$ on  $\mathcal{R}_{\delta}(n_{\delta})|x$,
there is a $w\in\mathcal{R}_{\delta}(k')|x$ such that $\E'$ is canonical  on $\mathcal{R}_{\delta}(n_{\delta})|w$.

This $m$ works for $\mathcal{R}_{\al}$:
Let $X\in\mathcal{R}_{\al}$ and
 take any equivalence relation  $\E$ on 
$\mathcal{R}_{\al}(n)|X(m)$.
Take $x\sse \tau_{\delta,\al}(X(m))$ such that $x\in\mathcal{R}_{\delta}(m)$.
Let $\stem(x)$ denote the collection of all $t\in X(m)$ which are strictly below all nodes in $x$.
Note that $\stem(x)$ is a downward closed interval in $X(m)$.
Define $\E'$ to be the equivalence relation on $\mathcal{R}_{\delta}(n_{\delta})|x$ as follows.
For $y\in\mathcal{R}_{\delta}(n_{\delta})$,
 let $\bar{y}$ denote {\em the} member of $\mathcal{R}_{\al}(n)$ for which $\tau_{\delta,\al}(\bar{y})=y$.
For $y,z\in \mathcal{R}_{\delta}(n_{\delta})|x$, define $y\E' z$ iff $\bar{y}\E \bar{z}$.
By Theorem \ref{thm.FCT_R_al(n)},
there is a 
$w\in\mathcal{R}_{\delta}(k')|x$ such that $\E'$ is canonical  on $\mathcal{R}_{\delta}(n_{\delta})|w$.
By our choice of $k'$,
there is some member $v\in\mathcal{R}_{\delta}(k_{\delta})|w$ such that
as $u$ ranges over $\mathcal{R}_{\delta}(n_{\delta})|w$, their projections   $\tau_{\gamma,\delta}\circ\pi_S(u)$ range over (and possibly beyond) $\mathcal{R}_{\gamma}(n_{\gamma})|v$.
Let $\bar{v}=v\cup\stem(x)$.
Then $\bar{v}\in\mathcal{R}_{\al}(k)$, and $\E$ is canonical on $\mathcal{R}_{\al}(n)|\bar{v}$.
\end{proof}

\begin{thm}[Finite Version of the Pigeonhole Principal for $\mathcal{R}_{\al}(n)$]\label{thm.PigeonR_al(n)}
Let $n\le k<\om$ and $X\in\mathcal{R}_{\al}$ be given.
Then there is an $m$ such that for each $2$-coloring $f:\mathcal{R}_{\al}(n)|X(m)\ra 2$,
there is a $y\in\mathcal{R}_{\al}(k)|X(m)$ such that $f$ is monochromatic on $\mathcal{R}_{\al}(n)|y$.
\end{thm}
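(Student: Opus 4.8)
The plan is to derive the pigeonhole principle from the Finite Canonization Theorem \ref{thm.FCT_R_al(n)}, which is already available for $\al$, using the fact that a $2$-coloring can realize only an equivalence relation with at most two classes. First I would dispose of the degenerate case $k=n$, where $\mathcal{R}_{\al}(n)|y=\{y\}$ for every $y\in\mathcal{R}_{\al}(k)$, so that any $f$ is vacuously monochromatic; thus assume $n<k$.

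The key preliminary observation is a lower bound on the number of classes of nontrivial canonical equivalence relations. I would argue that there is a level $k'\ge k$ with the property that for every $z\in\mathcal{R}_{\al}(k')$ and every $S\in\mathfrak{S}_{\al}(n)$, the canonical relation $\E_S$ has \emph{either exactly one class or at least three classes} on $\mathcal{R}_{\al}(n)|z$. This should hold because the number of distinct values $\pi_S(u)$, as $u$ ranges over $\mathcal{R}_{\al}(n)|z$, is monotone in the level of $z$ and tends to infinity whenever $\E_S$ is not the one-class relation; since each $\mathfrak{S}_{\al}(n,m)$ is finite, a single threshold $k'$ should bound away the value two for all the relevant $S$ simultaneously.

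With $k'$ fixed, I would apply Theorem \ref{thm.FCT_R_al(n)} to the pair $n\le k'$ and to $X$, obtaining the $m$ promised by the present theorem. Given a $2$-coloring $f:\mathcal{R}_{\al}(n)|X(m)\ra 2$, let $\E_f$ be defined by $u\,\E_f\,v\Leftrightarrow f(u)=f(v)$; it has at most two classes. By Theorem \ref{thm.FCT_R_al(n)} there is a $z\in\mathcal{R}_{\al}(k')|X(m)$ on which $\E_f$ is canonical, say $\E_f\re\mathcal{R}_{\al}(n)|z=\E_S$. Then $\E_S$ has at most two classes on $\mathcal{R}_{\al}(n)|z$, so by the choice of $k'$ it must have exactly one; that is, $f$ is constant on $\mathcal{R}_{\al}(n)|z$. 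Finally, setting $y=\pi^{\al}_{k',k}(z)$ (as in Definition \ref{defn.pi_S}) gives $y\in\mathcal{R}_{\al}(k)$ with $y\sse z\sse X(m)$, whence $\mathcal{R}_{\al}(n)|y\sse\mathcal{R}_{\al}(n)|z$ and $f$ is monochromatic on $\mathcal{R}_{\al}(n)|y$, as required.

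The step I expect to be the main obstacle is the preliminary counting observation: verifying that passing to a deep enough level $k'$ forces every nontrivial canonical equivalence relation to split $\mathcal{R}_{\al}(n)|z$ into at least three pieces, uniformly over the finitely many shapes $S\in\mathfrak{S}_{\al}(n)$ that can occur on a level-$k'$ object. Everything else is a routine translation between colorings and equivalence relations together with the projection maps $\pi_S$.
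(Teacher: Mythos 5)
Your proposal takes the same route as the paper---reduce to the Finite Canonization Theorem \ref{thm.FCT_R_al(n)} and use that a $2$-coloring induces an equivalence relation with at most two classes---but it is in fact more careful than the paper's own three-line proof. The paper applies Theorem \ref{thm.FCT_R_al(n)} at level $k$ itself and then asserts that ``the only canonical equivalence relation induced by a $2$-coloring is the trivial one.'' Read literally, on the witness this assertion can fail when the witness is small: in $\mathcal{R}_1$ with $n=0$, $k=1$, each $y\in\mathcal{R}_1(1)$ contains exactly two members of $\mathcal{R}_1(0)$, so the canonical relation $\E_S$ with $S=\bS_1(0)$ is the discrete relation with exactly two classes on $\mathcal{R}_1(0)|y$, and a $2$-coloring separating those two members induces it; since Theorem \ref{thm.FCT_R_al(n)} asserts only that \emph{some} witness exists, nothing prevents the witness handed to us from being such a $y$, on which $f$ is not monochromatic. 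Your detour through a level $k'$ at which every canonical relation has either one or at least three classes, followed by projecting down via $\pi^{\al}_{k',k}$ (so that $y\sse z\sse X(m)$ and $\mathcal{R}_{\al}(n)|y\sse\mathcal{R}_{\al}(n)|z$), is exactly what is needed to make this deduction valid; your separate treatment of $k=n$ is harmless but unnecessary, since the same argument covers it.

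The soft spot is the one you flagged: the existence of the uniform threshold $k'$. Per-shape monotonicity and divergence of the class count, together with finiteness of each $\mathfrak{S}_{\al}(n,m)$, do not by themselves produce a single threshold, because the family of shapes that must be controlled at level $k'$ is $\mathfrak{S}_{\al}(n,k')$, which grows with $k'$: a shape that first appears at level $k'$ is constrained by divergence only at \emph{later} levels, so a priori it could have exactly two classes at the very level where it first occurs. To close this, replace the per-shape limit argument by a lower bound valid for all shapes simultaneously: if $\E_S$ has at least two classes on $\mathcal{R}_{\al}(n)|z$ with $z$ of level $j$, then two embeddings of $\bS_{\al}(n)$ into $\psi^{-1}_{\al}(z)$ place some node of $S$ on distinct immediate successors of a common splitting node, and by the homogeneity of the construction (every branch at a splitting node of $\bS_{\al}(j)$ accommodates the part of $\bS_{\al}(n)$ that must sit below that node, since all blocks at level $j$ dominate all blocks needed at level $n$), that node can be relocated to give at least $h(j)$ distinct values of $\pi_S$, where $h(j)$ depends only on $n$ and $j$ and tends to infinity with $j$, by the growth of the splitting widths built into the trees (this is the structural fact behind the remark preceding Fact \ref{fact.frakS_mnseq}). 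Any $k'\ge k$ with $h(k')\ge 3$ then works uniformly in $S$, including the shapes first appearing at level $k'$. With that replacement your argument is complete, and it repairs a genuine looseness in the paper's own proof rather than merely reproducing it.
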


\begin{proof}
Let $n,k,X$ be as in the hypotheses.
Take $m$ from 
Theorem \ref{thm.FCT_R_al(n)}.
Then there is a 
$y\in\mathcal{R}_{\al}(k)|X(m)$ such that the equivalence relation induced by $f$ is canonical on $\mathcal{R}_{\al}(n)|y$.
But the only canonical equivalence relation induced by a $2$-coloring is the trivial one.
Thus, $f$ is monochromatic on $\mathcal{R}_{\al}(n)|y$.
\end{proof}

\begin{thm}\label{thm.R_altRs}
$(\mathcal{R}_{\al},\le_{\al},r^{\al})$ is a topological Ramsey space.
\end{thm}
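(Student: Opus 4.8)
The plan is to verify the hypotheses of the Abstract Ellentuck Theorem (Theorem~\ref{thm.AET}): it suffices to show that $(\mathcal{R}_{\al},\le_{\al},r^{\al})$ is closed as a subspace of $(\mathcal{AR}^{\al})^{\bN}$ and satisfies axioms \textbf{A.1}--\textbf{A.4}. Closure and \textbf{A.1}--\textbf{A.3} are routine consequences of the definitions in Section~\ref{sec.tRsR_al}, and the entire combinatorial content sits in the Pigeonhole Principle \textbf{A.4}, whose finitary form is exactly Theorem~\ref{thm.PigeonR_al(n)}. For the routine axioms: closure holds because membership in $\mathcal{R}_{\al}$ is cut out by coordinatewise‐closed conditions (the nonempty levels carry, in increasing order, the isomorphism types $\bS_{\al}(n)$). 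Axiom \textbf{A.1} is immediate since $r^{\al}_0(X)=\emptyset$, distinct members differ in some $n$-th tree, and both the length and the initial approximations are read off from any $r^{\al}_m(X)$. For \textbf{A.2}, $\le^{\al}_{\mathrm{fin}}$ is the partial ordering of Definition~\ref{defn.R_al}; finiteness of $\le^{\al}_{\mathrm{fin}}$-predecessors follows from the finiteness of each tree, the characterization of $\le_{\al}$ via approximations is built into the definition, and the amalgamation clause \textbf{A.2}(c) follows by placing an initial segment compatibly. Axiom \textbf{A.3} uses the structural fact (from the inductive construction) that every type-$j$ tree contains type-$k$ subtrees for all $k\le j$, so that $a$ can always be continued inside any $A\in[\depth_B(a),B]$.

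The crux is \textbf{A.4}. Fix $a\in\mathcal{AR}^{\al}$ with $d=\depth_B(a)<\infty$, put $n=|a|$, let $\mathcal{O}\sse\mathcal{AR}^{\al}_{n+1}$, and define a $2$-coloring $f$ on type-$n$ trees by $f(t)=1$ iff $a^{\frown}t\in\mathcal{O}$. For each index $i\ge n$ I would apply Theorem~\ref{thm.PigeonR_al(n)} with arities $n\le i$ to a sufficiently high tree $B(m_i)$, choosing $m_i$ strictly increasing with $m_i\ge d$, to obtain $y_i\in\mathcal{R}_{\al}(i)|B(m_i)$ on which $f$ is constant with value $\delta_i\in\{0,1\}$. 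This gives a map $\delta\colon[n,\om)\to\{0,1\}$, so the ordinary two–colour pigeonhole on $\om$ yields an infinite $I\sse[n,\om)$ and $\epsilon\in\{0,1\}$ with $\delta_i=\epsilon$ for all $i\in I$. Then I assemble $A'\in[\depth_B(a),B]=[r_d(B),B]$ by setting $A'(i)=B(i)$ for $i<d$ (the stem) and, for $i\ge d$, choosing $A'(i)$ to be a type-$i$ subtree of some $y_j$ with $j\in I$, $j\ge i$, taking the indices $j$ increasing so the supporting levels $m_j$ increase; here I again use that a type-$j$ tree contains type-$i$ subtrees for $i\le j$. Now any $t$ witnessing a member of $r_{n+1}[a,A']$ is a type-$n$ tree sitting in some $A'(k)$, and since $\depth_B(a)=d$ forces $a$ to occupy indices up to at least $d-1$, such $k$ must satisfy $k\ge d$; hence $A'(k)\sse y_j$ for some $j\in I$, so $t\in\mathcal{R}_{\al}(n)|y_j$ and $f(t)=\delta_j=\epsilon$. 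Therefore $r_{n+1}[a,A']\sse\mathcal{O}$ if $\epsilon=1$ and $r_{n+1}[a,A']\sse\mathcal{O}^c$ if $\epsilon=0$, which is precisely \textbf{A.4}.

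The main obstacle is exactly the colour–consistency problem inside \textbf{A.4}: Theorem~\ref{thm.PigeonR_al(n)} only makes the type-$n$ subtrees of a \emph{single} tree monochromatic, and a priori the resulting block–colours $\delta_i$ depend on $i$, whereas \textbf{A.4} demands one colour across all the (infinitely many) trees of $A'$. The resolution above is to run a second, infinite pigeonhole on the block–colours $\delta$ and to exploit the freedom in $\le_{\al}$ to pass to a subsequence of blocks, selecting only blocks from the monochromatic set $I$ and taking lower–type subtrees inside them; this forces the single global colour $\epsilon$. Granting \textbf{A.4}, together with the routine verification of closure and \textbf{A.1}--\textbf{A.3}, Theorem~\ref{thm.AET} yields that $(\mathcal{R}_{\al},\le_{\al},r^{\al})$ is a topological Ramsey space, completing the induction step at $\al$.
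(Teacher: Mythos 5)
Your proposal is correct and follows essentially the same route as the paper: reduce to the Abstract Ellentuck Theorem, treat closure and \textbf{A.1}--\textbf{A.3} as routine, and obtain \textbf{A.4} from the finite pigeonhole principle of Theorem \ref{thm.PigeonR_al(n)}. Your block-by-block application of Theorem \ref{thm.PigeonR_al(n)}, followed by an infinite pigeonhole on the block colours $\delta_i$ and reassembly of a member of $[\depth_B(a),B]$ from subtrees of the monochromatic blocks, is exactly the content of the paper's terse phrase ``by repeated applications of Theorem \ref{thm.PigeonR_al(n)}.''
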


\begin{proof}
By the Abstract Ellentuck Theorem,
it suffices to show that $(\mathcal{R}_{\al},\le_{\al},r^{\al})$  is a closed subspace of the Tychonov power $(\mathcal{AR}^{\al})^{\bN}$ of $\mathcal{AR}^{\al}$ with its discrete topology, and that $(\mathcal{R}_{\al},\le_{\al},r^{\al})$ satisfies axioms {\bf A.1} - {\bf A.4}.

$\mathcal{R}_{\al}$ is identified with the subspace of $(\mathcal{AR}^{\al})^{\bN}$  consisting of all sequences $\lgl a_n:n<\om\rgl$ such that there is an $A\in\mathcal{R}_{\al}$ such that for each $n<\om$,
$a_n=r^{\al}_n(A)$.
That $\mathcal{R}_{\al}$ is a closed subspace of 
$(\mathcal{AR}^{\al})^{\bN}$ follows from the fact that 
given any sequence $\lgl a_n:n<\om\rgl$ such that each $a_n\in\mathcal{AR}^{\al}_n$ and $r^{\al}_n(a_k)=a_n$ for each $k\ge n$, the union  $A=\bigcup_{n<\om}a_n$ is a member of $\mathcal{R}_{\al}$.
\bf A.1.  \rm and \bf A.2 \rm are trivially satisfied, by the definition of $\mathcal{R}_{\al}$.

\bf A.3. \rm
(1)  
If $\depth_B(a)=n<\infty$,
then  $a\le_{\mathrm{fin}}^{\al} r^{\al}_n(B)$.
If  $A\in[\depth_B(a),B]$,
then $r^{\al}_n(A)=r^{\al}_n(B)$ and for each $k\ge n$,
there is an $m_k$ such that $A(k)\sse B(m_k)$.
Letting $l$ be such that $a\in\mathcal{AR}^{\al}_l$,
for each $i\ge 1$,
let $w(l+i)$ be any subtree of $A(n+i)$
such that $\psi_{\al}^{-1}(w(l+i))\cong
\mathbb{S}_{\al}(l+i)$.
Let $A'=a\cup\bigcup\{w(l+i):i\ge 1\}$.
Then $A'\in[a,A]$, so $[a,A]\ne\emptyset$.

(2) Suppose $A\le_{\al} B$ and $[a,A]\ne\emptyset$.
Then $\depth_B(a)<\infty$ since $A\le_{\al} B$.
Let $n=\depth_B(a)$ and $k=\depth_A(a)$.
Note that $k\le n$ and for each $j\ge k$, $A(j)\sse B(l)$ for some $l\ge n$.
Let $A'=r^{\al}_n(B)\cup\bigcup\{A(n+i):i<\om\}$.
Then $A'\in[\depth_B(a),B]$ and 
$\emptyset\ne[a,A']\sse[a,A]$.

\bf A.4. \rm
Suppose that $\depth_B(a)=n<\infty$ and $\mathcal{O}\sse\mathcal{AR}^{\al}_{|a|+1}$.
Let $k=|a|$. 
(Recall that 
 $r^{\al}_{k+1}[a,B]$ is defined to be the collection of $c\in\mathcal{AR}^{\al}_{k+1}$ such that $r^{\al}_k(c)=r^{\al}_k(a)$ and $c(k)$ is a subtree of $B(m)$ for some $m\ge n$.)
So we may think of $\mathcal{O}$ as a 2-coloring on the collection of  subtrees $u\sse B(m)$, $m\ge n$,  such that $\psi_{\al}^{-1}(u)\cong\mathbb{S}_{\al}(k)$.
Say a set $u\in\mathcal{R}_{\al}(k)|B/r^{\al}_n(B)$ 
has color $0$ if $a\cup u$ is in $\mathcal{O}$ and has color $1$ if $a\cup u$ is in $\mathcal{O}^c$.
By repeated applications of Theorem  \ref{thm.PigeonR_al(n)},
we can construct an 
$A\in[\depth_B(a),B]$ such that 
either $r_{k+1}[a,A]\sse\mathcal{O}$,
 or else 
$r_{k+1}[a,A]\sse\mathcal{O}^c$.
\end{proof}


\section{Ramsey-classification theorems for $\mathcal{R}_{\al}$, $2\le\al<\om_1$}\label{sec.canonizationsRal}

This section contains the Ramsey-classification theorems  for equivalence relations on fronts on the  spaces $\mathcal{R}_{\al}$, $2\le \al<\om_1$.
Recall the Definitions \ref{defn.canonicaleqrels},
\ref{defn.canonicaleqrelsnm}, and
\ref{defn.canon-chunk} of the various canonical equivalence relations. 
We provide new facts here, not in \cite{Dobrinen/Todorcevic11}, necessitated by the fact that  $\al$ may be an infinite, countable ordinal.

\begin{fact}\label{fact.CanonThmEqReRestricted}
Let $n\le m<\om$ and $X\in\mathcal{R}_{\al}$, and suppose $\R$ is an equivalence relation on $\mathcal{R}_{\al}(n)$.
Then there is an $S\in\mathfrak{S}(n,m)$ and a $Y\le_{\al} X$ 
such that for each $y\in\mathcal{R}_{\al}(m)|Y$,
$\R\re(\mathcal{R}_{\al}(n)|y)$ is given by $\E_S$.
\end{fact}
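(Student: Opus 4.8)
The plan is to reduce the infinite-$\al$ statement to the already-established finite canonization results by exploiting the finiteness of $\mathfrak{S}_{\al}(n,m)$. Recall from Fact \ref{fact.frakSm<m'} that although $\mathfrak{S}_{\al}(n)$ may be countably infinite, $\mathfrak{S}_{\al}(n,m)$ is finite for every $n\le m$. This is the crucial structural point: the restriction to members of $\mathcal{R}_{\al}(n)$ coming from within a single tree $X(m)$ cuts down the potentially infinite supply of canonical equivalence relations to a finite list, so the finite machinery applies.

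First I would invoke Theorem \ref{thm.FCT_R_al(n)} (the Finite Canonization Theorem for $\mathcal{R}_{\al}(n)$), applied with the given $n$ and with $k:=m$, to obtain an $m'$ (in place of the $m$ produced by that theorem) such that for each equivalence relation on $\mathcal{R}_{\al}(n)|X(m')$ there is a $y\in\mathcal{R}_{\al}(m)|X(m')$ on which the relation is canonical, i.e.\ equals some $\E_S$. However, a single application only handles one tree $y$, whereas Fact \ref{fact.CanonThmEqReRestricted} demands a $Y\le_{\al} X$ for which the \emph{same} $S$ works uniformly on \emph{every} $y\in\mathcal{R}_{\al}(m)|Y$. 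To get this uniformity, I would repeatedly apply Theorem \ref{thm.FCT_R_al(n)} to thin out $X$ tree-by-tree, together with the finiteness of $\mathfrak{S}_{\al}(n,m)$ and a pigeonhole argument: since each thinned tree $y$ forces $\R\re(\mathcal{R}_{\al}(n)|y)$ to agree with one of finitely many $\E_S$, $S\in\mathfrak{S}_{\al}(n,m)$, I can color the admissible trees $y$ by which $S$ arises and extract, via the Abstract Ellentuck/Nash-Williams apparatus or a direct fusion, a condition $Y\le_{\al} X$ on which the color is constant.

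Concretely, I would define a coloring on $\mathcal{R}_{\al}(m)|X$ (or on an appropriate set of approximations) assigning to each candidate tree $y$ the unique $S\in\mathfrak{S}_{\al}(n,m)$ for which $\R\re(\mathcal{R}_{\al}(n)|y)=\E_S$, this being well-defined by the Remark following Definition \ref{defn.canonicaleqrelsnm} that for $n\le m$ and any $S\in\mathfrak{S}_{\al}(n)$ there is a unique $S'\in\mathfrak{S}_{\al}(n,m)$ giving the same relation upon restriction. Because this is a finite coloring, Theorem \ref{thm.PigeonR_al(n)} (iterated through a fusion over the levels, exactly as in the proof of axiom \textbf{A.4} in Theorem \ref{thm.R_altRs}) yields a single $Y\le_{\al} X$ on which the coloring is monochromatic with value the desired $S$. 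This $Y$ and $S$ witness the conclusion.

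The main obstacle I expect is the passage from the per-tree canonization of Theorem \ref{thm.FCT_R_al(n)} to the uniform-over-$Y$ statement, precisely because in the infinite-$\al$ regime the global set $\mathfrak{S}_{\al}(n)$ is infinite and one cannot naively color by the canonizing $S$ across all of $\mathcal{R}_{\al}(n)$. The resolution hinges on keeping everything inside a fixed level $m$, where Fact \ref{fact.frakSm<m'} guarantees the relevant index set $\mathfrak{S}_{\al}(n,m)$ is finite, so that a single finite-coloring pigeonhole fusion suffices; verifying that the coloring is genuinely well-defined and stable under the thinning (so that the $S$ read off at stage $y$ is not disturbed by later shrinking) is the delicate bookkeeping step, but it follows the same template as the established inductive scheme.
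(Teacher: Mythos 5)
Your proposal is correct and is essentially the paper's own argument: the paper defines, for each of the finitely many $S\in\mathfrak{S}_{\al}(n,m)$, the set $\mathcal{X}_S=\{Y\le_{\al}X:\R\re(\mathcal{R}_{\al}(n)|Y(m))=\E_S\}$ and applies the Abstract Ellentuck Theorem together with Theorem \ref{thm.FCT_R_al(n)} to homogenize and to rule out the degenerate (non-canonical) outcome, which is exactly your finite coloring of $m$-trees by the canonizing $S$, homogenized by the abstract Ramsey machinery, with Theorem \ref{thm.FCT_R_al(n)} guaranteeing a genuine $S\in\mathfrak{S}_{\al}(n,m)$ is realized. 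Both arguments rest on the same three pillars you identify: finiteness of $\mathfrak{S}_{\al}(n,m)$, the Finite Canonization Theorem for $\mathcal{R}_{\al}(n)$, and Abstract Ellentuck/fusion homogenization.
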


\begin{proof}
For each $S\in\mathfrak{S}(n,m)$,
let 
\begin{equation}
\mathcal{X}_S=\{Y\le_{\al} X: \R\re(\mathcal{R}_{\al}(n)|Y(m))=\E_S\}.
\end{equation}
Since $\mathfrak{S}(n,m)$ is finite,
applying the
 Abstract Ellentuck Theorem and 
Theorem \ref{thm.FCT_R_al(n)} 
 for  $\mathcal{R}_{\al}(n)$,
we obtain an $S\in\mathfrak{S}(n,m)$ and 
a $Y\le_{\al} X$ such that $[\emptyset, Y]\sse\mathcal{X}_S$.
\end{proof}

It is useful to point out the following statement, which  can be  proved using $(\dagger)$ by a simple induction on $\al<\om_1$:
For every  $s\in\bS_{\al}(n)$ which has domain $[\beta,\al]$ for some $\beta<\al$,
there is an $n'>n$ such that any embedding of $\bS_{\al}(n)$ into $\bS_{\al}(n')$ sends $s$ to the immediate successor of a splitting node in $\bS_{\al}(n')$.
By an embedding, we mean an injective, lexicographic order-preserving map $\iota$, 
such that if $s'$ is an immediate predecessor of $s$, then $\iota(s')$ is an immediate predecessor of $\iota(s)$.

\begin{fact}\label{fact.frakS_mnseq}
Let $n\le m<m'$ and
$\R$ be an equivalence relation on $\mathcal{R}_{\al}(n)$.
Suppose that $S\in\mathfrak{S}_{\al}(n,m)$,
$S'\in\mathfrak{S}_{\al}(n,m')$, and
$X\in\mathcal{R}_{\al}$ satisfies 
$\R\re(\mathcal{R}_{\al}(n)|x)=\E_S$ for all $x\in\mathcal{R}_{\al}(m)|X$, 
and 
$\R\re(\mathcal{R}_{\al}(n)|x)=\E_{S'}$ for all $x\in\mathcal{R}_{\al}(m')|X$.
Then $S'\sse S$.
Moreover, given any embedding $\iota:\bS_{\al}(n)\ra\bS_{\al}(m)$, for
every $s\in S$ such that $\iota(s)$ is an immediate successor  of a splitting node in $\bS_{\al}(m)$, $s$ is also in $S'$.
\end{fact}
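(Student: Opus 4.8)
The plan is to collapse the whole statement to a single comparison of $\E_S$ and $\E_{S'}$ on one level-$m$ copy, and then extract the two conclusions from the arithmetic of downward-closed subsets of $\bS_{\al}$ together with a separating-copies construction. First I would fix a level-$m'$ copy $x'\in\mathcal{R}_{\al}(m')|X$ and, inside it, a level-$m$ copy $x\in\mathcal{R}_{\al}(m)|x'$; such an $x$ exists since $m<m'$ and every member of $\mathcal{R}_{\al}(m')$ contains a subtree belonging to $\mathcal{R}_{\al}(m)$. Because $x\sse x'\sse X$ we have $x\in\mathcal{R}_{\al}(m)|X$, and since the isomorphism $\iota_u$ defining $\pi_S(u)$ and $\pi_{S'}(u)$ is intrinsic to each $u$ and does not depend on the ambient tree, the inclusion $\mathcal{R}_{\al}(n)|x\sse\mathcal{R}_{\al}(n)|x'$ together with the two hypotheses yields $\E_S\re(\mathcal{R}_{\al}(n)|x)=\R\re(\mathcal{R}_{\al}(n)|x)=\E_{S'}\re(\mathcal{R}_{\al}(n)|x)$. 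Thus it suffices to compare $\E_S$ and $\E_{S'}$ on the single domain $\mathcal{R}_{\al}(n)|x$.

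For the inclusion $S'\sse S$ I would invoke the Remark following Definition \ref{defn.canonicaleqrelsnm}, applied to $S'\in\mathfrak{S}_{\al}(n)$ and the level-$m$ tree $x$ (all level-$m$ trees being isomorphic to $\bS_{\al}(m)$): there is a unique $S^{*}\in\mathfrak{S}_{\al}(n,m)$ with $\E_{S^{*}}=\E_{S'}$ on $\mathcal{R}_{\al}(n)|x$, and moreover $S'\sse S^{*}$. Since $S\in\mathfrak{S}_{\al}(n,m)$ also satisfies $\E_S=\E_{S'}$ on $\mathcal{R}_{\al}(n)|x$ by the first paragraph, uniqueness forces $S=S^{*}$, whence $S'\sse S$.

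For the ``moreover'' clause, fix an embedding $\iota:\bS_{\al}(n)\ra\bS_{\al}(m)$ and $s\in S$ with $\iota(s)$ an immediate successor of a splitting node of $\bS_{\al}(m)$, and suppose toward a contradiction that $s\notin S'$. Since $S'$ is downward closed, no proper extension of $s$ lies in $S'$ either, for if $s'\in S'$ extended $s$ then $s=s'\re[\min\dom(s),\al]\in S'$. Using $x\cong\bS_{\al}(m)$, realize $\iota$ as a member $u_0\in\mathcal{R}_{\al}(n)|x$, and then reroute at the splitting node lying immediately above $\iota(s)$ to a distinct immediate successor, extending to a second member $u_1\in\mathcal{R}_{\al}(n)|x$ agreeing with $u_0$ on every node of $\bS_{\al}(n)$ that does not extend $s$. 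Because $\iota(s)$ is an immediate successor of a splitting node, $\psi_{\al}$ does not glue this difference away, so $\pi_S(u_0)\ne\pi_S(u_1)$ and hence $\neg(u_0\,\E_S\,u_1)$; on the other hand $S'$ contains neither $s$ nor any extension of $s$, so $\pi_{S'}(u_0)=\pi_{S'}(u_1)$ and $u_0\,\E_{S'}\,u_1$. This contradicts $\E_S=\E_{S'}$ on $\mathcal{R}_{\al}(n)|x$, so $s\in S'$.

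The main obstacle is the construction of $u_1$: I must reroute the copy at the splitting node above $\iota(s)$ so that the subtree of $\bS_{\al}(n)$ below $s$ still embeds into the alternative branch while every node not extending $s$ retains its image, with $u_1$ remaining a genuine level-$n$ subtree inside $x$. This rests on the structural homogeneity of $\bS_{\al}(m)$ at its (finitely many) splitting nodes, whose immediate successors carry isomorphic enough subtrees; to guarantee enough room it may be convenient to first pass to a sufficiently rich level-$m$ copy $x\le_{\al}X$, or to use the preliminary observation (provable from $(\dagger)$) that deep nodes of $\bS_{\al}(n)$ become immediate successors of splitting nodes once embedded at a high enough level. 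Everything else is routine bookkeeping with downward-closed subsets of $\bS_{\al}$ and the projection $\psi_{\al}$.
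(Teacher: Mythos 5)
Your proposal is correct and follows essentially the same route as the paper: both restrict the two hypotheses to a level-$m$ copy sitting inside a level-$m'$ copy, so that $\E_S=\R=\E_{S'}$ on $\mathcal{R}_{\al}(n)|x$, deduce $S'\sse S$ (you via the uniqueness clause of the Remark following Definition \ref{defn.canonicaleqrelsnm}, the paper by direct assertion), and prove the moreover clause by building two copies of $\bS_{\al}(n)$ inside $x$ differing only at the image of $s$, exploiting the freedom at the splitting node of which $\iota(s)$ is an immediate successor. Your write-up actually supplies more detail than the paper's very terse proof (which leaves both the containment step and the rerouting construction implicit); the only quibble is terminological, since that splitting node is the immediate predecessor of $\iota(s)$, i.e., below it in the extension order, not above it.
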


\begin{proof}
Assuming the hypotheses,
let $x\in\mathcal{R}_{\al}(m')|X$ and $z\in\mathcal{R}_{\al}(m)|x$.
Then for all $y,y'\in\mathcal{R}_{\al}(n)|z$,
we have that also $y,y'\in\mathcal{R}_{\al}(n)|x$.
Thus,
$y \E_{S} y'$ implies $y\R y'$, which in turn implies $y \E_{S'} y'$.
Hence, $S'\sse S$.

Suppose that there is an embedding  
 $\iota:\bS_{\al}(n)\ra\bS_{\al}(m)$
and an $s\in S\setminus S'$   such that
$\iota(s)$ is an immediate successor of some splitting node in $\bS_{\al}(m)$.
Then there are $x\in\mathcal{R}_{\al}(m')|X$
and  $y,y'\in\mathcal{R}_{\al}(n)|x$ such that $y\not{\E_S}\ y'$ but $y\E_{S'} y'$, contradiction.
\end{proof}

The next theorem will be essential in the proof of the main theorem, Theorem \ref{thm.PRR_al}.
Lemma \ref{lem.cohere} is included, as the argument there will be useful elsewhere.

\begin{thm}[Canonization Theorem for Equivalence Relations on $\mathcal{R}_{\al}(n)$]\label{thm.canon.eq.rel.R(n)}
Let $\R$ be an equivalence relation on $\mathcal{R}_{\al}(n)$ and let $X\in\mathcal{R}_{\al}(n)$.
Then there is an $S\in\mathfrak{S}_{\al}(n)$ and a  $Y\le_{\al}X$ such that $\R\re(\mathcal{R}_{\al}(n)|Y)$ is given by $\E_S$.
\end{thm}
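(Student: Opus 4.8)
The plan is to decide the equivalence relation one level at a time, using Fact \ref{fact.CanonThmEqReRestricted} as the single-level engine, and then to glue the infinitely many single-level verdicts together by a fusion and a passage to the limit. Fix $X\in\mathcal{R}_\al$ and the relation $\R$ on $\mathcal{R}_\al(n)$. First I would build, by recursion on $m\ge n$, a $\le_\al$-decreasing fusion sequence $X=Y_n\ge_\al Y_{n+1}\ge_\al\cdots$ together with canonical relations $S_m\in\mathfrak{S}_\al(n,m)$ as follows: at stage $m$ I fix the stem $r^\al_m(Y_m)$ and apply Fact \ref{fact.CanonThmEqReRestricted} inside the tail to obtain $Y_{m+1}\le_\al Y_m$ with $r^\al_m(Y_{m+1})=r^\al_m(Y_m)$ and an $S_m\in\mathfrak{S}_\al(n,m)$ such that $\R\re(\mathcal{R}_\al(n)|y)=\E_{S_m}$ for every $m$-th tree $y\in\mathcal{R}_\al(m)|Y_{m+1}$. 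Since the only $m$-th trees sitting inside a member live in its trees of index $\ge m$, fixing the length-$m$ stem does no harm, and the verdict at level $m$ is preserved under all further shrinkings. Because $(\mathcal{R}_\al,\le_\al,r^\al)$ is a topological Ramsey space (Theorem \ref{thm.R_altRs}) and in particular closed, the fusion limit $Y_0:=\bigcup_m r^\al_m(Y_m)$ is a genuine member of $\mathcal{R}_\al$ with $Y_0\le_\al X$, and for each $m\ge n$ the relation $\R$ restricted to the $n$-th subtrees of any $m$-th tree of $Y_0$ is exactly $\E_{S_m}$.

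The next step is to read off a single $S$. By Fact \ref{fact.frakS_mnseq}, applied on the common member $Y_0$ to the pairs $m<m'$, the relations cohere: $S_{m'}\sse S_m$, so $(S_m)_{m\ge n}$ is $\sse$-decreasing, and I would put $S:=\bigcap_{m\ge n}S_m$. That $S$ is actually a member of $\mathfrak{S}_\al(n)$ is the point at which the infinite-$\al$ case genuinely differs from the finite one, where $\mathfrak{S}_\al(n)$ is finite and the sequence stabilizes automatically; here it follows from the persistence statement in the ``moreover'' of Fact \ref{fact.frakS_mnseq} together with the remark preceding it: every node $s$ of domain $[\beta,\al]$ with $\beta<\al$, once it survives past the level at which embeddings carry it to an immediate successor of a splitting node, never leaves the $S_m$ again, so each such node stabilizes. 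Hence either the whole sequence stabilizes, giving a finite-level $S\in\mathfrak{S}_\al(n,m)$, or it collapses to the distinguished element $\{(\al,n),\emptyset\}$ listed in Fact \ref{fact.frakSm<m'}(2); in both cases $S\in\mathfrak{S}_\al(n)$.

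It remains to verify $\R\re(\mathcal{R}_\al(n)|Y)=\E_S$ for a suitable $Y\le_\al Y_0$. For two $n$-th trees lying in a common tree $Y_0(j)$ this is immediate, since $Y_0(j)$ is itself a $j$-th tree of $Y_0$ and hence $\R$ agrees with $\E_{S_j}$ there, while by the ``moreover'' clause of Fact \ref{fact.frakS_mnseq} the restrictions of $\E_{S_j}$ and $\E_S$ to the $n$-th subtrees of a single $j$-th tree coincide. The subtle case is that of two $n$-th trees $u\sse Y_0(j_1)$, $v\sse Y_0(j_2)$ with $j_1\ne j_2$: their node sets are disjoint, so $\pi_S(u)=\pi_S(v)$ can hold only when $S=\{\emptyset\}$, i.e.\ $\E_S$ relates such a cross-level pair precisely when it is the single-class relation. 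Thus I would interpose one more homogenization: view ``$\R$-relatedness'' as a $2$-coloring of the pairs of $n$-th trees drawn from distinct trees of $Y_0$ and use the Ramsey property of $\mathcal{R}_\al$ (Theorem \ref{thm.R_altRs}) to pass to $Y\le_\al Y_0$ homogeneous for it. If the homogeneous color is ``related'', transitivity through a cross-level witness forces $\R$ to relate all of $\mathcal{R}_\al(n)|Y$, so $S=\{\emptyset\}$; if it is ``unrelated'', then $\R$ relates only trees within a common $Y(j)$, which is exactly what $\E_S$ does for the $S\ne\{\emptyset\}$ distilled above.

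The main obstacle I anticipate is precisely this reconciliation of the within-tree verdict with the cross-tree one: one must ensure that the $S$ extracted from the single-level fusion is consistent with the homogeneous cross-level color, and in particular that the degenerate combination ``within-tree trivial but cross-tree unrelated'' is resolved by the distinguished chain element $\{(\al,n),\emptyset\}$ rather than by $\{\emptyset\}$. Arranging the bookkeeping of a single fusion so that it simultaneously carries the level-by-level canonicity, the coherence of the $S_m$, and the cross-level homogenization, all on one and the same $Y$, while keeping $S=\bigcap_m S_m$ inside $\mathfrak{S}_\al(n)$, is the technical heart of the argument.
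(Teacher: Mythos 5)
Your strategy is essentially the paper's own: the level-by-level fusion via Fact \ref{fact.CanonThmEqReRestricted}, the coherence $S_{m'}\sse S_m$ via Fact \ref{fact.frakS_mnseq}, the intersection $S=\bigcap_m S_m$, and the persistence-of-splitting-successor argument for the within-tree verification are precisely the content of the paper's Lemma \ref{lem.cohere}. But you run the two homogenizations in the opposite order from the paper, and that inversion produces a genuine gap which you yourself flag but never close: the ``within-tree trivial, cross-tree unrelated'' case. The problem is not bookkeeping. The verdicts produced by Fact \ref{fact.CanonThmEqReRestricted} concern only $n$-subtrees of a \emph{single} $m$-shaped tree, and on such a domain the relations $\E_{\{\emptyset\}}$ and $\E_{S_{\al}}$ (where $S_{\al}=\{(\al,n),\emptyset\}$) are literally the same relation, since all $n$-subtrees of one tree have the same $\pi_{S_{\al}}$-projection. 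Hence no amount of within-tree canonization, however coherently fused, can determine whether the correct global $S$ is $\{\emptyset\}$ or the two-element chain. Concretely, if $\R$ is exactly the ``same level'' relation $\E_{S_{\al}}$, your fusion sees the trivial relation at every level, nothing prevents each $S_m$ from being returned as $\{\emptyset\}$, and then $S=\bigcap_m S_m=\{\emptyset\}$, your cross-level homogenization comes out ``unrelated'', and the asserted identity $\R\re(\mathcal{R}_{\al}(n)|Y)=\E_S$ is simply false. Your final paragraph names the correct resolving element $\{(\al,n),\emptyset\}$ but explicitly defers the argument as ``the technical heart''; as written, the proof is therefore incomplete exactly where the theorem has content. (Incidentally, the dichotomy you invoke to get $S\in\mathfrak{S}_{\al}(n)$ --- stabilization versus collapse to the two-element chain --- is both unjustified and unnecessary: any intersection of downward closed sets containing $\emptyset$ is again a member of $\mathfrak{S}_{\al}(n)$; membership was never the issue, correctness is.)

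The paper closes this hole by doing the cross-level step \emph{first}: it applies the Abstract Ellentuck Theorem to $\mathcal{X}=\{X'\le_{\al}X: X'(n)\,\R\,\pi^{\al}_{n+1,n}(X'(n+1))\}$ and, after a further thinning, reaches the dichotomy (i) all of $\mathcal{R}_{\al}(n)|Y$ is $\R$-related, or (ii) $u\R v$ implies $\pi_{S_{\al}}(u)=\pi_{S_{\al}}(v)$. In case (i) it outputs $S=\{\emptyset\}$ and stops; only in case (ii) does it run the fusion of Lemma \ref{lem.cohere}, and it is hypothesis (ii) that licenses taking every $S_m$ to contain the node $\{(\al,n)\}$, so that $S=\bigcap_m S_m\ne\{\emptyset\}$ automatically and $\E_S$ separates cross-tree pairs exactly as $\R$ does. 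If you wish to keep your order of operations, you must supply the missing step: in the ``unrelated'' homogeneous case with eventually trivial within-tree verdicts, discard $\bigcap_m S_m$, set $S=\{(\al,n),\emptyset\}$, and verify directly that on a further shrink $Y'$ both $\R$ and $\E_S$ coincide with the ``same tree of $Y'$'' relation. That verification is short, but it is precisely the piece you left open, and without it the theorem is not proved.
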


\begin{proof}
Assume the hypotheses.
Recall the map
$\pi_{n+1,n}^{\al}:\mathcal{R}_{\al}(n+1)\ra\mathcal{R}_{\al}(n)$ from Definition \ref{defn.pi_S}. 
Let 
\begin{equation}
\mathcal{X}=\{X'\le_{\al} X:  X'(n) \R\  \pi_{n+1,n}^{\al}(X'(n+1)) \}.
\end{equation}
This set $\mathcal{X}$ will tell us whether or not the lexicographically least node in $X'(n)$ matters to the equivalence relation $\R$.
By the Abstract Ellentuck Theorem,
there is an $X'\le_{\al} X$ such that either $[\emptyset,X']\sse\mathcal{X}$, or $[\emptyset,X']\cap\mathcal{X}=\emptyset$.
Possibly thinning again, letting $S_{\al}$ denote $\{(\al,n),\emptyset\}\in\mathfrak{S}_{\al}(n)$,
we obtain a $Y\le_{\al} X'$ such that 
 either
 \begin{enumerate}
\item[(i)] 
for all $u,v\in \mathcal{R}_{\al}(n)|Y$,
$u\R v$; or
\item[(ii)]
 for all $u,v\in \mathcal{R}_{\al}(n)|Y$,
 if $u\R v$  then $\pi_{S_{\al}}(u)=\pi_{S_{\al}}(v)$.
\end{enumerate}
If case (i) holds, let $Z=Y$ and $S=\{\emptyset\}$.
In this case,  $u\R v$ for all $u,v\in\mathcal{R}_{\al}(n)|Z$.
Otherwise, case (ii) holds.

Suppose $\al<\om$.
Then $\bS_{\al}(n)$ is a finite tree, so $\mathfrak{S}_{\al}(n)$ is finite and consists of all finite subtrees of $\bS_{\al}(n)$.
Take $k>n$ large enough that $\mathfrak{S}_{\al}(n,k')=\mathfrak{S}_{\al}(n,k)$ for all $k'\ge k$.
For each  $S\in\mathfrak{S}_{\al}(n)\setminus\{\emptyset\}$,
define 
\begin{equation}
\mathcal{Y}_S=\{Y'\le_{\al} Y :
\forall u,v\in\mathcal{R}_{\al}(n)|Y'(2k)
 ( u\R v\mathrm{\ iff\ }
u \E_S v)\}.
\end{equation}
Let $\mathcal{Y}'=[\emptyset,Y]\setminus\bigcup_{S\in \mathfrak{S}_{\al}(n,k)}\mathcal{Y}_S$.
Then the $\mathcal{Y}_S$, $S\in \mathfrak{S}_{\al}(n)\setminus\{\emptyset\}$ along with $\mathcal{Y}'$ form an open  cover of
$[\emptyset,Y]$.
Since $\mathfrak{S}_{\al}(n)$ is finite, by the Abstract Ellentuck Theorem,  there is a $Z\le_{\al} Y$ such that either $[\emptyset,Z]\sse\mathcal{Y}_S$ for some  $S\in\mathfrak{S}_{\al}(n)\setminus\{\emptyset\}$, or else $[\emptyset,Z]\sse\mathcal{Y}'$.
By Theorem
\ref{thm.FCT_R_al(n)} for $\mathcal{R}_{\al}$,
it cannot be the case that $[\emptyset, Z]\sse\mathcal{Y}'$.
Since we are under the assumption that (ii) holds, there is some $S\in\mathfrak{S}_{\al}(n)\setminus\{\emptyset\}$ such that 
$\R\re(\mathcal{R}_{\al}(n)|Z)$ is given by $\E_S$.

Now suppose that $\om\le\al<\om_1$.
Then $\bS_{\al}(n)$ is not a tree, and $\mathfrak{S}_{\al}(n)$ is countably infinite.

\begin{lem}\label{lem.cohere}
Suppose $\om\le\al<\om_1$.
Let $n<\om$, $\R$ be an equivalence relation on $\mathcal{R}_{\al}(n)$, and $Y\in\mathcal{R}_{\al}$
such that (ii) holds; that is,
 for all $u,v\in \mathcal{R}_{\al}(n)|Y$,
 if $u\R v$  then $\pi_{S_{\al}}(u)=\pi_{S_{\al}}(v)$.
Then 
 there is a decreasing sequence, 
$Y=Y_{n}\ge_{\al} Y_{n+1}\ge_{\al}\dots$, 
and $S_m\in\mathfrak{S}_{\al}(n,m)$ for $m\ge n$ such that 
$S_n\contains S_{n+1}\contains\dots$ and 
for each $m\ge n$,
$\R\re(\mathcal{R}_{\al}(n)|z)$ is given by $\E_{S_m}$ for each $z\in \mathcal{R}_{\al}(m)| Y_m$.
Moreover, letting $Z=r^{\al}_n(Y)\cup\bigcup\{Y_n(n):n\ge m\}$ and $S=\bigcap\{S_m:m\ge n\}$,
we have that $\R\re(\mathcal{R}_{\al}(n)|Z)$ is given by $\E_S$.
\end{lem}

\begin{proof}
Assume the hypotheses.
Let $S_n=\bS_{\al}(n)$; this is the {\em only} member of $\mathfrak{S}_{\al}(n,n)$.
By Fact \ref{fact.CanonThmEqReRestricted},
there is a $Y_{n+1}\le_{\al} Y$ and an $S_{n+1}\in\mathfrak{S}(n,n+1)$ such that 
$\R\re\mathcal{R}_{\al}(n)|y$ is given by $\E_{S_{n+1}}$, for each $y\in\mathcal{R}_{\al}(n)|Y_{n+1}$.
Given $Y_m$, $m>n$,
by 
Fact \ref{fact.CanonThmEqReRestricted},
there is a $Y_{m+1}\le_{\al} Y_m$ and an $S_{m+1}\in\mathfrak{S}(n,m+1)$ such that 
$\R\re\mathcal{R}_{\al}(n)|y$ is given by $\E_{S_{m+1}}$, for each $y\in\mathcal{R}_{\al}(n)|Y_{m+1}$.
By Fact \ref{fact.frakS_mnseq}, $S_{m+1}\sse S_m$.

Let $Z=r^{\al}_n(Y)\cup\bigcup\{Y_m(m):m\ge n\}$, and 
let $S=\bigcap\{S_m:m\ge n\}$.
Then $S$ is downward closed, so $S\in\mathfrak{S}_{\al}(n)$.
Moreover, $S$ is nonempty, since the node $\{(\al,n)\}\in S_m$ for every $m\ge n$.
We claim that $\R\re\mathcal{R}_{\al}(n)|Z$ is given by $\E_S$.
Let $u,v\in\mathcal{R}_{\al}(n)|Z$, and let $m,m'$ be the integers such that $u\sse Z(m)$ and $v\sse Z(m')$.
If $m\ne m'$, then (ii) implies that $u\not\R v$.
Since $S$ is nonempty, also $u\not{\E}_S\ v$.
Now suppose $m=m'$.
If $u\R v$ then $u\E_{S_m} v$, which implies $u\E_S v$, since $S\sse S_m$.
If $u\not\R v$ then $u\not{\E}_{S_m} v$.
Let $s\in S_m$ be minimal in $S_m$ such that the copies of $s$ in $u$ and $v$ are different, under the isomorphisms of $\bS_{\al}(n)$ into $u$ and $v$.
Note that  $s$ must be the immediate successor of some splitting node in $\bS_{\al}(m)$.
But then 
 Fact \ref{fact.frakS_mnseq} implies
$s$ must be in $S_k$ for all $k\ge m$, which implies  $s\in S$, contradiction.
Hence, also $u\not{\E}_S\ v$.
Therefore, $u \R v$ iff $u\E_S v$.
\end{proof}

By the Lemma, the proof is complete.
\end{proof}

The following Lemmas \ref{lem.1}, \ref{lem.mixing} and 
\ref {claim.A}
were proved as 
Lemmas 4.6, 4.9, and 4.10, respectively,
in \cite{Dobrinen/Todorcevic11} for $\mathcal{R}_1$.
As the proofs are identical for all the spaces $\mathcal{R}_{\al}$, $1\le \al<\om_1$, we  restate these lemmas without proof.
In the following,  $X/(a,b)$ denotes $X/a\cap X/b$.

\begin{lem}\label{lem.1}
Suppose $1\le\al<\om_1$.

(1)
Suppose $P(\cdot,\cdot)$ is a property  such that for each $a\in\mathcal{AR}^{\al}$ and each $X\in\mathcal{R}_{\al}$, there is a $Z\le_{\al} X$ such that $P(a,Z)$ holds.
Then for each $X\in\mathcal{R}_{\al}$, 
there is a $Y\le_{\al} X$ such that for each $a\in\mathcal{AR}^{\al}|Y$ and each $Z\le_{\al} Y$, $P(a,Z/a)$ holds.

(2)
Suppose $P(\cdot,\cdot,\cdot)$ is a property such that for all $a,b\in\mathcal{AR}^{\al}$ and each $X\in\mathcal{R}_{\al}$, there is a $Z\le_{\al} X$ such that $P(a,b,Z)$ holds.
Then for each $X\in\mathcal{R}_{\al}$, there is a $Y\le_{\al} X$ such that for all $a,b\in\mathcal{AR}^{\al}|Y$ and all $Z\le_{\al} Y$, $P(a,b,Z/(a,b))$ holds.
\end{lem}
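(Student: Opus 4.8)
The plan is to prove (1) by a fusion (diagonalization) argument and then to obtain (2) by the identical method, since at each finite stage only finitely many \emph{pairs} $(a,b)$ need to be treated, exactly as only finitely many single approximations are treated in (1). So I concentrate on (1), and I will feed the fusion with a genuine homogeneity statement rather than with the bare single witness the hypothesis provides.

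The first and most important step is to upgrade the density hypothesis into a Ramsey statement. For each fixed $a\in\mathcal{AR}^{\al}$, set $\mathcal{P}_a=\{B\in\mathcal{R}_{\al}: P(a,B/a)\text{ holds}\}$. Given any $A$ with $a\le^{\al}_{\mathrm{fin}}A$, apply the hypothesis to a member of $[a,A]$ (one exists by \textbf{A.3} and Fact \ref{fact.B(n)}) to obtain $Z$ with $P(a,Z)$; then $B:=a\cup(Z/a)$ lies in $[a,A]$ by Fact \ref{fact.B(n)} and satisfies $B/a=Z/a$, so $B\in\mathcal{P}_a$. Thus $\mathcal{P}_a$ meets every $[a,A]$. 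Since in the intended applications $\mathcal{P}_a$ has the property of Baire, the Abstract Ellentuck Theorem (Theorem \ref{thm.AET}, available through Theorem \ref{thm.R_altRs}) makes $\mathcal{P}_a$ Ramsey: for every $\emptyset\neq[a,A]$ there is $B\in[a,A]$ with $[a,B]\sse\mathcal{P}_a$ or $[a,B]\cap\mathcal{P}_a=\emptyset$. The density just established rules out the second alternative, so below any $A$ one finds $B$ with $[a,B]\sse\mathcal{P}_a$, i.e. $P(a,Z/a)$ holds for \emph{every} $Z\in[a,B]$. This is exactly the passage from ``some good $Z$'' to ``all tails above $a$ are good,'' and it is where I expect the real difficulty to sit; a bare fusion with single witnesses cannot succeed here, because $P$ is not assumed downward absolute.

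Next I would run the fusion. I build a $\le_{\al}$-decreasing sequence $X=A_0\ge_{\al}A_1\ge_{\al}\cdots$ with $r^{\al}_n(A_n)=r^{\al}_n(A_{n+1})$ for all $n$, so that by the closure of $\mathcal{R}_{\al}$ as a subspace of $(\mathcal{AR}^{\al})^{\bN}$ established in the proof of Theorem \ref{thm.R_altRs} the fusion $Y=\bigcup_n r^{\al}_n(A_n)$ is a member of $\mathcal{R}_{\al}$ with $Y\le_{\al}A_n$ for every $n$. At stage $n+1$, axiom \textbf{A.2}(a) guarantees that there are only finitely many $a\in\mathcal{AR}^{\al}$ with $a\le^{\al}_{\mathrm{fin}}r^{\al}_n(A_n)$; enumerating them and applying the previous paragraph once for each, while freezing the $n$-th block and using \textbf{A.3} to remain inside $[\depth,A_n]$, produces $A_{n+1}\le_{\al}A_n$ with $[a,A_{n+1}]\sse\mathcal{P}_a$ for each such $a$.

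Finally I would read off the conclusion. Given $a\in\mathcal{AR}^{\al}|Y$ and $Z\le_{\al}Y$, the approximation $a$ was treated at some finite stage, so $[a,Y]\sse\mathcal{P}_a$; since $a\cup(Z/a)\in[a,Y]$ is a member of $\mathcal{R}_{\al}$ by Fact \ref{fact.B(n)} and has tail $Z/a$ above $a$, membership in $\mathcal{P}_a$ yields $P(a,Z/a)$. The main obstacle remains the homogeneity step of the second paragraph, together with the bookkeeping that keeps the finitely many requirements at each stage compatible with the frozen initial blocks; both are handled by the Abstract Ellentuck and Nash--Williams machinery (Theorems \ref{thm.AET} and \ref{thm.ANW}). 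Part (2) then follows verbatim, running the same fusion with the finitely many pairs $(a,b)\le^{\al}_{\mathrm{fin}}r^{\al}_n(A_n)$ in place of single approximations and with $Z/(a,b)=Z/a\cap Z/b$ in the roles above.
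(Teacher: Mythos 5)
Your proposal does \emph{not} follow the paper's route, and the step you substitute for the paper's key mechanism is a genuine gap. The paper gives no proof here: it cites Lemma 4.6 of \cite{Dobrinen/Todorcevic11}, whose argument is a plain fusion. At stage $n$ one lists the finitely many $a\le^{\al}_{\mathrm{fin}}r^{\al}_n(A_n)$ (axiom \textbf{A.2}(a)), successively shrinks the tail $A_n/r^{\al}_n(A_n)$ using the density hypothesis once per $a$, freezes $r^{\al}_n$, and fuses. The final $Y$ works because for any $a\le^{\al}_{\mathrm{fin}}Y$ and $Z\le_{\al}Y$ the tail $Z/a$ lies $\le_{\al}$-below the witness chosen for $a$ at stage $\depth_Y(a)$, and the goodness of the witness is \emph{inherited} by everything below it. That last transfer is exactly where the proof uses (silently) that $P(a,\cdot)$ passes down under $\le_{\al}$ and depends only on the part of its second argument lying above $a$. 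Both features hold for every property this lemma is ever applied to: mixing and separating are $\le_{\al}$-hereditary (in particular below a deciding $B$ as in Lemma \ref{claim.A}), so in Claim \ref{claim.E}, Claim \ref{claim.Tsametype} and Lemma \ref{lem.irredssephi} the transfer is immediate.

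You correctly identified the crux --- upgrading ``some good $Z$'' to ``all $Z$ are good'' --- and you correctly observed that a bare fusion cannot do this for a $P$ that is not downward absolute. But your resolution, namely that $\mathcal{P}_a=\{B:P(a,B/a)\}$ has the property of Baire and is therefore Ramsey by Theorem \ref{thm.AET}, imports a hypothesis that appears nowhere in the lemma and that you never verify; you explicitly defer to ``the intended applications.'' This is the gap, for two reasons. First, as a proof of the lemma as stated it is a non sequitur; indeed for literally arbitrary $P$ the stated lemma is \emph{false} (take $P(a,W)$ to say ``$W\in\mathcal{R}_{\al}$'': the hypothesis holds with $Z=X$, yet for any $Y$, with $a=r^{\al}_1(Y)$ and $Z=Y$ the tail $Y/a$ is missing a block of type $\mathbb{S}_{\al}(0)$ and so is not a member of $\mathcal{R}_{\al}$; and even after repairing this member/tail mismatch, a transfinite diagonalization produces a dense $P$ with no good $Y$ once heredity is dropped). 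So \emph{some} implicit restriction on $P$ must be used, and the only question is which one the paper's applications actually satisfy. Second, the restriction you chose is the one that is hard to check: the properties fed into this lemma are built from mixing, i.e.\ conditions of the form ``there is no $Y\le_{\al}Z$ which separates,'' which are $\Pi^1_1$-type; showing such sets have the Baire property in the Ellentuck topology requires the closure of Ramsey sets under the Souslin operation (an abstract Silver-type theorem), not Theorems \ref{thm.AET} and \ref{thm.ANW} which you cite. By contrast, downward heredity of these properties is a one-line check, which is why the cited fusion proof needs no measurability at all.

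Two local slips point to the same issue. In your density step, the hypothesis hands you $P(a,Z)$, while membership of $B$ in $\mathcal{P}_a$ requires $P(a,Z/a)$; these coincide only under the implicit reading above. Also $a\cup(Z/a)$ need not be a member of $\mathcal{R}_{\al}$: Fact \ref{fact.B(n)} produces $a\cup(Z/r^{\al}_{|a|}(Z))$, and $\depth_Z(a)$ need not equal $|a|$ (indeed $a$ need not satisfy $a\le^{\al}_{\mathrm{fin}}Z$ at all). Your fusion skeleton, the use of \textbf{A.2}(a) to get finitely many requirements per stage, the closure of $\mathcal{R}_{\al}$ in $(\mathcal{AR}^{\al})^{\bN}$, and the reduction of (2) to (1) via pairs all match the standard argument; but once $P$ is read so that those pieces make sense, the witness itself already homogenizes $[a,\cdot]$ and your detour through the Abstract Ellentuck Theorem is both unnecessary and, as used, unjustified.
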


Given a front $\mathcal{F}$ on $[\emptyset,A]$ for some $A\in\mathcal{R}_{\al}$  and $f:\mathcal{F}\ra\bN$, we adhere to the following convention: 
If we write $f(a)$ or $f(a\cup u)$, 
it is assumed that $a,a\cup u$ are in $\mathcal{F}$.
Define
\begin{equation}
\hat{\mathcal{F}}=\{r_m^{\al}(a): a\in\mathcal{F},\ m\le n<\om, \mathrm{\ where\ } a\in\mathcal{AR}^{\al}_n\}.
\end{equation}
Note that $\emptyset\in\hat{\mathcal{F}}$, since $\emptyset=r_0^{\al}(a)$ for any $a\in\mathcal{F}$.
Recall that for $a\in\mathcal{AR}^{\al}_k$ and $m<n\le k$, $a[m,n)$ denotes $\bigcup\{a(i):i\in[m,n)\}$.
For any $X\le_{\al} A$,
define
\begin{equation}
\Ext(X)=\{a[m,n):
\exists m\le n\, (a\in \mathcal{AR}^{\al}_n,\mathrm{\ and\ } a[m,n)\sse X)\}.
\end{equation}
$\Ext(X)$ is the collection of all possible legal extensions into $X$.
Note that
$a[m,n)\sse X$
iff $a[m,n)\le^{\al}_{\mathrm{fin}} X$.
For any $a\in\mathcal{AR}^{\al}$,  let $\Ext(X/a)$ denote the collection of those $y\in \Ext(X)$ such that 
$y\sse X/a$.
Let $\Ext(X/(a,b))$  denote $\Ext(X/a)\cap\Ext(X/b)$.
For $u\in\Ext(X)$,
we write $v\in\Ext(u)$ 
to mean that 
$v\in\Ext(X)$ and $v\sse u$.

\begin{defn}\label{def.sepmix}
Fix $a,b\in\hat{\mathcal{F}}$ and $X\in\mathcal{R}_{\al}$.
We say that $X$ {\em separates $a$ and $b$} iff
for all  $x\in\Ext(X/a)$ and $y\in\Ext(X/b)$ such that $a\cup x$ and $b\cup y$ are in $\mathcal{F}$,
$f(a\cup x)\ne f(b\cup y)$.
We say that $X$ {\em mixes $a$ and $b$} iff there is no $Y\le_{\al} X$ which separates $a$ and $b$.
$X$ {\em decides for $a$ and $b$} iff either $X$ separates $a$ and $b$ or else $X$ mixes $a$ and $b$.

We say that $X/(a,b)$ {\em separates $a$ and $b$} iff
for all $x,y\in\Ext(X/(a,b))$ such that  $a\cup x$ and $b\cup y$ are in $\mathcal{F}$,
$f(a\cup x)\ne f(b\cup y)$.
 $X/(a,b)$ {\em mixes $a$ and $b$} iff there is no $Y\le_{\al} X/(a,b)$ which separates $a$ and $b$.
 $X/(a,b)$ {\em decides  for $a$ and $b$} iff either $X/(a,b)$ separates $a$ and $b$;
or else $X/(a,b)$ mixes $a$ and $b$.
\end{defn}

The following facts are useful to note.
 $X$  mixes $a$ and $b$  iff
 $X/(a,b)$  mixes $a$ and $b$
 iff for each $Y\le_{\al} X$, there are $x,y\in\Ext(Y)$ such that $f(a\cup x)=f(b\cup y)$
iff
 for all $Y\le_{\al} X$,
$Y$ mixes $a$ and $b$.
If $X$ separates $a$ and $b$ ($X/(a,b)$ separates $a$ and $b$),
then  for all $Y\le_{\al} X$ (for all $Y\le_{\al} X/(a,b)$),
$Y$ separates $a$ and $b$.
 $X/(a,b)$  decides for $a$ and $b$ iff either for all  $x,y\in\Ext(X/(a,b))$,
$f(a\cup x)\ne f(b\cup y)$,
or else there is no $Y\le_{\al} X/(a,b)$ which has this property.
Thus, $X/(a,b)$ mixes $a$ and $b$ 
iff $X$ mixes $a$ and $b$.
However, 
if  $X/(a,b)$ separates $a$ and $b$ it does not necessarily follow that $X$ separates $a$ and $b$.

\begin{lem}[Transitivity of Mixing]\label{lem.mixing}
For any $X\in\mathcal{R}_{\al}$ and any $a,b,c\in\hat{\mathcal{F}}$,
if $X$ mixes $a$ and $b$ and $X$ mixes $b$ and $c$,
then $X$ mixes $a$ and $c$.
\end{lem}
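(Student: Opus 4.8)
The plan is to prove the contrapositive by a homogenization argument, using that every front on $\mathcal{R}_{\al}$ is Ramsey (Theorem~\ref{thm.ANW}). It is worth stressing at the outset that transitivity of mixing is \emph{not} a formal consequence of the definitions: if one only knew that the sets of realized $f$-values of completions of $a$, $b$, $c$ were, say, $\{0\}$, $\{0,1\}$, $\{1\}$ in every subcondition, then $a,b$ and $b,c$ would mix while $a,c$ separate. Such a configuration is ruled out only by the ability to thin $X$ and homogenize, which is exactly where the topological Ramsey space structure must enter.

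For $d\in\hat{\mathcal{F}}$ and $Z\le_{\al}X$, let $V_d(Z)=\{f(d\cup x): x\in\Ext(Z/d),\ d\cup x\in\mathcal{F}\}$ denote the set of $f$-values realized by completing $d$ with material from $Z$. In this notation $Z$ separates $d$ and $e$ precisely when $V_d(Z)\cap V_e(Z)=\emptyset$; by the facts recorded after Definition~\ref{def.sepmix}, $X$ mixes $d$ and $e$ precisely when $V_d(Z)\cap V_e(Z)\ne\emptyset$ for every $Z\le_{\al}X$; and plainly $V_d(Z)\sse V_d(Y)$ whenever $Z\le_{\al}Y$.

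Now I would suppose toward a contradiction that $X$ mixes $a,b$ and mixes $b,c$, yet does not mix $a,c$, and fix $Y\le_{\al}X$ separating $a$ and $c$, so that $V_a(Y)\cap V_c(Y)=\emptyset$. Since $Y\le_{\al}X$ also mixes $a,b$, the completions of $b$ within $Y$ are dense (they meet every $W\le_{\al}Y$) and hence form a front; I would color each such completion $b\cup x$ according to whether $f(b\cup x)$ lies in $V_a(Y)$, in $V_c(Y)$, or in neither---three well-defined classes, because $V_a(Y)$ and $V_c(Y)$ are disjoint. Applying the Ramsey property of fronts (Theorem~\ref{thm.ANW}) twice, I would pass to a $Z\le_{\al}Y$ on which this coloring is constant, i.e.\ $V_b(Z)$ is contained in exactly one of $V_a(Y)$, $V_c(Y)$, or the complement of $V_a(Y)\cup V_c(Y)$.

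Each case produces the desired contradiction. If $V_b(Z)\sse V_a(Y)$, then since $V_c(Z)\sse V_c(Y)$ is disjoint from $V_a(Y)$ we get $V_b(Z)\cap V_c(Z)=\emptyset$, so $Z$ separates $b,c$; if $V_b(Z)\sse V_c(Y)$, then symmetrically $Z$ separates $a,b$; and if $V_b(Z)$ avoids both $V_a(Y)$ and $V_c(Y)$, then $V_b(Z)$ is disjoint from $V_a(Z)\sse V_a(Y)$, so again $Z$ separates $a,b$. In every case some $Z\le_{\al}X$ separates $\{a,b\}$ or $\{b,c\}$, whereas $X$---and therefore every condition below it---mixes both pairs, a contradiction. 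The one delicate point, and the step I expect to require the most care, is verifying that the completions of $b$ inside $Y$ genuinely constitute a front so that Theorem~\ref{thm.ANW} applies; this is precisely where the hypothesis that $X$ mixes $a,b$ is used, since it forces those completions to be dense below $Y$.
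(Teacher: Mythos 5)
Your proof has the same architecture as the proof this paper relies on (Lemma \ref{lem.mixing} is stated here without proof, citing Lemma 4.9 of \cite{Dobrinen/Todorcevic11}, whose argument carries over verbatim): fix a separating $Y\le_{\al}X$, color the completions of $b$ according to how their $f$-values sit relative to the values realized over $a$ and over $c$, homogenize, and exploit the disjointness of the realized value sets. Your single $3$-coloring is just that proof's two successive $2$-colorings run at once, with the contradiction landed on the mixing hypotheses rather than on the separation of $a$ and $c$; the case analysis itself is correct.

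The genuine gap is in the step you yourself flag as delicate, and your proposed justification for it is wrong on both counts. First, the family $\mathcal{G}=\{b\cup x: x\in\Ext(Y/b),\ b\cup x\in\mathcal{F}\}$ is \emph{not} a front on $[\emptyset,Y]$ in the sense of Definition \ref{def.frontR1}, and no density observation makes it one: every member of $\mathcal{G}$ has $b$ as an initial segment, while a typical $W\le_{\al}Y$ does not begin with the blocks of $b$, so clause (1) of that definition fails. Consequently Theorem \ref{thm.ANW}, applied verbatim to the Nash-Williams family $\mathcal{G}$ (it is Nash-Williams, being a subfamily of $\mathcal{F}$), yields only some $Y'\le_{\al}Y$ and $i$ with $\mathcal{G}_i|Y'=\emptyset$; since nothing forces $b\le^{\al}_{\mathrm{fin}}Y'$, this can hold vacuously because $\mathcal{G}|Y'=\emptyset$, and then it says nothing about completions of $b$ into $Y'$. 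What the argument needs is the \emph{relativized} Ramsey property, which is how such homogenizations are carried out throughout this paper (e.g., in the proofs of Theorem \ref{thm.canon.eq.rel.R(n)} and Claim \ref{claim.E}): form $b\cup(Y/b)\in\mathcal{R}_{\al}$ using Fact \ref{fact.B(n)}, note that your three color classes determine clopen subsets of $\mathcal{R}_{\al}$, and apply the Abstract Ellentuck Theorem \ref{thm.AET} inside the cube $[b,\,b\cup(Y/b)]$ to obtain $W$ with $[b,W]$ color-homogeneous; then $Z=W/b\le_{\al}Y$ is the condition your case analysis requires. Second, the hypothesis that $X$ mixes $a$ and $b$ plays no role in this step: completions of $b$ exist within every admissible $W\le_{\al}Y$ simply because $\mathcal{F}$ is a front on $[\emptyset,A]$ --- the unique member of $\mathcal{F}$ that is an initial segment of $b\cup W$ must extend $b$, since $\mathcal{F}$ is Nash-Williams and $b\in\hat{\mathcal{F}}$. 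The mixing hypotheses are used exactly once, in the place where you in fact use them: to rule out each of the three homogeneous colors.
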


\begin{lem}\label{claim.A}
For each $X\in\mathcal{R}_{\al}$,
there is a $Y\le_{\al} X$ such that for each $a,b\le_{\mathrm{fin}}^{\al} Y$ in $\hat{\mathcal{F}}$,
$Y/(a,b)$ decides for $a$ and $b$.
\end{lem}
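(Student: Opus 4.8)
The plan is to recognize the assertion ``$Y/(a,b)$ decides for $a$ and $b$'' as an instance of the abstract fusion provided by Lemma \ref{lem.1}(2), so that one diagonalization simultaneously handles all pairs $a,b\in\hat{\mathcal{F}}$. To this end I would introduce a ternary property $P$ by declaring $P(a,b,Z)$ to hold if and only if either $a\notin\hat{\mathcal{F}}$ or $b\notin\hat{\mathcal{F}}$ (a vacuous case), or else $Z$ decides for $a$ and $b$; that is, $Z$ separates $a$ and $b$ or $Z$ mixes $a$ and $b$.

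First I would verify that $P$ satisfies the hypothesis of Lemma \ref{lem.1}(2): for all $a,b\in\mathcal{AR}^{\al}$ and every $X\in\mathcal{R}_{\al}$ there is a $Z\le_{\al}X$ with $P(a,b,Z)$. When $a$ or $b$ lies outside $\hat{\mathcal{F}}$ this is immediate, so assume $a,b\in\hat{\mathcal{F}}$. The key point is that deciding a single pair is automatic from the definition of mixing itself: by Definition \ref{def.sepmix}, $X$ mixes $a$ and $b$ exactly when no $Y\le_{\al}X$ separates $a$ and $b$. Hence either $X$ already mixes $a$ and $b$, in which case $X$ itself decides and we take $Z=X$; or $X$ does not mix them, in which case the failure of the defining clause of mixing yields a $Y\le_{\al}X$ that separates $a$ and $b$, and we take $Z=Y$. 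In either case $Z\le_{\al}X$ decides for $a$ and $b$, so $P(a,b,Z)$ holds.

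With the hypothesis in hand, I would apply Lemma \ref{lem.1}(2) to the given $X$ to obtain a $Y\le_{\al}X$ such that $P(a,b,Z/(a,b))$ holds for all $a,b\in\mathcal{AR}^{\al}|Y$ and all $Z\le_{\al}Y$. Specializing $Z=Y$ yields that $Y/(a,b)$ decides for $a$ and $b$ whenever $a,b\le^{\al}_{\mathrm{fin}}Y$ lie in $\hat{\mathcal{F}}$, which is exactly the conclusion of the lemma.

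There is essentially no computational obstacle here; all the genuine work is absorbed into the fusion Lemma \ref{lem.1}(2), which itself rests on the Abstract Ellentuck Theorem. The one subtlety to keep in view is the asymmetry recorded after Definition \ref{def.sepmix}: since $X$ mixing $a$ and $b$ is equivalent to $X/(a,b)$ mixing them, the mixing half of deciding is stable under passing to $Z/(a,b)$, whereas separation is only guaranteed in the passed-down form. This is precisely why the statement is phrased for $Y/(a,b)$ rather than for $Y$, and why feeding $P$ through Lemma \ref{lem.1}(2)---whose output is asserted for $Z/(a,b)$---produces the correct formulation.
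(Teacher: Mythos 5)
Your proof is correct and is essentially the same argument the paper relies on: the paper states this lemma without proof, citing Lemma 4.10 of \cite{Dobrinen/Todorcevic11}, whose proof is exactly your dichotomy (for each pair, either some $Z\le_{\al}X$ separates $a$ and $b$, and we take that $Z$, or else no such shrink exists and then $X$ itself mixes $a$ and $b$ by the very definition of mixing) followed by an application of the fusion Lemma \ref{lem.1}(2) and specialization to $Z=Y$. Your closing observation about why the conclusion must be phrased for $Y/(a,b)$ rather than $Y$ is likewise in accord with the facts recorded after Definition \ref{def.sepmix}.
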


\begin{defn}\label{defn.irred}
Let $\mathcal{F}$ be a front on $[\emptyset, X]$ for some $X\in\mathcal{R}_{\al}$, and let $\vp$ be a function on $\mathcal{F}$.
\begin{enumerate}
\item
$\vp$ is {\em inner} if $\vp(a)\sse a$ for all $a\in\mathcal{F}$.
\item
$\vp$ is {\em Nash-Williams} if $\vp(a)\not\sqsubseteq \vp(b)$, for all $a\ne b\in\mathcal{F}$.
\item 
$\vp$ is {\em Sperner} if $\vp(a)\not\sse \vp(b)$ for all $a\ne b\in\mathcal{F}$ 
\end{enumerate}
\end{defn}

\begin{defn}\label{defn.canonical}
Let $X\in\mathcal{R}_{\al}$, $\mathcal{F}$ be a front on $[\emptyset,X]$, and  $\R$ an equivalence relation on $\mathcal{F}$.
We say that $\R$ is {\em canonical} if and only if there is an
 inner  Nash-Williams  function 
$\vp$ on $\mathcal{F}$ such that
\begin{enumerate}
\item
for all $a,b\in\mathcal{F}$, $a\R b$ if and only if $\vp(a)=\vp(b)$; and
\item
$\vp$ is maximal among all inner Nash-Williams functions satisfying (1).
That is,
for any other inner Nash-Williams function $\vp'$ on $\mathcal{F}$ satisfying (1), there is a $Y\le_{\al} X$ such that $\vp'(a)\sse\vp(a)$ for all $a\in\mathcal{F}|Y$.
\end{enumerate}
\end{defn}

\begin{rem}\label{rem.canonical}
As in \cite{Dobrinen/Todorcevic11},
the map $\vp$ constructed in the 
 proof of Theorem \ref{thm.PRR_al} will in fact be Sperner. 
Moreover, this $\vp$ is also the only such inner  Nash-Williams map  with the additional property $(*)$ that there is a $Z\le_{\al} C$ such that for each $s\in\mathcal{F}|Z$ there is a $t\in\mathcal{F}$ such that $\vp(s)=\vp(t)=s\cap t$.
\end{rem}

The following is part of the general induction scheme discussed in Section \ref{sec.tRs}.
\vskip.1in

\noindent\underbar{Induction Hypothesis}.
Suppose that $2\le \al<\om_1$;
for all $1\le\beta<\al$,
Theorems \ref{thm.PRR_al}, \ref{thm.FCTR^al_n}  and 
\ref{thm.FCTR^beta_n} and Lemma \ref{lem.n_0n_1} (in that order)
 hold for $\mathcal{R}_{\beta}$;
and 
Theorems 
\ref{thm.FCT_R_al(n)}, \ref{thm.PigeonR_al(n)}, \ref{thm.R_altRs}, and \ref{thm.canon.eq.rel.R(n)} 
 hold for $\mathcal{R}_{\al}$.
\vskip.1in

Recall Remark
\ref{rem.barrier}, that for any front $\mathcal{F}$ on some $X\in\mathcal{R}_{\al}$, there is a $Y\le_{\al} X$ such that $\mathcal{F}|Y$ is a barrier.
Thus, the following main theorem yields the analogue of the Pudlak-\Rodl\ Theorem.

\begin{thm}\label{thm.PRR_al}
Suppose $A\in\mathcal{R}_{\al}$,  $\mathcal{F}$ is  a front on $[\emptyset,A]$ and  $\R$ is an equivalence relation on $\mathcal{F}$.
Then
 there is a $C\le_{\al} A$ such that $\R$ is canonical on 
$\mathcal{F}| C$.
\end{thm}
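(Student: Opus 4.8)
The plan is to follow the strategy of the Pudlak-\Rodl\ theorem and of its $\mathcal{R}_1$-analogue in \cite{Dobrinen/Todorcevic11}, substituting for the classical level-canonization by index sets the canonical equivalence relations $\E_S$, $S\in\mathfrak{S}_{\al}(n)$, supplied by Theorem \ref{thm.canon.eq.rel.R(n)}. By Remark \ref{rem.barrier} I may assume $\mathcal{F}$ is a barrier on $[\emptyset,A]$. Since $\mathcal{F}$ is countable, first fix $f:\mathcal{F}\ra\bN$ with $a\,\R\,b\Llra f(a)=f(b)$, so that the \emph{mixing} and \emph{separation} relations of Definition \ref{def.sepmix} are available. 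Applying Lemma \ref{claim.A} and then Lemma \ref{lem.1}, pass to a $B\le_{\al}A$ on which $B/(a,b)$ decides every pair $a,b\in\hat{\mathcal{F}}$ below $B$ and on which these decisions are hereditary; by the remarks following Definition \ref{def.sepmix}, mixing is then stable under further refinement, and by Lemma \ref{lem.mixing} it is transitive, hence an equivalence relation, on the relevant part of $\hat{\mathcal{F}}$.

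The core is to record, level by level, which nodes of each $a\in\mathcal{F}$ are essential to $f(a)$. Fixing $a\in\hat{\mathcal{F}}|B$ with $a\in\mathcal{AR}^{\al}_n$ and $i<n$, the equivalence relation on $\mathcal{R}_{\al}(i)$ recording how $f$ depends on the $i$-th coordinate, the remaining coordinates being mixed, is canonized by some $S_i\in\mathfrak{S}_{\al}(i)$ via Theorem \ref{thm.canon.eq.rel.R(n)}; this $S_i$ isolates exactly the substructure of $a(i)$ that $f$ can detect. I would then set $\vp(a)=\bigcup_{i<n}\pi_{S_i}(a(i))$. To obtain a single $C\le_{\al}B$ that serves all $a\in\mathcal{F}|C$ at once, I would run a fusion down the levels: at stage $i$, use Fact \ref{fact.CanonThmEqReRestricted} and Theorem \ref{thm.FCT_R_al(n)} to stabilize the canonical relation on $\mathcal{R}_{\al}(i)|C(m)$ to a fixed member of $\mathfrak{S}_{\al}(i,m)$, and diagonalize with the Abstract Ellentuck Theorem.

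It then remains to check, on $\mathcal{F}|C$, that $\vp$ is inner and Nash-Williams (indeed Sperner) and that $a\,\R\,b\Llra\vp(a)=\vp(b)$, as required by Definition \ref{defn.canonical} and Remark \ref{rem.canonical}. Innerness is immediate from $\pi_{S_i}(a(i))\sse a(i)$. The equivalence $a\,\R\,b\Llra\vp(a)=\vp(b)$ is where mixing and separation do the work: if $f(a)=f(b)$ then separation on the inessential parts forces the essential projections to coincide, while $\vp(a)=\vp(b)$ forces $a$ and $b$ to be mixed, giving $f(a)=f(b)$ after a last thinning, with transitivity of mixing propagating these local agreements. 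The Nash-Williams/Sperner property and the maximality of $\vp$ follow as in \cite{Dobrinen/Todorcevic11}: any failure would allow one to separate $a$ from a competitor and enlarge some essential $S_i$, contradicting the canonical, maximal choice.

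The hard part will be the coherence of the fusion when $\al$ is infinite. Unlike the $\mathcal{R}_1$ case, $\mathfrak{S}_{\al}(i)$ is now countably infinite, so the canonical $S_i$ at level $i$ need not stabilize at any finite stage $m$; the approximating members of $\mathfrak{S}_{\al}(i,m)$ only shrink as $m$ grows. This is precisely what Lemma \ref{lem.cohere} handles, the correct witness being the intersection $\bigcap_m S_i^{(m)}$ of the approximating sets, and the delicate point is to carry this coherence argument simultaneously across all levels and all $a\in\mathcal{F}|C$ while keeping $\vp$ Nash-Williams.
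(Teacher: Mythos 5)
Your outline does follow the paper's own route---the mixing/separation machinery of Definition \ref{def.sepmix}, canonization of the mixing relations via Theorem \ref{thm.canon.eq.rel.R(n)}, $\vp$ defined as a union of projections, then verification of innerness, the Nash-Williams property, and maximality---but two of its steps have genuine gaps. First, your fusion proposes to stabilize, at each level $i$, a \emph{single} canonical structure in $\mathfrak{S}_{\al}(i,m)$. For a general front this is not possible: the structure canonizing the mixing relation at level $i$ depends on the whole initial segment $r_i^{\al}(a)$, not on $i$ alone, since along different branches of $\mathcal{F}$ the map $f$ may depend on the $i$-th tree in genuinely different ways. The paper therefore works with stem-indexed structures $S_{r_i^{\al}(a)}$: Claim \ref{claim.E} formulates "there is $S_a\in\mathfrak{S}_{\al}(|a|)$ canonizing mixing of immediate extensions of $a$" as a property $P(a,Z)$ and uniformizes it over all stems by Lemma \ref{lem.1}, rather than by a level-by-level stabilization. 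The uniform, level-indexed picture you describe is exactly the content of Theorem \ref{thm.FCTR^al_n} for the fronts $\mathcal{AR}^{\al}_n$, and in the paper that theorem is \emph{deduced from} Theorem \ref{thm.PRR_al} afterwards, not used to prove it.

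Second, and more seriously, your argument for $a\,\R\,b\Llra\vp(a)=\vp(b)$ never confronts the comparison of extensions of two \emph{different} stems. Because $\vp$ is assembled from stem-dependent projections, one must show that whenever $a,b\in(\hat{\mathcal{F}}\setminus\mathcal{F})|C$ are mixed, the structures $S_a$ and $S_b$ are isomorphic and, after thinning, $C$ mixes $a\cup u$ and $b\cup v$ exactly when $\vp_a(u)=\vp_b(v)$. This is the paper's Claim \ref{claim.Tsametype}, and when $\al$ is infinite it cannot be absorbed into the phrase "separation on the inessential parts forces the essential projections to coincide," nor quoted from \cite{Dobrinen/Todorcevic11}: since $\mathfrak{S}_{\al}(n)$ is now infinite, the paper must run a new diagonalization over all pairs of embedded copies $S\in\mathfrak{I}_{\al}(i,k)$, $S'\in\mathfrak{I}_{\al}(j,k)$, $k\ge m$, of the sets $\mathcal{X}_{S,S'}$, prove a Subclaim ruling out mixing with unequal projections, and then obtain the converse direction by transitivity of mixing. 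The same caveat applies to maximality: Lemma \ref{lem.irredssephi} is not simply "as in \cite{Dobrinen/Todorcevic11}" but repeats the coherence argument of Lemma \ref{lem.cohere} for the competing inner map $\vp'$. Your instinct that Lemma \ref{lem.cohere} is the new crux is correct, but it has to be deployed in these two places as well, not only in the definition of $\vp$.
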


\begin{proof}
Let $A\in\mathcal{R}_{\al}$, 
let $\mathcal{F}$ be a given front on $[\emptyset,A]$,
and let $\R$ be an equivalence relation on $\mathcal{F}$.
Let   $f:\mathcal{F}\ra\bN$ be any mapping which induces  $\R$. 
By thinning if necessary, we may assume that 
 $A$ satisfies Lemma \ref{claim.A}.
Let $(\hat{\mathcal{F}}\setminus\mathcal{F})|X$ denote the collection of those $a\in\hat{\mathcal{F}}\setminus\mathcal{F}$ such that $a\le^{\al}_{\mathrm{fin}} X$.

\begin{claim}\label{claim.E}
There is a $B\le_{\al} A$ such that for  all $a\in(\hat{\mathcal{F}}\setminus\mathcal{F})| B$,
letting $n=|a|$,
there is an 
$S_a\in\mathfrak{S}_{\al}(n)$ such that,
letting $\E_a$ denote $\E_{S_a}$,
for all $u,v\in  \mathcal{R}_{\al}(n)|B/a$, 
$B$ mixes $a\cup u$ and $a\cup v$ 
if and only if $u \E_a v$.
\end{claim}

\begin{proof}
For any $Z\le_{\al} A$ and 
$a\in\mathcal{AR}^{\al}|A$,
let $P(a,Z)$ denote the following statement:
``If
 $a\in\hat{\mathcal{F}}\setminus\mathcal{F}$,
then
there is an
$S_a\in\mathfrak{S}_{\al}(|a|)$
 such that for all $u,v\in
\mathcal{R}_{\al}(|a|)|Z/a$,
$Z$ mixes $a\cup u$ and $a\cup v$ if and only if $u\E_{S_a} v$.''
We shall show that for each $X\le_{\al} A$ and $a\in\mathcal{AR}^{\al}|A$, there is a $Z\le_{\al} X$ for which  $P(a,Z)$ holds.
The claim  then follows from Lemma \ref{lem.1}.

Let $X\le_{\al} A$ and  $a\in\hat{\mathcal{F}}\setminus\mathcal{F}$ be given, and let $n=|a|$.
Let $\E$ denote the following equivalence relation of mixing  on 
$\mathcal{R}_{\al}(n)|A/a$: For all $u,v\in\mathcal{R}_{\al}(n)|A/a$,
\begin{equation}
 u\E v\ \Leftrightarrow\ A\mathrm{\ mixes\ }a\cup u\mathrm{\ and \ } a\cup v.
\end{equation}
By Theorem \ref{thm.canon.eq.rel.R(n)}, there is an $S\in\mathfrak{S}_{\al}(n)$ and a $Y\le_{\al} X$ such that $\E\re(\mathcal{R}_{\al}(n)|Y)$ is given by $\E_S$.
Take a $Z\le_{\al}Y/a$
and let $S_a$ denote this $S$.
Then $P(a,Z)$ holds.
\end{proof}

Fix $B$ be as in Claim \ref{claim.E}.
For $a\in(\hat{\mathcal{F}}\setminus\mathcal{F})|B$ and
 $n=|a|$,
 let $S_a$ denote the member of $\mathfrak{S}_{\al}(n)$ such that $\E_a=\E_{S_a}$, and 
let $\E_a$ denote $\E_{S_a}$.
We say that $a$ is  
 {\em $\E_a$-mixed by $B$}, meaning that  
 for all $u,v\in
\mathcal{R}_{\al}(n)|B/a$, 
$B$ mixes $a\cup u$ and $a\cup v$ if and only if $u\E_a  v$.

\begin{defn}\label{def.vp^1}
For $a\in\hat{\mathcal{F}}|B$, $n=|a|$,
and $i< n$, 
define 
\begin{equation}
\vp_{r^{\al}_i(a)}(a(i))
=\pi_{S_{r^{\al}_i(a)}}(a(i)).
\end{equation}
For $a\in\mathcal{F}|B$, define
\begin{equation}
\vp(a)=\bigcup_{i<|a|}\vp_{r^{\al}_i(a)}(a(i)).
\end{equation}
\end{defn}

The proof of the following claim is exactly the same as the one given for Claim 4.17 in \cite{Dobrinen/Todorcevic11}.

\begin{claim}\label{claim.G}
The following are true for all $X\le_{\al} B$ and all $a,b\in\hat{\mathcal{F}}| B$.
\begin{enumerate}
\item[(A1)]
Suppose $a\not\in\mathcal{F}$ and $n=|a|$.
Then $X$ mixes $a\cup u$ and $t$ for at most one $\E_a$ equivalence class of $u$'s in
$\mathcal{R}_{\al}(n)|B/a$.
\item[(A2)]
If $X/(a,b)$ separates $a$ and $b$,
then $X/(a,b)$ separates $a\cup x$ and $b\cup y$ for all  $x,y\in\Ext(X/(a,b))$ such that $a\cup x,b\cup y\in\hat{\mathcal{F}}$.
\item[(A3)]
Suppose $a\not\in\mathcal{F}$ and  $n=|a|$.
Then 
$S_a=\{\emptyset\}$
if and only if $X$ mixes $a$ and $a\cup u$
for all $u\in\mathcal{R}_{\al}(n)|B/a$.
\item[(A4)]
If $a\sqsubset b$
and  $\vp(a)=\vp(b)$,
then $X$ mixes $a$ and $b$.
\end{enumerate}
\end{claim}

\begin{claim}\label{claim.Tsametype}
If $a,b\in(\hat{\mathcal{F}}\setminus\mathcal{F})|B$ are mixed by $B$, then $S_a$ and $S_b$ are isomorphic.
Moreover, there is a $C\le_{\al} B$ such that for all $a,b\in(\hat{\mathcal{F}}\setminus\mathcal{F})|C$, for all  $u\in\mathcal{R}_{\al}(|a|)|C/(a,b)$ and $v\in\mathcal{R}_{\al}(|b|)|C/(a,b)$,
 $C$ mixes $a\cup u$ and $b\cup v$
 if and only if $\vp_a(u)=\vp_b(v)$.
\end{claim}

\begin{proof}
Suppose $a,b\in (\hat{\mathcal{F}}\setminus\mathcal{F})|B$ are mixed by $B/(a,b)$, and let $X\le_{\al} B$. 
By possibly thinning $X$, we may assume that $X\le_{\al} B/(a,b)$.
Let $i=|a|$ and $j=|b|$.

Suppose that $S_a=\{\emptyset\}$ and $S_b\ne \{\emptyset\}$.
By  (A1), $B/(a,b)$ mixes $a$ and $b\cup v$  for at most one $\E_a$ equivalence class of $v$'s in $\mathcal{R}_{\al}(j)|B/b$.  
Since $S_b\ne \{\emptyset\}$, there is a $Y\le_{\al} X/(a,b)$ such that for each $v\in\mathcal{R}_{\al}(j)|Y$,
$Y$ separates $a$ and $b\cup v$.
Since $S_a=\{\emptyset\}$,
it follows from (A4) that
for all $u\in\mathcal{R}_{\al}(i)|Y$, $Y$ mixes $a$ and  $a\cup u$.
If there are $u\in\mathcal{R}_{\al}(i)|Y$
and $v\in\mathcal{R}_{\al}(j)|Y$ such that $Y$ mixes $a\cup u$ and $b\cup v$,
then $Y$ mixes $a$ and $b\cup v$, by transitivity of mixing.
This contradicts that for each $v\in\mathcal{R}_{\al}(j)|Y$,
$Y$ separates $a$ and $b\cup v$.
Therefore, all extensions of $a$ and $b$ into $Y$ are separated.
But then $a$ and $b$ are separated, contradiction.
Hence, $S_b$ must also be $\{\emptyset\}$.
By a similar argument, we conclude that $S_a=\{\emptyset\}$ if and only if $S_b=\{\emptyset\}$.
Hence,
 $\vp_a(u)=\vp_b(v)=\{\emptyset\}$ for all 
$u\in\mathcal{R}_{\al}(i)|B$ and $v\in\mathcal{R}_{\al}(j)|B$.

Suppose now that both  $S_a$ and $S_b$ are not $\{\emptyset\}$.
Let $X\le_{\al} B/(a,b)$ and $m=\max(i,j)+1$.
Let
\begin{align}
\mathcal{Z}_{a}&
=\{Y\le_{\al} X:B/(a,b)\mathrm{\ separates\ } a\cup Y(i)\mathrm{\ and\ }b\cup \pi_{j,m}^{\al}(Y(m))\}\cr
\mathcal{Z}_{b}&
=\{Y\le_{\al} X:B/(a,b)\mathrm{\ separates\ } a\cup \pi_{i,m}^{\al}(Y(m))\mathrm{\ and\ }b\cup Y(j)\}. 
\end{align}
Applying the Abstract Ellentuck Theorem to the sets $\mathcal{Z}_a$ and $\mathcal{Z}_b$, 
we  obtain an $X'\le_{\al} X$ such that $[0,X']\sse\mathcal{Z}_a\cap\mathcal{Z}_b$,
since both 
$S_a$ and $S_b$ are not $\{\emptyset\}$.
Thus,
 for all $u\in\mathcal{R}_{\al}(i)|X'$ and $v\in\mathcal{R}_{\al}(j)|X'$,
   $a\cup u$ and $b\cup v$ may be mixed by $B/(a,b)$ only if $u$ and $v$ are subtrees of the same $X'(l)$ for some $l$.

For $l\in\{i,j\}$ and $k\ge m$,
let $\mathfrak{I}_{\al}(l,k)$ denote the collection of all $S\sse\bS_{\al}(k)$ such that $S\cong\bS_{\al}(l)$.
So $\mathfrak{I}_{\al}(l,k)$ consists of exactly those $S\in\mathfrak{S}_{\al}(k)$ such that $\pi_S:\mathcal{R}_{\al}(k)\ra\mathcal{R}_{\al}(l)$.
Note that each $\mathfrak{I}_{\al}(l,k)$ is finite.
For  each pair $S\in\mathfrak{I}_{\al}(i,k)$,
$S'\in\mathfrak{I}_{\al}(j,k)$,
let
\begin{equation}
\mathcal{X}_{S,S'}=\{Y\le_{\al} X': B\mathrm{\ mixes\ } a\cup\pi_{S}(Y(k))\mathrm{\ and\ } b\cup\pi_{S'}(Y(k))\}.
\end{equation}

Diagonalize over $k\ge m$ as follows.
Let $Y_m=X'$.
Given $Y_k$, 
apply the Abstract Ellentuck Theorem to $\mathcal{X}_{S,S'}$ for all pairs
$S,S'$ from 
 $\mathfrak{I}_{\al}(i,k)$,
$S'\in\mathfrak{I}_{\al}(j,k)$, respectively, to
 obtain a $Y_{k+1}\le_{\al} Y_k$  which is homogeneous for  $\mathcal{X}_{S,S'}$, for each  such pair.
Define 
\begin{equation}
Y=r^{\al}_{m}(Y_m)\cup \bigcup\{Y_{k+1}(k): k\ge m\}.
\end{equation}
Then $Y$ is homogeneous for $\mathcal{X}_{S,S'}$  for all $k\ge m$ and  all pairs
 $S\in\mathfrak{I}_{\al}(i,k)$,
$S'\in\mathfrak{I}_{\al}(j,k)$.

\begin{subclaimn}
There is a $Z\le_{\al} Y$ such that for each $k\ge m$, each pair 
 $S\in\mathfrak{I}_{\al}(i,k)$,
$S'\in\mathfrak{I}_{\al}(j,k)$,
 and each $Z'\le_{\al} Z$,
if $\vp_{s}(\pi_{S}(Z'(k)))\ne\vp_{t}(\pi_{S'}(Z'(k)))$,
then
$[\emptyset,Z']\cap\mathcal{X}_{S,S'}=\emptyset$.
\end{subclaimn}

Suppose not.  
Then in particular for $Y$,
there are $k$, $S\in\mathfrak{I}_{\al}(i,k)$,
$S'\in\mathfrak{I}_{\al}(j,k)$, and $Z\le_{\al} Y$ such that 
$\vp_{s}(\pi_{S}(Z(k)))\ne\vp_{t}(\pi_{S'}(Z(k)))$,
but $[0,Z]\cap\mathcal{X}_{S,S'}\ne\emptyset$.
Since $Y$ is already homogeneous for $\mathcal{X}_{S,S'}$, 
it must be the case that $[0,Y]\sse\mathcal{X}_{S,S'}$;
hence, $[0,Z]\sse\mathcal{X}_{S,S'}$.
Furthermore,
$\vp_{s}(\pi_{S}(Z(k)))\ne\vp_{t}(\pi_{S'}(Z(k)))$  implies that 
$\vp_{s}(\pi_{S}(Z'(k)))\ne\vp_{t}(\pi_{S'}(Z'(k)))$ for all $Z'\le_{\al} Z$,
since $\vp_s,\pi_S,\vp_t$, and $\pi_{S'}$ are projection maps.

We claim that $\pi_{S_a}(S)=\pi_{S_b}(S')$.
Suppose  there is some 
$s\in 
\pi_{S_a}(S)\setminus\pi_{S_b}(S')$.
Take $w,w'\in \mathcal{R}_{\al}(k)|Z(k')$  for some $k'$ large enough
 such that 
$w$ and $w'$ differ exactly on their  elements in the place $s$ and all extensions of $s$.
Let $u=\pi_{S_a}\circ\pi_{S}(w)$, $u'=\pi_{S_a}\circ\pi_{S}(w')$, 
$v=\pi_{S_b}\circ\pi_{S'}(w)$,
and $v'=\pi_{S_b}\circ\pi_{S'}(w')$.
Then 
$u\not{\E}_a\ u'$ but
$v\E_b v'$.
Since $[\emptyset,Z]\sse\mathcal{X}_{S,S'}$, 
$B/(a,b)$ mixes $a\cup u$ and $b\cup v$, and $B/(a,b)$ mixes $a\cup u'$ and $b\cup v'$.
$B/(a,b)$ mixes $b\cup v$ and $b\cup v'$, since $v\E_b v'$.
Hence, by transitivity of mixing, $B/(a,b)$ mixes $a\cup u$ and $a\cup u'$, contradicting that $u\not\E_a  u'$.
Likewise, we obtain a contradiction if 
there is some 
$s\in 
\pi_{S_b}(S')\setminus
\pi_{S_a}(S)$.
Therefore, the Subclaim holds.
\vskip.1in

By the Subclaim,
  the following holds.
There is a $Z\le_{\al} Y$ such that for all $u\in\mathcal{R}_{\al}(i)|Z$ and $v\in\mathcal{R}_{\al}(j)|Z$,
 if $a\cup u$ and $b\cup v$ are mixed by $B/(a,b)$, then $\vp_{a}(u)=\vp_b(v)$.
It follows that $S_a$ and $S_b$ must be isomorphic.
Thus, we have shown that there is a $Z\le_{\al} X$ such that for all $u\in\mathcal{R}_{\al}(i)|Z$ and $v\in\mathcal{R}_{\al}(j)|Z$,
if $B/(a,b)$ mixes $a\cup u$ and $b\cup v$, then $\vp_a(u)=\vp_b(v)$.

It remains to show that for all $u\in\mathcal{R}_{\al}(i)|Z$ and $v\in\mathcal{R}_{\al}(j)|Z$,
 if $\vp_a(u)=\vp_b(v)$, then $Z$ mixes $a\cup u$ and $b\cup v$.
Let $k\ge m$ and let
$S\in\mathfrak{I}_{\al}(i,k)$,
$S'\in\mathfrak{I}_{\al}(j,k)$,
 be any pair 
such that for all $w\in\mathcal{R}_{\al}(k)|Z$,
$\vp_a(\pi_{S}(w))=\vp_b(\pi_{S'}(w))$.
We will show that $[\emptyset, Z]\sse\mathcal{X}_{S,S'}$.

Assume towards a contradiction that $[\emptyset,Z]\cap\mathcal{X}_{S,S'}=\emptyset$.
Then for all $w\in \mathcal{R}_{\al}(k)|Z$,
$Z$ separates $a\cup \pi_{S}(w)$ and $b\cup \pi_{S'}(w)$.
First, let 
$T\in\mathfrak{I}_{\al}(i,k)$,
$T'\in\mathfrak{I}_{\al}(j,k)$,
 be any  pair such that $\vp_a(\pi_{T}(x))=\vp_b(\pi_{T'}(x))$ for any (all) $x\in \mathcal{R}_{\al}(k)|Z$.
Then there are $x,y\in \mathcal{R}_{\al}(k)|Z$ such that $\pi_{S}(x)\E_a \pi_{T}(y)$ and $\pi_{S'}(x)\E_b \pi_{T'}(y)$.
 $Z$ mixes $a\cup \pi_{S}(x)$ and $a\cup \pi_{T}(y)$, and $Z$ mixes $b\cup \pi_{S'}(x)$ and $b\cup \pi_{T'}(y)$.
Thus, $Z$ must separate $a\cup \pi_{T}(w)$ and $b\cup \pi_{T'}(w)$ for all $w\in \mathcal{R}_{\al}(k)|Z$.
Second, let  $T,T'$  be any pair such that
 $\vp_a(\pi_{T}(x))\not=\vp_b(\pi_{T'}(x))$.
 Then
$Z$ separates $a\cup \pi_{T}(x)$ and $b\cup \pi_{T'}(x)$.
Thinning, we obtain a   $Z'\le_{\al} Z/r^{\al}_k(Z)$ which  separates $a$ and $b$,
 contradiction.
 Therefore, $[\emptyset,Z]\sse\mathcal{X}_{S,S'}$,
 and thus $Z$ mixes $a\cup \pi_{S}(W(k))$ and $b\cup \pi_{S'}(W(k))$ for all $W\le_{\al} Z$.

Hence, for all such  pairs $S,S'$, we have that
$\vp_a(\pi_{S}(w))=\vp_b(\pi_{S'}(w))$ if and only if $[\emptyset,Z]\sse\mathcal{X}_{S,S'}$.
Thus,  for all $u\in\mathcal{R}_{\al}|Z$ and $v\in\mathcal{R}_{\al}|Z$,
$Z$ mixes $a\cup u$ and $b\cup v$ if and only if 
$\vp_a(u)=\vp_b(v)$.

Finally, we have shown that for all $a,b\in(\hat{\mathcal{F}}\setminus\mathcal{F})|B$ and each $X\le_{\al} B$, there is a $Z\le_{\al} X$ such that for all $u\in\mathcal{R}_{\al}(i)|Z$ and $v\in\mathcal{R}_{\al}(j)|Z$,
$Z$ mixes $a\cup u$ and $b\cup v$ if and only if $\vp_a(u)=\vp_b(v)$.
By Lemma \ref{lem.1},
there is a $C\le_{\al} B$ for which the Claim holds.
\end{proof}

The proofs of the next three claims are the same as the proofs of Claims 4.19, 4.20 and 4.21 in  \cite{Dobrinen/Todorcevic11}.

\begin{claim}\label{claim.phisamefsame}
For all $a,b\in\hat{\mathcal{F}}|C$,
if $\vp(a)=\vp(b)$, then $a$ and $b$ are mixed by $C$.
Hence, for all $a,b\in\mathcal{F}|C$,
if $\vp(a)=\vp(b)$, then $f(a)=f(b)$.
\end{claim}

\begin{claim}\label{claim.front}
For all $a,b\in\mathcal{F}|C$, $\vp(a)\not\sqsubset \vp(b)$.
\end{claim}

\begin{claim}\label{claim.L}
For all $a,b\in\mathcal{F}|C$, if $f(a)=f(b)$,
then $\vp(a)=\vp(b)$.
\end{claim}

It remains to show that $\vp$ witnesses that $\R$ is canonical.
By definition, $\vp$ is inner, and by Claim \ref{claim.front},
$\vp$ is Nash-Williams.
By Claims \ref{claim.phisamefsame} and  \ref{claim.L}, we have that
for each $a,b\in\mathcal{F}|C$, $a \R b$ if and only if $\vp(a)=\vp(b)$.
Thus, it only remains to show that $\vp$ is maximal among all inner Nash-Williams maps $\vp'$ on $\mathcal{F}|C$ which also represent the equivalence relation $\R$.
Toward this end, we prove the following Lemma.

\begin{lem}\label{lem.irredssephi}
Suppose $X\le_{\al} C$ and $\vp'$ is an inner  function on $\mathcal{F}|X$ which represents $\R$.
Then 
there is a $Y\le_{\al} X$ such that for each $a\in\mathcal{F}|Y$, 
for each $i<|a|$, there is an 
$S'_{r_i^{\al}(a)}\in\mathfrak{S}_{\al}(i)$ 
such that
$S'_{r_i^{\al}(a)}\sse S_{r_i^{\al}(a)}$ 
and 
the following hold.
\begin{enumerate}
\item
For each $b\in\mathcal{F}|Y$ for which $b\sqsupset r_i^{\al}(a)$,
$\vp'(b)\cap b(i)=\pi_{S'_{r_i^{\al}(a)}}(b(i))$.
\item
$\vp'(a)=\bigcup\{\pi_{S'_{r_i^{\al}(a)}}(a(i)):i<|a|\}\sse \vp(a)$.
\end{enumerate}
\end{lem}

\begin{proof}
Let $X\le_{\al} C$ and $\vp'$ 
satisfy the hypotheses.
Fix any $a\in(\hat{\mathcal{F}}\setminus\mathcal{F})|C$, $i<|a|$, and $X'\le_{\al} X/a$.
For each $k\ge i$ and $S\in\mathfrak{S}_{\al}(i,k)$, let $\mathcal{X}_S=\{Y\le_{\al} X': \vp'(a\cup Y[i,j))\cap Y(i)=\pi_S(Y(i))\}$,  where $j$ is such that $a\cup Y[i,j)\in\mathcal{F}$.
Since $\vp'$ is inner, 
following the argument in Lemma \ref{lem.cohere},
we construct
 an $X''\le_{\al} X'$ such that the following holds:
There is an $S'_{r^{\al}_i(a)}\in\mathfrak{S}_{\al}(i)$ 
such that for each $b\in \mathcal{F}$ extending $r^{\al}_i(a)$ with $b/ r^{\al}_i(a)\in\Ext(X'')$,
$\vp'(b)\cap b(i)=\pi_{S'_{r_i^{\al}(a)}}(b(i))$.
By Lemma \ref{lem.1},
there is a $Y\le_{\al} X$ such that for each $a\in\mathcal{F}|Y$ and each $i<|a|$,
there is an $S'_{r_i^{\al}(a)}$ satisfying (1).
Thus,  for each $a\in\mathcal{F}|Y$,
\begin{equation}
\vp'(t)=\bigcup\{\pi_{S'_{r_i^{\al}(a)}}(a(i)):i<|a|\}.
\end{equation}

Note that each $S'_{r_i^{\al}(a)}$ must be contained within the $S_{r_i^{\al}(a)}$ for the $\vp$ already attained associated with $\E_{r^{\al}_i(a)}$-mixing of immediate extensions of $r^{\al}_i(a)$.
Otherwise, there would be $u,v\in\mathcal{R}_{\al}(i)|Y/r^{\al}_i(a)$ such that $r^{\al}_i(a)\cup u$ and $r^{\al}_i(a)\cup v$ are mixed, yet all extensions of them have different $\vp'$ values, which would contradict that $\vp'$ induces the same equivalence relation as $f$.
Thus, for each $a\in\mathcal{F}|Y$,
$\vp'(a)\sse\vp(a)$.
\end{proof}

By
Lemma \ref{lem.irredssephi}, $\R$ is canonical on $\mathcal{F}|C$, which  finishes the proof of the theorem.
\end{proof}

\begin{rem}
The map $\vp$ from Theorem \ref{thm.PRR_al} has the following property.
One can thin to a $Z$  such that 
\begin{enumerate}
\item[$(*)$]
for each 
$s\in\mathcal{F}|Z$, there is a $t\in\mathcal{F}$ such that $\vp(s)=\vp(t)=s\cap t$.
\end{enumerate}
This is not the case for any smaller inner map $\vp'$, by Lemma \ref{lem.irredssephi}.
For  suppose $\vp'$ is an inner map representing $\R$,  $\vp'$ satisfies  the conclusions of Lemma \ref{lem.irredssephi} on $\mathcal{F}|Y$, and there is an $s\in\mathcal{F}|Y$ for which $\vp'(s)\subsetneq \vp(s)$.
Then there is some $i<|s|$ for which  $S'_{r^{\al}_i(s)}\subsetneq S_{r^{\al}_i(s)}$.
This implies that $\vp'(t)\subsetneq \vp(t)$ for every $t\in\mathcal{F}|Y$ such that $t\sqsupset r_i(s)$.
Recall that $\vp'(t)=\vp'(s)$ if and only if $\vp(t)=\vp(s)$; and in this case, $\vp(t)\cap\vp(s)\sse t\cap s$.
It follows that for any $t$ for which $\vp'(t)=\vp'(s)$, 
$\vp'(t)\cap\vp'(s)$ will always be a proper subset of $t\cap s$.
Thus, $\vp$ is the minimal inner map for which property $(*)$ holds.
\end{rem}

As shown in \cite{Dobrinen/Todorcevic11} for $\mathcal{R}_1$, 
this is the best possible:  there are examples of fronts on which there are inner maps $\vp'$ such that $\vp'(a)\subsetneq \vp(a)$ for all $a\in\mathcal{F}|C$.

Recall Definition \ref{defn.canon-chunk}.
For $n<\om$ and $X\in\mathcal{R}_{\al}$,
an equivalence relation $\R$ on the front $\mathcal{AR}^{\al}_n|X$ is {\em canonical} iff for each $i<n$ there is an $S(i)\in\mathfrak{S}_{\al}(i)$ such that 
\begin{equation}
\forall a,b\in\mathcal{AR}^{\al}_n|X,\ 
a\R b \Leftrightarrow \forall i<n\, (\pi_{S(i)}(a(i))=\pi_{S(i)}(b(i))).
\end{equation}
Note that if $n=0$, then $\mathcal{AR}^{\al}_0=\{\emptyset\}$, and every equivalence relation on $\{\emptyset\}$ is trivially canonical.

\begin{thm}[Canonization Theorem for $\mathcal{AR}^{\al}_n$]\label{thm.FCTR^al_n}
Let $1\le n<\om$.
Given any $A\in\mathcal{R}_{\al}$ and any equivalence relation $\R$ on  $\mathcal{AR}^{\al}_n|A$,
there is a $D\le_{\al} A$ such that $\R$ is canonical on $\mathcal{AR}^{\al}_n|D$.
\end{thm}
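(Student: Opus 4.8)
The plan is to recognise $\mathcal{AR}^{\al}_n|A$ as a front on $[\emptyset,A]$ and feed it into the main Theorem \ref{thm.PRR_al}, and then to show that on this \emph{uniform} front the resulting canonical map already splits as a product of coordinate‑wise canonical relations. Since every member of $\mathcal{AR}^{\al}_n|A$ has length exactly $n$, no two distinct members are $\sqsubseteq$‑comparable, and a short check shows none is $\le^{\al}_{\mathrm{fin}}$‑below another (two full $i$‑th trees of equal level cannot properly contain one another); hence $\mathcal{AR}^{\al}_n|A$ is a front, indeed a barrier, on $[\emptyset,A]$. By Theorem \ref{thm.PRR_al} there is a $C\le_{\al}A$ and an inner Nash‑Williams map $\vp$ representing $\R$ on $\mathcal{AR}^{\al}_n|C$, which by the construction in that proof (Definition \ref{def.vp^1}) has the form $\vp(a)=\bigcup_{i<n}\pi_{S_{r^{\al}_i(a)}}(a(i))$ with each $S_{r^{\al}_i(a)}\in\mathfrak{S}_{\al}(i)$.

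First I would observe that the product conclusion follows once the $S_{r^{\al}_i(a)}$ can be taken to depend only on $i$; write $S(i)$ for the common value. For each $i$ the set $\pi_{S(i)}(a(i))$ is a subtree of $a(i)\sse\bT_{\al}(k)$ lying at a single level, and these levels strictly increase with $i$, so the node sets $\pi_{S(i)}(a(i))$, $i<n$, occupy pairwise disjoint levels. Consequently $\vp(a)=\vp(b)$ holds if and only if $\pi_{S(i)}(a(i))=\pi_{S(i)}(b(i))$ for every $i<n$ (a coordinate with $S(i)=\{\emptyset\}$ contributes only the root and imposes no constraint). Recalling Definition \ref{defn.canonicaleqrels}, this is exactly $a\R b\Llra\forall i<n\ (a(i)\E_{S(i)}b(i))$, i.e.\ $\R$ is canonical on $\mathcal{AR}^{\al}_n|C$ in the sense of Definition \ref{defn.canon-chunk}.

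It therefore remains to thin $C$ so that, for each $i<n$, the assignment $a\mapsto S_a$ becomes constant over $a\in\mathcal{AR}^{\al}_i|C$; one application of the fusion Lemma \ref{lem.1} then handles all $i<n$ at once. For finite $\al$ this is immediate, since $\mathfrak{S}_{\al}(i)$ is finite, so $a\mapsto S_a$ is a finite colouring of the rank‑$i$ front $\mathcal{AR}^{\al}_i|C$, and iterating the Abstract Nash‑Williams Theorem \ref{thm.ANW} yields a condition on which a single value $S(i)$ is realised. For infinite $\al$ the set $\mathfrak{S}_{\al}(i)$ is countably infinite and constancy cannot be forced by a naive colouring argument; instead I would run the coherent‑stabilisation scheme of Lemma \ref{lem.cohere}. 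Using that each $\mathfrak{S}_{\al}(i,m)$ is finite (Fact \ref{fact.frakSm<m'}), I would thin through a fusion sequence $C=C_i\ge_{\al}C_{i+1}\ge_{\al}\cdots$ so that the value $S_a$ depends only on the top level $m$ of $a$, producing a $\contains$‑decreasing, coherent family $S^{(i)}_m\in\mathfrak{S}_{\al}(i,m)$ (the $\contains$‑descent and coherence being precisely Fact \ref{fact.frakS_mnseq}); the intersection $S(i)=\bigcap_m S^{(i)}_m$ lies in $\mathfrak{S}_{\al}(i)$ and, by the argument of Lemma \ref{lem.cohere}, governs coordinate $i$ uniformly on the diagonalised condition.

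The main obstacle is exactly this last homogenisation when $\om\le\al<\om_1$: one must make the coordinate‑wise data $S_a$ literally head‑independent, not merely isomorphic as Claim \ref{claim.Tsametype} provides, while $\mathfrak{S}_{\al}(i)$ is infinite. The device that makes this work is the finiteness of the level‑restricted families $\mathfrak{S}_{\al}(i,m)$ together with the monotone coherence of Fact \ref{fact.frakS_mnseq}, which reduce the infinite homogenisation to a nested sequence of finite ones followed by an intersection, exactly as in Lemma \ref{lem.cohere}. Once $S(i)$ is fixed for each $i<n$ and Lemma \ref{lem.1} is applied to obtain a single $D\le_{\al}C$, the computation of the second paragraph shows that $\R$ is canonical on $\mathcal{AR}^{\al}_n|D$, completing the proof.
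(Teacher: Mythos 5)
Your proposal is correct and follows essentially the same route as the paper: the paper's proof also applies Theorem \ref{thm.PRR_al} to the front $\mathcal{AR}^{\al}_n|A$ to get $\vp(a)=\bigcup_{i<n}\pi_{S_{r^{\al}_i(a)}}(a(i))$, and then stabilizes each assignment $a\mapsto S_{r^{\al}_i(a)}$ to a single $S(i)$ by covering $[\emptyset,C]$ with the finitely many sets indexed by $\mathfrak{S}_{\al}(i,k)$ and diagonalizing over $k$ exactly as in Lemma \ref{lem.cohere}, before reading off the coordinate-wise (product) form of the canonical relation. The only differences are cosmetic: the paper treats finite and infinite $\al$ uniformly via the $\mathfrak{S}_{\al}(j,k)$-truncations and combines the finitely many coordinates $i<n$ by a finite induction rather than by an appeal to Lemma \ref{lem.1}.
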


\begin{proof} 
Let $C\le_{\al} A$ be obtained from Theorem \ref{thm.PRR_al}.
Then for each $a\in\mathcal{AR}_n^{\al}|C$, there is a sequence $\lgl S_{r^{\al}_i(a)}:i<n\rgl$, where each $S_{r^{\al}_i(a)}\in\mathfrak{S}_{\al}(i)$,
satisfying the following:
For all $a,b\in\mathcal{AR}_n^{\al}|C$,
\begin{equation}
a \R b \Leftrightarrow\bigcup_{i<n}\pi_{S_{r^{\al}_i(a) }}(a(i))=\bigcup_{i<n}\pi_{S_{r^{\al}_i(b)}}(b(i)).
\end{equation}
We shall  obtain a $D\le_{\al} C$ such that for all $a,b\in\mathcal{AR}_n^{\al}|D$ and all $i<n$, $S_{r^{\al}_i(a)}=S_{r^{\al}_i(b)}$.

By  the proof of Theorem \ref{thm.PRR_al},
for all $a,b\in\mathcal{AR}_n^{\al}|C$, $S_{r^{\al}_0(a)}=S_{r^{\al}_0(b)}$. 
Let $X_0=C$ and $S(0)=S_{r^{\al}_0(a)}$ for any (all) $a\in\mathcal{AR}_n^{\al}|C$. 
Suppose $j\le n-1$ and  for all $i<j$, $X_i$, and $S(i)$ such that 
$[\emptyset,X_{i}]\sse\mathcal{X}_{S(i)}$,
 where $\mathcal{X}_{S(i)}=\{X\le_{\al} C: S_{r^{\al}_{i}(X)}=S(i)\}$. 
For each $k\ge j$ and each $S\in
\mathfrak{S}_{\al}(j,k)$, define 
\begin{equation}
\mathcal{X}_{S}(j,k)=\{X\le_{\al} C: \pi_{S_{r^{\al}_{j}}(X)}\re\mathcal{R}_{\al}(j)|X(k)=\pi_S\re\mathcal{R}_{\al}(j)|X(k)\}. 
\end{equation} 
These finitely many open sets, $\mathcal{X}_{S}(j,k)$, $S\in\mathfrak{S}_{\al}(j,k)$, cover $[\emptyset,C]$.
Diagonalizing over all $k\ge j$ as in the proof of Lemma \ref{lem.cohere},
 there is some $S(j)\in\mathfrak{S}_{\al}(j)$ and some  $X_{j}\le_{\al} X_{j-1}$ such that $[\emptyset,X_{j}]\sse\mathcal{X}_{S(j)}$,
 where $\mathcal{X}_{S(j)}=\{X\le_{\al} C: S_{r^{\al}_{j}(X)}=S(j)\}$.

Let $D=X_{n-1}$.
Then for all $a,b\in\mathcal{AR}_n^{\al}|D$,
\begin{align}
a \R b &\Leftrightarrow \vp(a)=\vp(b)\cr
&\Leftrightarrow
\forall i<n,\  
\pi_{S_{r^{\al}_i(a)}}(a(i))=\pi_{S_{r^{\al}_i(b)}}(b(i))\cr
&\Leftrightarrow
\forall i<n,\ 
\pi_{S(i)}(a(i))=\pi_{S(i)}(b(i))\cr
&\Leftrightarrow
\forall i<n,\ 
a(i)\E_{S(i)} b(i).
\end{align}
Thus, the equivalence relation $\R$ is canonical on $\mathcal{AR}_n^{\al}|D$.
\end{proof}


\section{The Tukey ordering below $\mathcal{U}_{\al}$ in terms of the Rudin-Keisler ordering}\label{sec.R_alTukey}

In this section, for each $\al<\om_1$, we classify the Rudin-Keisler classes within the Tukey type of any ultrafilter Tukey reducible to $\mathcal{U}_{\al}$, the ultrafilter corresponding to the space $\mathcal{R}_{\al}$.
As a corollary, we obtain the structure of the Tukey types of all ultrafilters Tukey reducible to $\mathcal{U}_{\al}$.

Recall that every topological Ramsey space has its own notion of  Ramsey and selective ultrafilters (see \cite{MR2330595}).
Recall the following definitions  from \cite{Dobrinen/Todorcevic11}.

\begin{defn}[\cite{Dobrinen/Todorcevic11}, \cite{MR2330595}]\label{defn.RamseyufU1}
Let $(\mathcal{R},\le,r)$ be any topological Ramsey space.
\begin{enumerate}
\item
We say that a subset
$\mathcal{C}\sse\mathcal{R}$ {\em  satisfies the Abstract Nash-Williams Theorem} if and only if for each family $\mathcal{G}\sse\mathcal{AR}$ and partition $\mathcal{G}=\mathcal{G}_0\cup\mathcal{G}_1$,
there is a $C\in\mathcal{C}$ and an $i\in 2$ such that  $\mathcal{G}_i|C=\emptyset$.
\item
We say that an ultrafilter $\mathcal{U}$  is  {\em Ramsey for $\mathcal{R}$}
if and only if $\mathcal{U}$ is generated by a subset  $\mathcal{C}\sse\mathcal{R}$ 
which satisfies the Abstract Nash-Williams Theorem.
\item
An ultrafilter generated by a set $\mathcal{C}\sse\mathcal{R}$ is {\em selective for $\mathcal{R}$} if and only if
for each decreasing sequence $X_0\ge X_1\ge\dots$ of members of $\mathcal{C}$, there is another $X\in\mathcal{C}$ such that for each $n<\om$, $X\le X_n/r_n(X_n)$.
\item
We say that an ultrafilter $\mathcal{U}$ is {\em canonical for fronts on $\mathcal{R}$} if and only if
for any front $\mathcal{F}$ on $\mathcal{R}$ and any equivalence relation $\R$ on $\mathcal{F}$,
there is a $U\in\mathcal{U}\cap\mathcal{R}$ such that 
$\R$ is canonical on $\mathcal{F}|U$.
\end{enumerate}
\end{defn}


We fix the following notation for the rest of this section.

\begin{notn}\label{notn.ufonafront}
For each $\al<\om_1$,
let  $\mathcal{U}_{\al}$ denote any ultrafilter on base set $\mathbb{T}_{\al}$ which is Ramsey for $\mathcal{R}_{\al}$ and
canonical for fronts on $\mathcal{R}_{\al}$.
Let $\mathcal{C}_{\al}$ denote $\mathcal{U}_{\al}\cap\mathcal{R}_{\al}$.
We shall say that   $\mathcal{F}\sse\mathcal{AR}^{\al}$ is a {\em front on a set $\mathcal{C}\sse\mathcal{R}_{\al}$}
if $\mathcal{F}$ is Nash-Williams and for each $X\in\mathcal{C}$, there is an $a\in\mathcal{F}$ such that $a\sqsubset X$.
For any front $\mathcal{F}$ on $\mathcal{C}_{\al}$ and
  any $X\in\mathcal{C}_{\al}$,
recall that
 $\mathcal{F}| X$ denotes
$\{a\in\mathcal{F}:a\le_{\mathrm{fin}}^{\al} X\}$.
Let
\begin{equation}
\mathcal{C}_{\al}\re\mathcal{F}
=\{\mathcal{F}| X:X\in\mathcal{C}_{\al}\}.
\end{equation}
\end{notn}

For each $\al<\om_1$,
ultrafilters $\mathcal{U}_{\al}$, which are Ramsey for $\mathcal{R}_{\al}$ and 
canonical for fronts on $\mathcal{R}_{\al}$
exist, assuming  CH or  MA, or forcing with $(\mathcal{R}_{\al},\le^*_{\al})$.
Since $\mathcal{R}_{\al}$ is isomorphic to a dense subset of Laflamme's forcing $\bP_{\al}$ in \cite{Laflamme89},
any ultrafilter  forced by $(\mathcal{R}_{\al},\le^*_{\al})$ is isomorphic to 
an ultrafilter forced  by $(\bP_{\al},\le_{\bP_{\al}}^*)$.
Note that $\mathcal{C}_{\al}$ is
cofinal in $\mathcal{U}_{\al}$.

\begin{rem}
$\mathcal{U}_{\al}$ is isomorphic to the ultrafilter  on base set $[\bT_{\al}]$ generated by  the collection of $[X]$, $X\in\mathcal{C}_{\al}$, which we denote as $\bar{\mathcal{U}}_{\al}$.
(As usual, $[X]$ denotes the set of cofinal branches through $X$, which in this context is exactly the set of maximal nodes in the tree $X$.)
The injection $g:[\bT_{\al}]\ra\bT_{\al}$, where $g(t)=t$ for each $t\in[\bT_{\al}]$, yields $g(\bar{\mathcal{U}}_{\al})=\mathcal{U}_{\al}$.
Though it would perhaps be more standard to consider $[\bT_{\al}]$ as the base set,
we use $\bT_{\al}$ as the base for $\mathcal{U}_{\al}$, as this simplifies notation:
firstly,   $\mathcal{U}_{\al}$ will be generated by true elements of $\mathcal{R}_{\al}$, and secondly, the projection ultrafilters $\pi_S(\mathcal{U}_{\al})$, $S\sse\bS_{\al}(n)$, are then truly projections of $\mathcal{U}_{\al}$.
\end{rem}

\begin{fact}\label{fact.basic}
For each $\al<\om_1$, the following hold.
\begin{enumerate}
\item
If $\mathcal{B}\sse\mathcal{C}_{\al}$ generates $\mathcal{U}_{\al}$, then for each front $\mathcal{F}$ on $\mathcal{B}$ and each 
$\mathcal{G}\sse\mathcal{F}$,
there is a $U\in\mathcal{B}$ such that either $\mathcal{F}|U\sse\mathcal{G}$, or else $\mathcal{F}|U\cap\mathcal{G}=\emptyset$.
\item
Any ultrafilter Ramsey for $\mathcal{R}_{\al}$ is also  selective for $\mathcal{R}_{\al}$.
\end{enumerate}
\end{fact}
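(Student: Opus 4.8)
\emph{Plan for (1).}
Since $\mathcal{U}_{\al}$ is Ramsey for $\mathcal{R}_{\al}$, I first fix a generating set $\mathcal{C}\sse\mathcal{R}_{\al}$ that satisfies the Abstract Nash-Williams property of Definition \ref{defn.RamseyufU1}; as every member of $\mathcal{C}$ lies in $\mathcal{U}_{\al}\cap\mathcal{R}_{\al}=\mathcal{C}_{\al}$, I may take $\mathcal{C}\sse\mathcal{C}_{\al}$. Given a front $\mathcal{F}$ on $\mathcal{B}$ and a $\mathcal{G}\sse\mathcal{F}$, I apply that property to the two-part partition $\mathcal{F}=\mathcal{G}\cup(\mathcal{F}\setminus\mathcal{G})$ of the family $\mathcal{F}\sse\mathcal{AR}^{\al}$: this produces a $C\in\mathcal{C}$ with either $\mathcal{G}|C=\emptyset$ or $(\mathcal{F}\setminus\mathcal{G})|C=\emptyset$. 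Because $\mathcal{G}\sse\mathcal{F}$, the first alternative reads $\mathcal{F}|C\cap\mathcal{G}=\emptyset$ and the second reads $\mathcal{F}|C\sse\mathcal{G}$. Now $C\in\mathcal{C}\sse\mathcal{U}_{\al}$, so since $\mathcal{B}$ generates $\mathcal{U}_{\al}$ there is a $U\in\mathcal{B}$ with $U\sse C$, i.e.\ $U\le_{\al}C$. If $a\le^{\al}_{\mathrm{fin}}U$, witnessed by some $A\in\mathcal{R}_{\al}$ with $a\sqsubset A\le_{\al}U$, then $A\le_{\al}C$ by transitivity of $\le_{\al}$, so $a\le^{\al}_{\mathrm{fin}}C$; hence $\mathcal{F}|U\sse\mathcal{F}|C$. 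Therefore $U$ inherits the dichotomy, giving $\mathcal{F}|U\sse\mathcal{G}$ or $\mathcal{F}|U\cap\mathcal{G}=\emptyset$. The passage $C\rightsquigarrow U$ via $U\sse C$ is precisely where it is used that $\mathcal{B}$ only \emph{generates} (rather than equals) the Nash-Williams witness $\mathcal{C}$.

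\emph{Plan for (2).}
Let $\mathcal{U}$ be any ultrafilter Ramsey for $\mathcal{R}_{\al}$, generated by a set $\mathcal{C}$ satisfying the Abstract Nash-Williams property; note that the argument for (1), applied with $\mathcal{B}=\mathcal{C}$, gives a front-dichotomy tool: for every front $\mathcal{F}$ on $\mathcal{C}$ and every $\mathcal{G}\sse\mathcal{F}$ there is a $U\in\mathcal{C}$ homogeneous for $\mathcal{G}$. Given a decreasing sequence $X_0\ge_{\al}X_1\ge_{\al}\cdots$ in $\mathcal{C}$, I would first record the basic monotonicity: since $X_{n+1}\le_{\al}X_n$, every tree of $X_{n+1}$ beyond level $n+1$ sits inside a tree of $X_n$ of index $\ge n$, so the tails $A_n:=X_n/r^{\al}_n(X_n)=\bigcup\{X_n(j):j\ge n\}$ form a $\contains$-decreasing sequence $A_0\contains A_1\contains\cdots$ of members of $\mathcal{U}$. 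The target is an $X\in\mathcal{C}$ with $X/r^{\al}_n(X)\le_{\al}A_n$ for all $n$; because the $A_n$ are decreasing, this will follow once each tree $X(m)$ is placed inside $A_n$ for every $n\le m$, which in turn is guaranteed by choosing $X(m)$ deep enough inside $X_m$ (axiom {\bf A.3} and the fact that every tree of $X_m$ of index $\ge m$ contains a copy of $\bS_{\al}(m)$ make such a placement available, $\mathcal{R}_{\al}$ being a topological Ramsey space by Theorem \ref{thm.R_altRs}).

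The crux is to produce such an $X$ \emph{inside the ultrafilter}, and I would do this exactly as in the classical proof that a Ramsey ultrafilter on $\om$ is selective, rather than by a naive fusion. Namely, I would two-colour a rank-two front by whether the ``later'' block of an approximation lands in the tail $A_{m}$ determined by the index $m$ of its ``earlier'' block, apply the front-dichotomy from (1) to obtain a homogeneous $U\in\mathcal{C}$ (so that membership $U\in\mathcal{U}$ is automatic, not something one must verify for a fusion limit), rule out the unwanted colour by observing that it would force $U$ to be almost disjoint from some $A_m\in\mathcal{U}$ -- impossible since $U\cap A_m\in\mathcal{U}$ -- and then read off the full diagonalization from homogeneity together with the $\contains$-decreasing structure of the $A_n$. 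The main obstacle I expect is purely bookkeeping: reconciling the tree-level index used by $\mathcal{AR}^{\al}$ with the sequence index $n$ labelling the tails $A_n$, which may require an initial p-point-style reduction aligning $X/r^{\al}_n(X)$ with $A_n$ before the colouring is set up. Alternatively, (2) is a special case of the general theorem that an ultrafilter Ramsey for a topological Ramsey space is selective for it (see \cite{MR2330595}), and one may simply invoke that result.
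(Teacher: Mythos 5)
Your part (1) is correct and is exactly the intended unpacking of the paper's one-line justification (``(1) follows immediately from the definition of $\mathcal{U}_{\al}$''): apply the Abstract Nash--Williams property of Definition \ref{defn.RamseyufU1} to the partition $\mathcal{F}=\mathcal{G}\cup(\mathcal{F}\setminus\mathcal{G})$ (legitimate, since a front on $\mathcal{B}$ is Nash--Williams by Notation \ref{notn.ufonafront}), then push the homogeneous $C$ down to some $U\in\mathcal{B}$ with $U\sse C$, using $\mathcal{F}|U\sse\mathcal{F}|C$. For part (2), the paper's entire proof is the citation of Lemma 3.8 of \cite{MR2330595}; since you explicitly offer that citation as an alternative, your write-up subsumes the paper's actual argument, and your reading of the selectivity condition (as $X/r^{\al}_n(X)\le_{\al}A_n$) agrees with how the paper itself uses it in Proposition \ref{prop.structureY_S}.

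Your primary, direct argument for (2), however, has a genuine gap at the step you describe as ``read off the full diagonalization from homogeneity.'' Homogeneity of $U\in\mathcal{C}$ for a colouring of the rank-two front $\mathcal{AR}^{\al}_2$ only controls the second components $a(1)\in\mathcal{R}_{\al}(1)$, i.e.\ $1$-blocks, whereas the target constrains the entire blocks $X(j)\in\mathcal{R}_{\al}(j)$, whose rank grows with $j$. To pass from ``every $1$-block of $U$ lying above a $0$-block at level $k$ is contained in $A_{k+1}$'' to ``$U(j)\sse A_j$'' you need the covering fact that every node of $\bS_{\al}(j)$, $j\ge 1$, lies in a downward closed copy of $\bS_{\al}(1)$ (existence of \emph{one} such copy is implicit in Definition \ref{defn.pi_S}, but covering \emph{every} node is a separate structural claim requiring an induction along the recursive construction of the $\bS_{\al}$); a single front cannot do this job directly, since a Nash--Williams family meets each $U$ in approximations whose last blocks have bounded rank relative to their first blocks, never in all of $U$'s own blocks. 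The obvious escape---shrink the homogeneous $U$ to a diagonalizing $X\le_{\al}U$---runs straight into what you yourself called the crux: $X\le_{\al}U$ with $U\in\mathcal{C}$ does not put $X$ into $\mathcal{C}$, nor even into $\mathcal{U}$. Two smaller points in the same vein: ruling out the bad colour is not a matter of $U$ being ``almost disjoint from some $A_m$''---bad-colour homogeneity only says that no $1$-block of $U$ beyond its first tree is \emph{contained} in $A_m$, which is compatible with $U\cap A_m$ being infinite; the correct contradiction takes $V\in\mathcal{C}$ with $V\sse U\cap A_m$ (using that $\mathcal{C}$ generates $\mathcal{U}$) and extracts a good-coloured pair from $V$ via Fact \ref{fact.B(n)}. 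And the $n=0$ requirement (control of $X(0)$ inside $X_0$) is invisible to your colouring unless the $0$-block is coloured as well. None of this dooms the strategy, but the missing covering lemma is real mathematical content rather than bookkeeping; absent it, the safe proof of (2) is the one the paper gives and you list as fallback, namely invoking \cite{MR2330595}.
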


(1) follows immediately from the definition of $\mathcal{U}_{\al}$;
(2) is a consequence of Lemma 3.8 in \cite{MR2330595}.

Given a front $\mathcal{F}$ on $\mathcal{C}_{\al}$,
we let $\mathcal{U}_{\al}\re\mathcal{F}$  denote the ultrafilter on base set $\mathcal{F}$ generated by the sets $\mathcal{F}|X$, $X\in\mathcal{C}_{\al}$.
The proofs of  Facts \ref{fact.ufonafront} and \ref{fact.ufequal}
and Proposition  \ref{prop.W=frontuf}
are the same as the proofs of Facts 5.3 and 5.4 and Proposition 5.5 in  \cite{Dobrinen/Todorcevic11}.

\begin{fact}\label{fact.ufonafront}
Let $\al<\om_1$, $\mathcal{B}$ be any cofinal subset of $\mathcal{C}_{\al}$, and  $\mathcal{F}\sse\mathcal{AR}^{\al}$ be any front on $\mathcal{C}_{\al}$.
Then
$\mathcal{B}\re\mathcal{F}$ generates the ultrafilter  $\mathcal{U}_{\al}\re\mathcal{F}$ on the base set $\mathcal{F}$.
\end{fact}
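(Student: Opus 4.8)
The plan is to show directly that the filter generated by $\mathcal{B}\re\mathcal{F}=\{\mathcal{F}| X:X\in\mathcal{B}\}$ coincides with $\mathcal{U}_{\al}\re\mathcal{F}$, which by definition is the ultrafilter on base set $\mathcal{F}$ generated by $\{\mathcal{F}| Y:Y\in\mathcal{C}_{\al}\}$. The argument rests on two inclusions, obtained from a single monotonicity observation. First, since $\mathcal{B}\sse\mathcal{C}_{\al}$, every set $\mathcal{F}| X$ with $X\in\mathcal{B}$ is already among the generators of $\mathcal{U}_{\al}\re\mathcal{F}$, so $\mathcal{B}\re\mathcal{F}\sse\mathcal{U}_{\al}\re\mathcal{F}$. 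Consequently the filter $\mathcal{G}$ generated by $\mathcal{B}\re\mathcal{F}$ is contained in $\mathcal{U}_{\al}\re\mathcal{F}$ and is in particular a proper filter; this frees me from having to check the finite intersection property separately.

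The monotonicity fact I would record is that $X\le_{\al} Y$ implies $\mathcal{F}| X\sse\mathcal{F}| Y$. This is immediate from the definitions: if $a\le^{\al}_{\mathrm{fin}} X$, witnessed by some $A\in\mathcal{R}_{\al}$ with $a\sqsubset A$ and $A\le_{\al} X$, then $A\le_{\al} Y$ by transitivity of the quasi-order $\le_{\al}$, whence $a\le^{\al}_{\mathrm{fin}} Y$. With this in hand, I fix an arbitrary generator $\mathcal{F}| Y$ of $\mathcal{U}_{\al}\re\mathcal{F}$, where $Y\in\mathcal{C}_{\al}$. Since $\mathcal{B}$ is cofinal in $\mathcal{C}_{\al}$, there is an $X\in\mathcal{B}$ with $X\le_{\al} Y$ (equivalently $X\sse Y$, by Fact statement on the equivalence of $\sse$ and $\le_{\al}$ on members of $\mathcal{R}_{\al}$). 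By monotonicity, $\mathcal{F}| X\sse\mathcal{F}| Y$, and since $\mathcal{F}| X\in\mathcal{B}\re\mathcal{F}\sse\mathcal{G}$, it follows that $\mathcal{F}| Y\in\mathcal{G}$. Hence every generator of $\mathcal{U}_{\al}\re\mathcal{F}$ lies in $\mathcal{G}$, so $\mathcal{U}_{\al}\re\mathcal{F}\sse\mathcal{G}$. Together with the first inclusion this gives $\mathcal{G}=\mathcal{U}_{\al}\re\mathcal{F}$, that is, $\mathcal{B}\re\mathcal{F}$ generates $\mathcal{U}_{\al}\re\mathcal{F}$.

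I do not anticipate a genuine obstacle here; the content is just the bookkeeping of the two inclusions plus the monotonicity of $\mathcal{F}|(\cdot)$. The only point requiring a moment of care is the interpretation of the hypothesis "cofinal subset of $\mathcal{C}_{\al}$": I use it in the form that for each $Y\in\mathcal{C}_{\al}$ there is an $X\in\mathcal{B}$ with $X\le_{\al} Y$, which is exactly what lets me replace a generator of $\mathcal{U}_{\al}\re\mathcal{F}$ by a smaller one coming from $\mathcal{B}$. (If one also wishes to confirm that $\mathcal{U}_{\al}\re\mathcal{F}$ is genuinely an ultrafilter rather than merely a filter, this is recovered by applying Fact \ref{fact.basic}(1) with $\mathcal{B}=\mathcal{C}_{\al}$ to an arbitrary $\mathcal{G}\sse\mathcal{F}$, but it is not needed for the generation claim itself.)
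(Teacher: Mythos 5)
Your proof is correct and takes essentially the same approach as the paper, which defers to the proof of Fact 5.3 in \cite{Dobrinen/Todorcevic11}: that argument is exactly your cofinality-plus-monotonicity bookkeeping, namely that $X\le_{\al} Y$ implies $\mathcal{F}\re X\sse \mathcal{F}\re Y$ (via transitivity of $\le_{\al}$ in the definition of $\le^{\al}_{\mathrm{fin}}$), so by cofinality of $\mathcal{B}$ every generator $\mathcal{F}\re Y$, $Y\in\mathcal{C}_{\al}$, contains some $\mathcal{F}\re X$ with $X\in\mathcal{B}$, giving equality of the two generated filters. Your side remark that the ultrafilter property of $\mathcal{U}_{\al}\re\mathcal{F}$ itself comes from Fact \ref{fact.basic}(1) is also the paper's intended reading.
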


\begin{fact}\label{fact.ufequal}
Suppose $\mathcal{U}$ and $\mathcal{V}$ are proper ultrafilters on the same countable base set, and for each $V\in\mathcal{V}$ there is a $U\in\mathcal{U}$ such that $U\sse V$.
Then $\mathcal{U}=\mathcal{V}$.
\end{fact}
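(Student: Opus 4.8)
The plan is to first establish the containment $\mathcal{V}\sse\mathcal{U}$ directly from the hypothesis, and then to upgrade this containment to equality using the maximality inherent in being a proper ultrafilter. Let the common countable base set be $S$.

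First I would show $\mathcal{V}\sse\mathcal{U}$. Fix any $V\in\mathcal{V}$. By hypothesis there is a $U\in\mathcal{U}$ with $U\sse V$. Since $\mathcal{U}$ is a filter and hence closed under supersets, from $U\in\mathcal{U}$ and $U\sse V$ we conclude $V\in\mathcal{U}$. As $V$ was arbitrary, $\mathcal{V}\sse\mathcal{U}$.

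Next I would close the gap by a standard complementation argument. Suppose toward a contradiction that the containment is proper, so there is some $U\in\mathcal{U}\setminus\mathcal{V}$. Because $\mathcal{V}$ is an ultrafilter on $S$, exactly one of $U$ and its complement $S\setminus U$ lies in $\mathcal{V}$; since $U\notin\mathcal{V}$, we have $S\setminus U\in\mathcal{V}$. But then $S\setminus U\in\mathcal{U}$ by the containment $\mathcal{V}\sse\mathcal{U}$ just established, so both $U$ and $S\setminus U$ belong to $\mathcal{U}$. Their intersection is empty, contradicting the fact that $\mathcal{U}$ is a proper filter (the empty set is not a member). Hence no such $U$ exists, and $\mathcal{U}=\mathcal{V}$.

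There is no genuine obstacle here; the statement is a soft set-theoretic fact and the only points that must be invoked are the upward-closure of filters and the dichotomy $A\in\mathcal{V}$ or $S\setminus A\in\mathcal{V}$ characterizing ultrafilters, together with properness of $\mathcal{U}$. The countability of the base set plays no essential role in the argument and is included only because all ultrafilters in this paper live on countable base sets.
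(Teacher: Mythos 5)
Your proof is correct, and it is the standard argument that the paper (which defers this fact to Fact 5.4 of \cite{Dobrinen/Todorcevic11}) has in mind: upward closure of $\mathcal{U}$ gives $\mathcal{V}\sse\mathcal{U}$, and then maximality of the ultrafilter $\mathcal{V}$ together with properness of $\mathcal{U}$ forces equality. Your remark that countability of the base set is irrelevant is also accurate.
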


Recall that by Theorem \ref{thm.5}, every Tukey reduction from a p-point to another ultrafilter is witnessed by a continuous cofinal map.
By arguments from \cite{Dobrinen/Todorcevic11}, the following holds.

\begin{prop}\label{prop.W=frontuf}
Let $\al<\om_1$.
Suppose $\mathcal{V}$ is a nonprincipal ultrafilter (without loss of generality on $\bN$) such that  $\mathcal{V}\le_T\mathcal{U}_{\al}$.
Then there is a front $\mathcal{F}$ on $\mathcal{C}_{\al}$ and a function $f:\mathcal{F}\ra\bN$ such that
$\mathcal{V}=f(\mathcal{U}_{\al}\re\mathcal{F})$.
\end{prop}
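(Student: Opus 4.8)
The plan is to follow the standard strategy for extracting a front out of a Tukey reduction from a p-point, exactly as was done for $\mathcal{R}_1$ in \cite{Dobrinen/Todorcevic11}, now carried out over the general space $\mathcal{R}_{\al}$. Since $\mathcal{U}_{\al}$ is a rapid p-point (being Ramsey for $\mathcal{R}_{\al}$, and by the general theory of such spaces as recorded in \cite{MR2330595}), and $\mathcal{V}\le_T\mathcal{U}_{\al}$, Theorem \ref{thm.5} furnishes a continuous monotone cofinal map $g\colon\mathcal{P}(\bT_{\al})\to\mathcal{P}(\bN)$ whose restriction to $\mathcal{U}_{\al}$ is a continuous monotone cofinal map into $\mathcal{V}$. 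First I would restrict attention to the cofinal set $\mathcal{C}_{\al}=\mathcal{U}_{\al}\cap\mathcal{R}_{\al}$, which generates $\mathcal{U}_{\al}$ and consists of genuine members of the topological Ramsey space.

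The core of the argument is to convert the continuity of $g$ into a front. For each $X\in\mathcal{C}_{\al}$, continuity of $g$ means that to determine any fixed finite amount of $g(X)$ it suffices to know a finite approximation $r^{\al}_n(X)$. Concretely, fix some target value $k\in\bN$ witnessing that $g(X)$ is a nonempty element of the nonprincipal $\mathcal{V}$; then the set of minimal approximations $a\in\mathcal{AR}^{\al}$ that already force a particular bit of $g$ forms a collection which, after thinning inside $\mathcal{C}_{\al}$, is Nash-Williams and meets every $X\in\mathcal{C}_{\al}$ in an initial segment. This is where I would invoke Fact \ref{fact.basic}(1) together with the Abstract Nash-Williams Theorem \ref{thm.ANW}: given the open-set structure coming from continuity, one thins $\mathcal{C}_{\al}$ so that the relevant family $\mathcal{F}\subseteq\mathcal{AR}^{\al}$ is a front on $\mathcal{C}_{\al}$ in the sense of Notation \ref{notn.ufonafront}, and one defines $f\colon\mathcal{F}\to\bN$ so that $f(a)$ is the value of $g$ determined by the approximation $a$. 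Monotonicity of $g$ ensures that $f$ respects the tree structure, so that $f(\mathcal{F}|X)$ generates $g(X)$.

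With $\mathcal{F}$ and $f$ in hand, I would verify $\mathcal{V}=f(\mathcal{U}_{\al}\re\mathcal{F})$ by a double inclusion argument using Fact \ref{fact.ufequal}. By Fact \ref{fact.ufonafront}, the sets $\mathcal{F}|X$ for $X\in\mathcal{C}_{\al}$ generate $\mathcal{U}_{\al}\re\mathcal{F}$, so $f(\mathcal{U}_{\al}\re\mathcal{F})$ is generated by the sets $f''(\mathcal{F}|X)$. By construction these coincide with the generators $g(X)$ of the cofinal image of $g$ in $\mathcal{V}$; since $g\re\mathcal{U}_{\al}$ is cofinal in $\mathcal{V}$, for each $V\in\mathcal{V}$ there is an $X\in\mathcal{C}_{\al}$ with $f''(\mathcal{F}|X)=g(X)\subseteq V$. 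Fact \ref{fact.ufequal} then forces the two ultrafilters to be equal.

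The main obstacle will be the thinning step that turns continuity into an honest front. A priori the approximations at which $g$ becomes ``decided'' need not be pairwise incomparable nor need they meet every $X$ in a proper initial segment; one must simultaneously arrange the Nash-Williams property and the covering property (1) of a front while preserving cofinality in $\mathcal{C}_{\al}$. The device for this is precisely that $\mathcal{U}_{\al}$ is Ramsey for $\mathcal{R}_{\al}$, so Fact \ref{fact.basic}(1) lets us decide, for the relevant family $\mathcal{G}$, a single $U\in\mathcal{C}_{\al}$ on which the family is captured as a front; iterating and diagonalizing over the countably many finite pieces of $g$ (using that $g$ is continuous, hence determined by countably many finite data) yields the desired front on a cofinal subset. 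Apart from this point, the remaining verifications are bookkeeping and are identical to those carried out for $\mathcal{R}_1$, so I would simply cite the corresponding Proposition 5.5 of \cite{Dobrinen/Todorcevic11}, whose proof transfers verbatim once $\mathcal{R}_1$ is replaced by $\mathcal{R}_{\al}$.
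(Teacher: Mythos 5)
Your overall route is the paper's route: the paper gives no new argument for Proposition \ref{prop.W=frontuf}, stating only that its proof is the same as that of Proposition 5.5 in \cite{Dobrinen/Todorcevic11}, and that proof is exactly your skeleton --- use Theorem \ref{thm.5} (only the p-point property of $\mathcal{U}_{\al}$ is needed, not rapidity) to get a continuous monotone map $g$ whose restriction to $\mathcal{U}_{\al}$ is cofinal in $\mathcal{V}$, read a front off of the continuity of $g$, define $f$ from the values decided by finite approximations, and finish with Facts \ref{fact.ufonafront} and \ref{fact.ufequal} together with cofinality of $g$.

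However, the step you single out as ``the main obstacle'' is a non-obstacle, and the device you propose for it would not work as written. With the right definition nothing needs to be arranged: put $a\in\mathcal{F}$ iff $a=r^{\al}_m(X)$ for some $X\in\mathcal{C}_{\al}$, where $m$ is \emph{least} such that $g(r^{\al}_m(X))\neq\emptyset$; such an $m$ exists because monotonicity plus continuity give $g(X)=\bigcup_m g(r^{\al}_m(X))$ and $g(X)\in\mathcal{V}$ is nonempty. Then set $f(a)=\min g(a)$. Minimality alone makes $\mathcal{F}$ Nash-Williams: if $a\sqsubset b$ with both in $\mathcal{F}$, then $a$ is a shorter initial segment of the witness for $b$ with $g(a)\neq\emptyset$, contradicting the minimality defining $b$. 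And by construction every $X\in\mathcal{C}_{\al}$ has an initial segment in $\mathcal{F}$, so $\mathcal{F}$ is a front on all of $\mathcal{C}_{\al}$, as the statement requires. By contrast, invoking Fact \ref{fact.basic}(1) or Theorem \ref{thm.ANW} here is circular, since both take a Nash-Williams family (a front) as a \emph{hypothesis} and so cannot be the tool that produces one; and thinning is actively counterproductive, since a family captured only below a single $U\in\mathcal{C}_{\al}$ need not meet those $X\in\mathcal{C}_{\al}$ that do not lie below $U$, i.e., it is a front on a proper subcollection rather than on $\mathcal{C}_{\al}$. Finally, a small slip in your last paragraph: $f''(\mathcal{F}|X)=g(X)$ should be $f''(\mathcal{F}|X)\sse g(X)$ (each $f(a)=\min g(a)$ lies in $g(X)$ by monotonicity); this containment is all that the appeal to Fact \ref{fact.ufequal} needs.
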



We now introduce notation which aids in making clear the classification of  ultrafilters which are Rudin-Keisler or Tukey below $\mathcal{U}_{\al}$.

\begin{notn}\label{notn.Phi_n}
Let $\al<\om_1$. 
\begin{enumerate}
\item
For each $n<\om$,
define $\mathcal{U}_{\al}\re\mathcal{R}_{\al}(n)$ to be the filter on the base $\mathcal{R}_{\al}(n)$ generated by the sets $\mathcal{R}_{\al}(n)|X$, $X\in\mathcal{C}_{\al}$.
\item
For each $n<\om$ and each  $S\in\mathfrak{S}_{\al}(n)$,
define $\mathcal{Y}^{\al}_S$ to be the filter on the base set $B_S:=\{\pi_S(u):u\in\mathcal{R}_{\al}(n)\}$
generated by the collection of sets $\pi_S(\mathcal{R}_{\al}(n)|X):=\{\pi_S(u):u\in\mathcal{R}_{\al}(n)|X\}$, $X\in\mathcal{C}_{\al}$.
\item
For $\beta\le\al$, 
let $\mathcal{Y}^{\al}_{\beta}$ denote $\mathcal{Y}^{\al}_{S_{\beta}}$.
Recall that $S_{\beta}\ (=S^{\al}_{\beta})$ is the downward closed subset of $\bS_{\al}(0)$ of order-type $[\beta,\al+1]$;
that is, $S_{\beta}=\{s\in\bS_{\al}(0):\exists \gamma\in[\beta,\al]\, (\dom(s)=[\gamma,\al])\}\cup\{\emptyset\}$.
\end{enumerate}
\end{notn}

In the next proposition, theorem and corollary,
we highlight the relationships between the various projection ultrafilters of the form $\mathcal{Y}^{\al}_S$, and the ultrafilters of the form $\mathcal{U}_{\al}|\mathcal{R}_{\al}(n)$.

\begin{prop}\label{prop.structureY_S}
Let $\al<\om_1$.
\begin{enumerate}
\item
$\mathcal{U}_{\al}$ is a rapid p-point.
\item
$\mathcal{U}_{\al}\cong\mathcal{U}_{\al}\re\mathcal{R}_{\al}(0)=\mathcal{Y}^{\al}_0$.
\item
$\mathcal{Y}_{\al}^{\al}$ is a Ramsey ultrafilter.
\item
For each $n<\om$ and $S\in\mathfrak{S}_{\al}(n)$,
$\mathcal{Y}_S^{\al}$ is an ultrafilter,  and moreover is a rapid p-point.
\item
Suppose $m\le n$, $S\in\mathfrak{S}_{\al}(m)$, $T\in\mathfrak{S}_{\al}(n)$, and $S\cong T$.
Then $\mathcal{Y}^{\al}_S\cong\mathcal{Y}^{\al}_{T}$.
\item
If  $S\cong \bS_{\al}(k)$, then $\mathcal{Y}^{\al}_S\cong\mathcal{Y}^{\al}_{\bS_{\al}(k)}=\mathcal{U}_{\al}\re\mathcal{R}_{\al}(k)$.
\end{enumerate}
\end{prop}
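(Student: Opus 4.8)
The plan is to treat all six parts uniformly by viewing each $\mathcal{Y}^{\al}_S$ as a Rudin--Keisler image of $\mathcal{U}_{\al}$ under the finite-to-one projection $\pi_S$, and then reading off the various structural properties from the fact that $\mathcal{U}_{\al}$ is Ramsey (hence selective) for $\mathcal{R}_{\al}$. Two bookkeeping identities organize the ``$=$'' assertions. First, an easy induction on $\al$ shows that the $0$-th level $\bS_{\al}(0)$ is a single chain and equals $S_0$, so $\pi_{S_0}$ is the identity on $\mathcal{R}_{\al}(0)$ and $\mathcal{Y}^{\al}_0=\mathcal{U}_{\al}\re\mathcal{R}_{\al}(0)$ holds by definition. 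Second, $\pi_{\bS_{\al}(k)}$ is the identity on $\mathcal{R}_{\al}(k)$, so $\mathcal{Y}^{\al}_{\bS_{\al}(k)}=\mathcal{U}_{\al}\re\mathcal{R}_{\al}(k)$, which is the ``$=$'' half of (6).

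For (1), I would invoke Fact \ref{fact.basic}(2): being Ramsey for $\mathcal{R}_{\al}$, the ultrafilter $\mathcal{U}_{\al}$ is selective for $\mathcal{R}_{\al}$. Selectivity ($X\le_{\al}X_n/r^{\al}_n(X_n)$) immediately yields the $p$-point property after refining a decreasing sequence of sets in $\mathcal{U}_{\al}$ to members of $\mathcal{C}_{\al}$ and diagonalizing; rapidity follows because each member of $\mathcal{R}_{\al}$ meets each finite level $\bT_{\al}(m)$ at most once while the occupied levels strictly increase, so a selective diagonalization produces a member whose enumeration dominates any prescribed $f:\om\ra\om$. Part (2) then combines the identity $\mathcal{Y}^{\al}_0=\mathcal{U}_{\al}\re\mathcal{R}_{\al}(0)$ with the Remark identifying $\mathcal{U}_{\al}$ with $\bar{\mathcal{U}}_{\al}$ on the branches $[\bT_{\al}]$: each $u\in\mathcal{R}_{\al}(0)$ is a single branch, determined by its leaf, so the leaf-to-branch bijection witnesses $\mathcal{U}_{\al}\cong\mathcal{U}_{\al}\re\mathcal{R}_{\al}(0)$.

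For (4) I would first show that $\mathcal{U}_{\al}\re\mathcal{R}_{\al}(n)$ is an ultrafilter: given $A\sse\mathcal{R}_{\al}(n)$, apply Fact \ref{fact.basic}(1) to the subfamily $\{a\in\mathcal{AR}^{\al}_{n+1}:a(n)\in A\}$ of the front $\mathcal{AR}^{\al}_{n+1}$ to obtain $U\in\mathcal{C}_{\al}$ with $\mathcal{R}_{\al}(n)|U\sse A$ or $\mathcal{R}_{\al}(n)|U\cap A=\emptyset$. Then $\mathcal{Y}^{\al}_S=\pi_S(\mathcal{U}_{\al}\re\mathcal{R}_{\al}(n))$ is an ultrafilter as a Rudin--Keisler image; since $\pi_S$ is finite-to-one (its fibers sit inside a single finite level) and finite-to-one images of rapid $p$-points are again rapid $p$-points, part (1) gives that $\mathcal{Y}^{\al}_S$ is a rapid $p$-point. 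Part (3) is the special case $S=S_{\al}=\{\emptyset,\{(\al,0)\}\}$, where $\pi_{S_{\al}}$ records only the top node of each $0$-th tree: a $2$-colouring of pairs from $B_{S_{\al}}$ pulls back coordinatewise along $\pi_{S_{\al}}$ to a $2$-colouring of the rank-$2$ front $\mathcal{AR}^{\al}_2$, so the Abstract Nash--Williams Theorem \ref{thm.ANW} for $\mathcal{R}_{\al}$ (valid by Theorem \ref{thm.R_altRs}) yields a homogeneous $X\in\mathcal{C}_{\al}$, and projecting gives a monochromatic set in $\mathcal{Y}^{\al}_{\al}$, so $\mathcal{Y}^{\al}_{\al}$ is Ramsey.

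Part (6) reduces to part (5) together with the identity-projection remark above, so the crux is (5): for $m\le n$, $S\in\mathfrak{S}_{\al}(m)$, $T\in\mathfrak{S}_{\al}(n)$ with $S\cong T$, build a Rudin--Keisler isomorphism $\mathcal{Y}^{\al}_S\cong\mathcal{Y}^{\al}_T$. I would fix $X\in\mathcal{C}_{\al}$ and use the embedding $\bS_{\al}(m)\hookrightarrow\bS_{\al}(n)$ from the paragraph preceding Fact \ref{fact.frakS_mnseq}, transported along the lexicographic isomorphism $S\cong T$, to set up a bijection between the generating sets $\pi_S(\mathcal{R}_{\al}(m)|X)$ and $\pi_T(\mathcal{R}_{\al}(n)|X)$, and then use Fact \ref{fact.ufequal} to identify the two generated ultrafilters. \textbf{The main obstacle is exactly here}: when $\al\ge\om$ the set $\mathfrak{S}_{\al}(n)$ is countably infinite and the projections interact with splitting nodes level by level, so the correspondence cannot be pinned down by the finite case-analysis that sufficed for $\mathcal{R}_1$. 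Controlling it requires the new infinite-$\al$ tools---Facts \ref{fact.CanonThmEqReRestricted} and \ref{fact.frakS_mnseq} together with a diagonalization over the levels $k\ge n$ in the style of Lemma \ref{lem.cohere}---to guarantee that the level-wise bijections cohere into a single map that is genuinely an isomorphism of ultrafilters rather than a mere cofinal (Tukey) correspondence.
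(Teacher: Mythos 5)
Your parts (2) and (3) are fine (for (3) you verify the Ramsey property directly via the Abstract Nash--Williams Theorem where the paper simply observes that $\pi_{S_{\al}}$ yields an isomorphic copy of the Ellentuck space), and your selectivity-based argument for (1) is workable in outline, though your stated justification for rapidity is wrong: a member of $\mathcal{R}_{\al}$ meets each occupied level $\bT_{\al}(k_n)$ in a finite tree of growing size, not ``at most once''; what the diagonalization must arrange is that the occupied levels begin late enough relative to the sizes of the earlier blocks.

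The genuine gap is in (4). You reduce the rapid p-point property of $\mathcal{Y}^{\al}_S$ to part (1) via ``finite-to-one images of rapid p-points are rapid p-points.'' But the ultrafilter you are projecting, $\mathcal{U}_{\al}\re\mathcal{R}_{\al}(n)$, lives on the base set $\mathcal{R}_{\al}(n)$, not on $\bT_{\al}$, and for $n\ge 1$ it is not the image of $\mathcal{U}_{\al}$ under \emph{any} map, finite-to-one or otherwise: such a map would witness $\mathcal{U}_{\al}\re\mathcal{R}_{\al}(n)=\mathcal{Y}^{\al}_{\bS_{\al}(n)}\le_{RK}\mathcal{U}_{\al}\cong\mathcal{Y}^{\al}_{\bS_{\al}(0)}$, which fails because (by Theorem \ref{prop.RKstructure}(3), or for $\al=1$ by Laflamme's Theorem \ref{thm.Laflammethms}(3) together with Blass's result) this would force $\bS_{\al}(n)$, which has splitting nodes, to embed into the chain $\bS_{\al}(0)$. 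So the phrase ``part (1) gives'' is precisely the step that cannot be made; the rapid p-point property of $\mathcal{U}_{\al}\re\mathcal{R}_{\al}(n)$ is the substantive content of (4), not a corollary of (1). The paper proves it directly: the p-point property follows from selectivity of $\mathcal{U}_{\al}$ for $\mathcal{R}_{\al}$ (Fact \ref{fact.basic}(2)), by choosing a $\le_{\al}$-decreasing sequence $X_k\in\mathcal{C}_{\al}$ with $\pi_S(\mathcal{R}_{\al}(n)|X_k)\sse U_k$ and diagonalizing; rapidity follows from a counting argument, choosing $Y\in\mathcal{C}_{\al}$ whose $k$-th block begins beyond $h(\Sigma_{n\le i\le k}|\mathcal{R}_{\al}(n)|\bT_{\al}(i)|)$. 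Once that is done, your finite-to-one observation about $\pi_S$ does correctly transfer both properties from $\mathcal{U}_{\al}\re\mathcal{R}_{\al}(n)$ to $\mathcal{Y}^{\al}_S$. Separately, your (5) is not a proof and misdiagnoses the difficulty: no canonization machinery (Facts \ref{fact.CanonThmEqReRestricted}, \ref{fact.frakS_mnseq}, Lemma \ref{lem.cohere}) is involved. Since $S\cong T$, one has $B_T\sse B_S$ outright, and there is an $X\in\mathcal{C}_{\al}$ with $B_S|X\sse B_T$; hence, modulo negligible subsets of the bases, the two ultrafilters have the same generating sets, and identity maps (after restriction) witness $\mathcal{Y}^{\al}_S\cong\mathcal{Y}^{\al}_T$ in both directions. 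The ``coherence of level-wise bijections'' you invoke is not needed for any value of $\al$.
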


\begin{proof}
(1) follows from Theorem \ref{thm.Laflammethms}.
To see
(2), recall that $\mathcal{U}_{\al}\cong\bar{\mathcal{U}}_{\al}$.
The map $g:[\bT_{\al}]\ra\mathcal{R}_{\al}(0)$, given by $g(t)=\{s\in\bT_{\al}:s\sqsubseteq t\}$  for each $t\in[\bT_{\al}]$, yields an isomorphism from $\bar{\mathcal{U}}_{\al}$ to $\mathcal{U}_{\al}\re\mathcal{R}_{\al}(0)$.
The equality follows from the fact that
$\pi_{\bS_{\al}(0)}$ is the identity map on $\mathcal{R}_{\al}(0)$.

(3)  follows from the fact that the projection $\pi_{S_{\al}}$ on $\mathcal{R}_{\al}(0)$ yields an isomorphic copy of the Ellentuck space. 
Hence, 
$\mathcal{Y}^{\al}_{\al}$ is Ramsey for the Ellentuck space, which yields that $\mathcal{Y}^{\al}_{\al}$ a Ramsey ultrafilter.

(4)
Let   $S\in\mathfrak{S}_{\al}(n)$.
Let $V$ be any subset of  $B_S$, and
let $\mathcal{H}=\{a\in\mathcal{AR}_{n+1}: \pi_S(a(n))\in V\}$.
Since  $\mathcal{U}_{\al}$ is Ramsey for $\mathcal{R}_{\al}$, there is an $X\in\mathcal{C}_{\al}$ such that either
$\mathcal{AR}_{n+1}^{\al}|X\sse\mathcal{H}$ or else
$\mathcal{AR}_{n+1}^{\al}|X\cap\mathcal{H}=\emptyset$.
In the first case, $V\in\mathcal{Y}_S^{\al}$ and in the second case, $B_S\setminus V\in\mathcal{Y}_S^{\al}$.
Thus, $\mathcal{Y}_S^{\al}$ is an ultrafilter.

Suppose $U_0\contains U_1\contains\dots$ is a decreasing sequence of elements of $\mathcal{Y}_S^{\al}$.
For each $k<\om$, there is some $X_k\in\mathcal{C}_{\al}$ for which $\pi_S(\mathcal{R}_{\al}(n)|X_k)\sse U_k$.
We may take $(X_k)_{k<\om}$ to be a $\le_{\al}$-decreasing sequence.
Since $\mathcal{U}_{\al}$ is selective for $\mathcal{R}_{\al}$, there is an $X\in\mathcal{C}_{\al}$ such that $X/r_k^{\al}(X)\le_{\al} X_k$, for each $k<\om$.
Then $\pi_S(\mathcal{R}_{\al}(n)|X)\sse^*\pi_S(\mathcal{R}_{\al}(n)|X_k)$, for each $k<\om$.
Thus, $\mathcal{Y}_S^{\al}$ is a p-point.

That $\mathcal{Y}_S^{\al}$ is rapid
follows from the fact that $\mathcal{U}_{\al}\re\mathcal{R}_{\al}(n)$ is rapid.
To see this,
 let $h:\om\ra\om$ be a strictly increasing function.
Linearly order $\mathcal{R}_{\al}(n)$ so that all members of $\mathcal{R}_{\al}(n)|\mathbb{T}_{\al}(k)$ appear before all members of $\mathcal{R}_{\al}(n)|\mathbb{T}_{\al}(k+1)$ for all $k\ge n$.
For any tree $u\sse\bT_{\al}$, let $m(u)$ denote the least $l$ such that $\lgl l\rgl\in u$.
For each $X\in\mathcal{C}_{\al}$, there is a $Y\le_{\al} X$ such that
$m(Y(n))>h(1)$, 
$m(Y(n+1))>h(1+|\mathcal{R}_{\al}(n)|\mathbb{T}_{\al}(n+1)|)$,
 and in general, for $k\ge n$,
\begin{equation}
m(Y(k))
>h(\Sigma_{n\le i\le k} |\mathcal{R}_{\al}(n)|\mathbb{T}_{\al}(i)|).
\end{equation}
Since $\mathcal{U}_{\al}$  is selective for $\mathcal{R}_{\al}$, there is a $Y\in\mathcal{C}_{\al}$ with this property, which yields that
$\mathcal{U}_{\al}\re\mathcal{R}_{\al}(n)$ is rapid for the function $h$.
Since for each $u\in\mathcal{R}_{\al}(n)$, $|\pi_S(u)|\le |u|$,
it follows that $\pi_S(\mathcal{R}_{\al}(n)|Y)$ witnesses that $\mathcal{Y}^{\al}_S$ is rapid for the function $h$.
Since $h$ was arbitrary, (4) holds.

(5) 
Suppose that $m\le n$, $S\in\mathfrak{S}_{\al}(m)$, $T\in\mathfrak{S}_{\al}(n)$, and $S\cong T$.
Then  $B_{T}\sse B_S$.
Moreover, there is an $X\in\mathcal{C}_{\al}$ such that 
$B_S|X\sse B_{T}$.
Thus, modulo negligible subsets of the bases, $\mathcal{Y}^{\al}_S$ is actually equal to $\mathcal{Y}^{\al}_{T}$.
The identity map on $B_{T}$ witnesses that $\mathcal{Y}^{\al}_S\le_{RK}\mathcal{Y}^{\al}_{T}$.
Given $X\in\mathcal{C}_{\al}$ such that $B_S|X\sse B_{T}$ and $\bT_{\al}\setminus X$ is infinite,
the identity map on $B_S|X$ witnesses that $\mathcal{Y}^{\al}_{T}\le_{RK}\mathcal{Y}^{\al}_S$.
(6) follows from (5).
\end{proof}

For $S$ and $T$ downward closed subsets of $\bS_{\al}$, we say that $S$ {\em embeds into} $T$, or $S$ {\em is isomorphic to a subset of} $T$, if  there is an injection $\iota:S\ra T$ which preserves lexicographic ordering  (recall Definition \ref{def.lex}) and such that the image $\iota(S)$ is downward closed in $T$.

\begin{thm}\label{prop.RKstructure}
Let $m,n<\om$, and let $S\in\mathfrak{S}_{\al}(m)$ and $T\in\mathfrak{S}_{\al}(n)$.
\begin{enumerate}
\item
If $S$ embeds into $T$, then 
$\mathcal{Y}^{\al}_S\le_{RK}\mathcal{Y}^{\al}_T$.
\item
If $\mathcal{V}\le_{RK}\mathcal{Y}^{\al}_T$,
then $\mathcal{V}\cong\mathcal{Y}^{\al}_{T'}$ for some  $T'\sse T$ such that $T'\in\mathfrak{S}_{\al}(n)$.
\item
If
$\mathcal{Y}^{\al}_S\le_{RK}\mathcal{Y}^{\al}_T$
then
$S$ embeds into $T$.
\item
$\mathcal{Y}^{\al}_S\cong\mathcal{Y}^{\al}_T$ iff
$S\cong T$.
\end{enumerate}
\end{thm}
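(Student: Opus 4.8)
The plan is to prove the four parts in the order $(1), (2), (3), (4)$, using the ultrafilter properties established in Proposition~\ref{prop.structureY_S} and the canonization machinery of Theorem~\ref{thm.PRR_al}. For part $(1)$, given an embedding $\iota\colon S\ra T$, I would define the Rudin-Keisler witness directly: for $w\in B_T$, set $g(w)=\iota^{-1}(w\cap\iota(S))$, that is, restrict the projected structure to the image of $\iota$ and pull it back along $\iota$. Since $\iota$ preserves lexicographic ordering and has downward-closed image, this $g$ is well-defined on a generating set $\pi_T(\mathcal{R}_{\al}(n)|X)$ for $X\in\mathcal{C}_{\al}$, and one checks $g(\mathcal{Y}^{\al}_T)=\mathcal{Y}^{\al}_S$ by verifying that preimages of generators under $g$ contain generators. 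This is the routine direction.

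For part $(2)$, suppose $\mathcal{V}\le_{RK}\mathcal{Y}^{\al}_T$ via some $f\colon B_T\ra\bN$. The family $\mathcal{F}=\mathcal{AR}^{\al}_{n+1}$ gives a front on $\mathcal{C}_{\al}$, and $f$ composed with $\pi_T$ on the top tree $a(n)$ induces an equivalence relation $\R$ on $\mathcal{F}$ (namely $a\R b$ iff $f(\pi_T(a(n)))=f(\pi_T(b(n)))$). Applying Theorem~\ref{thm.PRR_al} (canonicity of $\mathcal{U}_{\al}$ for fronts), there is a $C\in\mathcal{C}_{\al}$ on which $\R$ is canonical, represented by some $S(n)\in\mathfrak{S}_{\al}(n)$. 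Since $f$ was already a function of $\pi_T$, the canonical representative $S(n)$ must refine $T$, so I would set $T'=S(n)\cap T$ (or rather the downward-closed set picked out by the canonization inside $T$), giving $T'\sse T$ with $\mathcal{V}\cong\mathcal{Y}^{\al}_{T'}$. The main subtlety here is checking that the canonical equivalence relation arising from a map that already factors through $\pi_T$ is itself given by a subset of $T$; this uses the uniqueness clause in the Remark after Definition~\ref{defn.canonicaleqrelsnm} together with Fact~\ref{fact.frakS_mnseq}.

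For part $(3)$, assume $\mathcal{Y}^{\al}_S\le_{RK}\mathcal{Y}^{\al}_T$. I would first apply part $(2)$ to conclude $\mathcal{Y}^{\al}_S\cong\mathcal{Y}^{\al}_{T'}$ for some $T'\sse T$ with $T'\in\mathfrak{S}_{\al}(n)$. It then suffices to show that $\mathcal{Y}^{\al}_S\cong\mathcal{Y}^{\al}_{T'}$ forces $S\cong T'$, since then the inclusion $T'\sse T$ is exactly an embedding of $S$ into $T$. This reduces part $(3)$ to part $(4)$. The forward direction of $(4)$ (isomorphic structures yield isomorphic ultrafilters) is Proposition~\ref{prop.structureY_S}$(5)$; the converse is the crux. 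I expect \textbf{the converse of $(4)$ to be the main obstacle}: I would argue that the isomorphism type of $S$ as a finite downward-closed structure is an invariant of the ultrafilter $\mathcal{Y}^{\al}_S$, recoverable from its Rudin-Keisler-below structure. Concretely, the collection $\{T''\sse\bS_{\al}:\mathcal{Y}^{\al}_{T''}\le_{RK}\mathcal{Y}^{\al}_S\}$ is, by parts $(1)$ and $(2)$, exactly the set of isomorphism types of downward-closed substructures of $S$; two finite structures with the same family of substructure-isomorphism-types (as an ordered set under embeddability) must be isomorphic. Thus $\mathcal{Y}^{\al}_S\cong\mathcal{Y}^{\al}_T$ gives $S$ and $T$ the same embeddability-poset of substructures, forcing $S\cong T$.

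The delicate point throughout is that $B_S$ and $B_T$ are only \emph{generating bases} modulo the ultrafilter, so all equalities of the form $g(\mathcal{Y}^{\al}_T)=\mathcal{Y}^{\al}_S$ or $B_{T'}\sse B_S$ hold only after restricting to some $X\in\mathcal{C}_{\al}$; I would handle this exactly as in Proposition~\ref{prop.structureY_S}$(5)$, using Fact~\ref{fact.ufequal} to promote ``mod finite / mod a measure-one set'' agreements to genuine equalities of ultrafilters. The selectivity and p-point properties from Proposition~\ref{prop.structureY_S}$(4)$ guarantee that the relevant thinnings can be carried out inside $\mathcal{C}_{\al}$, keeping every witness a true element of the space.
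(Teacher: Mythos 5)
Your parts (1) and (2) essentially reproduce the paper's argument: (1) is Proposition \ref{prop.structureY_S}(5) composed with the projection from $B_T$ onto $B_{\iota(S)}$, and (2) is the paper's canonization of $f=\theta\circ\pi_T$ followed by the check that the canonizing set lies inside $T$. (One small point there: the paper proves outright that the canonizing set $T'$ satisfies $T'\sse T$, by taking any $s\in T'\setminus T$ and producing $u,v$ with $\pi_T(u)=\pi_T(v)$ but $\pi_{T'}(u)\ne\pi_{T'}(v)$, contradicting that $f$ factors through $\pi_T$; you should do this rather than hedge with $T'=S(n)\cap T$.) Reorganizing so that (3) reduces to the converse of (4) is also legitimate in principle; the paper goes the opposite way, proving (3) directly and then deducing (4) by applying (3) twice.

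The genuine gap is your argument for the converse of (4): it is circular. Parts (1) and (2) tell you only that every nonprincipal $\mathcal{V}\le_{RK}\mathcal{Y}^{\al}_S$ is isomorphic \emph{as an ultrafilter} to some $\mathcal{Y}^{\al}_{S'}$ with $S'\sse S$; they do not tell you that a structure $T''$ with $\mathcal{Y}^{\al}_{T''}\le_{RK}\mathcal{Y}^{\al}_S$ is itself isomorphic to a substructure of $S$. To get that, you must pass from $\mathcal{Y}^{\al}_{T''}\cong\mathcal{Y}^{\al}_{S'}$ to $T''\cong S'$, which is precisely the injectivity statement (converse of (4)) being proved, and recovering the embeddability poset of substructures from the RK-poset below $\mathcal{Y}^{\al}_S$ additionally requires order-reflection, which is (3). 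So the ``invariant'' you invoke is only known to be an invariant after the theorem is established. What is missing is the direct step the paper supplies in its proof of (3): starting from the isomorphism $\theta:B_{T'}\ra B_S$ furnished by (2), canonize $f=\theta\circ\pi_{T'}$ on $\mathcal{R}_{\al}(n)$ (Theorem \ref{thm.canon.eq.rel.R(n)} together with canonicity of $\mathcal{U}_{\al}$ for fronts) to obtain $T''\in\mathfrak{S}_{\al}(n)$ with $f(u)=f(v)$ iff $\pi_{T''}(u)=\pi_{T''}(v)$; injectivity of $\theta$ gives $\E_{T''}=\E_{T'}$ and hence $T''=T'$, while comparing the fibers of $f$ (given by $\pi_{T''}$) with the fibers of $\pi_S$ on the range $B_S$ forces $T''\cong S$. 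This yields $S\cong T'\sse T$ with no circularity, and (4) then follows. A further minor inaccuracy: you speak of $S$ as a \emph{finite} structure, but for $\om\le\al$ the members of $\mathfrak{S}_{\al}(n)$ may be infinite, so even the step ``mutually embeddable implies isomorphic'' (which the paper also uses tacitly in (4)) needs the specific well-ordered chain structure of these sets, not a cardinality count.
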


\begin{proof}
(1)
 Suppose that $S$ is isomorphic to a subset $T'\sse T$.
By Proposition \ref{prop.structureY_S} (5), $\mathcal{Y}^{\al}_S\cong\mathcal{Y}^{\al}_{T'}$.
The projection map $\pi_{T'}$ from $B_T$ to $B_{T'}$ witnesses that $\mathcal{Y}^{\al}_{T'}\le_{RK}\mathcal{Y}^{\al}_T$.

(2)
Suppose $\mathcal{V}\le_{RK}\mathcal{Y}^{\al}_T$, and without loss of generality, assume that $\om$ is the base set for $\mathcal{V}$.
Let $\theta:B_T\ra\om$ witness $\mathcal{V}\le_{RK}\mathcal{Y}^{\al}_T$; so $\theta(\mathcal{Y}^{\al}_T)=\mathcal{V}$.
Let  $f=\theta\circ\pi_T$ so that $f:\mathcal{R}_{\al}(n)\ra\om$.
By the Canonization Theorem \ref{thm.FCTR^al_n}  for $\mathcal{R}_{\al}(n)$
and the definition of $\mathcal{U}_{\al}$,
there is a $C\in\mathcal{C}_{\al}$ and a $T'\in\mathfrak{S}_{\al}(n)$ such that for all $u,v\in\mathcal{R}_{\al}(n)|C$,
$f(u)=f(v)$ iff $\pi_{T'}(u)=\pi_{T'}(v)$.
Thus, there is a bijection between $f''\mathcal{R}_{\al}(n)|C$ and $\pi_{T'}''\mathcal{R}_{\al}(n)|C$.

Suppose that $T'\setminus T\ne\emptyset$.
Then there are $u,v\in\mathcal{R}_{\al}(n)|C$ such that $\pi_T(u)=\pi_T(v)$ but $\pi_{T'}(u)\ne\pi_{T'}(v)$.
$\pi_T(u)=\pi_T(v)$ implies that $\theta(\pi_T(u))=\theta(\pi_T(v))$, which implies that $f(u)=f(v)$.
However, $\pi_{T'}(u)\ne\pi_{T'}(v)$ implies that $f(u)\ne f(v)$, contradiction.
Thus $T'\sse T$.
Hence,
$\mathcal{V}=\theta(\mathcal{Y}^{\al}_T)=\theta(\pi_T(\mathcal{U}_{\al}\re\mathcal{R}_{\al}(n)))
=f(\mathcal{U}_{\al}\re\mathcal{R}_{\al}(n))\cong\pi_S(\mathcal{U}_{\al}\re\mathcal{R}_{\al}(n))=\mathcal{Y}^{\al}_S$.

(3) 
Suppose that $\theta:B_T\ra B_S$ witnesses that $\mathcal{Y}^{\al}_S\le_{RK}\mathcal{Y}^{\al}_T$.
By (2), there is a $T'\sse T$ such that $\mathcal{Y}^{\al}_S\cong\mathcal{Y}^{\al}_{T'}$.
Let $\theta: B_{T'}\ra B_S$ be an isomorphism witnessing this.
Let $f:\mathcal{R}_{\al}(n)\ra B_S$ by letting $f=\theta\circ\pi_{T'}$.
By Theorem \ref{thm.canon.eq.rel.R(n)} and the definition of $\mathcal{U}_{\al}$,
there is some $T''\in\mathfrak{S}_{\al}(n)$ and $U\in\mathcal{C}_{\al}$ such that 
for all $u,v\in\mathcal{R}_{\al}(n)|U$,
$f(u)=f(v)$ iff $\pi_{T''}(u)=\pi_{T''}(v)$.
We claim that $T'=T''\cong S$.
$T'$ must equal $T''$, since $f$ is injective.
Moreover, $\pi_{T''}(u)=\pi_{T''}(v)$ iff $\pi_S(u)=\pi_S(v)$.
Hence, $T''\cong S$.

(4)
If $S\cong T$, then   $\mathcal{Y}^{\al}_S\cong\mathcal{Y}^{\al}_{T}$
by
Proposition \ref{prop.structureY_S} (5).
If $\mathcal{Y}^{\al}_S\cong\mathcal{Y}^{\al}_{T}$,
then by applying (3) twice, we find that $S$ and $T$ are isomorphic to subsets of each other.
Hence, $S\cong T$.
\end{proof}

The next Corollary follows immediately from  Proposition \ref{prop.structureY_S} and Theorem \ref{prop.RKstructure}, thus, recovering Laflamme's Theorem \ref{thm.Laflammethms} (3).

\begin{cor}\label{cor.RKchain} $\lgl\mathcal{Y}^{\al}_{\beta}:\beta\le\al\rgl$, forms a strictly decreasing chain of nonprincipal rapid p-points in the Rudin-Keisler ordering, with $\mathcal{Y}^{\al}_0$ Rudin-Keisler maximal and $\mathcal{Y}^{\al}_{\al}$ Rudin-Keisler  minimal in the chain.
Moreover, this chain is maximal within the  ordering of nonprincipal ultrafilters Rudin-Keisler reducible  $\mathcal{U}_{\al}$.
\end{cor}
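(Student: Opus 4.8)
The plan is to read all four assertions off Proposition~\ref{prop.structureY_S} and Theorem~\ref{prop.RKstructure}, using throughout the identifications $\mathcal{U}_{\al}\cong\mathcal{U}_{\al}\re\mathcal{R}_{\al}(0)=\mathcal{Y}^{\al}_0$ from Proposition~\ref{prop.structureY_S}(2),(6), where $S_0=\bS_{\al}(0)$. First I would establish that the sequence is $\le_{RK}$-decreasing. For $\beta<\beta'\le\al$ the chain $S_{\beta'}=\bS_{\al}(0)\re[\beta',\al+1]$ is a downward closed subset of $S_{\beta}=\bS_{\al}(0)\re[\beta,\al+1]$, so the inclusion witnesses that $S_{\beta'}$ embeds into $S_{\beta}$; by Theorem~\ref{prop.RKstructure}(1) this gives $\mathcal{Y}^{\al}_{\beta'}\le_{RK}\mathcal{Y}^{\al}_{\beta}$. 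Since $S_{\al}=\{f_{\al},\emptyset\}$ embeds into every $S_{\beta}$ and every $S_{\beta}$ embeds into $S_0$, the ultrafilter $\mathcal{Y}^{\al}_0\cong\mathcal{U}_{\al}$ is $\le_{RK}$-maximal in the chain and $\mathcal{Y}^{\al}_{\al}$ is $\le_{RK}$-minimal; by Proposition~\ref{prop.structureY_S}(3) the latter is moreover a Ramsey ultrafilter. Each $\mathcal{Y}^{\al}_{\beta}$ is a nonprincipal rapid p-point by Proposition~\ref{prop.structureY_S}(4), nonprincipality being clear once $S_{\beta}\ne\{\emptyset\}$, which holds for all $\beta\le\al$.

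For strictness I would invoke Theorem~\ref{prop.RKstructure}(4): it suffices to check that $S_{\beta}\not\cong S_{\beta'}$ whenever $\beta\ne\beta'$, for then $\mathcal{Y}^{\al}_{\beta}\not\cong\mathcal{Y}^{\al}_{\beta'}$ and the reduction $\mathcal{Y}^{\al}_{\beta'}\le_{RK}\mathcal{Y}^{\al}_{\beta}$ is proper. Granting this, $\lgl\mathcal{Y}^{\al}_{\beta}:\beta\le\al\rgl$ is a strictly $\le_{RK}$-decreasing sequence indexed by the ordinals $\beta\le\al$, hence of order type $\al+1$.

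For the maximality clause I would show the chain already exhausts, up to isomorphism, every nonprincipal ultrafilter $\le_{RK}\mathcal{U}_{\al}$. Suppose $\mathcal{V}$ is nonprincipal with $\mathcal{V}\le_{RK}\mathcal{U}_{\al}\cong\mathcal{Y}^{\al}_{S_0}$, where $S_0=\bS_{\al}(0)\in\mathfrak{S}_{\al}(0)$. By Theorem~\ref{prop.RKstructure}(2) there is a downward closed $T'\sse\bS_{\al}(0)$ with $\mathcal{V}\cong\mathcal{Y}^{\al}_{T'}$. The key structural observation, verified directly from Definitions~\ref{defn.T_{al}succ} and~\ref{defn.T_al_lim}, is that $\bS_{\al}(0)$ is a single $\subset$-chain, namely $\{\emptyset\}\cup\{f_{\gamma}:\gamma\le\al\}$ with $f_{\gamma}$ the constantly $0$ function of domain $[\gamma,\al]$. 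Hence its only nonempty downward closed subsets are the final segments $\{\emptyset\}\cup\{f_{\gamma}:\beta\le\gamma\le\al\}=S_{\beta}$, together with $\{\emptyset\}$. As $\mathcal{V}$ is nonprincipal, $T'\ne\{\emptyset\}$, so $T'=S_{\beta}$ and $\mathcal{V}\cong\mathcal{Y}^{\al}_{\beta}$ for some $\beta\le\al$. Thus no nonprincipal ultrafilter below $\mathcal{U}_{\al}$ falls outside the chain, which is therefore maximal; combined with strictness this recovers the count $\al+1$ of Laflamme's Theorem~\ref{thm.Laflammethms}(3).

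The step I expect to be the real work is the non-isomorphism $S_{\beta}\not\cong S_{\beta'}$ feeding strictness. For finite $\al$ it is trivial, since the $S_{\beta}$ are finite chains of pairwise distinct cardinality. For infinite $\al$ it is delicate: the bare linear order type of $S_{\beta}$, and even its rank as a set well-ordered by $\supset$, can coincide for different $\beta$ (for instance the intervals $[\beta,\om+1]$ all have order type $\om+2$ for finite $\beta$). The separation must therefore be extracted from the finer data that the construction of $\bS_{\al}$ records and that the notion of isomorphism of substructures of $\bS_{\al}$ is designed to preserve --- the configuration of the limit nodes $f_{\gamma}$ ($\gamma$ a limit ordinal) relative to the maximal node $f_{\beta}$ --- and one must argue that this configuration determines $\beta$ once $\al$ is fixed. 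Establishing this intrinsic invariant is the main obstacle; with it in hand, the Corollary assembles immediately from the pieces above.
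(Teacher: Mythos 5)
Your assembly is the same as the paper's: the paper derives this corollary in one line from Proposition~\ref{prop.structureY_S} and Theorem~\ref{prop.RKstructure}, and the parts you complete --- the decreasing chain via the inclusions $S_{\beta'}\sse S_{\beta}$ and Theorem~\ref{prop.RKstructure}(1), nonprincipality and rapid p-pointness via Proposition~\ref{prop.structureY_S}(4), and maximality via Theorem~\ref{prop.RKstructure}(2) together with the (correct and worth making explicit) observation that $\bS_{\al}(0)$ is a single chain, so its nonempty downward closed subsets are exactly $\{\emptyset\}$ and the $S_{\beta}$ --- are all right. The genuine gap is strictness, which you explicitly leave open, so the proposal is not a proof; and the repair you sketch cannot succeed. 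Writing $f_{\gamma}$ for the constantly-$0$ function with domain $[\gamma,\om]$ and taking $\al=\om$, the map fixing $\emptyset$ and $f_{\om}$ and sending $f_{n}\mapsto f_{n+1}$ is a bijection of $S_{0}$ onto $S_{1}$ which preserves the lexicographic order of Definition~\ref{def.lex}, the extension order, and the immediate successor relation, and whose image is downward closed. Thus $S_{0}$ and $S_{1}$ are isomorphic with respect to every internal structure the paper defines; in particular the ``configuration of the limit nodes relative to the maximal node'' is identical in the two, so no intrinsic order-theoretic invariant of the kind you propose can exist.

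What distinguishes $S_{\beta}$ from $S_{\beta'}$ is precisely the data that map fails to preserve: the ordinal domains themselves. The isomorphism notion actually operative in Proposition~\ref{prop.structureY_S}(5) and Theorem~\ref{prop.RKstructure} is the one induced by the canonical identifications $\iota_{u}:\bS_{\al}(n)\ra\psi_{\al}^{-1}(u)$ of Definition~\ref{defn.pi_S}, which match an element with domain $[\gamma,\al]$ to an element with domain $[\gamma,\al]$; that is, isomorphisms and embeddings of members of $\mathfrak{S}_{\al}$ are to be read as domain-preserving. This reading is forced by the mathematics: under bare lex-isomorphism, Theorem~\ref{prop.RKstructure}(4) applied to $S_{0}\cong S_{1}$ would yield $\mathcal{Y}^{\om}_{0}\cong\mathcal{Y}^{\om}_{1}$, contradicting Laflamme's Theorem~\ref{thm.Laflammethms}(3). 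Once domains are preserved, your missing step is trivial rather than delicate: the maximal nodes of $S_{\beta}$ and $S_{\beta'}$ have domains $[\beta,\al]$ and $[\beta',\al]$, so $S_{\beta}\cong S_{\beta'}$ forces $\beta=\beta'$, and strictness drops out of Theorem~\ref{prop.RKstructure}(4) exactly as you intended. (Alternatively, strictness can be proved without any isomorphism notion: if $\beta<\beta'$ and $\mathcal{Y}^{\al}_{\beta}\cong\mathcal{Y}^{\al}_{\beta'}$, then composing an isomorphism with $\pi_{S_{\beta'}}$ gives a map carrying $\mathcal{Y}^{\al}_{\beta}$ to itself, hence injective on a set of the ultrafilter; but $\pi_{S_{\beta'}}$ identifies, inside every $X\in\mathcal{C}_{\al}$, distinct members of $\mathcal{R}_{\al}(0)|X$ that differ only below level $\beta'$, since cofinally many $\bS_{\al}(m)$ have splitting nodes below level $\beta'$ --- a contradiction.) So the obstacle you identified is real, but it is a missing clarification of what isomorphism means, not a missing combinatorial lemma; with that clarification your argument closes, and without it no argument can.
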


We will extend the previous corollary to the setting of Tukey reducibility in Theorem \ref{cor.1Tpred}.

\begin{thm}\label{thm.Tukeystructure}
Let $n<\om$ and $S\in\mathfrak{S}_{\al}(n)$.
Let $\beta\le\al$ be minimal such that 
$S_{\beta}$ embeds into $S$.
Then $\mathcal{Y}^{\al}_S\equiv_T\mathcal{Y}^{\al}_{\beta}$.
\end{thm}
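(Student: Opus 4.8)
The plan is to prove the two Tukey inequalities separately, combining the Rudin--Keisler analysis of Theorem~\ref{prop.RKstructure} with the fact that $\mathcal{Y}^{\al}_{\beta}$ is a rapid p-point. The inequality $\mathcal{Y}^{\al}_{\beta}\le_T\mathcal{Y}^{\al}_S$ is immediate: by the choice of $\beta$ the chain $S_{\beta}$ embeds into $S$, so Theorem~\ref{prop.RKstructure}(1) gives $\mathcal{Y}^{\al}_{\beta}=\mathcal{Y}^{\al}_{S_{\beta}}\le_{RK}\mathcal{Y}^{\al}_S$, and Rudin--Keisler reducibility implies Tukey reducibility. The real content is the reverse inequality $\mathcal{Y}^{\al}_S\le_T\mathcal{Y}^{\al}_{\beta}$.

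For that direction I would first record the structure of $S$. Since $n$ is finite, $\bS_{\al}(n)$ has only finitely many splitting nodes, so $S$ is a finite downward closed tree, and each of its maximal chains (branches) realizes some $S_{\gamma}$ with $\gamma\ge\beta$, the deepest branch realizing $S_{\beta}$ by minimality of $\beta$. For a branch realizing $S_{\gamma}$, the associated projection ultrafilter is isomorphic to $\mathcal{Y}^{\al}_{\gamma}$ by Proposition~\ref{prop.structureY_S}(5); and since $S_{\gamma}\sse S_{\beta}$ whenever $\gamma\ge\beta$, Theorem~\ref{prop.RKstructure}(1) yields $\mathcal{Y}^{\al}_{\gamma}\le_{RK}\mathcal{Y}^{\al}_{\beta}$, hence $\mathcal{Y}^{\al}_{\gamma}\le_T\mathcal{Y}^{\al}_{\beta}$. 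Thus every branch of $S$ contributes an ultrafilter Tukey reducible to $\mathcal{Y}^{\al}_{\beta}$, and the deepest branch contributes exactly $\mathcal{Y}^{\al}_{\beta}$, matching the lower bound.

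Next I would represent $\mathcal{Y}^{\al}_S$ as a finite iterated Fubini product built along the tree $S$: descending through the common stem and taking a Fubini product over the splitting-node ultrafilter at each branching, the factors being precisely the branch ultrafilters $\mathcal{Y}^{\al}_{\gamma}$, $\gamma\ge\beta$. This exhibits $\mathcal{Y}^{\al}_S$ as Rudin--Keisler reducible to a finite Fubini power of $\mathcal{Y}^{\al}_{\beta}$, say $(\mathcal{Y}^{\al}_{\beta})^{\cdot k}$ where $k$ bounds the number of splitting nodes and branches of $S$. Since $\mathcal{Y}^{\al}_{\beta}$ is a rapid p-point (Proposition~\ref{prop.structureY_S}(4)), Corollary~37 of \cite{Dobrinen/Todorcevic10} gives $\mathcal{Y}^{\al}_{\beta}\cdot\mathcal{Y}^{\al}_{\beta}\equiv_T\mathcal{Y}^{\al}_{\beta}$, and iterating this collapse finitely often yields $(\mathcal{Y}^{\al}_{\beta})^{\cdot k}\equiv_T\mathcal{Y}^{\al}_{\beta}$. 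Combining with $\mathcal{Y}^{\al}_S\le_{RK}(\mathcal{Y}^{\al}_{\beta})^{\cdot k}$ and $\le_{RK}\Rightarrow\le_T$ gives $\mathcal{Y}^{\al}_S\le_T\mathcal{Y}^{\al}_{\beta}$, completing the equivalence.

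I expect the main obstacle to be the penultimate step: making precise the Fubini-product decomposition of $\mathcal{Y}^{\al}_S$ along the tree and the resulting reduction to a finite Fubini power. The delicate point is that the branch projections $\pi_{S'}$ for the various sub-branches $S'\sse S$ are all derived from the same $u\in\mathcal{R}_{\al}(n)$ and hence are correlated through the shared stem, so the decomposition is not literally a product of independent ultrafilters; one must track how the splitting nodes of $\bS_{\al}(n)$ organize these correlations into an honest iterated Fubini structure and verify that the rapid-p-point Tukey collapse survives this finite iteration. This is also exactly the step where rapidity is essential, since $\mathcal{Y}^{\al}_S$ genuinely carries more Rudin--Keisler information than $\mathcal{Y}^{\al}_{\beta}$ when $S$ is not a chain, yet must collapse to the same Tukey type.
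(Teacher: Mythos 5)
Your first inequality, $\mathcal{Y}^{\al}_{\beta}\le_T\mathcal{Y}^{\al}_S$, is exactly the paper's argument and is fine. The reverse inequality is where your proposal breaks down, and the problem is not the technical ``correlation'' issue you flag at the end --- it is an absolute obstruction to any route through Rudin--Keisler maps and Fubini products. First, the strong form of your plan, representing $\mathcal{Y}^{\al}_S$ \emph{as} a finite iterated Fubini product along the tree $S$, is impossible: by Proposition \ref{prop.structureY_S}(4) every $\mathcal{Y}^{\al}_S$ is a p-point, and a Fubini product of nonprincipal ultrafilters is never a p-point (the paper itself makes this remark in Examples \ref{ex.RK_T}). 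Second, and decisively, even the weaker statement you actually use, $\mathcal{Y}^{\al}_S\le_{RK}(\mathcal{Y}^{\al}_{\beta})^{\cdot k}$, is false whenever $S$ has a splitting node --- which is the only case in which the theorem is not already immediate from Proposition \ref{prop.structureY_S}(5). Indeed, if a p-point $\mathcal{W}$ satisfies $\mathcal{W}\le_{RK}\mathcal{U}\cdot\mathcal{V}$ with $\mathcal{U},\mathcal{V}$ nonprincipal, write $\mathcal{W}=\lim_{n\ra\mathcal{U}}h_n(\mathcal{V})$ where $h$ witnesses the reduction and $h_n=h(n,\cdot)$; either $\mathcal{U}$-many $h_n(\mathcal{V})$ are principal, in which case $\mathcal{W}\le_{RK}\mathcal{U}$, or else, choosing $A_n\in\mathcal{W}\setminus h_n(\mathcal{V})$ wherever possible and a pseudo-intersection $A\in\mathcal{W}$ of the $A_n$ (this is where p-pointness enters), one finds an $n$ with $h_n(\mathcal{V})=\mathcal{W}$, so $\mathcal{W}\le_{RK}\mathcal{V}$. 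Iterating this $k$ times, your claimed reduction would force $\mathcal{Y}^{\al}_S\le_{RK}\mathcal{Y}^{\al}_{\beta}=\mathcal{Y}^{\al}_{S_{\beta}}$; then Theorem \ref{prop.RKstructure}(2) makes $\mathcal{Y}^{\al}_S$ isomorphic to $\mathcal{Y}^{\al}_{T'}$ for some downward closed $T'\sse S_{\beta}$, which is a chain, and Theorem \ref{prop.RKstructure}(4) then forces $S$ itself to be isomorphic to a chain --- contradiction. So no bookkeeping of the splitting-node correlations can rescue the argument; rapidity and the collapse $\mathcal{U}\cdot\mathcal{U}\equiv_T\mathcal{U}$ of \cite{Dobrinen/Todorcevic10} are correct facts but the wrong tools here. (A separate minor slip: for infinite $\al$ the set $S$ is not finite; only its set of splitting nodes is.)

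What the paper does instead is purely Tukey-theoretic, exploiting that a Tukey reduction need only be a monotone map carrying some cofinal subset to a cofinal subset --- it need not be induced by any map on base points, which is precisely how $\mathcal{Y}^{\al}_S$ can lie strictly above $\mathcal{Y}^{\al}_{\beta}$ in the Rudin--Keisler order yet be Tukey equivalent to it. By minimality of $\beta$, every $s\in S$ has $\dom(s)=[\gamma,\al]$ with $\gamma\ge\beta$; consequently, for each $X\in\mathcal{C}_{\al}$, the single set $\pi_{S_{\beta}}(\mathcal{R}_{\al}(0)|X)$ already determines the set $\pi_S(\mathcal{R}_{\al}(n)|X)$. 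Hence the assignment
\begin{equation}
g\bigl(\pi_{S_{\beta}}(\mathcal{R}_{\al}(0)|X)\bigr)=\pi_S(\mathcal{R}_{\al}(n)|X),
\qquad X\in\mathcal{C}_{\al},
\end{equation}
is well defined and monotone, and it carries the cofinal family $\{\pi_{S_{\beta}}(\mathcal{R}_{\al}(0)|X):X\in\mathcal{C}_{\al}\}\sse\mathcal{Y}^{\al}_{\beta}$ cofinally into $\mathcal{Y}^{\al}_S$, which is exactly the missing inequality $\mathcal{Y}^{\al}_S\le_T\mathcal{Y}^{\al}_{\beta}$. You should replace your Fubini-decomposition step by this set-level reconstruction argument.
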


\begin{proof}
Let $n<\om$ and $S\in\mathfrak{S}_{\al}(n)$.
Let $\beta\le\al$ be minimal such that $S$ contains an isomorphic copy  of $S_{\beta}$, call it $S'$.
Thus, $S'$ is a downward closed chain in $S$ with largest order type among all chains in $S$, namely o.t.$([\beta,\al])$. 
The projection map $\pi_{S'}:B_S\ra B_{S'}$
witnesses that $\mathcal{Y}^{\al}_{S'}\le_{RK}\mathcal{Y}^{\al}_S$.
Since $\mathcal{Y}^{\al}_{S'}$ is isomorphic to $\mathcal{Y}^{\al}_{\beta}$,
we have that $\mathcal{Y}^{\al}_{\beta}\le_{RK}\mathcal{Y}^{\al}_S$.
Hence, $\mathcal{Y}^{\al}_{\beta}\le_T\mathcal{Y}^{\al}_S$.

For the reverse inequality,
first note that for each $X\in\mathcal{C}_{\al}$,
from $\pi_{S_{\beta}}(\mathcal{R}_{\al}(0)|X)$
one can  reconstruct $\pi_S(\mathcal{R}_{\al}(n)|X)$,
 since $n\ge 0$ and $\beta$ is minimal such that there is a member $s\in S$ with $\dom(s)=[\beta,\al]$.
Thus,
for each $X\in\mathcal{C}_{\al}$,
define 
$g(\pi_{S_{\beta}}(\mathcal{R}_{\al}(0)|X))
=\pi_S(\mathcal{R}_{\al}(n)|X)$. 
Then $g$ maps a cofinal subset of $\mathcal{Y}^{\al}_{\beta}$ cofinally and monotonically into $\mathcal{Y}^{\al}_S$.
Therefore,
$\mathcal{Y}^{\al}_S\le_T\mathcal{Y}^{\al}_{\beta}$.
\end{proof}

Now we are ready to prove the main theorem of this section, the classification of all Rudin-Keisler types of ultrafilters Tukey reducible to $\mathcal{U}_{\al}$.  The following notion of ultrafilter of $\vec{\mathcal{W}}$-tree encompasses the notion of iterated Fubini products of ultrafilters.

\begin{defn}\label{def.vecW-trees}
Let $\hat{\mathcal{T}}$ be  a well-founded tree,
let $\mathcal{T}$ denote the set of maximal nodes in $\hat{\mathcal{T}}$, and suppose that each $t\in\hat{\mathcal{T}}\setminus\mathcal{T}$ has infinitely many immediate successors in $\hat{\mathcal{T}}$.
For each $t\in\hat{\mathcal{T}}\setminus\mathcal{T}$,
let $\mathcal{W}_t$ be an ultrafilter on the base set consisting of all immediate successors of $t$ in $\hat{\mathcal{T}}$.
Let $\vec{\mathcal{W}}$ denote $(\mathcal{W}_t:t\in\hat{\mathcal{T}}\setminus\mathcal{T})$.
Then a {\em $\vec{\mathcal{W}}$-tree}
is a tree $T\sse\hat{\mathcal{T}}$ such that
for each $t\in T\cap(\hat{\mathcal{T}}\setminus\mathcal{T})$,
the collection of immediate successors of $t$ in $T$ is a member of the ultrafilter $\mathcal{W}_t$.
\end{defn}

\begin{thm}\label{thm.TukeyU_al}
Suppose 
$\mathcal{V}$ is a nonprincipal ultrafilter and  $\mathcal{V}\le_T\mathcal{U}_{\al}$.
Then $\mathcal{V}$ is isomorphic to an ultrafilter of  $\vec{\mathcal{W}}$-trees, where
$\hat{\mathcal{T}}\setminus\mathcal{T}$ is a well-founded tree, $\vec{\mathcal{W}}=(\mathcal{W}_t:t\in\hat{\mathcal{T}}\setminus\mathcal{T})$, 
  and each $\mathcal{W}_t$ is exactly $\mathcal{Y}^{\al}_S$ for some $n<\om$ and $S\in\mathfrak{S}_{\al}(n)$.
\end{thm}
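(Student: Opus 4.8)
The plan is to reduce the statement, via the Ramsey-classification machinery, to an explicit generating family for $\mathcal{V}$, and then read off the tree of $\vec{\mathcal{W}}$-trees directly from the canonizing map. First I would invoke Proposition \ref{prop.W=frontuf} to obtain a front $\mathcal{F}$ on $\mathcal{C}_{\al}$ and a map $f:\mathcal{F}\ra\bN$ with $\mathcal{V}=f(\mathcal{U}_{\al}\re\mathcal{F})$. Letting $\R$ be the equivalence relation $a\,\R\,b\Leftrightarrow f(a)=f(b)$ on $\mathcal{F}$, I apply the main Theorem \ref{thm.PRR_al} to get a $C\le_{\al}A$ and the canonical inner (indeed Sperner) map $\vp$ on $\mathcal{F}|C$ representing $\R$, where $\vp(a)=\bigcup_{i<|a|}\pi_{S_{r^{\al}_i(a)}}(a(i))$ with each $S_{r^{\al}_i(a)}\in\mathfrak{S}_{\al}(i)$ as in Definition \ref{def.vp^1}. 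Since $\vp$ and $f$ induce the same equivalence relation, the assignment $\vp(a)\mapsto f(a)$ is a well-defined bijection of $\vp''(\mathcal{F}|C)$ onto a subset of $\bN$, and it carries $\mathcal{V}$ isomorphically onto the ultrafilter generated by the sets $\vp''(\mathcal{F}|X)$, $X\in\mathcal{C}_{\al}$ with $X\le_{\al}C$. Thus it suffices to realize this latter ultrafilter as an ultrafilter of $\vec{\mathcal{W}}$-trees in the sense of Definition \ref{def.vecW-trees}.

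Next I would build the tree. Because $\mathcal{F}$ is a front on a topological Ramsey space, the approximation family $\hat{\mathcal{F}}|C$ is well-founded under $\sqsubset$. I transport this structure through $\vp$: let $\hat{\mathcal{T}}$ consist of all partial values $\bigcup_{i<j}\pi_{S_{r^{\al}_i(a)}}(a(i))$ for $a\in\mathcal{F}|C$ and $j\le|a|$, with the coordinates $i$ at which $S_{r^{\al}_i(a)}=\{\emptyset\}$ suppressed (they contribute nothing to $\vp$), ordered naturally by initial-segment extension; its maximal nodes $\mathcal{T}$ are exactly the values $\vp(a)$. The data $S_{r^{\al}_i(a)}$ depend only on the node $r^{\al}_i(a)$ by Claim \ref{claim.E}, so $\hat{\mathcal{T}}$ is genuinely a tree and inherits well-foundedness from $\hat{\mathcal{F}}|C$; the hypothesis that $\mathcal{V}$ is nonprincipal ensures this tree is nontrivial. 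At an interior node $t$ arising from $r^{\al}_i(a)$ I set $S_t:=S_{r^{\al}_i(a)}\in\mathfrak{S}_{\al}(i)$; since $S_t\ne\{\emptyset\}$ after the suppression, $\pi_{S_t}$ takes infinitely many values on $\mathcal{R}_{\al}(i)$, so $t$ has infinitely many immediate successors, in natural bijection with $B_{S_t}$. I then set $\mathcal{W}_t:=\mathcal{Y}^{\al}_{S_t}$, which by Proposition \ref{prop.structureY_S}(4) is an ultrafilter (indeed a rapid p-point), and which is of the required form $\mathcal{Y}^{\al}_S$ with $n=i$ and $S=S_t$.

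It remains to match the two generating families. For $X\in\mathcal{C}_{\al}$ with $X\le_{\al}C$, I would verify that $\vp''(\mathcal{F}|X)$ is exactly the set of maximal nodes of a subtree $T_X\sse\hat{\mathcal{T}}$ which is a $\vec{\mathcal{W}}$-tree: at each interior node $t$ of $T_X$ the immediate successors present in $T_X$ form the set $\{\pi_{S_t}(u):u\in\mathcal{R}_{\al}(i)|X\text{ extending the node}\}$, which lies in $\mathcal{Y}^{\al}_{S_t}=\mathcal{W}_t$ by the very definition of $\mathcal{Y}^{\al}_{S_t}$. Conversely, given any $\vec{\mathcal{W}}$-tree $T$ I must produce an $X\in\mathcal{C}_{\al}$ with $T_X\sse T$. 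This is where the main work lies: using that $\mathcal{U}_{\al}$ is selective for $\mathcal{R}_{\al}$ (Fact \ref{fact.basic}(2)), I would recursively choose, over the well-founded tree $\hat{\mathcal{T}}$, members of the prescribed $\mathcal{Y}^{\al}_{S_t}$ and fuse them into a single $X\in\mathcal{C}_{\al}$ all of whose node-projections fall inside $T$. Well-foundedness bounds the recursion along each branch, and selectivity provides the diagonalization of the separately chosen witnesses into one generator. With both inclusions established the generating families of $\mathcal{V}$ and of the ultrafilter of $\vec{\mathcal{W}}$-trees coincide, which proves the theorem.

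The principal obstacle is this converse fusion step. The forward direction is essentially definitional, but showing that every $\vec{\mathcal{W}}$-tree's set of maximal nodes belongs to $\mathcal{V}$ requires simultaneously satisfying the membership conditions at the infinitely many interior nodes of a well-founded tree. The delicate points are that the $S_t$ attached to distinct nodes are coherent (so that a single selective diagonalization respects all of them at once) and that well-foundedness allows the level-by-level choices to be amalgamated into one $X\in\mathcal{C}_{\al}$ rather than merely a $\le_{\al}$-decreasing sequence; selectivity of $\mathcal{U}_{\al}$ is exactly what converts the latter into the former.
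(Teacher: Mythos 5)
Your proposal is correct and follows essentially the same route as the paper's proof: Proposition \ref{prop.W=frontuf} to get the front and $f$, Theorem \ref{thm.PRR_al} to canonize by $\vp$, the tree $\hat{\mathcal{T}}$ of initial segments of $\vp$-values with the trivial coordinates ($S=\{\emptyset\}$) suppressed, the identification $\mathcal{W}_t=\mathcal{Y}^{\al}_{S_t}$, and the matching of the two generating families, where the paper disposes of your ``converse fusion step'' by citing the corresponding selectivity/fusion argument from Theorem 5.10 of \cite{Dobrinen/Todorcevic11}, exactly as you outline. One small correction: the coherence making $S_t$ well defined when $t$ is reached from distinct $a,b\in\mathcal{F}$ comes not from Claim \ref{claim.E} (which only fixes $S_a$ for each individual $a$) but from the mixing analysis inside the proof of Theorem \ref{thm.PRR_al} (Claims \ref{claim.phisamefsame} and \ref{claim.Tsametype}), which shows that nodes with equal $\vp$-values are mixed and hence carry isomorphic $S$'s with matching projections; this is precisely what the paper invokes at that point.
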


\begin{proof}
The proof  is so similar to the proof of Theorem 5.10
in \cite{Dobrinen/Todorcevic11} that we only give a sketch of the proof, providing the few changes here.
By Proposition \ref{prop.W=frontuf},
 there is a front $\mathcal{F}$ on $\mathcal{C}_{\al}$ and  a function $f:\mathcal{F}\ra\bN$
 such that
$\mathcal{V}=f(\mathcal{U}_{\al}\re\mathcal{F})$.
 By Theorem \ref{thm.PRR_al} and the fact that $\mathcal{U}_{\al}$ is canonical for fronts,
 there is a $C\in\mathcal{C}_{\al}$ such that 
the equivalence relation induced by $f$ on $\mathcal{F}|C$ is canonical. 
Let $\vp$ denote the function from Theorem \ref{thm.PRR_al} which canonizes $f$.
If $\mathcal{F}=\{\emptyset\}$, then $\mathcal{V}$ is a principal ultrafilter, so we may assume that $\mathcal{F}\ne\{\emptyset\}$.

Let  $\mathcal{T}=\{\vp(a):a\in\mathcal{F}|C\}$.
Define $\mathcal{W}$ to be the filter on base set $\mathcal{T}$ generated by the sets
$\{\vp(a):a\in \mathcal{F}|X\}$,  $X\in\mathcal{C}_{\al}|C$.
For $X\in\mathcal{C}_{\al}|C$, 
let $\mathcal{T}|X$ denote
$\{\vp(a):a\in\mathcal{F}|X\}$.
By arguments in \cite{Dobrinen/Todorcevic11},
$\mathcal{W}$ is an ultrafilter which is isomorphic to $\mathcal{V}$.
Let $\hat{\mathcal{T}}$ denote the collection of all initial segments of elements of $\mathcal{T}$.
Precisely,
let $\hat{\mathcal{T}}$ be the collection of all $\vp(a)\cap r_i^{\al}(a)$ such that $a\in\mathcal{F}|C$, $i\le |a|$, and if $0<i<|a|$ then $S_{r^{\al}_i(a)}\ne \{\emptyset\}$.
 $\hat{\mathcal{T}}$ forms  a tree under the end-extension ordering.
Recall 
from the proof of Theorem \ref{thm.PRR_al}
that for $t\in\hat{\mathcal{T}}\setminus\mathcal{T}$,
for all $a,b\in\mathcal{F}$,
if $j<|a|$ is maximal such that $\vp(r^{\al}_j(a))=t$ and $k$ is maximal such that $\vp(r^{\al}_{k}(b))=t$,
then $S_{r^{\al}_j(a)}$ is isomorphic to $S_{r^{\al}_{k}(b)}$, and these are both not $\{\emptyset\}$.

For $t\in\hat{\mathcal{T}}\setminus\mathcal{T}$, 
define $\mathcal{W}_t$ to be the filter
generated by the sets
$\{\vp_{r^{\al}_j(a)}(u): u\in\mathcal{R}_{\al}(j)|X/a\}$,
for all $a\in\mathcal{F}|C$ such that $t\sqsubseteq \vp(a)$ and
$j<|a|$ maximal such that $\vp(r_j^{\al}(a))=t$, and all $X\in\mathcal{C}_{\al}|C$.
The base set for $\mathcal{W}_t$ is
$\{\pi_{S_{r^{\al}_j(a)}}(u):u\in\mathcal{R}_{\al}(j)|C\}$.
By arguments in \cite{Dobrinen/Todorcevic11}, it follows that 
for each $t\in\hat{\mathcal{T}}\setminus\mathcal{T}$,
$\mathcal{W}_{t}$ is an ultrafilter;
moreover,
for any  $a\in\mathcal{F}$ and $j<|a|$ maximal such that $\vp(r_j^{\al}(a))=t$, 
$\mathcal{W}_{t}$
 is generated by the collection of  $\{\vp_{r_j(a)}(u):u\in\mathcal{R}_{\al}(j)|X\}, X\in\mathcal{C}_{\al}|C$.
This follows from the fact that $\mathcal{U}_{\al}$ is Ramsey for $\mathcal{R}_{\al}$.

\begin{claim}\label{claim.thmF,T3}
Let $t\in\hat{\mathcal{T}}\setminus\mathcal{T}$.
Then $\mathcal{W}_{t}$ equals  $\mathcal{Y}_S^{\al}$ for some $n<\om$ and $S\in\mathfrak{S}_{\al}(n)$.
\end{claim}

\begin{proof}
Fix $a\in\mathcal{F}|C$ and $j<|a|$  with $j$ maximal such that $\vp(r_j^{\al}(a))=t$.
Let $S$ denote $S_{r^{\al}_j(a)}$.
For each $X\in\mathcal{C}_{\al}|C$,
$\{\vp_{r_j^{\al}(a)}(u):u\in\mathcal{R}_{\al}(j)|X\}= \pi_S(\mathcal{R}_{\al}(j)|X)\in\mathcal{Y}_S^{\al}$.
Since $\mathcal{W}_t$ is a nonprincipal ultrafilter,  $\mathcal{W}_t$ must equal  $\mathcal{Y}_S^{\al}$, by Fact \ref{fact.ufequal}.
\end{proof}

Thus,
$\mathcal{W}$ is  the  ultrafilter of  $\vec{\mathcal{W}}$-trees, where  $\vec{\mathcal{W}}=(\mathcal{W}_t:t\in\hat{\mathcal{T}}\setminus\mathcal{T})$.
This follows from the fact that for each $\vec{\mathcal{W}}$-tree $\hat{T}\sse\hat{\mathcal{T}}$,
$[\hat{T}]$ is a member of $\mathcal{W}$.
Thus,
$\mathcal{V}$ is isomorphic to the ultrafilter $\mathcal{W}$ on base set $\mathcal{T}$ generated by the $\vec{\mathcal{W}}$-trees.
\end{proof}

By Corollary \ref{cor.RKchain} and Theorems  \ref{thm.Tukeystructure} and \ref{thm.TukeyU_al}, we obtain the analogue of Laflamme's result for the Rudin-Keisler ordering now in the context of Tukey types.

\begin{thm}\label{cor.1Tpred}
Let $\al<\om_1$ and suppose $\mathcal{V}$ is a nonprincipal ultrafilter such that 
 $\mathcal{V}\le_T\mathcal{U}_{\al}$.
Then  there is a $\beta\le\al$ such that 
$\mathcal{V}\equiv_T\mathcal{Y}^{\al}_{\beta}$.
Thus, the collection of the Tukey types of all nonprincipal ultrafilters Tukey reducible to $\mathcal{U}_{\al}$ forms a decreasing chain of rapid p-points of order type $(\al+1)^*$. 
\end{thm}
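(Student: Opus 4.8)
The plan is to extract the Tukey type of $\mathcal{V}$ directly from the $\vec{\mathcal{W}}$-tree representation provided by Theorem \ref{thm.TukeyU_al}, collapsing each component to a member of the Rudin--Keisler chain of Corollary \ref{cor.RKchain} via Theorem \ref{thm.Tukeystructure}. First I would fix a nonprincipal $\mathcal{V}\le_T\mathcal{U}_{\al}$ and, by Theorem \ref{thm.TukeyU_al}, realize it up to isomorphism as the ultrafilter of $\vec{\mathcal{W}}$-trees on a well-founded tree $\hat{\mathcal{T}}$ whose components satisfy $\mathcal{W}_t=\mathcal{Y}^{\al}_{S_t}$ for some $S_t\in\mathfrak{S}_{\al}(n_t)$. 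Since $\mathcal{V}$ is nonprincipal, $\hat{\mathcal{T}}\setminus\mathcal{T}\ne\emptyset$, so the set of ordinals supplied by Theorem \ref{thm.Tukeystructure} is nonempty: for each $t\in\hat{\mathcal{T}}\setminus\mathcal{T}$ let $\beta_t\le\al$ be least with $S_{\beta_t}$ embedding into $S_t$, so that $\mathcal{W}_t\equiv_T\mathcal{Y}^{\al}_{\beta_t}$, and set $\beta=\min\{\beta_t:t\in\hat{\mathcal{T}}\setminus\mathcal{T}\}$, which exists since the ordinals are well-ordered. The first goal is $\mathcal{V}\equiv_T\mathcal{Y}^{\al}_{\beta}$.

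Both inequalities I would obtain from maps between the generating families $\{\,\mathcal{T}|X:X\in\mathcal{C}_{\al}\,\}$ of $\mathcal{V}$ and $\{\,\pi_{S_{\beta}}(\mathcal{R}_{\al}(0)|X):X\in\mathcal{C}_{\al}\,\}$ of $\mathcal{Y}^{\al}_{\beta}$, both indexed by $X\in\mathcal{C}_{\al}$. For $\mathcal{V}\le_T\mathcal{Y}^{\al}_{\beta}$ the crucial tool is the reconstruction principle already used in the reverse inequality of Theorem \ref{thm.Tukeystructure}: because $\beta$ is the least ordinal occurring among all the $S_t$, the ``level-$\beta$'' data $\pi_{S_{\beta}}(\mathcal{R}_{\al}(0)|X)$ determines every coarser projection $\pi_{S_t}$, hence determines $\vp(a)$ for all $a\in\mathcal{F}|X$, and thereby reconstructs $\mathcal{T}|X$; this yields a monotone cofinal map witnessing $\mathcal{V}\le_T\mathcal{Y}^{\al}_{\beta}$. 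For the reverse I would run the projection pointwise on the base set, sending each $\vp(a)\in\mathcal{T}$ to the $S_{\beta}$-chain sitting inside a minimal-complexity coordinate of $\vp(a)$; here Claim \ref{claim.Tsametype} (mixed approximations carry isomorphic $S$) is what guarantees that the value $\beta$ recurs coherently, so that after thinning this projection is defined $\mathcal{V}$-almost everywhere and pushes $\mathcal{V}$ onto $\mathcal{Y}^{\al}_{\beta}$, giving in fact $\mathcal{Y}^{\al}_{\beta}\le_{RK}\mathcal{V}$. I expect the upper reduction---verifying that the reconstruction is uniform in $X$ and respects the $\vec{\mathcal{W}}$-tree generators---to be the main obstacle; it is the exact analogue of the argument carried out for $\mathcal{R}_1$ in \cite{Dobrinen/Todorcevic11}.

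It then remains to see that $\langle\mathcal{Y}^{\al}_{\beta}:\beta\le\al\rangle$ is \emph{strictly} $\le_T$-decreasing, so that the nonprincipal Tukey predecessors of $\mathcal{U}_{\al}$ form a chain of order type $(\al+1)^{*}$. Monotonicity is immediate: for $\beta\le\beta'$ the shorter chain $S_{\beta'}$ embeds into $S_{\beta}$, whence $\mathcal{Y}^{\al}_{\beta'}\le_{RK}\mathcal{Y}^{\al}_{\beta}$ by Theorem \ref{prop.RKstructure}(1) and so $\mathcal{Y}^{\al}_{\beta'}\le_T\mathcal{Y}^{\al}_{\beta}$. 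For strictness I would argue that $\mathcal{Y}^{\al}_{\beta}$ is the Rudin--Keisler \emph{least} element of its own Tukey type, hence an isomorphism invariant of that type: the pointwise projection of the previous paragraph shows $\mathcal{Y}^{\al}_{\beta}\le_{RK}\mathcal{V}$ for \emph{every} nonprincipal $\mathcal{V}\equiv_T\mathcal{Y}^{\al}_{\beta}$, and since $\mathcal{Y}^{\al}_{\beta}$ itself lies in this type, it is the unique Rudin--Keisler minimum up to isomorphism. Consequently $\mathcal{Y}^{\al}_{\beta}\equiv_T\mathcal{Y}^{\al}_{\beta'}$ would force $\mathcal{Y}^{\al}_{\beta}\cong\mathcal{Y}^{\al}_{\beta'}$, and then $\beta=\beta'$ by the strict Rudin--Keisler decrease of Corollary \ref{cor.RKchain} (the second delicate point being the uniformity of the Rudin--Keisler minimum just invoked). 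Combining this with the first part and the fact that each $\mathcal{Y}^{\al}_{\beta}$ is a rapid p-point (Proposition \ref{prop.structureY_S}), every nonprincipal $\mathcal{V}\le_T\mathcal{U}_{\al}$ is $\equiv_T\mathcal{Y}^{\al}_{\beta}$ for a unique $\beta\le\al$, and these distinct Tukey types form a decreasing chain of rapid p-points of order type $(\al+1)^{*}$.
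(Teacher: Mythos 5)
Your first half (producing some $\beta\le\al$ with $\mathcal{V}\equiv_T\mathcal{Y}^{\al}_{\beta}$) opens exactly as the paper does, but then diverges: the paper replaces each $\mathcal{W}_t$ by $\mathcal{Y}^{\al}_{\beta_t}$ via Theorem \ref{thm.Tukeystructure} and then argues by induction on the rank of the well-founded tree $\hat{\mathcal{T}}$ that the resulting tree ultrafilter is Tukey equivalent to $\mathcal{Y}^{\al}_{\beta}$, whereas you try to build both cofinal maps by hand. Your route is not hopeless, but it is substantially heavier than you indicate: the reconstruction map $\pi_{S_{\beta}}(\mathcal{R}_{\al}(0)|X)\mapsto\mathcal{T}|X$ need not even be well defined for an arbitrary front, since membership $a\in\mathcal{F}$ (for instance the length $|a|$) can depend on data of $a$ lying strictly below the level remembered by $\pi_{S_{\beta}}$, so one must first thin until the front and $\vp$ are suitably uniform; likewise a branch of $\hat{\mathcal{T}}$ need not pass through any node $t$ with $\beta_t=\beta$, and arranging this by shrinking requires knowing the minimum is ``essential'', i.e.\ achieved on $\mathcal{W}_t$-large sets of successors. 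These are real obstructions the paper's rank induction simply avoids.

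The genuine gap is in your strictness argument: it is circular. What your pointwise projection can actually deliver is this: for a nonprincipal $\mathcal{V}\le_T\mathcal{U}_{\al}$, letting $\beta_{\mathcal{V}}$ be the minimum read off from $\mathcal{V}$'s \emph{own} tree representation, one gets $\mathcal{Y}^{\al}_{\beta_{\mathcal{V}}}\le_{RK}\mathcal{V}$ and $\mathcal{V}\equiv_T\mathcal{Y}^{\al}_{\beta_{\mathcal{V}}}$. To promote this to your claim that $\mathcal{Y}^{\al}_{\beta}\le_{RK}\mathcal{V}$ for \emph{every} nonprincipal $\mathcal{V}\equiv_T\mathcal{Y}^{\al}_{\beta}$, you must know that $\beta_{\mathcal{V}}=\beta$ whenever $\mathcal{V}\equiv_T\mathcal{Y}^{\al}_{\beta}$, i.e.\ that the map $\beta\mapsto(\text{Tukey type of }\mathcal{Y}^{\al}_{\beta})$ is injective --- which is precisely the strictness you are trying to prove. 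Concretely, applying your projection to $\mathcal{V}=\mathcal{Y}^{\al}_{\beta'}$ (whose natural representation has minimum $\beta'$) yields only the trivial $\mathcal{Y}^{\al}_{\beta'}\le_{RK}\mathcal{Y}^{\al}_{\beta'}$; it never produces $\mathcal{Y}^{\al}_{\beta}\le_{RK}\mathcal{Y}^{\al}_{\beta'}$, and indeed that reduction is \emph{false} for $\beta<\beta'$, since by Theorem \ref{prop.RKstructure}(3) it would force $S_{\beta}$ to embed into $S_{\beta'}$. The paper closes this gap with an argument your proposal never invokes: assume the forbidden Tukey reduction between two chain members, upgrade it to a continuous monotone cofinal map using Theorem \ref{thm.5} (both are p-points), compose with the Rudin--Keisler projection $g:\bT_{\al}\ra B_{S_{\beta}}$ witnessing $g(\mathcal{U}_{\al})=\mathcal{Y}^{\al}_{\beta}$ so that the resulting cofinal map from $\mathcal{U}_{\al}$ factors through $B_{S_{\beta}}$, represent it on a front via Proposition \ref{prop.W=frontuf}, and canonize it via Theorem \ref{thm.PRR_al}; the canonical inner map then cannot separate members with equal $\pi_{S_{\beta}}$-projections, which contradicts the Rudin--Keisler classification of the target (Theorem \ref{prop.RKstructure}, Corollary \ref{cor.RKchain}). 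Applying the Ramsey-classification theorem to the \emph{hypothetical} reduction, rather than recycling the representation from the first half, is the missing idea.
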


\begin{proof}
Let $\mathcal{V}$ be a  nonprincipal ultrafilter such that  $\mathcal{V}\le_T\mathcal{U}_{\al}$.
Theorem \ref{thm.TukeyU_al} implies that $\mathcal{V}$ is isomorphic, and hence Tukey equivalent, to the ultrafilter on $\mathcal{T}$ generated by the  $\vec{\mathcal{W}}$-trees, where
 for each $t\in\hat{\mathcal{T}}\setminus\mathcal{T}$, the ultrafilter $\mathcal{W}_t$ is $\mathcal{Y}^{\al}_{S_t}$ for some $n<\om$ and some  $S_t\in\mathfrak{S}_{\al}(n)$.
By Theorem \ref{thm.Tukeystructure},
for each $t$, there is a $\beta_t\le\al$ such that  $\mathcal{Y}^{\al}_{S_t}\equiv_T \mathcal{Y}^{\al}_{\beta_t}$.
It follows that $\mathcal{V}$ is Tukey equivalent to the ultrafilter of $\lgl \mathcal{Y}^{\al}_{\beta_s}:s\in
\hat{\mathcal{S}}\setminus\mathcal{S}\rgl$-trees.
By induction on the lexicographical rank of $\mathcal{T}$, one concludes that 
the ultrafilter of $\lgl \mathcal{Y}^{\al}_{\beta_s}:s\in
\hat{\mathcal{S}}\setminus\mathcal{S}\rgl$-trees is
 Tukey equivalent to $\mathcal{Y}^{\al}_{\beta}$, where $\beta=\min\{\beta_s:
 s\in
\hat{\mathcal{S}}\setminus\mathcal{S}\}$.

Now suppose that $\gamma<\beta\le\al$ and suppose toward a contradiction that $\mathcal{Y}^{\al}_{\beta}\le_T\mathcal{Y}^{\al}_{\gamma}$.
Then there is a continuous monotone cofinal map $h:\mathcal{Y}^{\al}_{\gamma}\ra\mathcal{Y}^{\al}_{\beta}$, since $\mathcal{Y}^{\al}_{\gamma}$ and $\mathcal{Y}^{\al}_{\beta}$ are p-points.
Since $\mathcal{Y}^{\al}_{\gamma}\le_{RK}\mathcal{U}_{\al}$, 
let $g:\bT_{\al}\ra B_{S_{\gamma}}$ be such that $g(\mathcal{U}_{\al})=\mathcal{Y}^{\al}_{\gamma}$.
Then $h\circ g:\mathcal{U}_{\al}\ra\mathcal{Y}^{\al}_{\beta}$ is a continuous monotone cofinal map. 
By Proposition \ref{prop.W=frontuf},
there is a front $\mathcal{F}$ and a function $f:\mathcal{F}\ra B_{S_{\beta}}$ 
such that $\mathcal{Y}^{\al}_{\beta}= f(\lgl\mathcal{U}_{\al}\re\mathcal{F}\rgl)$.
By Theorem \ref{thm.PRR_al},
there is a $C\in\mathcal{C}_{\al}$
 such that $f\re\mathcal{F}|C$ is canonical, witnessed by the inner function $\vp$.
Noting that for each $X\in\mathcal{C}_{\al}|C$, $f(\mathcal{F}|X)\sse h\circ g(X)$ and $g(X)\sse B_{\gamma}$, we see that $\vp$ cannot distinguish between members $a,b\in\mathcal{F}$ for which $\pi_{S_{\gamma}}(a)=\pi_{S_{\gamma}}(b)$; contradiction to $f(\lgl \mathcal{U}_{\al}\re\mathcal{F}\rgl)
=\mathcal{Y}^{\al}_{\beta}$.

By Corollary \ref{cor.RKchain}, the $\mathcal{Y}^{\al}_{\beta}$, $\beta\le\al$, form a maximal chain in the Tukey ordering of ultrafilters Tukey reducible to $\mathcal{U}_{\al}$.

The second half follows from Theorems \ref{thm.Tukeystructure} and \ref{thm.TukeyU_al}.
\end{proof}

\begin{rem}
It follows from Theorem \ref{cor.1Tpred}  that the  Tukey equivalence class of $\mathcal{Y}^{\al}_{\beta}$ consists exactly of those ultrafilters which are isomorphic to some ultrafilter of $\vec{\mathcal{W}}$-trees, where for each $t\in\hat{\mathcal{T}}\setminus\mathcal{T}$, $\mathcal{W}_t\cong \mathcal{Y}^{\al}_{S_t}$ for some $S_t$ satisfying the following: for each $t\in \hat{\mathcal{T}}\setminus\mathcal{T}$, if $S_{\gamma}$ embeds into $S_t$, then $\gamma\ge\beta$; and for at least one $t\in \hat{\mathcal{T}}\setminus\mathcal{T}$, $S_{\beta}$ embeds into $S_t$.
\end{rem}

We conclude by pointing out some of the interesting structures that occur within the
 Tukey types of the ultrafilters $\mathcal{Y}^{\al}_{\beta}$.

\begin{examples}[Rudin-Keisler Structures within  Tukey Types]\label{ex.RK_T}
The Tukey type of $\mathcal{U}_{\al}$ contains all isomorphism types of countable iterations of Fubini products of $\mathcal{U}_{\al}$.
Hence, the Tukey type of $\mathcal{U}_{\al}$ contains a Rudin-Keisler strictly increasing chain of order type $\om_1$.
The Tukey type of $\mathcal{U}_{\al}$ contains
a rich array of  ultrafilters which are Rudin-Keisler incomparable.
For example,
it follows by arguments using the Abstract Ellentuck Theorem that $\mathcal{U}_{\al}\cdot\mathcal{U}_{\al}$ and $\mathcal{Y}^{\al}_{\bS_{\al}(n)}$ are Rudin-Keisler incomparable, for each $n\ge 2$. 
Furthermore, for $k<l<m<n$, $\mathcal{Y}^{\al}_{\bS_{\al}(k)}\cdot\mathcal{Y}^{\al}_{\bS_{\al}(n)}$ and $\mathcal{Y}^{\al}_{\bS_{\al}(l)}\cdot\mathcal{Y}^{\al}_{\bS_{\al}(m)}$ are Tukey equivalent to $\mathcal{U}_{\al}$ but are Rudin-Keisler incomparable with each other.
More examples can be made similarly, using iterated Fubini products.

For each $1\le\beta\le\al$, the Tukey class of $\mathcal{Y}^{\al}_{\beta}$ contains many Rudin-Keisler incomparable ultrafilters.
For instance, let  $S,T\in\bigcup_{n<\om}\mathfrak{S}_{\al}(n)$ be such that $S_{\beta}$ embeds into
both $S$ and $T$; for $\gamma<\beta$, $S_{\gamma}$ does not embed into $S$ and $S_{\gamma}$ does not embed into $T$; and neither of $S$ and $T$ embeds into the other.
Then $\mathcal{Y}^{\al}_S\equiv_T\mathcal{Y}^{\al}_T\equiv_T \mathcal{Y}^{\al}_{\beta}$.
However, $\mathcal{Y}^{\al}_S$ and $\mathcal{Y}^{\al}_T$ are Rudin-Keisler incomparable.

The collection of all ultrafilters Tukey reducible to $\mathcal{U}_{\al}$ includes the 
following Rudin-Keisler strictly decreasing chain of rapid p-points of order type $\al+1$:
$\mathcal{Y}^{\al}_{0}>_{RK}\mathcal{Y}^{\al}_1>_{RK}\dots>_{RK}\mathcal{Y}_{\al}^{\al}$.
Since each of  the $\mathcal{Y}^{\al}_{\beta}$, $\beta\le\al$, is a p-point,  none of the ultrafilters in this chain is a Fubini product of any other ultrafilters.
Moreover, it follows from Theorem 
\ref{thm.TukeyU_al} that this chain is Rudin-Keisler-maximal within the collection of ultrafilters Tukey reducible to $\mathcal{U}_{\al}$.
This chain is also Tukey-maximal decreasing within this collection, by Theorem \ref{cor.1Tpred}.

For any $\beta\le\al$,
the collection of all ultrafilters Tukey reducible to $\mathcal{Y}^{\al}_{\beta}$ includes 
the Rudin-Keisler decreasing chain $\mathcal{Y}^{\al}_{\beta}>_{RK}\dots>_{RK}\mathcal{Y}^{\al}_{\al}$.
In addition, it contains
many ultrafilters which are Tukey incomparable, and hence Rudin-Keisler incomparable.
Since all $\mathcal{Y}^{\al}_{\gamma}$ are rapid p-points, it follows from the results in this section and
Corollary 21 of \cite{Dobrinen/Todorcevic10} that 
for any $\gamma<\delta<\varepsilon<\zeta\le \beta$,
$\mathcal{Y}^{\al}_{\gamma}\cdot\mathcal{Y}^{\al}_{\zeta}$ and $\mathcal{Y}^{\al}_{\delta}\cdot\mathcal{Y}^{\al}_{\varepsilon}$ are both Tukey reducible to $\mathcal{Y}^{\al}_{\beta}$, and are Tukey incomparable with each other.
More general examples may be constructed by the interested reader using iterated Fubini products of appropriate subsets of ultrafilters from among $\mathcal{Y}^{\al}_{\gamma}$, $\gamma\le\beta$.
\end{examples}

\bibliographystyle{plain}
\bibliography{referencesR_1}

\end{document}